\definecolor{darkblue}{RGB}{0,0,160}
\definecolor{darkgreen}{RGB}{0,80,0}
\definecolor{colsemigr}{rgb}{0.55, 0.71, 0.0}
\definecolor{colsubsgr}{rgb}{0.0, 0.5, 1.0}
\definecolor{colboundr}{rgb}{1.0, 0.01, 0.24}
\setlist[enumerate]{labelindent=0pt, labelwidth =0pt, itemindent=*,leftmargin=0pt, font=\upshape}
\newcommand{\ktrash}[1]{}
\newcommand{\matej}[1]{ \textcolor{Lavender}{\sf [$\diamondsuit$ #1]}}
\newcommand{\change}[1]{}
\definecolor{cKlaus}{rgb}{0.15,0.40,0.03}
\definecolor{cKLinkGBU}{rgb}{1,0,0}  
\definecolor{cKLink}{rgb}{0,0,0} 
\definecolor{cALink}{rgb}{0,0.3,0}
  \tikzset{mylabel/.style  args={at #1 #2  with #3}{
    postaction={decorate,
    decoration={
      markings,
      mark= at position #1
      with  \node [#2] {#3};
 } } } }
\theoremstyle{plain}
\newtheorem{theorem}{Theorem}[section]
\newtheorem{corollary}[theorem]{Corollary}
\newtheorem{lemma}[theorem]{Lemma}
\newtheorem{proposition}[theorem]{Proposition}
\theoremstyle{definition}
\newtheorem{question}[theorem]{Question}
\newtheorem{remark}[theorem]{Remark}
\newtheorem{definition}[theorem]{Definition}
\newtheorem{example}[theorem]{Example}
\newtheorem{convention}[theorem]{Convention}
\newtheorem{notation}[theorem]{Notation}
\numberwithin{equation}{section}
\newcommand{\hmm}[1]{\ifnum\ifhmode\spacefactor\else2000\fi>1000 \uppercase{#1}\else#1\fi}
\newcommand{\aed}{{d}} 
\newcommand{\aextend}{\hmm{e}xtend} 
\newcommand{\aextension}{\hmm{e}xtension} 
\newcommand{\ahb}{{\color{cKLink}\mathfrak{c}}} 
\newcommand{\aold}[1]{}
\newcommand{\awt}[1]{\ensuremath{\widetilde{#1}}}
\newcommand{\A}{\ensuremath{\mathbb{A}}}
\newcommand{\bfm}{\ensuremath{\mathbf{m}}}
\newcommand{\bfs}{\ensuremath{\mathbf{s}}}
\newcommand{\bft}{\ensuremath{\mathbf{t}}}
\newcommand{\bound}{{\partial}} 
\newcommand{\card}[1]{\lvert #1\rvert}
\newcommand{\CC}{\ensuremath{\mathbb{C}}}
\newcommand{\cedges}{\operatorname{Edge}^{\operatorname{c}}}
\newcommand{\ceil}[1]{\lceil #1 \rceil}
\newcommand{\CE}{\ensuremath{{\mathcal E}}}
\newcommand{\CF}{\ensuremath{{\mathcal F}}}
\newcommand{\cocartesian}{{co-Cartesian}} 
\newcommand{\cone}{\operatorname{cone}}
\newcommand{\conv}{\operatorname{conv}}
\newcommand{\dep}{\operatorname{Dep}}
\newcommand{\dual}{^{\scriptscriptstyle\vee}}
\newcommand{\etaFrac}[1]{{\color{cKLink}\left\{#1\right\}}}
\newcommand{\face}{{\operatorname{face}}}
\newcommand{\geta}{\ensuremath{\diamondsuit}} 
\newcommand{\gH}{\operatorname{H}}
\newcommand{\height}{{\lambda}} 
\newcommand{\hideLatticeVert}[1]{}
\newcommand{\Hom}{\operatorname{Hom}}
\newcommand{\id}{\operatorname{id}}
\newcommand{\im}{\mbox{\rm Image}}
\newcommand{\innt}{\operatorname{int}}
\newcommand{\kbb}{{\kss \bullet}}
\newcommand{\kbox}{\emph}
\newcommand{\kbp}{{\color{cKLink}b_\partial}}
\newcommand{\kcP}{{\color{cKLink}\sigma}} 
\newcommand{\kc}{{\color{cKLink}c}}
\newcommand{\kellp}{{\color{cKLink}\kell_\partial}} 
\newcommand{\kells}{{\color{cKLink}\kell_s}} 
\newcommand{\kellt}{{\color{cKLink}\kell_t}} 
\newcommand{\kellz}{{\color{cKLink}\kell^{\mbox{\rm\tiny st}}}} 
\newcommand{\kell}{{\color{cKLink}\ell}} 
\newcommand{\kenditem}{\vspace{-1ex}\end{itemize}}
\newcommand{\ketaZ}{{\color{cKLink}\eta_{\ZZ}}}
\newcommand{\keta}{{\color{cKLink}\eta}}
\newcommand{\kF}{{F}} 
\newcommand{\kiota}{{\iota}} 
\newcommand{\kitem}{\begin{itemize}\vspace{-2ex}}
\newcommand{\kL}{{L}} 
\newcommand{\kMC}{{C}} 
\newcommand{\kMtC}{{\tC}} 
\newcommand{\kMTD}{{\mathcal T^*}} 
\newcommand{\kMTPD}{{\mathcal T_{+}\dual}} 
\newcommand{\kMTPZ}{{\color{cKLink}\mathcal T_{+\ZZ}}}
\newcommand{\kMTP}{{\mathcal T_{+}}} 
\newcommand{\kMTtP}{{\wt{\mathcal T}_+}} 
\newcommand{\kMTtZD}{{\color{cKLink}\wt{\mathcal T}_{\ZZ}^*}}
\newcommand{\kMTZD}{{\mathcal T_{\ZZ}^*}} 
\newcommand{\kMTZ}{{\mathcal T_{\ZZ}}} 
\newcommand{\kMT}{{\mathcal T}} 
\newcommand{\kMVZ}{{C^{\mbox{\tiny lin}}_{\ZZ}}} 
\newcommand{\kMV}{{C^{\mbox{\tiny lin}}}} 
\newcommand{\kodSp}{{\rho}} 
\newcommand{\ko}{\overline}
\newcommand{\kpiD}{{e}} 
\newcommand{\kpi}{{\pi}} 
\newcommand{\kPm}{{m}} 
\newcommand{\kpr}{{p}} 
\newcommand{\kPr}{{r}} 
\newcommand{\kp}{{\color{cKLink}R}} 
\newcommand{\kP}{{P}} 
\newcommand{\kqr}{{q}} 
\newcommand{\kQuotD}{{N}} 
\newcommand{\kQuotRD}{{N_{\RR}}}
\newcommand{\kQuotR}{{M_{\RR}}} 
\newcommand{\kQuotSup}{{\color{cKLink}M_{\Z\Z}}} 
\newcommand{\kQuotS}{{\ko{\kS}}} 
\newcommand{\kQuot}{{M}} 
\newcommand{\kR}{{R}} 
\newcommand{\kSD}{{\kS\dual}} 
\newcommand{\kSpanSD}{{\kSpanS^*}}
\newcommand{\kSpanS}{{V}} 
\newcommand{\kSpanTD}{{\kSpanT^*}}
\newcommand{\kSpanT}{{W}} 
\newcommand{\kss}{\scriptscriptstyle}
\newcommand{\kst}{\,:\;}
\newcommand{\ksum}{{a}} 
\newcommand{\ks}{\scriptstyle}
\newcommand{\kS}{{S}} 
\newcommand{\kTD}{{\kT\dual}} 
\newcommand{\ktetaZ}{{\color{cKLink}\wt{\eta}_{\ZZ}}}
\newcommand{\kteta}{{\color{cKLink}\wt{\eta}}}
\newcommand{\ktqr}{{\wt{q}}} 
\newcommand{\ktQuot}{{\wt{M}}} 
\newcommand{\ktsum}{{\wt{a}}} 
\newcommand{\kts}{{\wt{s}}} 
\newcommand{\ktS}{{\wt{S}}} 
\newcommand{\kttpiS}{{\pi'_{\kS}}} 
\newcommand{\kttpiT}{{\pi'_{\kT}}} 
\newcommand{\kttpi}{{\pi'}} 
\newcommand{\kttS}{{\wt{S}'}} 
\newcommand{\kttT}{{\wt{T}'}} %
\newcommand{\ktT}{{\wt{T}}} %
\newcommand{\ktt}{{\wt{t}}} 
\newcommand{\kt}{{\color{cKLink}t}} 
\newcommand{\kT}{{T}} %
\newcommand{\ku}{\underline}
\newcommand{\lan}{\langle}
\newcommand{\largestint}[1]{\lfloor #1 \rfloor}
\newcommand{\la}{\ensuremath{\lambda}}
\newcommand{\MinkFam}{{\CF}} 
\newcommand{\minks}{{k}} 
\newcommand{\neht}{\Leftarrow}
\newcommand{\NN}{\ensuremath{\mathbb{N}}}
\newcommand{\normal}{{\mathcal N}}
\newcommand{\oneone}{{(\ku{1};\ku{1},\ku{0})}} 
\newcommand{\one}{{\ku{1}}} 
\newcommand{\opp}{\operatorname{opp}}
\newcommand{\orient}{\varepsilon}
\newcommand{\pr}{\operatorname{pr}}
\newcommand{\QQ}{\ensuremath{\mathbb{Q}}}
\newcommand{\Q}{\ensuremath{\mathbb{Q}}}
\newcommand{\Rand}[2]{\ensuremath{\partial_{#1}{ #2} }}
\newcommand{\rank}{\operatorname{rank}}
\newcommand{\ran}{\rangle}
\newcommand{\rounddown}[1]{\largestint{#1}}
\newcommand{\roundup}[1]{\smallestint{#1}}
\newcommand{\RR}{\ensuremath{\mathbb{R}}}
\newcommand{\se}{{short}} 
\newcommand{\sigmaDBd}{{\color{cKLink}\Rand{(\RR_{\geqslant 0}R)}{\kcP\dual}}}
\newcommand{\smallestint}[1]{\lceil #1 \rceil}
\newcommand{\Spec}{\operatorname{Spec}}
\newcommand{\straightExt}{{\CE}} 
\newcommand{\straightness}{freeness}
\newcommand{\Straightness}{Freeness}
\newcommand{\straight}{free}
\newcommand{\Straight}{Free}
\newcommand{\surj}{\twoheadrightarrow}
\newcommand{\tail}{\operatorname{tail}}
\newcommand{\tC}{\wt{C}} %
\newcommand{\then}{\Rightarrow}
\newcommand{\too}{\ensuremath{\longrightarrow}}
\newcommand{\toric}{{\mathbb T \!\mathbb V}}
\newcommand{\tsigma}{{\tilde{\sigma}}}
\newcommand{\tX}{{{\widetilde{X}}}}
\newcommand{\vastP}{{\ensuremath{{v'_\ast}}}} 
\newcommand{\vast}{{\ensuremath{{v_\ast}}}} 
\newcommand{\vect}[1]{#1}
\newcommand{\VrtxnZ}{\operatorname{Vert}_{\notin\mathbb{Z}}}
\newcommand{\VrtxZ}{\operatorname{Vert}_{\in\mathbb{Z}}}
\newcommand{\Vrtx}{\operatorname{Vert}}
\newcommand{\wteta}{\kteta}
\newcommand{\wt}[1]{\widetilde{#1}}
\newcommand{\ZZ}{\ensuremath{\mathbb{Z}}}
\newcommand{\Z}{\ensuremath{\mathbb{Z}}}
\newcommand{\givenpair}{\ensuremath{(\NN,\cone_\ZZ(\kP)\dual)}}
\newcommand{\givenpairXYmat}{\NN \ar@{^{(}->}[r] &  \cone_\ZZ(\kP)\dual}
\newcommand{\spann}[2]{%
  \ifx\relax\noexpand#1\relax  
  \operatorname{span}\!\left\{#2\right\}
  \else
  \operatorname{span}_{#1}\left\{#2\right\}
  \fi
  }
\renewcommand{\star}{\operatorname{star}}
\renewcommand{\ker}{\mbox{\rm ker}\,}
\begin{document}
\title{Polyhedra, lattice structures, and extensions of semigroups} 

\author[K.~Altmann]{Klaus Altmann%
}
\address{Institut f\"ur Mathematik,
FU Berlin,
Arnimalle 3,
D-14195 Berlin,
Germany}
\email{altmann@math.fu-berlin.de}
\author[A.~Constantinescu]{Alexandru Constantinescu}
\address{Institut f\"ur Mathematik,
FU Berlin,
Arnimalle 3,
D-14195 Berlin,
Germany}
\email{aconstant@zedat.fu-berlin.de}
\author[M.~Filip]{Matej Filip}
\address{Laboratory of applied mathematics and statistics,
University of Ljubljana,
Tr\v{z}ai\v{s}ka cesta 25,
SI-1000 Ljubljana,
Slovenia}
\email{matej.filip@fe.uni-lj.si}
\thanks{%
         MSC 2010: 14M25; 
                   14B07, 
                   20M10, 
                   52B20; 
         Key words: semigroups, universal extensions, polyhedra, 
                    deformation theory, toric singularities}

\begin{abstract}
  For an arbitrary rational polyhedron
  we consider its decompositions into Minkowski summands and, dual to this,
  the \straight~ extensions of the associated pair of semigroups.
  Being \straight~ for the pair of semigroups is equivalent to flatness for the corresponding algebras.
  Our main result is phrased in this dual setup:
  the category of \straight~ extensions always contains an initial object, which we describe explicitly.
  These objects seem to be related to unique liftings in log geometry.
  Further motivation  comes from the deformation theory of the associated toric singularity.
\end{abstract}
\maketitle

\setcounter{tocdepth}{1}
\tableofcontents

\section{Introduction}\label{intro}
The present paper deals entirely with objects from discrete mathematics, 
like semigroups, convex polyhedra and polyhedral cones,
and their relations to lattice points.
Nevertheless, most of the motivation comes from algebraic deformation theory
of affine toric varieties. 
\subsection{Minkowski sums of polyhedra}
\label{minkPolyhedra}
A central notion of the present paper is the Minkowski sum of two convex
polyhedra $A,B$ in some real vector space $\kQuotRD\cong\RR^d$. 
It is simply defined as
\[
A+B:=\{a+b\kst a\in A,\, b\in B\}.
\]
It is easy to see that the result is again a convex polyhedron.
In analogy to this, the ambient vector space is defined as $A-A:=\{a-a'\kst a,a'\in A\}$.
Recall that Minkowski decomposition can be used to write
every convex polyhedron $\kP$ as a Minkowski sum of a polytope, i.e.\ a bounded
polyhedron, and a polyhedral cone, namely its tail cone
\[
\tail(\kP):=\{a\in\kP-\kP\kst a+\kP\subseteq\kP\}.
\]
However, this is exactly the type of situation we will
{\em not} consider. Instead, for all Minkowski sums and decompositions in
this paper we will assume that all participating polyhedra share the same tail
cone. For example, if this tail cone is $0$, then we speak
about polytopes. An advantage of this general assumption is that
the Minkowski addition allows cancellation, i.e.\ $A+B=A'+B$ implies $A=A'$.
\\[1ex]
Starting with a polyhedron $\kP$,  one might look at all
possibilities of splitting $\kP$ into a Minkowski sum 
\[
\kP=\kP_0+\ldots+\kP_\minks.
\]
Even if one looks only at the most elementary or extreme
decompositions, they are far
from being unique. They do rather behave like a non-unique prime
factorization. Arguably the most convincing example is the following.
\[
\raisebox{2ex}{$
\begin{tikzpicture}[scale=0.8]
\draw[thick,  color=black]  
  (0,0) -- (1,0) -- (1,1) -- cycle;
\fill[thick,  color=black]
  (0,0) circle (2.5pt) (1,0) circle (2.5pt) (1,1) circle (2.5pt);
\draw[thick,  color=black]
  (1.5, 0.5) node{$\hspace{1.5em}+\hspace{0.8em}$};
\end{tikzpicture}
$}
\raisebox{2ex}{$
\begin{tikzpicture}[scale=0.8]
\draw[thick,  color=black]  
  (0,0) -- (0,1) -- (1,1) -- cycle;
\fill[thick,  color=black]
  (0,0) circle (2.5pt) (0,1) circle (2.5pt) (1,1) circle (2.5pt);
\draw[thick,  color=black]
  (1.5, 0.5) node{$\hspace{1.5em}=\hspace{0.8em}$};
\end{tikzpicture}
$}
\begin{tikzpicture}[scale=0.8]
\draw[thick,  color=black]  
  (0,0) -- (1,0) -- (2,1) -- (2,2) -- (1,2) -- (0,1) -- cycle;
\fill[thick,  color=black]
  (0,0) circle (2.5pt) (1,0) circle (2.5pt) (0,1) circle (2.5pt)
  (1,1) circle (2.5pt) (2,1) circle (2.5pt) (1,2) circle (2.5pt) 
  (2,2) circle (2.5pt);
\draw[thick,  color=black]
  (2.5, 0.9) node{$\hspace{2.2em}=\hspace{0.8em}$};
\end{tikzpicture}
\raisebox{2ex}{$
\begin{tikzpicture}[scale=0.8]
\draw[thick,  color=black]  
  (0,0) -- (0,1) ;
\fill[thick,  color=black]
  (0,0) circle (2.5pt) (0,1) circle (2.5pt);
\draw[thick,  color=black]
  (0.5, 0.45) node{$\hspace{1.5em}+\hspace{0.8em}$};
\end{tikzpicture}
$}
\raisebox{2ex}{$
\begin{tikzpicture}[scale=0.8]
\draw[thick,  color=black]  
  (0,0) -- (1,1);
\fill[thick,  color=black]
  (0,0) circle (2.5pt) (1,1) circle (2.5pt);
\draw[thick,  color=black]
  (1.5, 0.4) node{$\hspace{1.5em}+\hspace{0.8em}$};
\end{tikzpicture}
$}
\raisebox{2.0ex}{$
\begin{tikzpicture}[scale=0.8]
\draw[thick,  color=black]  
  (0,0) -- (1,0);
\fill[thick,  color=black]
  (0,0) circle (2.5pt) (1,0) circle (2.5pt);
\end{tikzpicture}
$}
\]
It is well-known that the set of Minkowski summands of scalar multiples
of $\kP$ (see Definition~\ref{def-MinkSum}) carries the structure of a convex, polyhedral cone
$\kMC(\kP)$,
i.e.\ each $\xi\in\kMC(\kP)$ represents a Minkowski summand
$\kP_\xi$ \cite{versalG}.
For the 
previous hexagon example, it is the four-dimensional cone over a
double tetrahedron. Its vertices, i.e.\ the fundamental rays of the cone,
correspond to the five summands displayed in the figure above.

\subsection{Considering families}
\label{considerFamilies}
The concept of studying Minkowski summands of scalar multiples of $\kP$
can be reformulated
into a relative setting. We may look at homomorphisms 
$\kpr_+:\kMtC\surj \kMC$ of polyhedral cones such that
$\kpr_+^{-1}(\xi+\xi')=\kpr_+^{-1}(\xi)+\kpr_+^{-1}(\xi')$ for all
$\xi,\xi'\in \kMC$, where the common tail cone of all the fibers $\kpr_+^{-1}(\xi)$ is $\kpr_+^{-1}(0)$. A trivial example of this can be obtained
by taking the affine cone over $\kP$ in $\kQuotRD\oplus\RR$ (with $\kP$ embedded in height 1) and considering
its natural height function $\cone(\kP)\to\RR_{\geq 0}$.
Another example is the projection $\kMtC(\kP)\surj \kMC(\kP)$
with
\[
\kMtC(\kP):=\{(\xi,v)\kst \xi\in\kMC(\kP),\,v\in\kP_\xi\}.
\]
The latter is even universal, namely it is the
terminal object in the category of all those families around
$\cone(\kP)\to\RR_{\geq 0}$, cf.~Proposition~\ref{prop-versalCP}. 
However, while this might just look like an arming of 
language, the striking point consists of the combination of
the following two observations:
\begin{enumerate}[label=(\roman*)]
\item \label{item:justAbove}
One may dualize these notions, looking at injections
$\kpr_+\dual:\kMC\dual\hookrightarrow\kMtC\dual$. Then, 
the property of Minkowski linearity translates into an interesting property
we call \straightness, cf.\
Proposition~\ref{prop-straightMinkLin}.
This property addresses the splitting of $\kMtC\dual$ into a product of
$\kMC\dual$ and a boundary part.
\item \label{item:Above}
The advantage of~\ref{item:justAbove} is that it allows porting the whole setup into the
category of finitely generated semigroups. Doing so, one can again ask
for universal (so, after dualizing, initial) objects of the appropriate
categories.
\end{enumerate}

\subsection{Extensions of semigroups}
\label{extSG}
We take the observations of Subsection~\ref{considerFamilies} as our
starting point of the whole paper. We will begin in Sections~\ref{straightPairs}
and~\ref{extStraightPairs} from scratch
with developing the appropriate notions in the category of semigroups.
Then, insisting on finite generation, the general approach naturally 
splits into two different setups. We have called them
the {\em cone} and the {\em discrete} setup, and we will focus on them in
Sections~\ref{sectConeSetup} and~\ref{discreteSetupStart}, respectively.
While the cone setup will recover the (duals of the) cones
$\kMC(\kP)$ and $\kMtC(\kP)$, the comparison of both setups
will lead to a new vector space $\kMT(\kP)$ together with a lattice
$\kMTZ(\kP)\subset\kMT(\kP)$, and a rational, polyhedral cone
$\kMTP(\kP)\subset\kMT(\kP)$ generalizing $\kMC(\kP)$.
Studying their dual level, we obtain a finer structure, i.e.\
there is a (unique) finitely generated 
subsemigroup $\ktT$ of the dual Abelian group $\kMTZD(\kP)$ 
fulfilling the universal property~\ref{item:Above} above,
i.e.\ it is the base for a universal \straight\ extension.\\[1ex]
The existence of a universal object in the discrete setup is our  main result.
It is formulated in Theorem~\ref{th-initialObject}, the proof of which 
occupies the whole of  Section~\ref{ssec:initialObject}.
It seems to be an interesting question if the existence and structure
of initial extensions is linked to results like 
\cite[Proposition 3.38]{logGross}
addressing unique liftings in log geometry;
see the remark after Proposition~\ref{prop:characterizeCoCartesian}. Note that unique liftings in log geometry were important for producing smoothings in \cite{fil} and \cite{clm}, see also \cite{gs}, \cite{rud}, \cite{simon}.

\subsection{Involving a lattice structure}
\label{involveLattice}
Let us return to Subsection\,\ref{minkPolyhedra} and
let us assume that we have fixed a lattice structure in our
ambient $\RR$-vector space. For instance, let us start with a free Abelian
group $\kQuotD$ of rank $d$, i.e.\ $\kQuotD\cong\ZZ^d$, and take
$\kQuotRD:=\kQuotD\otimes_\ZZ\RR\cong\RR^d$ as our ambient vector space.
If $\kP$ is a lattice polyhedron, i.e.\ if all vertices belong to $\kQuotD$,
then it is a natural question to look for all lattice decompositions,
i.e.\ for those
Minkowski decompositions such that the summands $\kP_\nu$ are lattice
polytopes, too. One might expect that those $\kP_\nu$ correspond to special
points inside the parametrizing cone $\kMC(\kP)$.
\\[1ex]
However, in the present paper, we go far beyond lattice
polyhedra. Instead, we will deal with arbitrary rational polyhedra, but
we study their interaction with the lattice.
In particular, lattice decompositions do no longer make sense.
Instead, in Section~\ref{latticeFriends}, we introduce the
weaker notion of
lattice-friendly decompositions, cf.~Definition~\ref{def-latticeFriends}.
Then, it is our second main result of this paper that the parameters
$\xi\in\kMTP(\kP)\cap\kMTZ(\kP)$ introduced in Subsection\,\ref{extSG}
correspond to exactly those Minkowski
summands $\kP_\xi$ occurring in lattice friendly decompositions,
cf.~Theorem~\ref{thm-latticeDecompPsi}.
Moreover, similarly to the definition of $\kMtC(\kP)$ in
Subsection\,\ref{considerFamilies}, we have combined in
Theorem~\ref{thm-tautCone} all
Minkowski summands $\kP_\xi$, lattice friendly or not, into
a common polyhedral, so-called tautological  cone $\kMTtP(\kP)$
fibered over $\kMTP(\kP)$.
That is, the lattice $\kMTZD(\kP)$
occurs twice in this paper -- as the ambient space of some universal object
$\ktT$,
but also as the right tool to check Minkowski decompositions for the lattice
friendly property.

\vspace{1ex}
{\em Acknowledgements. }
The whole project started in 2017 when the first two authors were guests 
at the University of Genova. We would like to thank for the hospitality and
the financial support.

\section{\Straight\ pairs of semigroups}\label{straightPairs}

\subsection{Relative boundaries of semigroups within two different setups}
\label{relBoundary}

Let $\kT\subseteq\kS$ be two 
commutative 
and cancellative ($a+c=b+c\then a=b$) semigroups
with identity $(0+a=a+0=a)$, 
satisfying $\kS\cap(-\kS)=\{0\}$,
i.e.\ $\kS$ (and hence $\kT$) is pointed.
This situation gives rise to the following notion of a relative boundary.

\begin{definition}
\label{def-boundary}
The \kbox{boundary of $\kS$ relative to $\kT$} is defined as
\[
  \Rand{\kT}{\kS} = \{ s\in \kS\kst(s-\kT)\cap \kS = \{s\}\}.
\]

This setting comes with a natural addition map
$\ksum:\Rand{\kT}{\kS}\times\kT\to\kS$.
\end{definition}
\begin{example}
In the context of numerical semigroups, that is subsemigroups of $\NN$, 
the so-called Ap\'ery sets are relative boundaries with respect to the subgroup generated by the smallest element. 
\end{example}
The following examples illustrate 
that the relative boundary is almost never a semigroup itself. 

\begin{example}
  \label{ex-4Bilder}
  Consider the real cone $S_\RR:=\RR_{\geqslant 0} \cdot[-2,1]+ \RR_{\geqslant 0}\cdot [2,1]\subset \RR^2$, and the finitely generated semigroup $S = S_\RR\cap \ZZ^2$. In the following we consider the boundary of $S_\RR$ with respect to an internal ray, and boundaries of $S$ relative to different subsemigroups; Figures~\ref{fig:3}~and~\ref{fig:4} show how different embeddings of $\NN$ in $S$ give rise to different boundaries (see also Example~\ref{ex-discreteA} for more details). 
  \begin{figure}[h!]
    \centering
    \begin{minipage}{0.49\textwidth}
      \centering      
      \begin{tikzpicture}
        \newcommand{\xa}{5} 
        \newcommand{\ya}{5} 
        \newcommand{\ca}{1.7}
        \draw[very thin,color=white!80!black, step=.5] (-\xa.2/2,-0.1) grid
        (\xa.2/2,\ya.2/2);    
        \draw[->,name path=xaxis] (-\xa.2/2,0) -- (\xa.2/2,0) node[pos=0.5, below]
        {\tiny $[0,0]$};
        \draw[->,name path=yaxis] (0,-0.2) -- (0,\ya.4/2) node[pos=0.25,left ] {};
        \draw[thick, color=colboundr, domain=0:\xa.2/2, mylabel=at 0.5 above with
        {}] plot (\x,\x/2);
        \draw[thick, color=colboundr, domain=0:\xa.2/2, mylabel=at 0.2 above right
        with {}] plot (-\x,{\x/2}) ;
        \node (a) at (0,0) {};
        \node (b) at (-\xa.2/2,\xa.2/4) {};
        \node (c) at (-\xa.2/2,\ya.2/2) {};
        \node (d) at (\xa.2/2,\ya.2/2) {};
        \node (e) at (\xa.2/2,\xa.2/4) {};
        \fill[fill=colsemigr,fill opacity=0.2] (a.center)  -- (b.center) --
        (c.center)  -- (d.center) -- (e.center) --cycle;
        \draw[ thick, color=colsubsgr, domain=0:\ya.2/2] plot (2*\x/3,\x);
      \end{tikzpicture}
      \caption{\mbox{\small $T=\RR_{\geqslant 0}\cdot [2,3]\subseteq S_\RR$ }\label{fig:1}}
    \end{minipage}
    \begin{minipage}{0.45\textwidth}
      \centering
      \begin{tikzpicture}
        \newcommand{\xa}{5} 
        \newcommand{\ya}{5} 
        \newcommand{\ca}{1.7}
        \draw[very thin,color=white!80!black, step=.5] (-\xa.2/2,-0.1) grid
        (\xa.2/2,\ya.2/2);    
        \draw[->,name path=xaxis] (-\xa.2/2,0) -- (\xa.2/2,0) node[pos=0.5, below]
        {\tiny $[0,0]$};
        \draw[->,name path=yaxis] (0,-0.2) -- (0,\ya.4/2) node[pos=0.25,left ] {};
        \draw[name path=line1,domain=0:\xa.2/2, mylabel=at 0.5 above with {}] plot
        (\x,\x/2);
        \draw[name path=line2,domain=0:\xa.2/2, mylabel=at 0.2 above right with
        {}] plot (-\x,{\x/2}) ;
        \node (a) at (0,0) {};
        \node (b) at (-\xa.2/2,\xa.2/4) {};
        \node (c) at (-\xa.2/2,\ya.2/2) {};
        \node (d) at (\xa.2/2,\ya.2/2) {};
        \node (e) at (\xa.2/2,\xa.2/4) {};
        \fill[fill=white!80!black,fill opacity=0.2] (a.center)  -- (b.center) --
        (c.center)  -- (d.center) -- (e.center) --cycle;
        \foreach \y in {1,...,\ya}{
          \pgfmathsetmacro\mytemp{2*\y}
          \foreach \x in {1,...,\mytemp}{
            \pgfmathsetmacro\bound{\xa+1}
            \ifthenelse{\x<\bound}{
              \fill[thick, color=colsemigr] (\x/2,\y/2) circle (\ca pt);
              \fill[thick, color=colsemigr] (-\x/2,\y/2) circle (\ca pt);
              \fill[thick, color=colsemigr] ( 0,\y/2) circle (\ca pt);
            }
            {};
          }}
        \foreach \x in {0,...,\xa}{
          \foreach \y in {0,...,\x}{
            \pgfmathsetmacro\bound{\xa+1}
            \ifthenelse{\x<\bound AND \x<\y}{
              \fill[thick, color=colsubsgr] (\x/2-\y,\x/2) circle (\ca pt);
            }
            {};
          }}
        \foreach \x in {0,...,\ya}{
          \draw[thin, color=colsubsgr] (-\x/2,\x/2)--(\ya.2/2 - \x,\ya.2/2);
          \draw[thin, color=colsubsgr] (\x/2,\x/2)--(-\ya.2/2 + \x,\ya.2/2);
          }
        \pgfmathsetmacro\i{int((\xa)/2)}
        \foreach \x in {0,...,\i}{
          \fill[thick, color=colboundr] (\x,\x/2) circle (\ca pt);
          \fill[thick, color=colboundr] (-\x,\x/2) circle (\ca pt);
        }
      \end{tikzpicture}      
      \caption{\mbox{\small$T=\spann{\NN}{[-1,1],[1,1]}\subseteq S$}\label{fig:2}}
    \end{minipage}
  \end{figure}

  \begin{figure}[h!]
    \centering
    \begin{minipage}{0.49 \textwidth}
      \centering
      \begin{tikzpicture}
        \newcommand{\xa}{5} 
        \newcommand{\ya}{5} 
        \newcommand{\ca}{1.7}
        \draw[very thin,color=white!80!black, step=.5] (-\xa.2/2,-0.1) grid
        (\xa.2/2,\ya.2/2);    
        \draw[->,name path=xaxis] (-\xa.2/2,0) -- (\xa.2/2,0) node[pos=0.5, below]
        {\tiny $[0,0]$};
        \draw[->,name path=yaxis] (0,-0.2) -- (0,\ya.4/2) node[pos=0.25,left ] {};
        \draw[name path=line1,domain=0:\xa.2/2, mylabel=at 0.5 above with {}] plot
        (\x,\x/2);
        \draw[name path=line2,domain=0:\xa.2/2, mylabel=at 0.2 above right with {}] plot (-\x,{\x/2}) ;
        \node (a) at (0,0) {};
        \node (b) at (-\xa.2/2,\xa.2/4) {};
        \node (c) at (-\xa.2/2,\ya.2/2) {};
        \node (d) at (\xa.2/2,\ya.2/2) {};
        \node (e) at (\xa.2/2,\xa.2/4) {};
        \fill[fill=white!80!black,fill opacity=0.2] (a.center)  -- (b.center) --
        (c.center)  -- (d.center) -- (e.center) --cycle;
        \foreach \y in {1,...,\ya}{
          \pgfmathsetmacro\mytemp{2*\y}
          \foreach \x in {1,...,\mytemp}{
            \pgfmathsetmacro\bound{\xa+1}
            \ifthenelse{\x<\bound}{
              \fill[thick, color=colsemigr] (\x/2,\y/2) circle (\ca pt);
              \fill[thick, color=colsemigr] (-\x/2,\y/2) circle (\ca pt);
            }
            {};
          }}
        \foreach \y in {1,...,\ya}
        \fill[color=colsubsgr] (0,\y/2) circle (\ca pt);
        \draw[thin, color=colsubsgr] (0,0)--(0,\ya/2);
        \foreach \x in {0,...,\xa}{
          \pgfmathsetmacro\i{int((\x+1)/2)}
          \fill[thick, color=colboundr] (\x/2,\i/2) circle (\ca pt);
          \fill[thick, color=colboundr] (-\x/2,\i/2) circle (\ca pt);
        }
      \end{tikzpicture}
      \caption{\mbox{\small $T_0=\spann{\NN}{[0,1]}\subseteq S$}\label{fig:3}}
      
    \end{minipage}
    \begin{minipage}{0.45 \textwidth}
      \centering
      \begin{tikzpicture}
        \newcommand{\xa}{5} 
        \newcommand{\ya}{5} 
        \newcommand{\ca}{1.7}
        \draw[very thin,color=white!80!black, step=.5] (-\xa.2/2,-0.1) grid
        (\xa.2/2,\ya.2/2);    
        \draw[->,name path=xaxis] (-\xa.2/2,0) -- (\xa.2/2,0) node[pos=0.5, below]
        {\tiny $[0,0]$};
        \draw[->,name path=yaxis] (0,-0.2) -- (0,\ya.4/2) node[pos=0.25,left ] {};
        \draw[name path=line1,domain=0:\xa.2/2, mylabel=at 0.5 above with {}] plot
        (\x,\x/2);
        \draw[name path=line2,domain=0:\xa.2/2, mylabel=at 0.2 above right with
        {}] plot (-\x,{\x/2}) ;
        \node (a) at (0,0) {};
        \node (b) at (-\xa.2/2,\xa.2/4) {};
        \node (c) at (-\xa.2/2,\ya.2/2) {};
        \node (d) at (\xa.2/2,\ya.2/2) {};
        \node (e) at (\xa.2/2,\xa.2/4) {};
        \fill[fill=white!80!black,fill opacity=0.2] (a.center)  -- (b.center) --
        (c.center)  -- (d.center) -- (e.center) --cycle;
        \foreach \y in {1,...,\ya}{
          \pgfmathsetmacro\mytemp{2*\y}
          \fill[thick, color=colsemigr] (0,\y/2) circle (\ca pt);
          \foreach \x in {1,...,\mytemp}{
            \pgfmathsetmacro\bound{\xa+1}
            \ifthenelse{\x<\bound}{
              \fill[thick, color=colsemigr] (\x/2,\y/2) circle (\ca pt);
              \fill[thick, color=colsemigr] (-\x/2,\y/2) circle (\ca pt);
            }
            {};
          }}
        \foreach \y in {1,...,\ya}
        \fill[color=colsubsgr] (\y/2,\y/2) circle (\ca pt);
        \draw[thin,color=colsubsgr] (0,0)--(\ya.2/2,\ya.2/2);
        \foreach \x in {0,...,\xa}{
          \pgfmathsetmacro\i{int((\x+1)/2)}
          \ifthenelse{\isodd{\x}}
          {\fill[thick, color=colboundr] (-\i+1,\i/2) circle (\ca pt);}
          {\fill[thick, color=colboundr] (\x/2,\i/2) circle (\ca pt);}
          \fill[thick, color=colboundr] (-\x/2,\i/2) circle (\ca pt);
        }

      \end{tikzpicture}
      \caption{\mbox{\small $T_1=\spann{\NN}{[1,1]}\subseteq S$}\label{fig:4}}
    \end{minipage}
  \end{figure}
  \begin{figure}[h!]
    \centering
    \begin{tikzpicture}
      \newcommand{\xsg}{6}
      \newcommand{\xss}{0}
      \newcommand{\xbd}{3}
      \fill[fill=colsemigr,fill opacity=0.6] (\xsg,0)  -- (\xsg+1,0) --
      (\xsg+1,0.3) -- (\xsg,0.3) --cycle;
      \node at (\xsg+2.4,0.17) {= $S \setminus(T \cup\Rand{T}{S})$};
      \fill[fill=colsubsgr,fill opacity=0.6] (\xss,0)  -- (\xss+1,0) --
      (\xss+1,0.3) -- (\xss,0.3) --cycle;
      \node at (\xss+1.4,0.17) {= $T$};
      \fill[fill=colboundr,fill opacity=0.6] (\xbd,0)  -- (\xbd+1,0) --
      (\xbd+1,0.3) -- (\xbd,0.3) --cycle;
      \node at (\xbd+1.6,0.17) {= $\Rand{T}{S}$};
    \end{tikzpicture}    
  \end{figure}
\end{example}
There are two quite different classes of semigroups we have in mind.
Both are, in their own way, 
finitely generated.
\subsubsection{The cone setup}
\label{coneSetup}
Here we take for $\kT\subseteq\kS$ polyhedral cones in some 
finitely-dimensional real vector space.
These gadgets are finitely generated by their fundamental rays as 
``$\RR_{\geqslant 0}$-modules'', but not at all
finitely generated as semigroups.

\begin{example}
\label{ex-coneA}
Assume that $\kT\subseteq\kS$ is a ray, i.e.\ $\kT=\RR_{\geqslant 0}\cdot R$
for some $R\in\kS\setminus\{0\}$.
Then there is unique face $F=F(\kT)\leq\kS$ such that $R$ and 
hence $\kT\setminus\{0\}$ is contained in the relative interior $\innt(F)$.
Then $S\setminus \Rand{\kT}{\kS} =\star(F):= \bigcup_{F\leq G\leq\kS} \innt(G)$.
Note that both $\innt(\kS)$ and $\innt(F)$ are parts of this set, i.e.\
$\Rand{\kT}{\kS} \subseteq \partial\kS\setminus \innt(F)$
with $\partial\kS:=\kS\setminus\innt(\kS)$ denoting the classical topological
boundary. 
\\
For the special case $R\in\innt(\kS)$, as in Figure~\ref{fig:1},
we even have that $\Rand{\kT}{\kS}=\partial\kS$.
In particular, in this situation the relative boundary does not depend on 
a further specification of $R$.
\end{example}

\subsubsection{The discrete setup}
\label{discreteSetup}
Here we  suppose
that both $\kT$ and $\kS$ are finitely generated as semigroups,
i.e.\ as ``$\NN$-modules''.
In this case, the so-called Hilbert basis, consisting
of all irreducible elements, provides even a minimal, hence canonical,
finite generating system.
A typical example of this situation is the intersection
of a cone setup with an underlying lattice.

\begin{example}
\label{ex-discreteA}
Let $\kS:=\spann{\RR_{\geq 0}}{[-2,1],\,[2,1]}\cap\ZZ^2$
as in the three discrete figures of Example~\ref{ex-4Bilder}, and in Figure~\ref{fig:6}, 
the Hilbert basis of this semigroup is
\[
H=\{[-2,1],\,[-1,1],\,[0,1],\,[1,1],\,[2,1]\}.
\]
The semigroup $\kS$ contains the inner ``discrete rays''
$\kT_0=\NN\cdot [0,1]$ and $\kT_1=\NN\cdot [1,1]$, and their respective
relative boundaries are
\[
\Rand{\kT_0}{\kS}=
\{[\pm 2b,\,b]\kst b\in\NN\}\;\cup\; \{[\pm(2b-1),\,b]\kst b\in\NN_{\geqslant 1}\}
\]
and
\[
\Rand{\kT_1}{\kS}=
\{[\pm 2b,\,b]\kst b\in\NN\}\;\cup\; \{[-2b+1,\,b],\; [-2b+2,\,b]
\kst b\in\NN_{\geqslant 1}\}.
\]
That is, while both $\kT_0$ and $\kT_1$ come from the ``interior'' of $\kS$,
they lead to different relative, discrete boundaries;
see Figures~\ref{fig:3}~and~\ref{fig:4}, respectively.
\end{example}

\subsection{\Straightness}\label{defStraight}
We will use the addition map $\ksum:\Rand{\kT}{\kS}\times\kT\to\kS$
for decomposing elements of the semigroup $\kS$. 
In general, i.e.\ if we are in the cone setup~(\ref{coneSetup}) 
or the discrete setup~(\ref{discreteSetup}),
the existence of those decompositions is not a problem.
This is established by the following lemma.

\begin{lemma}
\label{lem-ksumSurj}
Assume that we are either in the cone or the discrete setup.
Then the canonical addition map
$\ksum:\Rand{\kT}{\kS}\times\kT\to\kS$ is automatically surjective.
\end{lemma}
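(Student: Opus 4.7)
The plan is to exploit pointedness of $\kS$ in order to find a linear functional that acts as a ``height'' and then to pick a decomposition $s = b + t$ that is maximal with respect to this height. Concretely, since $\kS$ is pointed, the real cone $\RR_{\geqslant 0}\cdot\kS$ (in either setup) is a pointed cone, so there exists a linear functional $\ell$ on the ambient real vector space such that $\ell(x) > 0$ for every $x \in \kS \setminus \{0\}$. In particular $\ell$ is strictly positive on $\kT \setminus \{0\}$.

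Next, fix $s \in \kS$ and consider the ``reachable subtractions''
\[
A(s) := \{t \in \kT \kst s - t \in \kS\}.
\]
This set is nonempty because $0 \in A(s)$, and for any $t \in A(s)$ we have $\ell(t) \leqslant \ell(s)$ since $\ell(s-t) \geqslant 0$. The next step is to exhibit a maximizer of $\ell$ on $A(s)$. In the cone setup this follows from compactness: the slice $\{t \in \kT \kst \ell(t) \leqslant \ell(s)\}$ is a bounded (because $\kT$ is a pointed polyhedral cone on which $\ell$ is strictly positive away from $0$) closed subset of $\kT$, and $A(s) = \kT \cap (s - \kS)$ is closed, so $A(s)$ is compact and $\ell$ attains its maximum. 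In the discrete setup the same slice of $\kT$ is finite (finite generation plus pointedness forces only finitely many lattice-type elements of bounded $\ell$-height), so $A(s)$ is finite and a maximum again exists.

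Let $t^{*} \in A(s)$ achieve the maximum of $\ell$ and set $b := s - t^{*} \in \kS$. The claim is that $b \in \Rand{\kT}{\kS}$, which would finish the proof because $s = b + t^{*}$. Suppose by contradiction that $b \notin \Rand{\kT}{\kS}$: then there exists $t' \in \kT \setminus \{0\}$ with $b - t' \in \kS$. But then $s - (t^{*} + t') = b - t' \in \kS$, so $t^{*} + t' \in A(s)$, while $\ell(t^{*} + t') = \ell(t^{*}) + \ell(t') > \ell(t^{*})$ since $\ell(t') > 0$. This contradicts maximality of $t^{*}$.

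The only delicate point I anticipate is justifying the existence of a maximizer uniformly in the two setups. In the cone setup this is a standard compactness argument, but I want to be careful that $A(s)$ is genuinely closed (which uses that $\kS$ itself is closed, a consequence of being a polyhedral cone); in the discrete setup the finiteness of the bounded slice of $\kT$ is the key point and relies on finite generation together with pointedness. Both are straightforward, so I expect no serious obstacle beyond packaging these two verifications cleanly.
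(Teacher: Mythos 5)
Your proof is correct, and it takes a genuinely different route from the paper's. The paper argues by a greedy reduction over a fixed finite generating system of $\kT$: for each generator $t_i$ it subtracts the largest admissible multiple (whose existence is forced by expressing everything in a generating system of $\kS$ and using pointedness), and it iterates over the generators to land in the relative boundary. You instead set up an extremal principle: a single linear functional $\ell$, strictly positive on $\kS\setminus\{0\}$ (available because the real cone spanned by a pointed finitely generated semigroup is again a pointed polyhedral cone), gives a height that is bounded above on $A(s)=\kT\cap(s-\kS)$; a maximizer $t^*$ exists by compactness in the cone setup and by finiteness of the bounded $\ell$-slice of $\kT\subset\ZZ^n$ in the discrete setup; and maximality of $\ell(t^*)$ immediately forces $s-t^*\in\Rand{\kT}{\kS}$. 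Your version buys two things: it handles both setups with a single mechanism (only the reason a maximizer exists differs), and it sidesteps any question of whether the paper's generator-by-generator greedy reduction terminates after one pass or needs to cycle. The paper's version is more hands-on and constructive but relies on that implicit termination claim. The one point worth spelling out in your write-up is the observation that $\RR_{\geqslant 0}\cdot\kS$ is pointed in the discrete setup too (a nontrivial line in the real cone would, after clearing denominators, yield a nonzero element of $\kS\cap(-\kS)$), which is what licenses the existence of $\ell$ there.
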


\begin{proof}
Let $\kT=\spann{}{ t_1,\ldots,t_k}$ and $\kS=\spann{}{s_1,\dots,s_r}$, which we consider either as  $\NN-$modules, or as $\RR_{\geqslant 0}-$modules. 
For each $s\in\kS$ write $s=a_1s_1+\dots+a_rs_r$, and for each $i=1,\dots,k$ write $t_i=b_1s_1+\dots + b_rs_r$.
By the pointedness assumption, we have that for every $n\in\NN$ with $a_i<nb_i$, for all $i$, we get $s-nt\notin\kS$.
So in both setups, there exists a maximal $n^*\in\RR_{\geqslant 0}$, respectively $\in\NN$,
with $s-n^*t\in\kS$. Continuing this process with all generators of $\kT$
eventually leads to an element $s^*\in\kS$ which cannot be decreased via $\kT$.
\end{proof}

The injectivity of $\ksum$ is less common, but, as we will see, very powerful. Therefore, we introduce the following key terminology.

\begin{definition}
\label{def-straight}
The semigroups $\kT\subseteq\kS$  form a
\kbox{\straight\ pair $(\kT,\kS)$} 
(or $\kiota:\kT\hookrightarrow\kS$ is called a \straight\ embedding) if
the addition map $\ksum:\Rand{\kT}{\kS}\times\kT\to\kS$ is bijective.
\end{definition}


\begin{example}
\label{ex-coneB}
Let $\kS:=\spann{\RR_{\geq 0}}{[-2,1],\,[2,1]}$ and
$\kT:=\spann{\RR_{\geq 0}}{[-1,1],\,[1,1]}$. We are thus in in the cone setup, and
 $\Rand{\kT}{\kS} = \partial\kS$
as in the situation at the end of Example~\ref{ex-coneA}.
However, the surjective map $\ksum:\Rand{\kT}{\kS}\times\kT\to\kS$
is not injective. For instance,
$[0,0] + [2,2]  
= [2,1] + [0,1]$ displays two different decompositions
of $[2,2]\in\kS$.
Applying Proposition~\ref{lem-decomposableByQuot} will make this even more obvious:
We obtain $\kQuot=\RR^2/\RR^2=0$, hence $\kqr:\Rand{\kT}{\kS}\to\kQuot$
has no chance to become injective.
\\[0.3ex]
Note that literally the same remains true if we intersect everything with the
lattice $\ZZ^2$. This yields a non-\straight\ example in the discrete setup,
too.
Alternatively, in Figure~\ref{fig:2} above,
where we have that
$
  \kT:=\spann{\NN}{ [-1,1],[1,1]}
$
and
$ 
\kS:=\spann{\NN}{[-2,1],[-1,1],[0,1],[1,1],[2,1]}
$
we can take for a non-unique decomposition $[0,0] + [4,4]  = [4,2] + [0,2]$.
\end{example}

\subsection{The decomposition operators}
\label{decOp}
By definition, \straight\ pairs $(\kT,\kS)$ allow a unique decomposition of
every element $s\in \kS$  into a sum
\[
s = \bound(s) + \height(s)
\hspace{1em}\mbox{with}\hspace{1em}
\bound(s)\in\Rand{\kT}{\kS} 
\hspace{0.5em}\mbox{and}\hspace{0.5em}
\height(s)\in\kT.
\]
In other words, there are retraction maps $\bound:\kS\surj \Rand{\kT}{\kS}$
and $\height:\kS\surj\kT$
with $\,\bound+\height=\id\,$
satisfying
\[
\bound|_{\Rand{\kT}{\kS}}=\id,
\hspace{0.5em}
\bound|_{\kT}=0
\hspace{1em}\mbox{and}\hspace{1em}
\height|_{\Rand{\kT}{\kS}}=0,
\hspace{0.5em}
\height|_{\kT}=\id.
\]
Note that $\height$ is in general not linear, i.e.\ not a semigroup
homomorphism.
Moreover, for $\bound$, linearity does not even make sense, since
the target $\Rand{\kT}{\kS}$ is not a semigroup.
Finally, in the discrete setup,
the Hilbert basis $H$ of $\kS$ hosting a \straight\ pair
$(\kT,\kS)$ splits into two parts, namely
\[
H = \big(H\cap\Rand{\kT}{\kS}\big) \sqcup \big(H\cap\kT\big).
\]

\subsection{Rays yield \straight\ pairs}
\label{raysStraight}
While Example~\ref{ex-coneB} 
has shown that \straightness\ is not always satisfied, there is, nevertheless, 
a standard situation where this property is guaranteed.

\begin{definition}
\label{def-ray}
In both setups, we call $\kT$ a ray if it is saturated in 
the ambient Abelian group $\kS-\kS$ and if its canonical poset structure
($t\leq t'$ $:\iff$ $t'-t\in\kT$) is a total order. 
\end{definition}

In the cone setup~(\ref{coneSetup}), this
means $\kT\cong\RR_{\geqslant 0}$; in the discrete setup~(\ref{discreteSetup}),
the ray property implies that $\kT\cong\NN$.
In both situations, there exists  an $R\in\kS$ such that
$\kT\subseteq\kS$ consists of all ``allowed'' multiples of $R$,
i.e.\ using $\RR_{\geqslant 0}$ or $\NN$ as coefficients, respectively. 

\begin{proposition}
\label{prop-ksumInj}
\label{lem:decompositionDonwstairs}
If $\kT$ is a ray, then $\ksum$ is injective, i.e.\
$(\kT,\kS)$ is a \straight\ pair.
\end{proposition}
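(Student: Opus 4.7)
The plan is to leverage Lemma~\ref{lem-ksumSurj} for surjectivity and deduce injectivity directly from the total order structure on a ray. In both the cone and discrete setups, $\kT$ being a ray means any two elements of $\kT$ are comparable: for $t_1, t_2 \in \kT$, either $t_2 - t_1 \in \kT$ or $t_1 - t_2 \in \kT$. This comparability is the only nontrivial ingredient needed.

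Concretely, I would assume we have two decompositions $b_1 + t_1 = b_2 + t_2$ of the same element of $\kS$, with $b_i \in \Rand{\kT}{\kS}$ and $t_i \in \kT$. After swapping the roles of the two decompositions if necessary, the total order on $\kT$ lets me assume $t_2 - t_1 \in \kT$. Rearranging the equation gives
\[
b_1 = b_2 + (t_2 - t_1),
\]
so that $b_2 = b_1 - (t_2 - t_1) \in (b_1 - \kT) \cap \kS$. By the defining property of $\Rand{\kT}{\kS}$ applied to $b_1$, the set $(b_1 - \kT) \cap \kS$ equals $\{b_1\}$, which forces $b_2 = b_1$ and hence $t_2 - t_1 = 0$ by cancellativity of $\kS$. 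Thus $(b_1, t_1) = (b_2, t_2)$, proving injectivity.

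There is no serious obstacle here; the argument is essentially an unwinding of definitions once the totality of the order on $\kT$ is invoked. The one point to keep in mind is that Definition~\ref{def-ray} requires saturatedness of $\kT$ in $\kS - \kS$, which guarantees that comparability in $\kT$ is witnessed by a genuine element of $\kT$ rather than merely one of $\kT - \kT$; otherwise the step $b_2 = b_1 - (t_2 - t_1) \in b_1 - \kT$ would not be available. Combined with Lemma~\ref{lem-ksumSurj}, injectivity yields the claimed bijectivity of $\ksum$, so $(\kT, \kS)$ is a \straight\ pair.
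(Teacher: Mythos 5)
Your argument is correct and coincides with the paper's proof: both reduce to comparability of $t_1,t_2$ via the total order on $\kT$, write one boundary element as the other minus an element of $\kT$, and invoke the defining property of $\Rand{\kT}{\kS}$ followed by cancellation. One small quibble: your closing remark attributes the comparability step to saturatedness, but in fact the total-order requirement in Definition~\ref{def-ray} is stated directly on $\kT$ (via $t\leq t' :\iff t'-t\in\kT$), so comparability is already guaranteed by that clause alone, independently of saturation.
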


\begin{proof}
Let $b,b'\in\Rand{\kT}{\kS}$ and $t,t'\in\kT$ with
$b+t=b'+t'$. We may, without loss of generality, assume that $t\geq t'$.
Then, the cancellation property implies that $b+(t-t')=b'\in\Rand{\kT}{\kS}$
with $t-t'\in\kT$.
By definition of the relative boundary, this means that $t-t'=0$,
i.e.\ $t=t'$ and hence $b=b'$.
\end{proof}

\subsection{Involving the ambient Abelian groups}
\label{invAmbGrp}
Since $\kT\subseteq\kS$ are both cancellative, we may embed them into their
respective linear hulls
\[
\kSpanT:=\kT-\kT\subseteq\kS-\kS=:\kSpanS.
\]
These ambient objects $\kSpanT,\kSpanS$ are torsion free Abelian groups.
In the cone or in the discrete setup, they are finitely generated
$\RR$-, respectively $\ZZ$-modules. That is, $\kSpanT$ and $\kSpanS$ are
finitely dimensional vector spaces or free Abelian groups of finite rank.
We denote by $\kQuot:=\kSpanS/\kSpanT$ the quotient
(which might have torsion in the discrete setup).
This leads to the quotient map
\[
\kqr:\kS\surj\kQuotS\subseteq\kQuot
\hspace{1em}\mbox{with}\hspace{1em}
\kQuotS:=(\kS-\kT)/(\kT-\kT)
\]
denoting its image.
The usage of the ambient groups and their quotient $\kQuot$
yields the following criterion of \straightness\ in terms of the injectivity of
$\kqr|_{\Rand{}{\kS}}$.

\begin{lemma}
\label{lem-decomposableByQuot}
Let $(\kT,\kS)$ be a pair of semigroups such that
$\ksum:\Rand{\kT}{\kS}\times\kT\to\kS$ is surjective. We have:
\begin{enumerate}[label=(\roman*)]
\item \label{item:sbqi}The restriction $\kqr|_{\Rand{}{\kS}}:\Rand{\kT}{\kS}\to\kQuotS$ is
surjective.
\item \label{item:sbqii} The map $\kqr|_{\Rand{}{\kS}}:\Rand{\kT}{\kS}\to\kQuot$ is injective if
and only if $(\kT,\kS)$ is \straight.
\end{enumerate}
\end{lemma}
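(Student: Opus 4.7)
The plan for both parts is to unwind the definitions of $\kqr$, $\Rand{\kT}{\kS}$, and $\kQuot$, and then transport equations between $\kS$ and the ambient Abelian group $\kSpanS$.

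For part~\ref{item:sbqi}, I would pick an arbitrary element of $\kQuotS$; by definition it equals $\kqr(s)$ for some $s\in\kS$. The hypothesis that $\ksum$ is surjective allows us to write $s=b+t$ with $b\in\Rand{\kT}{\kS}$ and $t\in\kT$. Since $\kT\subseteq\kSpanT=\ker(\kqr|_{\kSpanS})$, we get $\kqr(s)=\kqr(b)$, exhibiting the chosen element as the image of a boundary element. That settles surjectivity.

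For part~\ref{item:sbqii}, both implications are obtained by pushing a putative coincidence of decompositions into $\kSpanS/\kSpanT=\kQuot$. For the implication ``\straight\ $\Rightarrow$ injective'', start with $b,b'\in\Rand{\kT}{\kS}$ satisfying $\kqr(b)=\kqr(b')$. Then $b-b'\in\kSpanT=\kT-\kT$, so there exist $t_1,t_2\in\kT$ with $b+t_2=b'+t_1$ inside $\kS$. Uniqueness of the $\ksum$-decomposition (which is what \straightness\ means) forces $b=b'$. Conversely, for ``injective $\Rightarrow$ \straight'', suppose $b+t=b'+t'$ with $b,b'\in\Rand{\kT}{\kS}$ and $t,t'\in\kT$. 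Applying $\kqr$ annihilates the $\kT$-parts and yields $\kqr(b)=\kqr(b')$, hence $b=b'$ by injectivity; then cancellation in the cancellative semigroup $\kS$ gives $t=t'$, so $\ksum$ is injective.

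There is no genuine obstacle here: the lemma is a bookkeeping translation between the ``downstairs'' datum (the quotient $\kQuot$, which was built precisely to quotient out $\kSpanT$) and the ``upstairs'' datum (the addition map $\ksum$, for which $\Rand{\kT}{\kS}$ supplies a canonical system of representatives). The only conceptual point to keep in mind is that $\kqr(t)=0$ for every $t\in\kT$, since $\kT\subseteq\kSpanT$, so any $\kT$-contribution disappears after quotienting; everything else is cancellation in $\kS$ and the definition of the relative boundary.
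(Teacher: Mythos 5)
Your argument is correct and essentially identical to the paper's: part~\ref{item:sbqi} uses surjectivity of $\ksum$ plus $\kqr|_{\kT}=0$, the direction ``free $\Rightarrow$ injective'' lifts $\kqr(b)=\kqr(b')$ to $b+t_2=b'+t_1$ and invokes uniqueness of decompositions, and ``injective $\Rightarrow$ free'' is the routine application of $\kqr$ plus cancellation (which the paper dismisses as ``obvious'' but you spell out). No gaps, no genuinely different route.
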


\begin{proof}
\ref{item:sbqi} The surjectivity of $\kqr|_{\Rand{}{\kS}}$ is a direct consequence
from the surjectivity of the addition map~$\ksum$.
\\[1ex]
\ref{item:sbqii} The direct implication is obvious.
For the converse, assume that $(\kT,\kS)$ is \straight\ and that
$\kqr(b)=\kqr(b')$
for some $b,b'\in \Rand{\kT}{\kS}$. This implies
$b-b'\in\kT-\kT$, so there are $t,t'\in\kT$ with $b+t=b'+t'$. The
latter displays two decompositions of the same element into summands from
$\Rand{\kT}{\kS}$ and $\kT$. Hence \straightness\ implies $b=b'$.
\end{proof}


\section{\aextend ing \straight\ pairs}\label{extStraightPairs}

\subsection{\aextend ing semigroups}\label{extSemigroups}
Starting with a free pair $\kT\hookrightarrow\kS$ we are going to consider all possibilities
to put this in relation  with other free pairs  $\ktT\hookrightarrow\ktS$ having isomorphic boundaries.

\begin{definition}
  \label{def-extension}
  We call a semigroup homomorphism $\kpi:\ktS\to\kS$ an \kbox{\aextension} 
  if it has trivial kernel, that is if $\ker \kpi= \{\wt{s}\in\ktS\kst \kpi(\wt{s})=0\}= 0$, which is equivalent%
  \footnote{Note that for semigroup homomorphisms, having a trivial kernel does not imply injectivity.}
  to 
  \[
    \kpi\big(\ktS\setminus\{0\}\big)\subseteq \kS\setminus\{0\}.
  \]
Let $\kT\hookrightarrow\kS$ be a pair of semigroups (not necessary free). A commutative diagram of semigroup maps
\[
 \xymatrix{
   \ktT~ \ar@{^{(}->}[r] \ar@{->}[d]_{\kpi_{\kT}}
   &
   \ktS \ar@{->}[d]^{\kpi_{\kS}}
   \\
   \kT~ \ar@{^{(}->}[r]
   &
   \kS 
 }
\]
is an \kbox{\aextension~ of the pair} $(\kT,\kS)$  if $\kpi_\kS $ (and thus also $\kpi_\kT$) is an \aextension.
An \aextension~ is called \kbox{\cocartesian}~if the following two conditions are satisfied:
\begin{enumerate}[label=(\roman*)]
  \item the addition maps $\ksum$ and $\ktsum$ are surjective, and
  \item $\kpi$ induces a bijection on the boundaries: $\Rand{\ktT}{\ktS}\stackrel{\sim}{\longrightarrow}\Rand{\kT}{(\kS)}$.
\end{enumerate}
\end{definition}

We will frequently denote both vertical maps simply by $\kpi$.
Note that the above diagram alone immediately implies that
$\kpi\big(\ktS\setminus \Rand{\ktT}{\ktS}\big)\subseteq
\kS\setminus \Rand{\kT}{\kS}$. On the other hand, $\kpi$ generally fails
to map $\Rand{\ktT}{\ktS}$ into $\Rand{\kT}{\kS}$,
cf.~Example~\ref{ex-pairExtension}.\ref{item:ex-pair2} 

\begin{example}
  \label{ex-pairExtension}
  \begin{enumerate}[label={\bf \arabic*.}, ref={\arabic*.}]
  \item  A trivial possibility for extending pairs is to first define $\ktS:=\kS\times \kF$
    with $\kF$ any semigroup of the scenario in question.
    However, the plain projection $\pr_{\kS}:\kS\times \kF\surj \kS$
    does not meet our requirements, because its kernel equals $\kF$.
    This can be corrected by choosing any semigroup map $\ell:\kF\to\kS$
    with trivial kernel and defining $\kpi_\ell:=\pr_{\kS}+\ell$, i.e.\ $\kpi_\ell(s,f):=s+\ell(f)$. 
    Using this notation,  the forbidden plain projection corresponds to the forbidden $\ell=0$.
    To obtain an extension of the pair, take $\ell:\kF\to\kT\subseteq\kS$ with $\ker\ell=0$, and define $\ktT:=\kT\times \kF$.
    Note that $\Rand{\kT\times\kF}{(\kS\times\kF)}=\Rand{\kT}{(\kS)}\times\{0\}$.
    Hence the \straightness\ property of $(\kT,\kS)$ is equivalent to
    the similar one for $(\ktT,\ktS)$.
\item  \label{item:ex-pair2} We consider an example in the cone setup~(\ref{coneSetup}). 
To be able to draw what is going
on, we intersect both cones $\ktS$ and $\kS$ with affine hyperplanes
-- displaying convex polytopes 
(the origin of the cones being behind the screen):
\begin{figure}[h!]
\begin{tikzpicture}[scale=0.5]
\draw[thin,  color=black]
(0,0) -- (2,-3) -- (5,-2) -- (4,2) -- (0,0);
\fill[fill=colsemigr,fill opacity=0.2] (0,0)  -- (2,-3) --
   (5,-2)  -- (4,2) -- cycle;
\draw[thick,  color=colsubsgr]
(0,0) -- (4.5,0);

\draw[thick,  color=colsubsgr]
  (3,-0.7) node{$\ktT$};
\draw[thick,  color=black]  
  (-0.7,-1.9) node{$\ktS$};
\draw[thick,  color=colboundr]
  (0.1,0.05) -- (4,2) (0.03,-0.06) -- (2,-3) -- (5,-2);
\draw[thick,  color=colboundr]
  (4.2,-2.9) node{$\ks \Rand{\ktT}{\ktS}$}
  (1.2,1.3) node{$\ks \Rand{\ktT}{\ktS}$};
\draw[thick,  color=black]  
  (7,0) node{$\stackrel{\kpi}{\longrightarrow}$};
\draw[thick,  color=colsemigr]
  (10,2) -- (10,-3);
\fill[thick,  color=colsubsgr]
  (10,0) circle (3pt);
\draw[thick,  color=colsubsgr]
  (10.7,-0.0) node{$\kT$};
\draw[thick,  color=black]
  (11.7,-1.4) node{$\kS$};
\fill[thick,  color=colboundr]
  (10,2) circle (3pt) (10,-3) circle (3pt);  
\draw[thick,  color=colboundr]
  (10.9,-3.0) node{$\ks \Rand{\kT}{\kS}$}
  (10.9,2) node{$\ks \Rand{\kT}{\kS}$};
\end{tikzpicture}
\caption{$\kpi$ does not always map boundary to boundary.}\label{fig:5}
\end{figure}
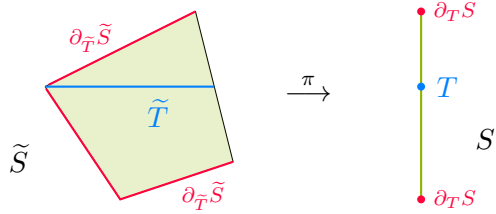

Extending the formula of Example~\ref{ex-coneA}, we obtain that, in the cone setup,
\[
\displaystyle
\kS\setminus \Rand{\kT}{\kS}=\bigcup_{\small \substack{R\in\kT\\ R\neq  0}}
\big(\kS\setminus \Rand{R}{\kS}\big)
= \bigcup_{\small \substack{R\in\kT\\R\neq 0}}\bigg(\bigcup_{\small \substack{R\in G\\ G\leqslant\kS}} \innt(G)\bigg)
= \bigcup_{\small \substack{G\leqslant\kS\\ G\cap\kT\neq 0}} \innt(G).
\]
In particular, in Figure~\ref{fig:5}, we have that $\kpi^{-1}(\kT)=\ktT$,
but $\Rand{\ktT}{\ktS}$ does not map to $\Rand{\kT}{\kS}$.

  \end{enumerate}

\end{example}

\subsection{Keeping it \straight}\label{keepStraight}
The main point of the present subsection is to keep track of the 
\straightness\ property along extensions of pairs.
The next result shows two important consequences of a diagram being \cocartesian, and that if the vertical maps are surjective, each of these consequences are also sufficient. Let the following diagram define an \aextension~ of the free pair $(\kT,\kS)$
\[
\xymatrix{
\ktT~ \ar@{^{(}->}[r] \ar@{->}[d]_{\kpi_{\kT}} & \Rand{\ktT}{(\ktS)} \times \ktT  \ar@{->>}[r]^-\ktsum & \ktS \ar@{->}[d]^{\kpi_{\kS}} \\
\kT~ \ar@{^{(}->}[r]                           & \Rand{\kT}{(\kS)} \times \kT\ar@{->>}[r]^-\ksum       & \kS 
}
\]
with the addition map $\ktsum$  surjective. Denote by $\ktQuot:=(\ktS-\ktS)/(\ktT-\ktT)$ and by $\kQuot:=(\kS-\kS)/(\kT-\kT)$.
Consider the following three conditions.
\begin{enumerate}[label=(C\arabic*)]
\item \label{condition1} The \aextension~ is \cocartesian.
\item \label{condition2} The pair $(\ktT,\ktS)$ is free and $\ko{\kpi}:\ktQuot\to\kQuot$ is an isomorphism.
\item \label{condition3} For all $\kts_1,\kts_2 \in \ktS$ with $\kpi(\kts_1)=\kpi(\kts_2)$, there exist $\ktt_1,\ktt_2 \in \ktT$ such that $\kts_1-\ktt_1=\kts_2-\ktt_2\in\ktS$.
\end{enumerate}

\begin{proposition}\label{prop:characterizeCoCartesian}%
  \begin{enumerate}[label=(\roman*)]
  \item In the above situation we have the following logical relations:
\[
  \begin{tikzcd}[arrows=Rightarrow,every arrow/.append style={shift left=0.8ex},column sep=1.5em]
    \text{\ref{condition1}} \arrow[xshift = 3pt]{r} &
    \text{\ref{condition2}} \arrow[r,xshift=3pt] \arrow["/"{anchor=center,sloped}]{l} &
    \text{\ref{condition3}} \arrow["/"{anchor=center,sloped}]{l}
  \end{tikzcd}
\]
\item If the maps $\kpi_\kS$ and $\kpi_\kT$ are surjective, 
then
\[
\begin{tikzcd}[arrows=Rightarrow,every arrow/.append style={shift left=0 ex},column sep=1.5em]
    \text{\ref{condition3}} \arrow{r} &
    \text{\ref{condition1}},
  \end{tikzcd}
\]
thus the three conditions are equivalent in this case. 
  \end{enumerate}
\end{proposition}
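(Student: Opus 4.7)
The plan is to establish $\text{(C1)} \Rightarrow \text{(C2)} \Rightarrow \text{(C3)}$ in part (i), exhibit explicit counterexamples to both reverse arrows, and then close the circle in part (ii) by proving $\text{(C3)} \Rightarrow \text{(C1)}$ under the surjectivity hypothesis. Throughout, the essential tool is the uniqueness of the boundary-plus-$\kT$ decomposition in a free pair, combined with the triviality of $\ker\kpi$.

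For $\text{(C1)} \Rightarrow \text{(C2)}$, my first task is to show $(\ktT, \ktS)$ is free: I apply $\kpi$ to two decompositions in $\ktS$, use that (C1) sends $\Rand{\ktT}{\ktS}$ into $\Rand{\kT}{\kS}$, and invoke freeness of $(\kT, \kS)$ together with the boundary bijection to match the pieces. The map $\ko{\kpi}$ is then shown surjective by decomposing any $s = b + t$ in $\kS$ via freeness, lifting $b$ through the boundary bijection to $\wtb\in\Rand{\ktT}{\ktS}$, and reading $s \equiv \kpi(\wtb) \pmod{\kT - \kT}$; its injectivity follows by a similar matching argument after decomposing $\kts_1, \kts_2 \in \ktS$ via the (now free) pair $(\ktT, \ktS)$ and pushing $\kpi(\kts_1 - \kts_2) \in \kT - \kT$ through $\kpi$.

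For $\text{(C2)} \Rightarrow \text{(C3)}$, given $\kpi(\kts_1) = \kpi(\kts_2)$, injectivity of $\ko{\kpi}$ gives $\kts_1 - \kts_2 \in \ktT - \ktT$; decomposing each $\kts_i = \wtb_i + \ktt_i$ and applying freeness of $(\ktT, \ktS)$ once more forces the boundary parts to agree, so $\kts := \wtb_1 = \wtb_2 \in \ktS$ is the common predecessor. For the non-implications I plan two small explicit examples. For $\text{(C3)} \not\Rightarrow \text{(C2)}$: the injective $\kpi: \NN \hookrightarrow \NN^2$, $n \mapsto (n, 0)$, extending the trivial pair $\NN = \NN$ to $\NN \times \{0\} \hookrightarrow \NN^2$, has (C3) vacuously while $\ko{\kpi}: 0 \to \ZZ$ is manifestly not an isomorphism. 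For $\text{(C2)} \not\Rightarrow \text{(C1)}$: the map $\kpi(n) = 5n$ extending $2\NN \hookrightarrow \NN$ (with boundary $\{0, 1\}$) to $2\NN \hookrightarrow 2\NN + 3\NN$ (with boundary $\{0, 3\}$) induces the identity on $\ZZ/2\ZZ$ and thus satisfies (C2), but $\kpi(1) = 5 \notin \{0, 3\}$ breaks the boundary bijection.

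For part (ii), $\ksum$ is surjective by freeness of $(\kT, \kS)$ and $\ktsum$ is surjective by hypothesis, so the task reduces to proving that $\kpi$ is bijective on boundaries. Injectivity follows from (C3) alone: if $\kpi(\wtb_1) = \kpi(\wtb_2)$, (C3) supplies $\ktt_1,\ktt_2\in\ktT$ and a common $\kts \in \ktS$ with $\wtb_i - \ktt_i = \kts$, and the boundary property of each $\wtb_i$ forces $\ktt_i = 0$, so $\wtb_1 = \kts = \wtb_2$. For surjectivity I lift $b \in \Rand{\kT}{\kS}$ to some $\kts \in \ktS$ via surjectivity of $\kpi_\kS$ and then check $\kts \in \Rand{\ktT}{\ktS}$: any $\ktt \in \ktT$ with $\kts - \ktt \in \ktS$ produces $b - \kpi(\ktt) \in \kS$, so the boundary property of $b$ forces $\kpi(\ktt) = 0$, hence $\ktt = 0$ by triviality of $\ker\kpi$. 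The main subtlety to navigate is the asymmetry between ``common successor'' (what quotient conditions naturally yield) and ``common predecessor'' (what (C3) requires); it is precisely to bridge this gap that the step $\text{(C2)} \Rightarrow \text{(C3)}$ genuinely needs the freeness of $(\ktT, \ktS)$ and not merely the isomorphism of quotients.
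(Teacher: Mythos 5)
Your forward implications \ref{condition1}$\Rightarrow$\ref{condition2}$\Rightarrow$\ref{condition3} follow the paper's route closely and are correct, and your two counterexamples, though different from the paper's (your \ref{condition3}$\not\Rightarrow$\ref{condition2} example is in the discrete setup where the paper uses the cone setup, and your \ref{condition2}$\not\Rightarrow$\ref{condition1} example twists the inclusion $\braket{2,3}\hookrightarrow\NN$ by multiplication by $5$ and works with $\kT=2\NN$ rather than $\kT=0$), are valid once one untangles the direction of $\kpi$ in your wording --- bear in mind that in an \aextension~ $\kpi$ always maps the upstairs $(\ktT,\ktS)$ onto the downstairs $(\kT,\kS)$, so phrases like ``$\kpi(1)=5\notin\{0,3\}$'' are only coherent after swapping your labelling of the two rows.

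The genuine gap is in part (ii). To establish that $\kpi$ induces a \emph{bijection} $\Rand{\ktT}{\ktS}\to\Rand{\kT}{\kS}$, one has to show three things: that $\kpi$ actually \emph{maps} $\Rand{\ktT}{\ktS}$ into $\Rand{\kT}{\kS}$ (well-definedness), injectivity of this restriction, and surjectivity onto $\Rand{\kT}{\kS}$. Your argument supplies the last two --- indeed your surjectivity argument shows the stronger fact that $\kpi_\kS^{-1}\big(\Rand{\kT}{\kS}\big)\subseteq\Rand{\ktT}{\ktS}$ --- but never addresses the first. These two facts together do \emph{not} rule out an element $\wtb\in\Rand{\ktT}{\ktS}$ with $\kpi(\wtb)\notin\Rand{\kT}{\kS}$, and the paper explicitly warns (right after Definition~\ref{def-extension}) that $\kpi$ generally fails to map the upstairs boundary into the downstairs one. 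To close this gap one must argue as the paper does: decompose $\kpi(\wtb)=s+t$ in $\Rand{\kT}{\kS}\times\kT$, lift $s$ and $t$ separately via the surjectivity of $\kpi_\kS$ and $\kpi_\kT$ to $\kts\in\ktS$ and $\ktt\in\ktT$, apply~\ref{condition3} to $\wtb$ and $\kts+\ktt$ (which have the same image), use the boundary property of $\wtb$ to kill the first correction term and then triviality of $\ker\kpi_\kT$ to kill the second, and conclude $\wtb=\kts+\ktt$; the boundary property of $\wtb$ then forces $\ktt=0$, hence $t=\kpi(\ktt)=0$, i.e.\ $\kpi(\wtb)=s\in\Rand{\kT}{\kS}$. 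This is a separate, non-trivial use of~\ref{condition3} that your proof omits.
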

\begin{proof}
\fbox{\ref{condition1}$\then$\ref{condition2}}
 {\em The decomposability of $(\ktT,\ktS)$.}
\label{lem:decompositionUpstairs}
Assume that $\wt{b}_1+\ktt_1=\wt{b}_2+\ktt_2$, with $\wt{b}_i\in\Rand{\ktT}{\ktS}$ and $\ktt_i\in\ktT$. Applying $\kpi$ we obtain
\[
  \kpi(\wt{b}_1)+\kpi(\ktt_1)=\kpi(\wt{b}_2)+\kpi(\ktt_2).
\]
By~\ref{condition1} we have $\kpi(\wt{b}_1),\kpi(\wt{b}_2)\in\Rand{\kT}{\kS}$, and the diagram condition implies $\kpi(\ktt_1),\kpi(\ktt_2)\in\kT$.
So, by the decomposability of $(\kT,\kS)$, that $\kpi(\wt{b}_1)=\kpi(\wt{b}_2)$. Again by~\ref{condition1} we obtain $\wt{b}_1=\wt{b}_2$, and thus $\ktt_1=\ktt_2$, so the decomposition is unique.\\[1ex]
{\em The group isomorphism.} Since the addition maps are surjective, every element of $\kQuot$ and of $\ktQuot$  can be represented by a corresponding boundary element. So the surjectivity of the restriction to the boundary implies the surjectivity of the map $\ko{\kpi}:\ktQuot\to\kQuot$.
By Lemma~\ref{lem-decomposableByQuot}, we have
$\kqr|_{\Rand{}{\kS}}:\Rand{\kT}{\kS}\stackrel{\sim}{\to}\kQuotS
\subseteq\kQuot$ on both levels, $\ktQuot$ and $\kQuot$.
Hence $\ko{\kpi}:\ktQuot\to\kQuot$ is an isomorphism
on the images of $\ktS$ and $\kS$ in $\ktQuot$ and $\kQuot$, respectively.
Since these images generate the two groups, we are done.
\\[1ex]
\fbox{\ref{condition2}$\then$\ref{condition3}}
Let $\kts_1,\kts_2 \in \ktS$ with $\kpi(\kts_1)= \kpi(\kts_2)$ in $\kS$, hence in $\kQuot$.
Then the second part of~\ref{condition2}, i.e.\ the fact that $\ko{\kpi}$ is an isomorphism, 
implies that $\kts_1$ and $\kts_2$ become equal in $\ktQuot$, i.e.\
$\ktqr(\kts_1)=\ktqr(\kts_2)$.
Now, we consider the unique decompositions
\[
\kts_1=\kts_1\!\!'+\ktt_1
\hspace{1em}\mbox{and}\hspace{1em}
\kts_2=\kts_2\!\!'+\ktt_2
\hspace{1em}\mbox{within}\hspace{1em}
\Rand{\ktT}{(\ktS)}\times\ktT.
\]
We still have $\ktqr(\kts_1\!\!')=\ktqr(\kts_2\!\!')$,
but now we can use the decomposability of $(\ktT,\ktS)$ in the way provided by
Lemma~\ref{lem-decomposableByQuot}, namely as the injectivity of
$\ktqr:\Rand{\ktT}{\ktS}\to\ktQuot$. This implies $\kts_1\!\!'=\kts_2\!\!'=:\kts~\!'$,
hence
$\kts_1-\ktt_1=\kts_2-\ktt_2=\kts~\!'\in\ktS$.
\\[1ex]
\fbox{\ref{condition2}$\not\then$\ref{condition1}}
Take the following extension with surjective addition maps:
\[
  \begin{tikzcd}
    0 \arrow[r, hook]\arrow[d]& \braket{2,3}\subseteq \NN \arrow[d,xshift = -1em] \\
    0 \arrow[r, hook]& \braket{1} =  \NN  
  \end{tikzcd}
\]
with the second vertical map being the canonical inclusion. The two pairs on the rows are free, and even the first projection is surjective. Also, the groups $\ktQuot$ and $\kQuot$ are both isomorphic to $\ZZ$, and $\kpi_\kS$ induces the identity as isomorphism. However, the restriction to the boundary is a strict inclusion.
\\[1ex]
\fbox{\ref{condition3}$\not\then$\ref{condition2}}
Take the following extension with surjective addition maps:
\[
  \begin{tikzcd}
    0 \arrow[r, hook]\arrow[d]& \RR_{\ge 0}\arrow[d] \\
    \RR_{\ge 0} \arrow[r, hook]& \RR_{\ge 0}^2 
  \end{tikzcd}
\]
with both maps $\RR_{\ge 0}\too \RR_{\ge 0}^2$ given by $t\mapsto (t,t)$. Even if the two pairs are free and the groups $\kQuot$ and $\ktQuot$ are isomorphic to $\RR$, the map induced by the vertical one is the zero map, so not an isomorphism.
\\[1ex]
 \fbox{\ref{condition3}$\then$\ref{condition1} if $\kpi_\kS$ is surjective.}
\label{lem:boundaryBijection}
 \emph{The restriction $\kpi_{\Rand{}{(\ktS)}}$ is well defined.} 
Let $\wt{b}\in\Rand{\ktT}{(\ktS)}$ and $b=\kpi(\wt{b})$. Then $b$ admits a unique decomposition into $s+\kt
\in \Rand{\kT}{(\kS)} \times \kT$
and, by surjectivity,  we may lift both summands 
to $\kts\in\ktS$ and 
$\ktt\in\ktT$, respectively.
Thus, $\wt{b}$ and $\kts+\ktt$ have the same image under $\kpi$, and~\ref{condition3} 
implies the existence of $\ktt_1, \ktt_2\in\ktT$ with
\[
\wt{b} - \ktt_1 = \kts+\ktt -\ktt_2\in\ktS.
\]
The hypothesis $\wt{b}\in\Rand{\ktT}{(\ktS)}$ enforces $\ktt_1=0$.
Hence $\wt{b}= \kts+\ktt -\ktt_2$. After applying $\pi$, this means
\[
b = \kpi(\wt{b}) = \kpi(\kts) + \kpi(\ktt) -\kpi(\ktt_2)
= s+\kt-\kpi(\ktt_2).
\]
Comparing with our original equation $b=s+\kt$, this implies
$\kpi(\ktt_2)=0$, i.e.\ $\ktt_2\in\ker\kpi_{\kT}=\{0\}$.
Hence $\kts+\ktt=\wt{b}\in\Rand{\ktT}{(\ktS)}$ which again  
enforces $\ktt=0$. Finally, we apply $\kpi$ to the equation
$\wt{b}=\kts$, leading to $b=s\in\Rand{\kT}{(\kS)}$.
\\[1ex]
 \emph{Injectivity.} 
Let $\wt{b}_1, \wt{b}_2\in\Rand{\ktT}{(\ktS)}$ with
$\kpi(\wt{b}_1)=\kpi(\wt{b}_2)$. By~\ref{condition3} we obtain elements
$\ktt_1, \ktt_2\in\ktT$ with
\[
\wt{b}_1 - \ktt_1 = \wt{b}_2-\ktt_2\in\ktS.
\]
Again, the defining property of $\Rand{\ktT}{(\ktS)}$ implies
$\ktt_1=\ktt_2=0$.
\\[1ex]
 \emph{Surjectivity.} 
Let $b \in \Rand{\kT}{(\kS)}$. By the surjectivity of $\kpi_\kS$, this may be lifted to an element
$\wt{b}=\kts+\ktt\in \Rand{\ktT}{(\ktS)} + \ktT$.
Applying $\kpi$ yields
$b=s+\kt\in\Rand{\kT}{(\kS)} + \kT$, thus $\kpi(\ktt)=\kt=0$.
Again we conclude that $\ktt\in \ker\kpi_{\kT}=\{0\}$.
\end{proof}

Note that the property~\ref{condition3} is called the integrality of the
embedding $\ktT\hookrightarrow\ktS$ in \cite[Remark 3.25]{logGross}.
There and in \cite[Proposition 4.1]{logKato}, this notion is related to 
the flatness among the associated semigroup algebras.

\begin{example}
  \begin{enumerate}[label={\bf \arabic*.}, ref={\arabic*.}]
  \item 
It could have been that both $(\kT,\kS)$ and $(\ktT,\ktS)$ are
\straight, but the diagram is not \cocartesian. For example take $\ktT=\kT=\kS=\RR_{\geqslant 0}$ and 
$\ktS=\RR^2_{\geqslant 0}$ containing $\ktT$ as the ray 
$\RR_{\geqslant 0}\cdot (1,1)$ with $\kpi=\frac{1}{2}\,(1,1)$. 
By Proposition~\ref{prop-ksumInj} we know that $(T,S)$
and $(\ktT,\ktS)$ are free but $\ko{\kpi}:\ktQuot\to\kQuot$ is 
not an isomorphism, i.e.\ even~\ref{condition2} fails.

\item Any \cocartesian\ diagram is automatically Cartesian, i.e.\ it follows that 
$\ktT=\kpi_\kS^{-1}(\kT)\subseteq\ktS$. 
However, as it can be seen in Example~\ref{ex-pairExtension},~\ref{item:ex-pair2}, 
this condition does not suffice.
\end{enumerate}
\end{example}

\subsection{Boundary independence}
\label{subs:boundaryIndep}
The goal of Section~\ref{sec:mainPlayers} is to construct a universal \cocartesian~ \aextension~ for any free pair. To this aim, we have to identify the essential structures and concepts that a \cocartesian~\aextension~ has to preserve. The first is the concept of independence. This is defined for tuples of elements in the boundary. The second is a family of special elements in the smaller semigroup ($\kT$, respectively $\ktT$) which can be defined in terms 
of the boundary, and has to be compatible with the bijection on the boundary.
\begin{definition}
  \label{def:boundaryIndependence}
  Let $(\kT,\kS)$ be a free pair of semigroups.  A  collection of $r$ (not necessarily distinct) boundary elements  $b_1,\dots,b_r\in \Rand{\kT}{\kS}$ is called \emph{boundary independent} if their sum is still in the boundary, that is if
  \[
    b_1+\dots + b_r = \bound(b_1+\dots+b_r).
  \]
In contrast, such a collection is called boundary \emph{dependent}, if it is not boundary independent, and \emph{minimally dependent} if it is dependent and  every proper subset is independent.
\end{definition}
Let $(\kT,\kS)$ be a free pair, and $(\ktT,\ktS)$ be a \cocartesian~ \aextension~ of it.
We thus have an induced bijection $\kpi^{-1}_\bound:\Rand{\kT}{(\kS)}\stackrel{\sim}{\to}\Rand{\ktT}{(\ktS)}$, and for every $b\in\Rand{\kT}{\kS}$ we simply denote
\[
  \awt{b}:=\kpi^{-1}_\bound(b).
\]
 Let us denote the retractions upstairs by $\widetilde{\bound}:\ktS\surj\Rand{\ktT}{\ktS}$ and $\widetilde{\lambda}:\ktS\surj\ktT$ respectively.

\begin{proposition}
  \label{prop:boundaryIndepEquivalence}
  For any \cocartesian~ \aextension~ $\kpi:(\ktT,\ktS)\too(\kT,\kS)$ and for any (not necessarily distinct) elements $b_1,\dots,b_r\in \Rand{\kT}{\kS}$ we have
  \begin{eqnarray*}
    \awt{\bound}(\awt{b}_1+\dots+\awt{b}_r) &=& \kpi^{-1}_\bound(\bound(b_1+\dots+b_r))\\
    \kpi_\kT\big(\awt{\lambda}(\awt{b}_1+\dots+\awt{b}_r)\big) &=& \lambda(b_1+\dots+b_r)\\
\awt{b}_1,\dots,\awt{b}_r\text{~are boundary independent} &\iff& b_1,\dots,b_r\text{~are boundary independent}.
  \end{eqnarray*}
  In particular,
  \[
    \kpi^{-1}_\partial(b_1)+\dots+\kpi^{-1}_\partial(b_r) - \kpi^{-1}_\partial\big(\bound(b_1+\dots+b_r)) \in \ktT.
  \]
Furthermore, if we choose $\awt{s}_1,\dots,\awt{s}_r\in\ktS$ and denote by $s_i:=\kpi(\awt{s}_i)$, then the first two relations  above still hold.
\end{proposition}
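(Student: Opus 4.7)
The plan is to prove the first two identities simultaneously by pushing the unique \straight\ decomposition upstairs through $\kpi$ and then invoking the uniqueness downstairs; all the remaining statements are then immediate corollaries. Since the \emph{furthermore} part is strictly more general than the first two identities (boundary elements are a special case of arbitrary elements of $\ktS$), I would prove that version directly and specialise.

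First I would start with arbitrary $\awt{s}_1,\dots,\awt{s}_r\in\ktS$ and set $\awt{\sigma}:=\awt{s}_1+\dots+\awt{s}_r$, $\sigma:=\kpi(\awt{\sigma})=s_1+\dots+s_r$. Upstairs, the \straightness\ of $(\ktT,\ktS)$ (which holds by Proposition~\ref{prop:characterizeCoCartesian}, since the diagram is \cocartesian) yields the unique decomposition
\[
\awt{\sigma}=\awt{\bound}(\awt{\sigma})+\awt{\lambda}(\awt{\sigma})
\quad\text{with}\quad
\awt{\bound}(\awt{\sigma})\in\Rand{\ktT}{\ktS},\
\awt{\lambda}(\awt{\sigma})\in\ktT.
\]
Applying $\kpi$ gives
\[
\sigma=\kpi\bigl(\awt{\bound}(\awt{\sigma})\bigr)+\kpi\bigl(\awt{\lambda}(\awt{\sigma})\bigr).
\]
By the defining property of a \cocartesian~ \aextension, $\kpi$ restricts to a bijection $\Rand{\ktT}{\ktS}\stackrel{\sim}{\to}\Rand{\kT}{\kS}$, so the first summand lies in $\Rand{\kT}{\kS}$; the commutativity of the square forces the second summand to lie in $\kT$. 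The \straightness\ of $(\kT,\kS)$ therefore identifies these summands with $\bound(\sigma)$ and $\lambda(\sigma)$ respectively, yielding
\[
\kpi\bigl(\awt{\bound}(\awt{\sigma})\bigr)=\bound(\sigma),
\qquad
\kpi_{\kT}\bigl(\awt{\lambda}(\awt{\sigma})\bigr)=\lambda(\sigma).
\]
Applying $\kpi^{-1}_\bound$ to the first equation gives the first claimed identity, and the second identity is already there. Specialising $\awt{s}_i=\awt{b}_i:=\kpi^{-1}_\bound(b_i)$ (so that $s_i=b_i$) recovers the original two formulas.

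For the boundary independence equivalence, recall that a tuple is boundary independent precisely when $\lambda$ of their sum vanishes. From the second identity above, $\lambda(b_1+\dots+b_r)=\kpi_\kT\bigl(\awt{\lambda}(\awt{b}_1+\dots+\awt{b}_r)\bigr)$, and since $\kpi_\kT$ has trivial kernel by the definition of an \aextension, one vanishes if and only if the other does. Finally, for the \emph{in particular} statement, rewriting the unique upstairs decomposition of $\awt{b}_1+\dots+\awt{b}_r$ as
\[
\awt{b}_1+\dots+\awt{b}_r-\awt{\bound}(\awt{b}_1+\dots+\awt{b}_r)=\awt{\lambda}(\awt{b}_1+\dots+\awt{b}_r)\in\ktT
\]
and substituting $\awt{b}_i=\kpi^{-1}_\bound(b_i)$ together with the first identity gives exactly the claimed membership in $\ktT$. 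There is no real obstacle here; the only subtle point is to verify that $\kpi(\awt{\bound}(\awt{\sigma}))$ actually lands in $\Rand{\kT}{\kS}$, which is precisely the content of condition~\ref{condition1} and fails for a non-\cocartesian~ extension (cf.\ Example~\ref{ex-pairExtension}.\ref{item:ex-pair2}).
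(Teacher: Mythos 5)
Your proof is correct and follows essentially the same route as the paper's: apply $\kpi$ to the unique upstairs decomposition, use the co-Cartesian bijection on boundaries plus commutativity of the square to recognize the images as boundary- and $\kT$-components downstairs, and then invoke uniqueness of the downstairs decomposition together with $\ker\kpi_\kT=0$. Proving the general (``furthermore'') version first and specialising to $\awt{s}_i=\kpi^{-1}_\bound(b_i)$ is a clean way to organize the argument, but it is the same mechanism the paper uses.
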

\begin{proof}
  From $\kpi(\awt{b}_1+\dots+\awt{b}_r)= b_1+\dots+b_r$, using the unique boundary decompositions we get
  \[    \kpi_\bound\big(\awt{\bound}(\awt{b}_1+\dots+ \awt{b}_r)\big)+ \kpi_T\big(\awt{\la}(\awt{b}_1+\dots+ \awt{b}_r)\big) =  \bound(b_1+\dots+b_r) + \lambda(b_1+\dots+b_r).   \]
  By the uniqueness of the decomposition, by the bijectivity of $\kpi_\bound$ and by $\ker\kpi_\kT=0$, we conclude.
\end{proof}

\subsection{The category of \straight\ \aextension s of a pair}\label{catStraight}
Assume that $(\kT,\kS)$ is a \straight\ pair. Then the co-Cartesian extensions
$\kpi:(\ktT,\ktS)\to(\kT,\kS)$ form a 
\kbox{category $\straightExt_{(\kT,\kS)}$} where the morphisms are
defined in the obvious way. Moreover, we have the following construction
imitating base change from algebraic geometry and equipping
$\straightExt_{(\kT,\kS)}$ with the structure of being fibered in groupoids.

\begin{proposition}
\label{prop-groupoids}
Assume that $(\ktT,\ktS)\in\straightExt_{(\kT,\kS)}$ and that
$\kttpiT:\kttT\to\kT$ is another extension of $\kT$. Then,
for any semigroup homomorphism $f:\ktT\to\kttT$ over $\kT$,
there is a unique extension $\kttpiS:\kttS\to\kS$ such that
$\kttpi=(\kttpiT,\kttpiS):(\kttT,\kttS)\to(\kT,\kS)$
belongs to $\straightExt_{(\kT,\kS)}$ and that
$f$ extends to a morphism $(\ktT,\ktS)\to(\kttT,\kttS)$ in this category.
\[
  \begin{tikzcd}
     \ktT
     \arrow[hook]{r}
     \arrow{rd}
     \arrow[bend left=30]{rr}{\forall~f}
     & \ktS
     \arrow[bend left=30, dotted, crossing over]{rr}{}
     & \ktT'
     \arrow{ld}
     \arrow[hook,dotted]{r}
     & {\color{darkgreen}\ktS'}
     \arrow[dotted]{ld}
  \\
     & \kT
     \arrow[hook]{r}
     & \kS
     \arrow[crossing over,leftarrow]{lu}
     &    
   \end{tikzcd}
\]
\end{proposition}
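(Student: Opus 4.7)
My plan is to construct $\kttS$ explicitly as the pushout of $\ktT\hookrightarrow\ktS$ along $f:\ktT\to\kttT$ in the category of commutative cancellative pointed semigroups, realising it concretely via the unique boundary decomposition of $\ktS$ guaranteed by the \cocartesian~property of $\kpi$. As a set, I would put
\[
  \kttS := \Rand{\kT}{\kS} \times \kttT,
\]
with $\kttT$ embedded by $t''\mapsto(0,t'')$ and $\kttpiS:\kttS\to\kS$ defined by $(b,t'')\mapsto b+\kttpiT(t'')$. For the addition, I would lift $b_1,b_2\in\Rand{\kT}{\kS}$ via $\kpi^{-1}_\bound$ to $\wt b_1,\wt b_2\in\Rand{\ktT}{\ktS}$, decompose $\wt b_1+\wt b_2=\wt\bound(\wt b_1+\wt b_2)+\wt\lambda(\wt b_1+\wt b_2)$ in $\ktS$, and set
\[
  (b_1,t''_1)+(b_2,t''_2):=\bigl(\bound(b_1+b_2),\,t''_1+t''_2+f(\wt\lambda(\wt b_1+\wt b_2))\bigr).
\]
By Proposition~\ref{prop:boundaryIndepEquivalence} we have $\kpi_\bound(\wt\bound(\wt b_1+\wt b_2))=\bound(b_1+b_2)$, so $\kttpiS$ is automatically a semigroup homomorphism.

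Next I would check that this defines a commutative cancellative pointed semigroup with identity $(0,0)$. Commutativity is immediate, and associativity reduces to the cocycle identity
\[
  \wt\lambda(\wt b_1+\wt b_2+\wt b_3)=\wt\lambda(\wt b_1+\wt b_2)+\wt\lambda\bigl(\wt\bound(\wt b_1+\wt b_2)+\wt b_3\bigr),
\]
which follows by applying the unique decomposition $\ktS=\Rand{\ktT}{\ktS}+\ktT$ to $\wt b_1+\wt b_2+\wt b_3$ in two different ways. The trivial kernel of $\kttpiS$ follows from $\kS$ being pointed and $\kttpiT$ being an \aextension, and pointedness of $\kttS$ follows by applying the same argument inside its Grothendieck group. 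Cancellativity is the most technical step: if $(b_x,t''_x)+(b_z,t''_z)=(b_y,t''_y)+(b_z,t''_z)$, then comparing the two boundary parts yields $b_x-b_y\in\kT-\kT$, and the injectivity of $\kqr|_{\Rand{\kT}{\kS}}$ from Lemma~\ref{lem-decomposableByQuot} forces $b_x=b_y$, hence $\wt b_x=\wt b_y$; cancellativity in $\kttT$ then finishes the job.

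To see that $(\kttT,\kttS)$ is \cocartesian~over $(\kT,\kS)$, I note that $\wt\lambda(\wt b)=0$ for any $\wt b\in\Rand{\ktT}{\ktS}$, so the addition formula collapses to $(b',t''_3)+(0,t''_0)=(b',t''_3+t''_0)$; hence $(b,t'')$ lies in the relative boundary iff $t''\in\Rand{\kttT}{\kttT}=\{0\}$, giving $\Rand{\kttT}{\kttS}=\Rand{\kT}{\kS}\times\{0\}$. The addition map $\Rand{\kttT}{\kttS}\times\kttT\to\kttS$ is then tautologically bijective, and $\kttpiS$ restricts to the identification $\Rand{\kttT}{\kttS}\simeq\Rand{\kT}{\kS}$; surjectivity of $\ksum$ holds by hypothesis. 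The morphism $\ktS\to\kttS$ extending $f$ is forced to be $\wt s\mapsto(\kpi_\bound(\wt\bound(\wt s)),f(\wt\lambda(\wt s)))$, and this is a homomorphism by the same boundary-independence argument. Finally, uniqueness is enforced by the \cocartesian~constraint on any candidate $(\kttT,\kttS')$: the boundary bijection $\Rand{\kttT}{\kttS'}\simeq\Rand{\kT}{\kS}$ identifies $\kttS'$ set-theoretically with $\Rand{\kT}{\kS}\times\kttT$, and the requirement that $\ktS\to\kttS'$ send each $\wt b_i$ to the corresponding boundary lift uniquely determines the addition law. The principal obstacle is the cancellativity of $\kttS$; everything else is a direct consequence of Proposition~\ref{prop:boundaryIndepEquivalence} and the unique-decomposition property.
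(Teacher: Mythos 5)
Your proposal is correct and follows essentially the same route as the paper: there $\kttS$ is defined as the pushout $\ktS\oplus_\ktT\kttT$, and the claim is that the canonical map $\Rand{\ktT}{\ktS}\times\kttT\to\ktS\oplus_\ktT\kttT$ is bijective with inverse $(\kts,\ktt')\mapsto(\bound\kts,\,f(\height\kts)+\ktt')$, which is precisely your identification (after the bijection $\Rand{\ktT}{\ktS}\simeq\Rand{\kT}{\kS}$). Your explicit cocycle-twisted addition on the boundary product is exactly the structure this bijection transports from the pushout, and your axiom verifications (associativity via the cocycle identity, cancellativity via Lemma~\ref{lem-decomposableByQuot}, and the boundary computation) unpack what the paper dismisses as ``straightforward to check.''
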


\begin{proof}
It suffices to prove that the canonical map
$\Rand{\ktT}{\ktS}\times\kttT\to\ktS\oplus_\ktT\kttT$ is a bijection
where the latter denotes the push-out
$\ktS\oplus_\ktT\kttT:=(\ktS\times\kttT)/\!\sim\,$ defined by modding out
the equivalence relation 
\[
(\kts_1,\ktt_1')\sim(\kts_2,\ktt_2') :\iff
\exists \ktt_1,\ktt_2\in\ktT\!:\hspace{0.7em}
\kts_1+\ktt_2=\kts_2+\ktt_1
\hspace{0.7em}\mbox{and}\hspace{0.7em}
\ktt_1'+f(\ktt_1)=\ktt_2'+f(\ktt_2).
\]
However, it is straightforward to check that the assignment
$(\kts,\ktt')\mapsto (\bound\kts,\,f(\height\kts)+\ktt')$
yields a correctly defined inverse map
$\ktS\oplus_\ktT\kttT\to\Rand{\ktT}{\ktS}\times\kttT$.
\end{proof}

\subsection{Initial objects in $\straightExt_{(\kT,\kS)}$}
\label{catStraightInit}

The main result of this paper is the following.
\begin{theorem}
  \label{thm:universalObjectExists}
Le us assume that we are in the discrete setup , cf. Section~\ref{discreteSetup}.
The category of \cocartesian~\aextension s of $(\kT,\kS)$ contains an initial object.
\end{theorem}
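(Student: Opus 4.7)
My plan is to construct the initial object $(\ktT_0,\ktS_0)$ explicitly, reducing the problem to identifying the minimal structure forced on any \cocartesian~\aextension\ of $(\kT,\kS)$. Proposition~\ref{prop:boundaryIndepEquivalence} already pins down the boundary of~$\ktS_0$ via the required bijection $\Rand{\ktT_0}{(\ktS_0)}\cong\Rand{\kT}{\kS}$, and it fixes the class modulo~$\kT$ of every defect $\awt{\lambda}(\awt{b}_1+\dots+\awt{b}_r)$. The only residual freedom is in these defects themselves, and for the initial object they should be as free as possible, subject only to the relations forced by associativity in~$\ktS_0$.

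Concretely, I would let $\ktT_0$ be the commutative monoid generated by $\kT$ together with formal symbols $\delta(b_1,b_2)$ indexed by pairs in $\Rand{\kT}{\kS}^2$, modulo the symmetry $\delta(b_1,b_2)=\delta(b_2,b_1)$, the normalisation $\delta(b,0)=0$, and the cocycle identity
\[
\delta(b_1,b_2)+\delta\bigl(\bound(b_1+b_2),b_3\bigr)=\delta(b_2,b_3)+\delta\bigl(b_1,\bound(b_2+b_3)\bigr)
\]
expressing associativity. The projection $\kpi_\kT\colon\ktT_0\to\kT$ acts as the identity on $\kT$ and sends $\delta(b_1,b_2)\mapsto\lambda(b_1+b_2)$. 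I would then take $\ktS_0:=\Rand{\kT}{\kS}\times\ktT_0$ as a set, with addition
\[
(b_1,t_1)+(b_2,t_2):=\bigl(\bound(b_1+b_2),\,t_1+t_2+\delta(b_1,b_2)\bigr)
\]
and projection $\kpi\colon(b,t)\mapsto b+\kpi_\kT(t)$. By construction this yields a \straight\ pair with the correct boundary bijection and trivial kernel, so $(\ktT_0,\ktS_0)$ is an object of $\straightExt_{(\kT,\kS)}$.

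Universality is then essentially formal. Given any other \cocartesian~\aextension\ $\kpi'\colon(\ktT',\ktS')\to(\kT,\kS)$, Proposition~\ref{prop:boundaryIndepEquivalence} assigns to each pair $(b,b')$ the distinguished element $\awt{\lambda}'(\awt{b}+\awt{b'})\in\ktT'$ lifting $\lambda(b+b')$. Sending $\delta(b_1,b_2)\mapsto\awt{\lambda}'(\awt{b}_1+\awt{b}_2)$ and extending by the identity on $\kT$ defines a morphism $f\colon\ktT_0\to\ktT'$, because the same proposition verifies the symmetry and cocycle relations inside $\ktT'$. Since both $\ktS_0$ and $\ktS'$ are parameterised by $\Rand{\kT}{\kS}\times(\text{monoid})$ via the respective boundary bijections (Proposition~\ref{prop-groupoids}), the induced map $(b,t)\mapsto(b,f(t))$ is automatically a semigroup homomorphism, and uniqueness is forced by the rigid assignment of boundary lifts and defects.

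The main obstacle is the hypothesis that we are in the discrete setup, which requires $\ktT_0$ to be \emph{finitely generated}. As presented, the monoid has infinitely many generators $\delta(b,b')$, one per pair of boundary elements. The key reduction uses the cocycle identity iteratively: writing $b=\sum c_ih_i$ in the boundary Hilbert basis $H_\partial=H\cap\Rand{\kT}{\kS}$, one expresses $\delta(b,b')$ inductively as a sum of $\delta(h_i,\bound(\,\cdot\,))$'s, and similarly reduces the second argument; this should eventually bring the generator set down to the finite family $\{\delta(h,h')\}_{h,h'\in H_\partial}$. Carrying out this induction on the Hilbert-basis degree of $b_1+b_2$, verifying that the resulting finite presentation is consistent and has only finitely many independent cocycle relations, and finally confirming that $\ktT_0$ has not been over-collapsed---for instance by using the identity extension $(\kT,\kS)\to(\kT,\kS)$ and concrete ``test'' extensions to separate the proposed generators---is where I expect the bulk of the technical difficulty to lie.
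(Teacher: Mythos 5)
Your proposal takes a genuinely different route from the paper---a presentation-by-generators-and-relations construction rather than the paper's explicit lattice construction $\ktT, \ktS \subset \kQuot\oplus\kMTZD(\kP)$---but it has a concrete gap that the paper's technical machinery is there precisely to close. You assert that $(\ktT_0,\ktS_0)$ has trivial kernel, but it does not: if $b_1,b_2\in\Rand{\kT}{\kS}$ are nonzero and boundary independent (so $\lambda(b_1+b_2)=0$), then $\kpi_\kT(\delta(b_1,b_2))=0$, yet none of your imposed relations (symmetry, $\delta(b,0)=0$, cocycle) forces $\delta(b_1,b_2)=0$ in the free monoid; the cocycle identity with $b_3=0$ reduces to a tautology. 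So your $(\ktT_0,\ktS_0)$ fails Definition~\ref{def-extension} and is not an object of the category whose initial object you are trying to construct. The fix is to impose $\delta(b_1,b_2)=0$ whenever $b_1,b_2$ are boundary independent, and then one must re-check that the universal map still exists---which it does, by the third statement of Proposition~\ref{prop:boundaryIndepEquivalence}---but once you admit this family of relations you are exactly replaying the content of Proposition~\ref{prop:equivalenceOfIndependence}: the fact that the independence relations downstairs lift compatibly is the nontrivial combinatorial heart of the construction.

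Beyond that, even with the kernel fixed, the presented monoid must be shown to be cancellative and pointed (the discrete setup requires semigroups embeddable in lattices; your quotient of a free commutative monoid is not obviously so), and the finite-generation reduction you outline does not go through as written: $\Rand{\kT}{\kS}$ is not a subsemigroup and $\lambda$ is not additive, so the proposed expansion ``$b=\sum c_ih_i$ in the boundary Hilbert basis'' and iterated use of the cocycle identity does not terminate in the way you suggest. The paper resolves this (Lemma~\ref{lem-finMinDep}, Proposition~\ref{prop-finGenT}) via Dickson's lemma applied to a monomial ideal of dependent multisets, which in turn leans on the explicit lattice relations in $\kMTZD(\kP)$ (closing conditions along $2$-faces, $s_i=s_j$ for lattice-disjoint edges, $s_i=t_{ij}$ for short edges). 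Sections 7.3--7.6 then prove the converse: that every cocartesian extension is forced to respect those relations. Your ``universality is essentially formal'' step quietly assumes that the free object already carries exactly the forced relations; establishing that is where the real work lies, and your sketch does not engage with it.
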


We will provide a very explicit construction of this universal object in the discrete setup. We start analysing first the cone setup in Section~\ref{sectConeSetup}, where we get a terminal object, cf. Proposition~\ref{prop-versalCP}. 
Just to get an impression of what this initial object may look like we provide the following example.

\begin{example}
\label{ex-discreteB}
Let us return to Example~\ref{ex-discreteA} and Figure~\ref{fig:3},
i.e.\ $\kS=\braket{ [-2,1],\,[-1,1],\,[0,1],\,[1,1],\,[2,1]}$
with $\kT=\NN\cdot \kR$ and $\kR=[0,1]$. In Example~\ref{ex-MinFourB}
this semigroup  will be understood starting from the 
1-dimensional polytope $\kP=[-\frac{1}{2},\frac{1}{2}]\subset\RR$;
the link between these two approaches is that the polyhedral cone
$\sigma$ over $P\times\{1\}\subset\RR^2$ is dual to
$S_\RR=\sigma\dual$ from Example~\ref{ex-4Bilder} which contains
$\kS$ as the set of lattice points.
Anyway, in algebraic geometry, this setup gives rise to the toric 
singularity $X=\toric(\sigma)\subseteq\A^5_k$ which can, alternatively,
be understood as the vanishing set of the six minors encoded by
the condition
\[
\rank\left(\begin{array}{cccc}
z_{-2}& z_{-1} & z_0 & z_1\\ z_{-1} & z_0 & z_1 & z_2
\end{array}\right) \leq 1.
\]
The elements $[k,1]\in\kS$ can be recovered as the multidegrees
of the variables $z_i$. 
In \cite{m} we discuss the deformation theory of those toric singularities.
In this context, the present example became famous in the last century, 
because Pinkham has detected that the deformation space of $X$ admits two 
different components. In \cite{m} we will recall that this corresponds
to two different lattice friendly decompositions of $\kP$ as we will
meet them here in Section~\ref{sec:MinkowskiDec},
cf.\ Example~\ref{ex-calComponents}.
Finally, it comes full circle by the fact that these two decompositions
correspond to the following two \cocartesian~\aextension s of $(\kT,\kS)$,
which we represent in Figures~\ref{fig:artinComponent} and~\ref{fig:qGcomponent} only through the generators.
The blue points correspond to  $\kT$ and $\ktT$, respectively.
The semigroups are recovered from the pictures by taking the cone over the convex hull.
Figure~\ref{fig:6} in Example~\ref{ex-MinFourB} depicts this explicitly for  $\kS$.

\begin{figure}[h!]
  \newcommand{\hght}{3}
  \centering
  \begin{minipage}{0.49 \textwidth}
    \centering
    \begin{tikzpicture}[scale=.6]
      \draw[color=gray,thin] (-2,0) -- (2,0);
      \draw[color=colsubsgr, thin] (0,\hght) -- (0,\hght+1)  (-5,\hght)--(-5,\hght+1);
      \fill[color=colsemigr, opacity = 0.1] (-2,\hght+1) -- (0,\hght+1) -- (2,\hght) -- (0,\hght) --cycle;
      \draw[color=gray, thin] (-2,\hght+1) -- (0,\hght+1) -- (2,\hght) -- (0,\hght) --cycle;
      
      \fill[color=colboundr]
      (-2,0) circle (0.1)      (-2,\hght+1) circle (0.1)     (-2,\hght+2.1) circle (0)
      (-1,0) circle (0.1)      (-1,\hght+1) circle (0.1)
      (1,0) circle (0.1)      (1,\hght) circle (0.1)
      (2,0) circle (0.1)      (2,\hght) circle (0.1);

      \fill[color=colsubsgr]
      (0,0) circle (0.1)     (0,\hght) circle (0.1)   (0,\hght+1) circle (0.1)
      (-5,0) circle (0.1)     (-5,\hght) circle (0.1)   (-5,\hght+1) circle (0.1);

      \node at (0,\hght/2) { $\downarrow$};
      \node at (-5,\hght/2) { $\downarrow$};
      \node at (-3.5,\hght+0.5) { $\hookrightarrow$};
      \node at (-3.5,0) { $\hookrightarrow$};

    \end{tikzpicture}
    \caption{The Artin component}
    \label{fig:artinComponent}
  \end{minipage}
  \begin{minipage}{0.49 \textwidth}
    \centering
    \begin{tikzpicture}[scale=.6]
      \draw[color=gray,thin] (-2,0) -- (2,0);
      \draw[color=colsubsgr, thin] (0,\hght) -- (0,\hght+1) (-5,\hght)--(-5,\hght+1);
      \fill[color=colsemigr, opacity = 0.1] (-2,\hght+2) -- (2,\hght) -- (0,\hght) --cycle;
      \draw[color=gray, thin] (-2,\hght+2) -- (2,\hght) -- (0,\hght) --cycle;

      \fill[color=colboundr]    
      (-2,0) circle (0.1)       (-2,\hght+2) circle (0.1)
      (-1,0) circle (0.1)       (-1,\hght+1) circle (0.1)
      (1,0) circle (0.1)       (1,\hght) circle (0.1)
      (2,0) circle (0.1)       (2,\hght) circle (0.1);
      \fill[color=colsubsgr]
      (0,0) circle (0.1)     (0,\hght) circle (0.1)   (0,\hght+1) circle (0.1)
      (-5,0) circle (0.1)     (-5,\hght) circle (0.1)   (-5,\hght+1) circle (0.1);
      \draw[color=colsubsgr, thin] (0,\hght) -- (0,\hght+1);
      \node at (0,\hght/2) { $\downarrow$};
      \node at (-5,\hght/2) { $\downarrow$};
      \node at (-3.5,\hght+0.5) { $\hookrightarrow$};
      \node at (-3.5,0) { $\hookrightarrow$};
      
    \end{tikzpicture}
    \caption{The qG component}
    \label{fig:qGcomponent}
  \end{minipage}
\end{figure}

Now, by Theorem~\ref{thm:universalObjectExists}, we know that both extensions
can be merged to a common one. This leads to a 4-dimensional semigroup,
i.e.\ to a semigroup filling a 4-dimensional polyhedral cone where
its 3-dimensional crosscut is depicted in Figure~\ref{fig:PinkhamCone};
see Example~\ref{ex-MinFourC} for the detailed calculations.
Note that this establishes a remarkable difference to the algebro-geometric
setup: There, the two deformation components cannot be dominated by a
higher-dimensional joint deformation. In Figure~\ref{fig:-4curve} we represent the core of this merger,
by drawing only the generators of the semigroups: on the left the generators of the subsemigroups, and on the right, those of the semigroups.
The colors in this figure deviate from the convention established in Figure~\ref{fig:1}:
we have drawn in gray the generators of given semigroups and in orange those of the initial object.

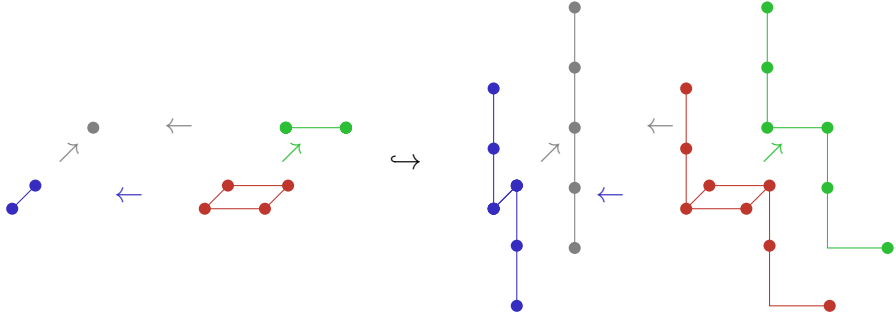
\begin{figure}[h!]
  \definecolor{intOrange}{rgb}{0.75,0.2118,0.1725}
  \definecolor{intBlue}{rgb}{0.2118,0.1725,0.75}
  \definecolor{intGreen}{rgb}{0.1725,0.75,0.2118}
  \centering
  \begin{tikzpicture}[scale=.8]
    \newcommand{\xl}{-3.2}
    \newcommand{\xr}{0}
    \newcommand{\zf}{2}
    \newcommand{\zb}{-.5}
    \newcommand{\spc}{8}

        \fill[intOrange]
        (\xr,4,\zf+1) circle (0.1)
        (\xr,3,\zf+1) circle (0.1)
        (\xr,2,\zf+1) circle (0.1)
        (\xr,2,\zf+0) circle (0.1)
        (\xr+1,2,\zf+1) circle (0.1)
        (\xr+1,2,\zf) circle (0.1)
        (\xr+1,1,\zf) circle (0.1)
        (\xr+2,0,\zf) circle (0.1);
        \draw[thin,  color=intOrange] (\xr+0,4,\zf+1)--(\xr+0,2,\zf+1) -- (\xr+1,2,\zf+1) -- (\xr+1,2,\zf+0) -- (\xr+1,1,\zf+0);
        \draw[thin,  color=intOrange] (\xr+1,1,\zf+0)--(\xr+1,0,\zf+0) --(\xr+2,0,\zf+0);
        \draw[thin,  color=intOrange] (\xr+0,2,\zf+1)--(\xr+0,2,\zf+0) --(\xr+1,2,\zf+0);

      \fill[intBlue]
        (\xl,4,\zf+1) circle (0.1)
        (\xl,3,\zf+1) circle (0.1)
        (\xl,2,\zf+1) circle (0.1)
        (\xl,2,\zf+0) circle (0.1)
        (\xl,2,\zf+1) circle (0.1)
        (\xl,2,\zf+0) circle (0.1)
        (\xl,1,\zf+0) circle (0.1)
        (\xl,0,\zf+0) circle (0.1);
        \draw[thin,  color=intBlue] (\xl,4,\zf+1)--(\xl,2,\zf+1) -- (\xl,2,\zf+1) -- (\xl,2,\zf+0) -- (\xl,1,\zf+0);
        \draw[thin,  color=intBlue] (\xl,1,\zf+0)--(\xl,0,\zf+0) --(\xl,0,\zf+0);
        \draw[thin,  color=intBlue] (\xl,2,\zf+1)--(\xl,2,\zf+0) --(\xl,2,\zf+0);

        \fill[intGreen]
        (\xr+0,4,\zb) circle (0.1)
        (\xr+0,3,\zb) circle (0.1)
        (\xr+0,2,\zb) circle (0.1)
        (\xr+1,2,\zb) circle (0.1)
        (\xr+1,1,\zb) circle (0.1)
        (\xr+2,0,\zb) circle (0.1);
        \draw[thin,  color=intGreen] (\xr+2,0,\zb)--(\xr+1,0,\zb) -- (\xr+1,2,\zb) -- (\xr,2,\zb) -- (\xr,4,\zb);

         \fill[gray]
         (\xl,0,\zb) circle (0.1)
         (\xl,1,\zb) circle (0.1)
         (\xl,2,\zb) circle (0.1)
         (\xl,3,\zb) circle (0.1)
         (\xl,4,\zb) circle (0.1);
         \draw[thin,  color=gray] (\xl,0,\zb)--(\xl,4,\zb);
         
         
         \fill[intOrange]
        (\xr-\spc,2,\zf+1) circle (0.1)
        (\xr-\spc,2,\zf+0) circle (0.1)
        (\xr-\spc+1,2,\zf+1) circle (0.1)
        (\xr-\spc+1,2,\zf) circle (0.1);
        \draw[thin,  color=intOrange](\xr-\spc+0,2,\zf+1) -- (\xr-\spc+1,2,\zf+1) -- (\xr-\spc+1,2,\zf+0) -- (\xr-\spc+0,2,\zf+0)--cycle;

        \fill[intBlue]
        (\xl-\spc,2,\zf+1) circle (0.1)
        (\xl-\spc,2,\zf+0) circle (0.1);
        \draw[thin,  color=intBlue] (\xl-\spc,2,\zf+1)--(\xl-\spc,2,\zf+0);

        \fill[intGreen]
        (\xr-\spc+0,2,\zb) circle (0.1)
        (\xr-\spc+0,2,\zb) circle (0.1)
        (\xr-\spc+1,2,\zb) circle (0.1)
        (\xr-\spc+1,2,\zb) circle (0.1);
        \draw[thin,  color=intGreen] (\xr-\spc+0,2,\zb)--(\xr-\spc+1,2,\zb);

        \fill[gray]
        (\xl-\spc,2,\zb) circle (0.1);

        \node at (\xl/2.2,2,\zf+.5) {\color{intBlue} $\leftarrow$};
        \node[anchor=north] at (\xl,2.35,\zf/4) {\color{gray} \rotatebox{45}{$\rightarrow$}};
        \node at (\xl/1.8,2,\zb) {\color{gray} $\leftarrow$};         
        \node[anchor=north] at (\xr+.5,2.35,\zf/4) {\color{intGreen} \rotatebox{45}{$\rightarrow$}};
       
        \node at (\xl/2.2 -\spc,2,\zf+.5) {\color{intBlue} $\leftarrow$};
        \node[anchor=north] at (\xl -\spc,2.35,\zf/4) {\color{gray} \rotatebox{45}{$\rightarrow$}};
        \node at (\xl/1.8 -\spc,2,\zb) {\color{gray} $\leftarrow$};         
        \node[anchor=north] at (\xr+.5 -\spc,2.35,\zf/4) {\color{intGreen} \rotatebox{45}{$\rightarrow$}};

        \node[anchor=west] at (\xl-\spc/3,2,\zf/2) {\color{black} $\hookrightarrow$};
  \end{tikzpicture}
  \caption[-4 curve]{The generators of the semigroups, and the way the fit together.}
  \label{fig:-4curve}
\end{figure}
In Figure~\ref{fig:PinkhamCone} we return to the colors used throughout the paper,
and depict the convex bodies which define the 2-, 3-, and 4-dimensional semigroups involved. 
\begin{figure}[!h]
  \centering
  \begin{tikzpicture}[scale=.8]
    \newcommand{\ha}{4} 
    \newcommand{\lo}{0} 
      \draw[thin, dotted, color=gray] (0,0,\ha) -- (6,0,\ha) -- (6,4,\ha) -- (0,4,\ha)--cycle;
      \draw[thin, dotted, color=gray] (0,0,\lo) -- (6,0,\lo) -- (6,4,\lo) -- (0,4,\lo)--cycle;
      \draw[thin, dotted, color=gray] (0,0,\ha) -- (0,0,\lo);
      \draw[thin, dotted, color=gray] (6,0,\ha) -- (6,0,\lo);
      \draw[thin, dotted, color=gray] (0,4,\ha) -- (0,4,\lo);
      \draw[thin, dotted, color=gray] (6,4,\ha) -- (6,4,\lo);
      \draw[thin, dotted, color=colsubsgr] (0,2,\ha) -- (6,2,\ha) -- (6,2,\lo) -- (0,2,\lo)--cycle;

    \fill[color = colsemigr, opacity=0.2] (4,4,\ha) -- (5,2,\ha) -- (4,2,\ha) --cycle;
    \fill[color = colsemigr, opacity=0.1] (6,0,\ha-\ha/4) -- (5,2,\ha-\ha/4) -- (4,4,\ha) -- (5,2,\ha) --cycle;
    \fill[color = colsemigr, opacity=0.4] (6,0,\ha-\ha/4) -- (5,2,\ha) -- (4,2,\ha) --cycle;
   
    \draw[thin,color=black] (6,0,\ha-\ha/4) -- (5,2,\ha-\ha/4) -- (4,4,\ha) -- (5,2,\ha) --cycle;
    \draw[thin,color=black] (6,0,\ha-\ha/4) -- (4,2,\ha) -- (4,4,\ha);
    \draw[thin,dashed, color=black] (6,0,\ha-\ha/4) -- (4,2,\ha-\ha/4) -- (4,4,\ha);
    \draw[thin, color=colsubsgr, opacity =1] (4,2,\ha) -- (5,2,\ha) -- (5,2,\ha-\ha/4);
    \draw[thin, dashed, color=colsubsgr, opacity=1] (4,2,\ha) -- (4,2,\ha-\ha/4) -- (5,2,\ha-\ha/4); 
    
    \fill[color=colboundr]
    (6,0,\ha-\ha/4) circle (0.1)
    (5,1,\ha-\ha/4) circle (0.1)
    (4,3,\ha) circle (0.1)
    (4,4,\ha) circle (0.1);
    \fill[color=colsubsgr]
    (5,2,\ha-\ha/4) circle (0.1)
    (5,2,\ha) circle (0.1)
    (4,2,\ha-\ha/4) circle (0.1)
    (4,2,\ha) circle (0.1);
          
    \fill[color = colsemigr, opacity=0.1] (0,0,\ha-\ha/4) -- (0,2,\ha-\ha/4) -- (0,4,\ha) -- (0,2,\ha) --cycle;
    \draw[color=black, thin] (0,0,\ha-\ha/4) -- (0,2,\ha-\ha/4) -- (0,4,\ha) -- (0,2,\ha) --cycle;
    \draw[thin, color=colsubsgr, opacity=1] (0,2,\ha) -- (0,2,\ha-\ha/4);
    
    \fill[color=colboundr]
    (0,0,\ha-\ha/4) circle (0.1)
    (0,1,\ha-\ha/4) circle (0.1)
    (0,3,\ha) circle (0.1)
    (0,4,\ha) circle (0.1);
    \fill[color=colsubsgr]
    (0,2,\ha) circle (0.1)
    (0,2,\ha-\ha/4) circle (0.1);
    
    \fill[color = colsemigr, opacity=0.1] (6,0,\lo)--(4,2,\lo) -- (4,4,\lo) --cycle;
    \draw[color=black, thin] (6,0,\lo)--(4,2,\lo) -- (4,4,\lo) --cycle;
    \draw[thin, color=colsubsgr, opacity =1] (4,2,\lo) --(5,2,\lo);
    
    \fill[color=colboundr]
    (6,0,\lo) circle (0.1)
    (5,1,\lo) circle (0.1)
    (4,3,\lo) circle (0.1)
    (4,4,\lo) circle (0.1);
    \fill[color=colsubsgr]
    (4,2,\lo) circle (0.1)
    (5,2,\lo) circle (0.1);

    \draw[thin, color=black] (0,0,\lo)-- (0,4,\lo);

    \fill[color=colboundr]
    (0,0,\lo) circle (0.1)
    (0,1,\lo) circle (0.1)
    (0,3,\lo) circle (0.1)
    (0,4,\lo) circle (0.1);
    \fill[color=colsubsgr]
    (0,2,\lo) circle (0.1);

\end{tikzpicture}
  \caption{The full picture: Artin- and qG-components as projections of the initial object.}
  \label{fig:ArtinComp_of-4}
  \label{fig:PinkhamCone}
\end{figure}
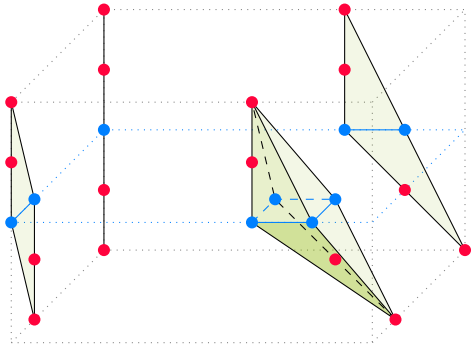

\end{example}

\section{The cone setup}\label{sectConeSetup}

\subsection{Dualizing the cone setup}
\label{dualWorld}
In the present section, we deal exclusively with the situation introduced in (\ref{coneSetup}).
The main result is the construction of a terminal object in a certain category (Proposition~\ref{prop-versalCP}).
This proposition is a  much easier to prove analog of  Theorem~\ref{thm:universalObjectExists}.
One of the striking features of the cone setup 
is that it allows dualization of both
the cones $\kT$ and $\kS$
and their ambient vector spaces $\kSpanS:=\kS-\kS=\spann{\RR}{\kS}$ 
and $\kSpanT:=\kT-\kT=\spann{\RR}{\kT}$,
respectively. In particular, considering
$\kiota:\kSpanT\hookrightarrow\kSpanS$ with $\kiota(\kT)\subseteq\kS$,
there is a dual linear map
\[
\kpr:\kSpanSD \surj \kSpanTD
\hspace{1em}\mbox{with}\hspace{1em}
\kpr(\kSD)\subseteq\kTD.
\]
While $\kpr$ is always surjective on the level of vector spaces,
the surjectivity of its restriction $\kpr_+:\kSD \to \kTD$ to the 
level of semigroups
is equivalent to the property $\kS\cap \kSpanT
=\kT$.
This is, however, automatically fulfilled for \straight\ pairs,
cf.~Definition~\ref{def-straight}.

\begin{lemma}
\label{lem-straightPrSurj}
If $(\kT,\kS)$ is \straight, then $\kS\cap \spann{\RR}{\kT}=\kT$.
\end{lemma}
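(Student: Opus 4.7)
The plan is to exploit the unique decomposition provided by straightness (Section~\ref{decOp}) together with the fact that $0$ is a boundary element. Let $s \in \kS \cap \spann{\RR}{\kT}$. Using straightness, I would first write the unique decomposition
\[
s = \bound(s) + \height(s), \qquad \bound(s) \in \Rand{\kT}{\kS}, \quad \height(s) \in \kT.
\]
The first observation is that the hypothesis $s \in \spann{\RR}{\kT} = \kT - \kT$, combined with $\height(s) \in \kT \subseteq \kT - \kT$, forces $\bound(s) \in \kT - \kT$ as well. Hence there exist $t_1, t_2 \in \kT$ with $\bound(s) + t_1 = t_2$.

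Next, I would check that $0 \in \Rand{\kT}{\kS}$: indeed $(0 - \kT) \cap \kS = (-\kT) \cap \kS = \{0\}$, using that $\kS$ is pointed and $\kT \subseteq \kS$. This means the equation
\[
\bound(s) + t_1 \;=\; 0 + t_2
\]
exhibits two decompositions of the same element of $\kS$ as a sum from $\Rand{\kT}{\kS} \times \kT$. Applying the injectivity of $\ksum$ (i.e.\ straightness, Definition~\ref{def-straight}) gives $\bound(s) = 0$ and $t_1 = t_2$. Therefore $s = \height(s) \in \kT$, which is what we wanted.

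There is essentially no obstacle: the main content is just unwinding the definitions and observing that $0$ lies in the relative boundary so that straightness can be applied directly to the equation $\bound(s) + t_1 = t_2$. Alternatively, one could argue via Lemma~\ref{lem-decomposableByQuot}\ref{item:sbqii}, noting that $\bound(s) \in \spann{\RR}{\kT}$ means $\kqr(\bound(s)) = 0 = \kqr(0)$ in $\kQuot$, so injectivity of $\kqr|_{\Rand{\kT}{\kS}}$ yields $\bound(s) = 0$; but the direct route above avoids invoking the quotient.
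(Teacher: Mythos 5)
Your proof is correct and follows essentially the same route as the paper: decompose $s$ via $\bound+\height$, use $s\in\kT-\kT$ to deduce $\bound(s)\in\kT-\kT$, and then invoke straightness (together with $0\in\Rand{\kT}{\kS}$) on the two decompositions to force $\bound(s)=0$. The only difference from the paper's argument is cosmetic — the paper organizes the cancellation around the element $t_1$ in the ansatz $s=t_1-t_2$, while you organize it around the $t_2$ coming from $\bound(s)\in\kT-\kT$.
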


\begin{proof}
If $t_1-t_2\in \kS$ (with $t_i\in\kT$),
then this element can be decomposed into
$t_1-t_2=b+t$ with $b\in\Rand{\kT}{(\kS)}$ and $t\in\kT$.
Hence $0+t_1=b+(t+t_2)$, but this displays two decompositions
within $\kS=\Rand{\kT}{(\kS)}+\kT$. Thus, $0=b$, and this means
$t_1-t_2=0+t\in\kT$.
\end{proof}

\subsection{A dual characterization of \straightness}
\label{collCharStraight}
In the cone setup, \straightness\
can be characterized by the following enhancement of the surjectivity of
$\kpr_+:\kSD \to \kTD$.
\begin{proposition}
\label{prop-dualStraight}
The pair $(\kT,\kS)$ is \straight\ if and only if
$\kpr_+:\kSD \surj \kTD$  is surjective and maps faces onto faces.
\end{proposition}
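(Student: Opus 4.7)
Both directions reduce, via Farkas-type LP duality, to feasibility questions for extensions of linear functionals on $\kSpanS$; the reverse direction will be a short computation, while the forward direction is subtler and will hinge on a structural lemma describing how the span of a face of $\kS$ intersects $\kSpanT$.

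\emph{Forward direction.} Assume $(\kT,\kS)$ is \straight. Surjectivity of $\kpr_+$ follows immediately from Lemma~\ref{lem-straightPrSurj}: by LP duality, the only obstruction to extending $w\in\kTD$ to some $u\in\kSD$ would come from an element of $\kS\cap\kSpanT$ on which $w<0$, but $\kS\cap\kSpanT=\kT$ places every such element in $\kT$, where $w\geq 0$. For the face-to-face property I fix a face $F\leq\kS$, set $F':=F\cap\kT$ (which is automatically a face of $\kT$, since $F\cap\kT=\kT\cap(F-F)$ by the face property of $F$), and plan to show $\kpr_+(F^\perp\cap\kSD)=(F')^\perp\cap\kTD$. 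The inclusion $\subseteq$ is immediate. For the converse I would first establish the structural lemma $(F-F)\cap\kSpanT=F'-F'$: given $v=f_1-f_2=t_1-t_2$ in the intersection, the element $a:=f_1+t_2=f_2+t_1\in\kS$ admits two presentations $a=\bound(f_i)+(\height(f_i)+t_{3-i})$ in which the face property of $F$ forces $\bound(f_i)\in F$ and $\height(f_i)\in F'$; uniqueness of the straight decomposition of $a$ then yields $v=\height(f_1)-\height(f_2)\in F'-F'$. Next, given $w\in(F')^\perp\cap\kTD$, LP duality tells me the desired extension exists provided that, for every $s\in\kS\cap(\kSpanT+(F-F))$ decomposed as $s=v+f$ with $v\in\kSpanT$ and $f\in F-F$, we have $w(v)\geq 0$ (well-defined modulo $F'-F'$, on which $w$ vanishes). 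I close this step by showing that every such $s$ actually lies in $\kT+(F-F)$, which allows me to take $v=t\in\kT$ and obtain $w(v)=w(t)\geq 0$ for free.

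\emph{Reverse direction.} Assume $\kpr_+$ is surjective and maps faces to faces. By part~\ref{item:sbqii} of Lemma~\ref{lem-decomposableByQuot} it suffices to show that $\kqr|_{\Rand{\kT}{\kS}}$ is injective. Take $b_1,b_2\in\Rand{\kT}{\kS}$ with $v:=b_1-b_2\in\kSpanT$, and let $F_i$ be the smallest face of $\kS$ containing $b_i$; by definition of the relative boundary, $F_i\cap\kT=\{0\}$. Face-to-face then forces $\kpr_+(F_i^\perp\cap\kSD)=\{0\}^\perp\cap\kTD=\kTD$, so every $w\in\kTD$ lifts to some $u_1\in F_1^\perp\cap\kSD$ with $\kpr_+(u_1)=w$. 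Since $u_1(b_1)=0$ and $u_1|_{\kSpanT}=w$, I obtain $u_1(b_2)=-w(v)$, and $b_2\in\kS$ yields $w(v)\leq 0$; the symmetric argument through $F_2$ gives $w(v)\geq 0$. Hence $w(v)=0$ for every $w\in\kTD$, and since $\kTD$ spans $\kSpanTD$ this forces $v=0$, so $b_1=b_2$.

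\emph{Main obstacle.} The technical crux is the ``good decomposition'' claim at the end of the forward direction, namely that every $s\in\kS\cap(\kSpanT+(F-F))$ already lies in $\kT+(F-F)$; equivalently, $\bound(s)\in F-F$ (taking $t=\height(s)$). This reduces, via the structural lemma applied to the smallest face $F_b\leq\kS$ containing $\bound(s)$ (where $F_b\cap\kT=\{0\}$ and hence $(F_b-F_b)\cap\kSpanT=\{0\}$ by straightness), to verifying $(F_b-F_b)\cap(\kSpanT+(F-F))\subseteq F-F$. Carrying this out will likely require combining the face property of $F_b$ with a further application of uniqueness of the straight decomposition to a suitable element built from $\bound(s)$ and generators of $F,F_b$.
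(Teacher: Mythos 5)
Your reverse direction is correct and quite clean: reducing via Lemma~\ref{lem-decomposableByQuot}\ref{item:sbqii} to injectivity of $\kqr$ on the boundary, using the smallest face $F_i$ containing $b_i$ with $F_i\cap\kT=\{0\}$, and exploiting the existence of a lift $u_i\in F_i^\perp\cap\kSD$ of an arbitrary $w\in\kTD$. (The step ``face-to-face forces $\kpr_+(F_i^\perp\cap\kSD)=\kTD$'' does need a one-line justification via cone duality, since face-to-face alone only says the image is \emph{some} face, but that argument goes through.) This is a genuinely different route from the paper, which instead works with the polyhedra $\kT_s:=(s+\kSpanT)\cap\kS$ and identifies straightness with each $\kT_s$ being a shifted copy of $\kT$, i.e.\ with its normal fan being the face fan of $\kTD$; the face-to-face property then falls out from identifying the normal cones $\normal(s,\kT_s)$ with the images $\kpr(\kSD\cap s^\bot)$ of the faces of $\kSD$.

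Your forward direction, however, contains a genuine error at the ``good decomposition'' step. You assert that $s\in\kT+(F-F)$ is \emph{equivalent} to $\bound(s)\in F-F$, and then set out to prove the latter. Only one implication holds: $\bound(s)\in F-F$ does give $s\in\kT+(F-F)$, but the converse fails, and indeed the statement $\bound(s)\in F-F$ you aim for is simply false. Take $\kS=\RR^2_{\geqslant 0}$, $\kT=\RR_{\geqslant 0}\cdot(1,1)$ (straight by Proposition~\ref{prop-ksumInj}), $F=\RR_{\geqslant 0}\cdot(1,0)$, and $s=(1,2)$. Here $\bound(s)=(0,1)$, which is not in $F-F=\RR\cdot(1,0)$, even though $s=(2,2)+(-1,0)\in\kT+(F-F)$ does hold. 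Accordingly your proposed reduction to the containment $(F_b-F_b)\cap(\kSpanT+(F-F))\subseteq F-F$ fails too: in this example $F_b=\RR_{\geqslant 0}\cdot(0,1)$, so the left side is $\RR\cdot(0,1)$, which is not contained in $\RR\cdot(1,0)$. The claim you actually need, $s\in\kS\cap(\kSpanT+(F-F))\Rightarrow s\in\kT+(F-F)$, is true, but it requires a different argument than the one you sketch; for instance, writing $s=v+f_1-f_2$ with $v\in\kSpanT$, $f_i\in F$, choosing $t_1,t_2\in\kT$ with $v=t_1-t_2$, and comparing the straight decompositions of $s+f_2+t_2=f_1+t_1$ yields $\bound(\bound(s)+\bound(f_2))=\bound(f_1)\in F$, from which $\bound(s)\in\kT+(F-F)$ and hence $s\in\kT+(F-F)$ follows. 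As written, though, the forward direction does not close.
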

\begin{proof}
For each $s\in\kS$ we define $\kT_s:=(s+\spann{\RR}{\kT})\cap\kS$.
Note that, for every $s'\in\kT_s$, one has $\kT_s=\kT_{s'}$.
It is a polyhedron in (a translate of) $\spann{\RR}{\kT}$ with tail cone $\kT$,
i.e.\ its normal fan $\normal(\kT_s)$ has $\kTD$ as its support,
cf.\ Subsection~\ref{spaceMinkSummands} for reviewing these notions.
Now, the \straightness\ of the pair $(\kT,\kS)$ is equivalent to the fact
that the polyhedra $\kT_s$ are cones, i.e.\ that their normal fans
$\normal(\kT_s)$ equal the face fan of their supporting cone $\kTD$.
That is, for each $s\in\kS$ the normal cone $\normal(s,\kT_s)$
has to be a face of $\kTD$.
On the other hand, these normal cones equal 
\[
\normal(s,\kT_s)=\{\varphi:\spann{\RR}{\kT}\to\RR\kst
\varphi(s)\leq\varphi(\kT_s)\}
\]
where the description requires choosing some 
lift of $\varphi\in\kSpanTD$ 
along the surjection $\kpr:\kSpanSD\surj\kSpanTD$.
Hence, these normal cones are the images under $\kpr$ of the faces 
\[
\face(\kSD,s):=\kSD\cap s^\bot=\{\varphi\in\kSD\kst \varphi(s)= 0\}.
\]
Finally, since these faces are running, with $s\in\kS$, 
through all faces $F\leq\kSD$, we get exactly
the characterization claimed in the proposition.
\end{proof}

\subsection{\Straightness\ and Minkowski linearity}
\label{collCharMinLin}
We have already seen that it was important to distinguish between the map
$\kpr:\kSpanSD \to \kSpanTD$ on the level of vector spaces and
its restriction $\kpr_+:\kSD\to\kTD$.
This is particularly relevant when we deal with fibers.
For an element $\xi\in\kTD$ we will call
\[
\kpr_+^{-1}(\xi):=\kpr^{-1}(\xi)\cap\kSD
\]
the \kbox{{\em positive fiber}} of $\xi$ (under $\kpr$).

\begin{definition}
\label{def_minkLinear}
We call the pair $(\kT,\kS)$ \kbox{Minkowski linear} if
$\kpr_+:\kSD \to \kTD$ is surjective, and if for each
$\xi,\xi'\in\kTD$ we have
$\kpr_+^{-1}(\xi)+\kpr_+^{-1}(\xi')=\kpr_+^{-1}(\xi+\xi')$.
\end{definition}

Note that the inclusion ``$\subseteq$'' as well as equality on the level
of vector spaces, i.e.\ replacing $\kpr_+$ by $\kpr$,
is always satisfied. However, the linearity among the positive fibers
becomes equivalent to \straightness.

\begin{proposition}
\label{prop-straightMinkLin}
The pair $(\kT,\kS)$ is \straight\ if and only if it is Minkowski linear.
\end{proposition}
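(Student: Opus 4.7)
The plan is to pivot on Proposition~\ref{prop-dualStraight}, which already recasts straightness of $(\kT,\kS)$ as the requirement that $\kpr_+:\kSD\surj\kTD$ be surjective and carry every face of $\kSD$ to a face of $\kTD$. The task then reduces to showing that this face-to-face property is equivalent to Minkowski linearity of the positive fibers.

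For the implication $(\neht)$, surjectivity of $\kpr_+$ is already part of the definition of Minkowski linearity. Given a face $F\leq\kSD$ and $\xi,\xi'\in\kTD$ with $\xi+\xi'\in\kpr(F)$, I lift $\xi+\xi'$ to some $\eta\in F$, then invoke Minkowski linearity to write $\eta=\eta_1+\eta_2$ with $\eta_i\in\kSD$ and $\kpr(\eta_i)=\xi^{(i)}$. The defining property of $F$ being a face of the cone $\kSD$ then forces $\eta_1,\eta_2\in F$, so $\xi,\xi'\in\kpr(F)$ and $\kpr(F)$ is a face of $\kTD$, as required by Proposition~\ref{prop-dualStraight}.

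For the implication $(\then)$, the inclusion $\kpr_+^{-1}(\xi)+\kpr_+^{-1}(\xi')\subseteq\kpr_+^{-1}(\xi+\xi')$ is automatic. For the reverse, I fix $\eta\in\kpr_+^{-1}(\xi+\xi')$ and treat the problem of finding $\eta_1\in\kSpanSD$ satisfying $\kpr(\eta_1)=\xi$, $\eta_1\in\kSD$, and $\eta-\eta_1\in\kSD$ as a linear feasibility problem phrased on a basis of $\kSpanT$ and a system of generators of $\kS$. By Farkas' lemma, an infeasibility certificate would consist of $a,b\in\kS$ with $a-b\in\kSpanT$ together with the strict inequality $\eta(b)<\xi(b-a)$. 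Now straightness and the uniqueness of the decomposition $s=\bound(s)+\height(s)$ (equivalently, Lemma~\ref{lem-decomposableByQuot}\ref{item:sbqii}) force $\bound(a)=\bound(b)$, whence $b-a=\height(b)-\height(a)$. Using $\eta|_{\kSpanT}=\kpr(\eta)=\xi+\xi'$ to rewrite $\eta(b)=\eta(\bound(b))+(\xi+\xi')(\height(b))$, one arrives at
\[
\eta(b)-\xi(b-a)=\eta(\bound(b))+\xi'(\height(b))+\xi(\height(a)),
\]
whose three summands are each non-negative (each pairs an element of $\kS$ or $\kT$ with an element of $\kSD$ or $\kTD$). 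This contradicts the Farkas certificate, so the LP is feasible and the sought $\eta_1$ exists; setting $\eta_2:=\eta-\eta_1$ completes the decomposition.

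The principal technical step I expect to be the main obstacle is the direction $(\then)$. The Farkas route above is clean, but it crucially uses the straightness decomposition $s=\bound(s)+\height(s)$ to rewrite the potentially bad term $\xi(b-a)$ into a quantity manifestly dominated by $\eta(b)$, and it leans on Lemma~\ref{lem-decomposableByQuot} to identify the boundary parts of $a$ and $b$. A more geometric alternative would produce $\eta_1$ inside the smallest face of $\kSD$ containing $\eta$ by exploiting that both $\xi$ and $\xi'$ then lie in the face $\kpr(F)\leq\kTD$ and that $\eta\in\innt(F)$, but carrying that out cleanly appears to require essentially the same decomposition input.
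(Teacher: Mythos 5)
Your argument is correct, but it is genuinely different from the paper's. The paper reduces both properties to the linearity of a single pair of functions $\Phi_s(\varphi)=\min\braket{\kqr_+^{-1}(\kqr(s)),\varphi}$ and $\Psi_s(\varphi)=\min\braket{s,\kpr_+^{-1}(\kpr(\varphi))}$ and then invokes the duality identity $\Phi_s(\varphi)+\Psi_s(\varphi)=\braket{s,\varphi}$ from \cite[Proposition 8.5]{tvar0}: since their sum is linear, linearity of one is equivalent to linearity of the other, yielding both implications simultaneously in a single stroke. You instead split the equivalence: for $(\neht)$ you route through Proposition~\ref{prop-dualStraight} (Minkowski linearity lets you decompose a point $\eta$ of a face $F\leq\kSD$ over any splitting $\xi+\xi'$ of $\kpr(\eta)$, and the face property of $F$ forces the pieces back into $F$, so $\kpr(F)$ satisfies the face axiom in $\kTD$); for $(\then)$ you bypass the face characterization and argue directly with the decomposition operators $\bound,\height$, phrasing the required splitting $\eta=\eta_1+\eta_2$ as a finite linear feasibility problem and using Farkas' lemma, with the key computation $\eta(b)-\xi(b-a)=\eta(\bound(b))+\xi'(\height(b))+\xi(\height(a))\geq 0$ killing any infeasibility certificate. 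Both routes are sound; what yours buys is self-containedness — it does not lean on the external identity from \cite{tvar0} and instead makes the convex-duality content explicit via LP duality — at the price of a less symmetric treatment of the two directions. Two small notational remarks: in the $(\then)$ direction you also need the surjectivity of $\kpr_+$, which you should attribute explicitly to Lemma~\ref{lem-straightPrSurj} (or to the direction of Proposition~\ref{prop-dualStraight} you are citing anyway); and your identification $\bound(a)=\bound(b)$ from $a-b\in\kSpanT$ does use, as you say, the injectivity in Lemma~\ref{lem-decomposableByQuot}\,\ref{item:sbqii}, which is worth flagging because it is the only place straightness is actually consumed.
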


\begin{proof}
Let us visualize the present situation.
Denoting $\kQuotS:=\kqr(\kS)\subseteq\kSpanS/\kSpanT$,
we obtain the exact sequence
\[
\begin{tikzcd}
  0  \ar[r]  &
  \kSpanT\ar[r]&
  \kSpanS\ar[r]{}{\kqr}&
  \kSpanS/\kSpanT\ar[r]
  &0
  \\
  &
  \kT_s\ar[r] \ar[hook,u] & 
  \kS \supseteq \Rand{\kT}{\kS} \ar[twoheadrightarrow,r]{\kqr_+}\ar[hook,u]&
  \kQuotS\ni\kqr(s) \ar[hook,u]{}&  
\end{tikzcd}
\]
where, for an element $s\in\kS$,
the positive fiber $\kqr_+^{-1}(\kqr(s))$ is just another 
way of writing $\kT_s$ from the proof of Proposition~\ref{prop-dualStraight}.
In particular, for a fixed $s$, the linearity of the function
\[
\Phi_s:
\varphi\mapsto \min\Braket{ \kqr_+^{-1}(\kqr(s)), \, \varphi}
\]
is equivalent to the fact that $\kT_s$ is a shifted copy of $\kT$.
Hence, the linearity of $\Phi_s$ for all $s\in\kS$ is equivalent to
the \straightness\ of the pair $(\kT,\kS)$.
%
On the other hand, the dual picture is
\[
  \begin{tikzcd}
    0&
    \kSpanTD\ar[l]&
    \kSpanSD\ar[l,swap]{}{\kpr}&
    \kSpanT^\bot \ar[l] &
    0\ar[l]
    \\
    &
    \xi\in\kTD \ar[hook,u]&
    \kSD \ar[two heads ,l,swap]{}{\,\kpr_+} \ar[hook,u]&
    \kpr_+^{-1}(\xi). \ar[hook',l] \ar[hook,u]&
  \end{tikzcd}
\]
Writing $\xi=\kpi(\varphi)$ and $\xi'=\kpi(\varphi')$,
the linearity of
\[
\Psi_s:
\varphi\mapsto
\min\Braket{ s, \, \kpr_+^{-1}(\kpr(\varphi))}
\]
means that the polyhedra $\kpi_+^{-1}(\xi+\xi')$
and $\kpi_+^{-1}(\xi)+\kpi_+^{-1}(\xi')$ provide the same
values after applying $\min\braket{ s,\kbb}$ for all $s\in\kS$. i.e.,
that both polyhedra coincide. That is, the linearity
of $\Psi_s$ for all $s\in\kS$ is equivalent to the Minkowski linearity of the
pair $(\kT,\kS)$.
\\[1ex]
Finally, starting with two elements $s\in\kS$ and $\varphi\in\kSD$,
we obtain from the proof of \cite[Proposition 8.5]{tvar0}
the equality
\[
\min\Braket{ \kqr_+^{-1}(\kqr(s)), \, 
\kpr_+^{-1}(\kpr(\varphi))} = 0,
\]
or the equivalent version
\[
\Phi_s(\varphi)+\Psi_s(\varphi)\;=\;
\min\Braket{\kqr_+^{-1}(\kqr(s)), \, \varphi}
+
\min\Braket{ s, \, \kpr_+^{-1}(\kpr(\varphi))}
\;=\;
\braket{ s,\varphi}.
\]
It follows that the linearity of $\Phi_s$ is equivalent to the linearity of $\Psi_s$.
\end{proof}

\subsection{The cone of Minkowski summands}
\label{spaceMinkSummands}
Let us assume that $(\kT,\kS)$ is \straight, meaning  that the map $\kpr_+:\kSD \to \kTD$ is
Minkowski linear and, in particular, surjective.
We fix a ray in the interior of the cone $\kTD\subseteq\kSpanTD$,
that is we fix a linear map $\kpiD:\RR_{\geqslant 0}\hookrightarrow\kTD$ such that
$\kpiD(1)\in\innt\kTD$. The preimage $\kP:=\kpr_+^{-1}(\kpiD(1))\subseteq\kSD$
can be understood, well-defined up to some shift, as a polyhedron
in $\kSpanT^\bot=:\kQuotRD$. Consequently, the preimage of the whole ray
$\kpiD^*(\kSD):=\kpr_+^{-1}\big(\kpiD(\RR_{\geqslant 0})\big)$ equals
$\sigma:=\cone(\kP)$.

\begin{remark}
\label{rem-notationMN}
In Sections~\ref{straightPairs} and~\ref{extStraightPairs}
we had originally denoted $\kQuot=(\kS-\kS)/(\kT-\kT)$.
This fits perfectly well for the discrete setup: $\kQuot$ is then a finitely
generated Abelian group, and in Subsection~\ref{catStraightInit} 
we had denoted by $\kQuotR:=\kQuot\otimes_{\Z}\RR$
the associated vector space. However, in the cone setup,
$(\kS-\kS)/(\kT-\kT)=\kSpanS/\kSpanT$ is already an $\RR$-vector space, and it
seems appropriate to denote it by $\kQuotR$ instead of $\kQuot$.
The same applies for the dual gadgets $\kQuotD$ and $\kQuotRD$,
i.e.\ in particular $\kSpanT^\bot$ becomes $\kQuotRD$.
\end{remark}

The tail cone 
$
\,\tail(\kP)=\{a\in\kQuotRD\kst a+\kP\subseteq \kP\}
$
equals $\ker(\kpr_+)=\kSD\cap\kQuotRD=\kQuotS\dual$, and,
more general, this is the common tail cone of all other positive fibers
$\kpr_+^{-1}(\xi)$. 
In particular, $\kQuotS$ is the common support of their
normal fans $\normal\big(\kpr_+^{-1}(\xi)\big)$.

\begin{definition}
\label{def-MinkSum}
A (convex) polyhedron $Q$ with $\tail(Q)=\tail(\kP)$ is called a \kbox{Minkowski summand} of 
$\RR_{\geqslant 0}\cdot \kP$ if there is another polyhedron $Q'$
and a scalar $\lambda\in\RR_{\geqslant 0}$ such that $Q+Q'=\lambda\cdot \kP$.
\end{definition}

It is well-known fact that $Q$ is a Minkowski summand of 
$\RR_{\geqslant 0}\cdot\kP$ if and only if
the normal fan $\normal(\kP)$ is a refinement, i.e.\ a
subdivision of $\normal(Q)$. 
Exactly this property applies to all positive fibers $Q=\kpr_+^{-1}(\xi)$.
For interior points $\xi\in\innt\kTD$, we even have
$\normal\big(\kpr_+^{-1}(\xi)\big)=\normal(\kP)$.
In \cite{versalG} we have constructed a linear surjective map of polyhedral
cones 
\[
\kpr_{\kMC}:\kMtC(\kP)\surj \kMC(\kP)
\] 
where the elements $\xi\in \kMC(\kP)$ of the target
parametrize the set of (translation classes of)
Minkowski summands $\kP_\xi$ of $\kP$.
The summands $\kP_\xi$ are encoded via the 
associated dilation factors $t_{ij}(\xi)\in\RR_{\geqslant 0}$ of the bounded edges 
$\vect{\aed}_{ij}=v^j-v^i$ connecting the vertices $v^i,v^j\in\kP$. 
For instance $t_{ij}(\xi)=1$ for all $i,j$ leads to
$\one\in \kMC(\kP)$ with $\kP_{\one}=\kP$.
In general, the parameters $t_{ij}(\xi)$ are supposed to meet the
closing conditions $\sum_it_{i,i+1}(\xi)\cdot \vect{\aed}_{i,i+1}=0$ along the 
oriented boundaries of all 2-dimensional, compact faces of $\kP$.
The source cone of $\kpr_\kMC$ is the ``universal Minkowski summand'';
it is defined as
\[
  \kMtC(\kP):=\{(\xi,w)\kst \xi\in\kMC(\kP),\; w\in\kP_\xi\},\;
  \quad\mbox{i.e.\ }
\kpr_{\kMC}^{-1}(\xi)=\{\xi\}\times \kP_\xi.
\]
While the Minkowski summands are only well-defined up to translation,
one can make the previous definition precise by fixing a vertex $v_*\in\kP$
and placing all associated vertices $(v_*)_\xi\in\kP_\xi$ in the origin.
In general, every vertex $v\in\kP$ provides
a linear section $v:\kMC(\kP)\hookrightarrow\kMtC(\kP)$
of $\kpr_{\kMC}$ sending $\xi\mapsto v_\xi$.
The formula
$v^j_\xi-v^i_\xi=t_{ij}(\xi)\cdot(v^j-v^i)$ summarizes the situation.

\begin{remark}
\label{rem-propsCQ}
\begin{enumerate}[label={(\roman*)}]
\item\label{item:pcq}  The special element $\one\in\kMC(\kP)$, representing $\kP$ itself,
provides a linear embedding $\kpiD_{\kMC}:\RR_{\geqslant 0}\hookrightarrow\kMC(\kP)$.
As in the beginning of this subsection, the fiber product, i.e.\
the preimage of this ray $\,\kpr_{\kMC}^{-1}\big(\kpiD_{\kMC}(\RR_{\geqslant 0})\big)$
equals $\sigma=\cone(\kP)$.
\[
\xymatrix@C=3em@R=3ex{
\cone(\kP) \ar[r] \ar[d]&
\kMtC(\kP) \ar[d]^-{\kpr_{\kMC}} 
\\
\RR_{\geqslant 0} \ar@{^(->}[r]^-{\kpiD_{\kMC}} &
\kMC(\kP) 
}
\hspace{6em}
\xymatrix@C=3em@R=3ex{
\cone(\kP_0*\ldots*\kP_\minks) \ar[r] \ar[d]&
\kMtC(\kP) \ar[d]^-{\kpr_{\kMC}} 
\\
\RR^{\minks+1}_{\geqslant 0} \ar[r] &
\kMC(\kP) 
}
\]
\item \label{item:rem-propsCQ}If $\kP=\kP_0+\ldots+\kP_\minks$ is a Minkowski decomposition of $\kP$,
then the summands induce elements
$[\kP_0],\ldots,[\kP_\minks]\in\kMC(\kP)$, and
thus a linear map $\RR_{\geqslant 0}^{m+1}\to\kMC(\kP)$ sending the
$i$-th unit vector $e^i\mapsto [\kP_i]$. The 
fiber product becomes equal to the cone over the
so-called \kbox{Cayley product} $\kP_0*\ldots*\kP_\minks$.
\end{enumerate}
\end{remark}

\subsection{A terminal object in the cone setup}
\label{catMinkLin}
This subsection is the dual of the cone setup variant of Subsection~\ref{catStraightInit}. 
Let $\kP$
be a rational, convex polyhedron in some $\RR$-vector space $\kQuotRD$; 
for instance, it could arise from the situation in
Subsection~\ref{spaceMinkSummands}. 
Taking the height induces a natural 
map $\kR:\cone(\kP)\to\RR_{\geqslant 0}$. Then the pairs $(\kpr_+,\kpiD)$
consisting of a surjective homomorphism of polyhedral cones 
$\kpr_+:\kMtC\surj \kMC$ 
and an embedding $\kpiD:\RR_{\geqslant 0}\hookrightarrow \kMC$ form a \kbox{category $\MinkFam_{\kP}$}. 
The embedding $\kpiD$ gives thus an element $\kpiD(1)\in \kMC$, such that
\begin{enumerate}[label=(\roman*)]%
\item for $\xi,\xi'\in \kMC$ one has $\kpr_+^{-1}(\xi)+\kpr_+^{-1}(\xi')=
\kpr_+^{-1}(\xi+\xi')$ and
\item
$\kR:\cone(\kP)\to\RR_{\geqslant 0}$ is obtained  from $\kpr_+:\kMtC\surj \kMC$
via base change $\kpiD:\RR_{\geqslant 0}\hookrightarrow \kMC$, that is 
$\cone(\kP)=\kMtC\times_{\kMC}\RR_{\geqslant 0}$
\end{enumerate}
The two examples
$\big[\kR:\cone(\kP)\to\RR_{\geqslant 0}\ni 1\big]$ and
$\big[\kpr_{\kMC}:\kMtC(\kP)\surj\kMC(\kP)\ni\one\big]$ 
yield two objects in $\MinkFam_{\kP}$.
Moreover, if $\kP$ arises from Subsection~\ref{spaceMinkSummands},
then also $\big[\kpr_+:\kSD \to \kTD\ni\kpiD(1)\big]$ becomes an object
in $\MinkFam_{\kP}$.
\\[1ex]
By Proposition~\ref{prop-straightMinkLin}, $\MinkFam_{\kP}$
equals the opposite category $\straightExt_{\kP}^{\opp}$
of the cone setup variant of
$\straightExt_{\kP}=\straightExt_{(\kT,\kS)}$ from
Subsection~\ref{catStraightInit}.
Amplifying this comparison, the (dual) analog to 
Proposition~\ref{prop-groupoids} is just base
change. Moreover, it is clear that $[\kR,1]\in\MinkFam_{\kP}$ is an 
initial object.
However, the true analog to Theorem~\ref{thm:universalObjectExists} (restricted to this setting) is the existence of a 
terminal object in $\MinkFam_{\kP}$.

\begin{proposition}
\label{prop-versalCP}
The pair $[\kpr_{\kMC},\one]$ is a terminal object in $\MinkFam_{\kP}$.
That is, for any $[\kpr_+:\kMtC\surj \kMC\ni\kpiD(1)]$ in the category
$\MinkFam_{\kP}$, there is a \kbox{unique} linear 
$\kpiD':\kMC\to\kMC(\kP)$ such that
$\kpr_+$ is induced from $\kpr_{\kMC}$ via~$\kpiD'$
\[
\xymatrix@C=3em@R=3ex{
\cone(\kP) \ar[d]^-{\kR}\ar[r] \ar@/^1.3pc/[rr]^(0.4){} &
\kMtC \ar[r]^-{} \ar[d]^-{\kpr_+} & \kMtC(\kP) \ar[d]^-{\kpr_{\kMC}}
\\
 \RR_{\geqslant 0} \ar[r]^-\kpiD \ar@/_1.3pc/[rr]^(0.4){\kpiD_{\kMC}}&
\kMC \ar[r]^-{\kpiD'} &\kMC(\kP)
}
\]
and $\kpiD_{\kMC}(1)=\one=(\kpiD'\circ{\kpiD})(1)$.
Moreover, the map $\kMtC\to\kMtC(\kP)$
is supposed to induce the identity map
$\id_{\kP}:\kpr_+^{-1}(\kpiD(1))\to\kP_{\one}$
on the two distinguished fibers.
\end{proposition}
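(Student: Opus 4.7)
The strategy is to define $\kpiD'$ pointwise by sending each $\xi$ to the class of the Minkowski summand realized by the positive fiber $\kpr_+^{-1}(\xi)$, then to check linearity, compatibility with the diagrams, and uniqueness. Concretely, for $\xi\in\kMC$ I write $Q_\xi:=\kpr_+^{-1}(\xi)$, so that the base-change hypothesis $\cone(\kP)=\kMtC\times_{\kMC}\RR_{\geqslant 0}$ gives $Q_{\kpiD(\lambda)}=\lambda\,\kP$, and in particular $\kP=Q_{\kpiD(1)}$. All of these fibers share the common tail cone $\kpr_+^{-1}(0)$.

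The first step is to show that $Q_\xi$ is a Minkowski summand of some positive dilate of $\kP$, so that $[Q_\xi]$ defines a point in $\kMC(\kP)$. Using that $\kpiD(1)$ lies in the relative interior of $\kMC$ (which is the natural position of the distinguished ray; note the examples $[\kR,1]$ and $[\kpr_{\kMC},\one]$), for any $\xi\in\kMC$ I can pick $\xi'\in\kMC$ and $\lambda>0$ with $\xi+\xi'=\lambda\,\kpiD(1)$. Minkowski linearity (Definition~\ref{def_minkLinear}) combined with $Q_{\kpiD(\lambda)}=\lambda\,\kP$ then yields $Q_\xi+Q_{\xi'}=Q_{\lambda\,\kpiD(1)}=\lambda\,\kP$, so $Q_\xi$ is a genuine Minkowski summand of $\lambda\,\kP$; the resulting class $[Q_\xi]\in\kMC(\kP)$ is independent of the chosen pair $(\xi',\lambda)$. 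I then set $\kpiD'(\xi):=[Q_\xi]$.

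The second step is linearity. Recall from Subsection~\ref{spaceMinkSummands} that $\kMC(\kP)$ is cut out inside an affine space of edge-dilation factors $(t_{ij})$ by the closing conditions, and that Minkowski addition of summands of $\kP$ corresponds to coordinatewise addition of these vectors. So from $Q_{\xi_1+\xi_2}=Q_{\xi_1}+Q_{\xi_2}$ (and analogously for positive scalars) I obtain $\kpiD'(\xi_1+\xi_2)=\kpiD'(\xi_1)+\kpiD'(\xi_2)$ and $\kpiD'(c\,\xi)=c\,\kpiD'(\xi)$, so $\kpiD'$ extends to a linear map of polyhedral cones. The identity $Q_{\kpiD(1)}=\kP$ immediately gives $\kpiD'(\kpiD(1))=\one$, i.e.\ $\kpiD_{\kMC}=\kpiD'\circ\kpiD$, and the induced horizontal map $\kMtC\to\kMtC(\kP)$ is defined fiberwise by the identification $Q_\xi=\kP_{\kpiD'(\xi)}$; over $\kpiD(1)$ this is tautologically $\id_\kP$. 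Uniqueness is built in: the Cartesian requirement $\kMtC=\kMC\times_{\kMC(\kP)}\kMtC(\kP)$ forces $\kpr_+^{-1}(\xi)\cong\kP_{\kpiD'(\xi)}$ for every $\xi$, which pins down $\kpiD'(\xi)$ as $[Q_\xi]$.

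The main obstacle is the very first step: rigorously justifying that each $Q_\xi$ is a Minkowski summand of some $\lambda\,\kP$, rather than merely a polyhedron with the correct tail cone. Everything downstream — linearity, the diagram, uniqueness — is a more or less formal consequence of Minkowski linearity once the classification $Q_\xi\leftrightarrow[Q_\xi]\in\kMC(\kP)$ is available. The clean input needed is that $\kpiD(1)$ lies in the relative interior of $\kMC$, so that the Minkowski partner $\xi'$ can always be produced; everything else then follows from combining this with Definition~\ref{def_minkLinear} and the characterization of $\kMC(\kP)$ via edge dilation factors.
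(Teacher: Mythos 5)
Your proposal is correct and follows essentially the same route as the paper's proof: you use that $\kpiD(1)$ lies in the relative interior of $\kMC$ to produce a Minkowski complement $\xi'$ with $\xi+\xi'$ a multiple of $\kpiD(1)$, so that Minkowski linearity exhibits $\kpr_+^{-1}(\xi)$ as a Minkowski summand of a dilate of $\kP$, and then you read off $\kpiD'(\xi)$ from the universal parametrizing property of $\kMC(\kP)$. The paper's proof is exactly this (with $\lambda=n\in\NN$ rather than $\lambda\in\RR_{>0}$) and likewise leaves the linearity and uniqueness checks as routine.
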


\begin{proof}
  Let $\xi\in \kMC$. Then, since we have that $\kpiD(1)$ is an interior point of $\kMC$,
there is an $n\in\NN$ such that $\xi':=\kpiD(n)-\xi\in \kMC$.
That is, by Minkowski linearity, the decomposition $\kpiD(n)=\xi+\xi'$ within
$\kMC$ provides a Minkowski decomposition
$n\cdot\kP=\kpr_+^{-1}(\xi)+\kpr_+^{-1}(\xi')$, i.e.\ $\kpr_+^{-1}(\xi)$ is
a Minkowski summand of a scalar multiple of $\kP$.
Now, since the points of $\kMC(\kP)$ are in a one-to-one correspondence
to the Minkowski summands of scalar multiples of $\kP$,
the polyhedron $\kpr_+^{-1}(\xi)$ corresponds to a unique 
$\kpiD'(\xi)\in\kMC(\kP)$.
This establishes the map $\kpiD'$. It is clearly additive, and
one easily checks the remaining properties.
\end{proof}


\section{The discrete setup}
\label{discreteSetupStart}

\subsection{The pair of semigroups associated to a polyhedron}
\label{buildSg}
Let $\kQuotD$ be a lattice of rank $d$, that is a finitely generated free 
Abelian group $\kQuotD\simeq \ZZ^d$, 
and let $\kQuot=\Hom_\ZZ(\kQuotD,\ZZ)$ be the dual lattice.
We denote  the ambient real vector spaces by  $\kQuotRD:=\kQuotD\otimes_\ZZ\RR\simeq \RR^d$, respectively by $\kQuotR:=\kQuot\otimes_\ZZ \RR$.
Let $\kP \subset \kQuotRD$ be a rational polyhedron, which means that $\kP$ is the intersection of finitely many halfspaces defined by linear inequalities with rational coefficients. 
Recall from Subsection~\ref{minkPolyhedra} that
polyhedra are not necessarily bounded, but 
we will assume that they have at least one vertex. 
Then, they split into a Minkowski sum
\[
  \kP=\kP^c+\tail(\kP)
\]
where $\kP^c$ is 
the (bounded) convex hull of the vertices of $\kP$, 
and $\tail(\kP)$ is a pointed polyhedral cone.
We associate a cone to the polyhedron $\kP$ by embedding it in the hyperplane 
of height one of $\kQuotRD\oplus \RR$, and taking the 
(closure of) the cone over it:
\[
  \kcP:=\cone(\kP)\subseteq\kQuotRD\oplus\RR.
\]
The dual cone will be denoted by
\(
  \kcP\dual=\cone(\kP)\dual\subseteq\kQuotR\oplus\RR.
\)
We call $\kp$ the canonical projection $\kQuotRD\oplus\RR\surj\RR$. Alternatively, we
can understand this map as the element $\kp=[0,1]\in \kQuot\oplus\ZZ$, 
which defines a ray $\kp:\RR_{\geq 0}\hookrightarrow \kcP\dual$. 
Note that the intersections $\sigma\cap R^\bot=:\sigma\cap[R=0]$
and $\sigma\cap[R=1]$ recover $\tail(\kP)$ and $\kP$, respectively.
The pair of discrete semigroups associated to $\kP$ is given by 
\begin{equation}
  \label{eq:pairDefinedBykP}
  \kT:=\NN,\qquad  \kS:=\cone_\ZZ(\kP)\dual:=\cone(\kP)\dual\cap(\kQuot\oplus\ZZ),\qquad\text{and}\quad  \kp:\kT \hookrightarrow \kS.
\end{equation}
By Proposition~\ref{prop-ksumInj}, this forms a \straight\ pair.
One goal of this work is to construct explicitly the universal \cocartesian~\aextension\  of $\kT\hookrightarrow\kS$  (cf.~Theorem~\ref{th-initialObject}). The inspiration for the constructions to come was the
knowledge of the space of infinitesimal deformations $T^1$
of the toric singularity $\toric(\kcP)=\Spec\CC[\kS]$ in the
multidegree  $-\kp\in M$. 
\subsection{The structure of $\kcP\dual$}
\label{sigmaDual} 
We want to understand the relative boundaries of both $\kcP\dual$ and $\kcP\dual\cap(\kQuot\oplus\ZZ)$, with respect to the rays given by $\kp$. To this aim we introduce the following.

 \begin{definition}\label{d:eta(c)}
For every linear form $\kc\in \tail(\kP)\dual\subseteq\kQuot_\RR$ define
  \begin{eqnarray*}
\keta(\kc)  & := & -\min_{v\in\kP}\braket{ v,\kc} \in\RR,\\
\ketaZ(\kc) & := & \ceil{\keta(\kc)} \in\ZZ, 
  \end{eqnarray*}
 where $\ceil{\keta}$ denotes the ceiling, i.e.\ the least integer not smaller than 
the real number $\keta$. 
It is easy to see that the set $f_c:=\{v\in\kP~:~-\braket{ v,\kc} = \keta(\kc)\}$ is a face and contains at least one vertex. We choose and fix one such vertex and denote it by $v(\kc)$. So we have   
\[
  \keta(\kc) =  -\braket{ v(\kc),\kc} \le \ketaZ(\kc).
\]
The reason for not taking any $\kc\in\kQuot_\RR$ is that we want $\kc$ to be bounded below on $\kP$. Obviously, when $\kP$ is compact, $\tail(\kP)\dual=\kQuot_\RR$.
\end{definition}

\begin{notation}
  \label{not:verticesLattice/Nonlattice}
  We denote the set of all vertices of $\kP$ by $\Vrtx(\kP)$. We will need to distinguish between lattice and non-lattice vertices, and for this use the notation
\[
  \VrtxZ(\kP) := \Vrtx(\kP)\cap \kQuotD\qquad \VrtxnZ(\kP) := \Vrtx(\kP)\setminus \VrtxZ(\kP).
\]
Moreover, for real numbers $z\in \RR$ we will use the following notation:
\begin{equation}\label{eq:etaFrac}
\etaFrac{z}:=\roundup{z}-z.
\end{equation}
\end{notation}

For $\kc\in \tail(\kP)\dual\cap\kQuot$ and $v(\kc)\in\VrtxZ(\kP)$ we have $\keta(\kc)=\ketaZ(\kc)$.
If $\keta(\kc)\notin\ZZ$, then the integer $\ketaZ(\kc)$ equals the 
value of $-\braket{\kbb,\kc}$ at some point sitting on a moving affine
$\kc$-hyperplane \emph{before} it reaches our polyhedron $\kP$.
\begin{remark}
  \label{rem:firstPropertiesOfeta}
  \begin{enumerate}[label=(\roman*)]
  \item If $0\in\kP$, then $\keta(\kc)\geqslant 0$.
  \item The elements $[\kc,\keta(\kc)]$ form the relative boundary $\sigmaDBd\subseteq\partial\kcP\dual$ (the inclusion is strict if the tail cone is not trivial). This implies 
  \[
    \kcP\dual=\{[\kc,\keta(\kc)]\kst \kc\in \tail(\kP)\dual\} + \RR_{\geqslant 0}\cdot[\underline{0},1].
  \]
  \item The semigroup  $\kS:=\kcP\dual\cap(\kQuot\oplus\ZZ)$ is generated by the Hilbert basis of $\kcP\dual$, which has the form
  \[
    \big\{[\ahb_1,\ketaZ(\ahb_1)],\dots,[\ahb_k,\ketaZ(\ahb_k)],[\underline{0},1]\big\},
  \]
with uniquely determined elements $\ahb_i\in \tail(\kP)\dual\cap\kQuot$. We will use the font ``$\ahb$'' only for the Hilbert basis elements.
\item We defined $\kT:=\NN$ and embedded it in $\kS$ by $1\mapsto [\underline{0},1]$. For $\kc\in\tail(P)\dual\cap\kQuot$, the elements $[\kc,\ketaZ(\kc)]$ are always in $\Rand{\kT}{\kS}$, but not in $\sigmaDBd$ whenever $\keta(\kc)\notin\ZZ$.
  Note that the latter implies that $v(\kc)$ does not belong to the lattice, i.e.\ that $v(\kc)\in\VrtxnZ(\kP)$.
  \end{enumerate}
\end{remark}

\begin{example}
\label{ex-MinFourB}
Let $\kP = \conv(-\frac{1}{2},\frac{1}{2}) \subseteq \RR$.
Then $\kcP\subseteq \RR^2$ is spanned by the rays $\RR_{\geqslant0}\cdot (-1,2)$ and $\RR_{\geqslant0}\cdot (1,2)$.
The dual $\kcP\dual$ is spanned by $\RR_{\geqslant0}\cdot [-2,1]$ and $\RR_{\geqslant0}\cdot [2,1]$, see Figure~\ref{fig:6}
continuing the story of Figure~\ref{fig:3} and of Example~\ref{ex-discreteB}.
We obtain:
\[
\renewcommand{\arraystretch}{1.3}
\begin{array}{c|ccccccc}
\kc & \dots & -2 & -1 & 0 & 1 & 2& \dots\\
\hline
v(\kc) & \dots  & v_2 & v_2 & v_2\mbox{ or } v_1 & v_1 & v_1 & \dots\\
\keta(\kc)& \dots & 1 & \frac{1}{2} & 0 & \frac{1}{2} & 1 &\dots \\
\ketaZ(\kc)& \dots & 1 & 1 & 0 & 1 & 1 &\dots \\
\end{array}
\]

\definecolor{colsemigr}{rgb}{0.55, 0.71, 0.0}
\definecolor{colsubsgr}{rgb}{0.0, 0.5, 1.0}
\definecolor{colboundr}{rgb}{1.0, 0.01, 0.24}
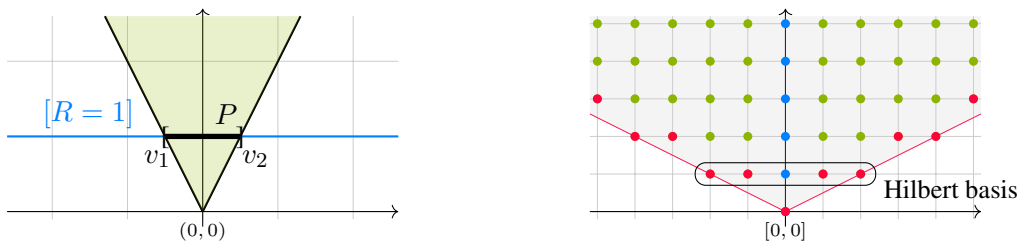
\begin{figure}[h!]
  \begin{tikzpicture}
    \newcommand{\xa}{5} 
    \newcommand{\ya}{5} 
    \newcommand{\ca}{1.7}
    \draw[very thin,color=white!80!black, step=1] (-\xa.2/2,-0.1) grid (\xa.2/2,\ya.2/2);
    \draw[->,name path=xaxis] (-\xa.2/2,0) -- (\xa.2/2 ,0) node[pos=0.5, below] {\tiny$(0,0)$};
    \draw[->,name path=yaxis] (0,-0.2) -- (0,\ya.4/2) node[pos=0.25,left ] {};
    \draw[thick, color=black, domain=0:\xa.2/2, mylabel=at 0.5 above with {}] plot (\x/2,\x);
    \draw[thick, color=black, domain=0:\xa.2/2, mylabel=at 0.2 above right with {}] plot (-\x/2,{\x}) ;
    \node (a) at (0,0) {};
    \node (b) at (-\xa.2/4,\xa.2/2) {};
    \node (c) at (\xa.2/4,\xa.2/2) {};
    \fill[fill=colsemigr,fill opacity=0.2] (a.center)  -- (b.center) -- (c.center)  --cycle;
    \draw[ thick, color=colsubsgr, domain=-\xa.2/2:\xa.2/2] plot (2*\x/2,1);
    \node[color=colsubsgr] (R) at (-\xa/3+0.15,1.3) {$[R=1]$};
    \draw[line width=2pt, domain = -1/2:1/2, mylabel=at 0.5 above right with {$P$}] plot (\x,1);
    \node at (-.5,.98) {\small [ };
    \node at (.5,.98) {\small ] };
    \node at (-.6,.7) {$v_1$};
    \node at (.7,.7) {$v_2$};
  \end{tikzpicture}
  \qquad\qquad\qquad
    \begin{tikzpicture}
    \newcommand{\xa}{5} 
    \newcommand{\ya}{5} 
    \newcommand{\ca}{1.7}
    \draw[very thin,color=white!80!black, step=.5] (-\xa.2/2,-0.1) grid (\xa.2/2,\ya.2/2);    
    \draw[->,name path=xaxis] (-\xa.2/2,0) -- (\xa.2/2,0) node[pos=0.5, below] {\tiny $[0,0]$};
    \draw[->,name path=yaxis] (0,-0.2) -- (0,\ya.4/2) node[pos=0.25,left ] {};
    \draw[ultra thin, color=colboundr, domain=0:\xa.2/2, mylabel=at 0.5 above with {}] plot (\x,\x/2);
    \draw[ultra thin, color=colboundr, domain=0:\xa.2/2, mylabel=at 0.2 above right with {}] plot (-\x,{\x/2}) ;
    \node (a) at (0,0) {};
    \node (b) at (-\xa.2/2,\xa.2/4) {};
    \node (c) at (-\xa.2/2,\ya.2/2) {};
    \node (d) at (\xa.2/2,\ya.2/2) {};
    \node (e) at (\xa.2/2,\xa.2/4) {};
    \fill[fill=white!80!black,fill opacity=0.2] (a.center)  -- (b.center) -- (c.center)  -- (d.center) -- (e.center) --cycle;
    \foreach \y in {1,...,\ya}{
      \pgfmathsetmacro\mytemp{2*\y}
      \foreach \x in {1,...,\mytemp}{
        \pgfmathsetmacro\bound{\xa+1}
        \ifthenelse{\x<\bound}{
          \fill[thick, color=colsemigr] (\x/2,\y/2) circle (\ca pt);
          \fill[thick, color=colsemigr] (-\x/2,\y/2) circle (\ca pt);
        }
        {};
      }}
    \foreach \y in {1,...,\ya}
    \fill[color=colsubsgr] (0,\y/2) circle (\ca pt);
    \foreach \x in {0,...,\xa}{      
      \pgfmathsetmacro\i{int((\x+1)/2)}
      \fill[thick, color=colboundr] (\x/2,\i/2) circle (\ca pt);
      \fill[thick, color=colboundr] (-\x/2,\i/2) circle (\ca pt);
    }
    \draw[rounded corners] (-1.2,.35) rectangle (1.2,.65);
    \node at (2.2,.3) {\small Hilbert basis};
  \end{tikzpicture}
  \caption{The cone and the semigroups for the 1-dimensional 
$P=[-\frac{1}{2},\frac{1}{2}]\subset \RR$.}
  \label{fig:6}
\end{figure}
\end{example}

\subsection{The ambient space of the universal extension}
\label{sec:theAbmientSpace}
In this section we will define  $\kMT(\kP)$. The two subgroups which will later give the initial object live in the dual spaces of $\kMT(\kP)$ and $\kQuotRD\times\kMT(\kP)$, respectively. 
\begin{notation}
  \label{not:verticesEdges}
The set of compact edges of $\kP$ is
\(
\cedges(\kP) = \{\aed_1,\dots,\aed_\kPr\}.
\)
We  write
\(
\aed=[v^i,v^j]
\)
for the edge connecting $v^i$ and $v^j$ and 
we write
\(
[v^i,v^j)
\)
for the half-open edge. We will abuse notation and denote oriented edges also by $\vect{\aed}:=v^j-v^i\in\kQuotRD$; in the few ambiguous situations we will use words to specify which of the two $\aed$ refers to.
We also use the convention that round brackets denote vectors: $(1,2,3)\in \kQuotRD$,
square brackets denote linear forms $[4,5,6]\in\kQuotR$, and 
pointed brackets denote the standard perfect pairing: $\braket{(1,2,3),[4,5,6]} = 32\in\RR$.
We will also fix
\[
  \kPr=\card{\cedges(\kP)}\qquad\qquad \kPm=\card{\Vrtx(\kP)}.
\]

\end{notation}
The $\RR$-vector space $\kMT(\kP)$ will be a subspace of $\RR^{\kPr}\oplus\RR^{\kPm}$.
In order to describe its equations we need to introduce the following notions.

\begin{definition}
  \label{d:faceCycleOrientation}
  For every compact two-dimensional face $F$ of $\kP$ let
  $\orient_F:\cedges(\kP)\longrightarrow \{-1,0,1\}$ 
  be one of the two functions satisfying
  \[
  \begin{array}{rl}
    \orient_F(\aed)\in\{-1,1\} &\iff \aed \subseteq F\\
    \{\orient_F(\aed)\cdot\vect{\aed}\kst \aed \subseteq F\}&\text{forms an oriented cycle along~} \partial F.
  \end{array}
  \]
The last property implies (but is not equivalent to)
\[
\sum_{\aed\in\cedges(\kP)} \orient_F(\aed)\cdot\vect{\aed} = 0.
\]
\end{definition}

\begin{definition}
  \label{def-shortEdge}
  \begin{enumerate}[label={(\roman*)}]
  \item  To each half open edge $\aed=[v,w)$ of $\kP$
we associate the positive integer
\[
g=g(\aed) := \min\{g\in\ZZ_{\geqslant 1}\kst \text{the {\em affine line} through }gv \text{ and } gw \text{ contains lattice points}\}.
\]
\item We call $\aed=[v,w)$ a {\em\se\ half open edge} if
\[
\card{\{g\cdot [v,w)\cap\kQuotD\}}\leq g-1.
\]
If this is the case, then it follows that $v\notin \kQuotD$.
Finally, we call $\aed$ a \emph{\se\ edge} if 
both $[v,w)$ and $(w,v]$ are \se\ half open edges.
\end{enumerate}
\end{definition}

\begin{remark}
  \label{rem:afterDefShortedge}
If at least one of the half open edges $[v,w)$ or $[w,v)$ is \se,
then $\ell(w-v)<1$, where $\ell$ denotes the lattice length\footnote{ defined as the homogeneous function on $\kQuotRD$ such
that any primitive element of $\kQuotD$ has lattice length one.}.
While the property $\ell(w-v)<1$
is responsible for the name ``\se'',
$\ell<1$ alone does not suffice for \se ness. For example $\aed=[-\frac{1}{2},\frac{1}{3}]\subset\RR$
has lattice length $\ell=\frac{5}{6}<1$, but still,
neither of the two half open edges is \se.
\end{remark}

\begin{definition}
  \label{def:Clin}
For each compact edge $\aed_i=[v^j,v^k]$ of $\kP$ we introduce a 
parameter which we denote
by $t_{\aed_i}$, $t_i$,  or $t_{jk}$ \footnote{\label{denoteFootnote} always choosing the most convenient one in the given context.}.  We then collect all of these in a vector $\bft\in\RR^\kPr$ and  define the linear subspace
\[
\textstyle
\kMV(\kP) := \{\bft\in\RR^\kPr\kst\sum_{\aed\subset F}\orient_F(\aed)t_{\aed}\cdot\vect{\aed} = 0 \,\text{ for all compact 2-faces }F \text{ of }\kP\}.
\]
The intersection $\kMV(\kP)\cap R^\kPr_{\geqslant 0}$  parametrizes the Minkowski summands of positive multiples of $\kP$. 
\end{definition}
\begin{definition}
  \label{def:TP}
For each vertex $v=v^i\in\Vrtx(\kP)$ we introduce a parameter which we denote
  by $s_v$ or $s_i$. We then define
\[
\textstyle
\kMT(\kP):=\big\{ (\bft,\bfs) \in \kMV(\kP)\oplus \RR^\kPm \kst 
\begin{array}[t]{ll}
s_i = 0 &\mbox{if} \hspace{0.5em} v^i\in\kQuotD,\\
  s_i = s_j & \mbox{if}\hspace{0.5em} [v^i,v^j]\in\cedges(\kP) \text{ with } [v^i,v^j]\cap\kQuotD=\emptyset, \text{ and }\\
s_i=t_{ij} & \mbox{if}\hspace{0.5em} [v^i,v^j) 
\text{ is a half open \se\ edge} \big\}.
\end{array}
\]
\end{definition}
Note that the vector space $\kMT(\kP)$ contains a distinguished element
$\oneone=[\kP]$ which is defined by $s_i:=0$ for $v^i\in\kQuotD$ and
$s_j:=1$ and $t_{ij}:=1$ for all remaining coordinates, cf.\ Remark~\ref{rem-oneone}.
In the upcoming sections we will often deal with the dual vector space
$\kMTD(\kP)$. Then, its elements $s_i,t_{ij}\in\kMTD(\kP)$ form 
a generating set of this space. We could easily omit the elements 
$s_i=0$ for $v^i\in\kQuotD$. However, while they are just zero, 
there existence will simplify some formulae. 

\subsection{Relation to algebraic geometry}
In Example~\ref{ex-discreteB} we have already mentioned that
our theory of extensions of semigroups has strong links to 
deformation theory in algebraic geometry, cf.\ \cite{m} for addressing
this in detail. Nevertheless, we would like to mention here that
for a singularity $X$ there is the vector space $T_X^1$
of so-called infinitesimal deformations. In case of a toric singularity,
it is $\kQuot$-graded, and we denote by $T_X^1(-\kR)$ the contribution
in multidegree $-R$.

\begin{proposition}
\label{prop-compTOne}
For $X=\toric(\cone(\kP))$ we obtain that
$\,T_X^1(-\kR)=\big(\kMT(\kP)\otimes_{\RR}\CC\big)\big/\CC\cdot\oneone$.
\end{proposition}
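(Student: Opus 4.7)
The plan is to exhibit a natural isomorphism between $\kMT(\kP)\otimes_\RR\CC$ modulo the line $\CC\cdot\oneone$ and the graded piece $T^1_X(-R)$, using the known combinatorial description of cotangent cohomology of toric singularities. Detailed calculations are deferred to the companion paper~\cite{m}; what follows is the strategy.

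First I would invoke the standard presentation $\CC[\kS]=\CC[z_1,\dots,z_r,z_\infty]/I$, where the variables correspond to the Hilbert basis elements $[\ahb_i,\ketaZ(\ahb_i)]$ and $[\underline{0},1]$ from Remark~\ref{rem:firstPropertiesOfeta} and $I$ is the binomial ideal of lattice relations. Then $T^1_X$ is computed via the Schlessinger/Jacobi complex, and its multigraded piece in degree $-R=[\underline{0},-1]$ is finite dimensional and controlled purely by the combinatorics of $\kP$. The binomials in $I$ that contribute in degree $-R$ correspond to pairs of lattice representations of a common element shifted by $R$, while the second syzygies are governed by the compact two-dimensional faces of $\kP$.

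Next I would construct a comparison map $\kMT(\kP)\otimes_\RR\CC\longrightarrow T^1_X(-R)$ explicitly. The edge coordinates $t_\aed$ are to be interpreted as infinitesimal dilations of the compact edges of $\kP$; the closing conditions $\sum_{\aed\subset F}\orient_F(\aed)\,t_\aed\,\vect{\aed}=0$ on each compact two-face $F$ ensure that these dilations assemble into a consistent perturbation of the binomial relations and hence define a cocycle in the Jacobi complex. The vertex coordinates $s_i$ encode the integer shifts forced by the ceiling in $\ketaZ$ near non-lattice vertices, and the three defining constraints of $\kMT(\kP)$ in Definition~\ref{def:TP} --- vanishing of $s_i$ on lattice vertices, $s_i=s_j$ along a non-lattice edge, and $s_i=t_{ij}$ on every \se\ half-open edge --- reflect precisely the compatibilities required for the cocycle to glue consistently across those edges.

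Finally I would identify the kernel and check surjectivity. The element $\oneone$ encodes scaling of $\kP$ itself, which is absorbed by the $\kQuot$-grading and therefore gives a trivial infinitesimal deformation in degree $-R$; quotienting by $\CC\cdot\oneone$ implements the standard ``Euler'' correction. The hard part will be tracking the integer shifts induced by the difference $\ketaZ-\keta=\etaFrac{\keta}$ through the differentials of the Jacobi complex, together with the extra bookkeeping triggered by \se\ half-open edges. The formulation of Definition~\ref{def:TP} is calibrated exactly so that this bookkeeping matches the output of the Koszul/Jacobi computation; once the comparison map is shown to be well defined with kernel $\CC\cdot\oneone$, surjectivity follows from the classification of cocycles in the toric cotangent complex as carried out in \cite{m}.
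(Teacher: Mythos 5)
Your proposal takes a fundamentally different route from the paper. The paper's actual proof is a one-sentence citation: it invokes \cite[Theorem~2.5]{ka-flip}, where $T^1_X(-\kR)$ is already computed in this combinatorial form, and then merely observes that the equations called $G_{jk}$ in \cite[(2.6)]{ka-flip} coincide with the defining equations of $\kMT(\kP)$ in Definition~\ref{def:TP}. You instead propose to recompute $T^1_X(-\kR)$ from scratch via the binomial presentation of $\CC[\kS]$ and the Jacobi/cotangent complex. Both strategies share the key insight --- that the families of constraints defining $\kMT(\kP)$ (the closing conditions over compact two-faces, $s_i=0$ for lattice vertices, $s_i=s_j$ across lattice-free edges, $s_i=t_{ij}$ for \se~half-open edges) encode precisely the compatibilities on a degree-$(-\kR)$ cocycle --- and both quotient by $\CC\cdot\oneone$ for the same Euler-vector-field reason. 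What the paper's citation buys is brevity: once \cite[Theorem~2.5]{ka-flip} is in hand, the remaining content is a purely syntactic check that two linear systems agree, which is exactly the one sentence the paper devotes to it.

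The weakness of your write-up is that it is a plan, not a proof: you explicitly defer the hard parts (well-definedness of the comparison map, tracking the shifts $\ketaZ-\keta$ through the differentials, surjectivity) to the companion paper \cite{m}, but the present paper cites \cite{m} only for the broader deformation-theoretic interpretation, not for this specific computation, so nothing is actually established. To close the gap you would either need to carry out the Jacobi-complex comparison yourself --- this is nontrivial, particularly the bookkeeping around \se~edges that your proposal already flags as ``the hard part'' --- or recognize that \cite[Theorem~2.5]{ka-flip} already did the work and reduce the proof, as the paper does, to matching the $G_{jk}$ against Definition~\ref{def:TP}.
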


\begin{proof}
Essentially, this corresponds to Theorem 2.5 in \cite{ka-flip}.
One has just to check that the equations called $G_{jk}$ in
\cite[(2.6)]{ka-flip} coincide with those
in the definition of the $\RR$-vector space $\kMT(\kP)$.
\end{proof}

\begin{example}
  \label{ex-CQS}
  \begin{enumerate}[label={\bf \arabic*.}, ref={\arabic*.}]
  \item\label{item:CQS1} The mother of all examples is $\kP=[-\frac{1}{2},\frac{1}{2}]\subset\RR$
    from Example~\ref{ex-discreteB}.
    Here we have only one edge $\aed=\kP$ with $g_{\kP}=1$. The interval has length
    one, and it contains exactly one lattice point, i.e.\ $\card{\{\kP\cap\kQuotD\}}=1$.
    In particular, it gives rise to two non-\se\ half open edges.
  \item\label{item:CQS2} The interval $\kP=[-\frac{1}{2},\frac{1}{3}]\subset\RR$
    has lattice length $\ell=\frac{5}{6}<1$, but still,
    neither of the two half open edges is \se.
  \item\label{item:CQS3} Since the interval $\kP=[\frac{1}{2},\frac{3}{4}]$ does not contain
    any lattice points, but the affine line through $\kP$ does,
    we obtain that $g=1$, and both half open edges are \se. 
  \item\label{item:CQS4} Take $\kP:=\conv\big\{(-\frac{1}{6},\frac{1}{2}),\;
    (\frac{2}{3},\frac{1}{2})\big\}\subset \RR^2$. 
    Here we need
    to multiply with $g=2$ to produce lattice points on the affine line.
    The resulting interval $g\kP=[-\frac{1}{3},\frac{4}{3}]$ has length
    $\frac{5}{3}<2$ and $\card{\{g\kP\cap\kQuotD\}}=2$. That is, 
    neither of the half open edges are \se. 
  \item\label{item:CQS5} At last, we consider $\kP:=\conv\big\{(-\frac{1}{2},\frac{1}{2}),\;
    (\frac{1}{3},\frac{1}{2})\big\}\subset \RR^2$.
    We still have $g=2$ and this leads to the interval $g\kP=[-1,\frac{2}{3}]$.
    In particular, one of the half open edges is \se, the opposite one is not.
  \end{enumerate}
\end{example}

\subsection{Understanding $\kMTPD(\kP)$ }
\label{ssec:understandMTPD}
By definition we have $\kMT(\kP)\subseteq \RR^\kPr\oplus\RR^\kPm$, which allows us to define
\[
  \kMTP(\kP):=\kMT(\kP)\cap(\RR^\kPr_{\geqslant 0}\oplus\RR^\kPm_{\geqslant 0}).
\]
For the construction of the universal object from Section~\ref{sec:mainPlayers} positivity does not play an important role.
However, positivity will be crucial for the correspondence with lattice-friendly Minkowski decompositions in Section~\ref{sec:MinkowskiDec}.
While positivity of he $t$-coordinates has a clear meaning, the necessity of positive $s$-coordinates becomes apparent in Example~\ref{ex-negativeS}.

We will denote the dual of $\RR^{\kPr+\kPm}$ by the same symbol\footnote{i.e.\ we will not add a star here.}: $\RR^{\kPr+\kPm}$.
Since $\kMT(\kP)\subseteq\RR^{\kPr+\kPm}$, we have a 
canonical projection $\RR^{\kPr+\kPm}\surj\kMTD(\kP)$ 
yielding elements $t_{ij},s_v\in\kMTD(\kP)$ and the equality
\begin{equation}
  \label{eq:TPdualAsQuotient}
  \kMTD(\kP) =\RR^{\kPr+\kPm}\big/\kMT(\kP)^\perp.  
\end{equation}
According to Definition~\ref{def:TP}, the subspace $\kMT(\kP)^\perp\subseteq\RR^{\kPr+\kPm}$ is generated by the following four types of elements:
\begin{eqnarray}
  \textstyle
  \label{eq:dualEquationsT(Q)1}  \chi(F):=\sum_{\aed\subset F}\orient_F(\aed)\,t_{\aed}\cdot\vect{\aed}&& \text{for all\footnotemark~ compact 2-faces $F$ of $\kP$,}\\
  \label{eq:dualEquationsT(Q)2}  \rule{0pt}{1.5em}  s_i&& \text{for all~}v^i\in\kQuotD.\\
  \label{eq:dualEquationsT(Q)3}  \rule{0pt}{1.5em}s_i - s_j &&\text{for all $[v^i,v^j]\in\cedges(\kP)$ with $[v^i,v^j]\cap\kQuotD = \emptyset$, and}\\
  \label{eq:dualEquationsT(Q)4}  \rule{0pt}{1.5em}t_{ij} - s_i &&\text{for all \se~ edges $[v^i,v^j)$.}
\end{eqnarray}\footnotetext{actually, these elements are in $\kMT(\kP)^\perp\otimes\kQuotRD$ and need to be evaluated by some $\kc\in\kQuotR$.}%
The relations $\chi(F)$ enable us to encode Minkowski summands 
via edge dilation, cf.\ Subsection~\ref{spaceMinkSummands} and
Definition~\ref{def:Clin}.
Their importance for the extensions of semigroups, however,
becomes apparent in the proof of 
Proposition~\ref{prop:equivalenceOfIndependence}.
From Equation~(\ref{eq:TPdualAsQuotient}) we get that 
the dual cone of $\kMTP(\kP)$ is
\[
\kMTPD(\kP) = 
\im(\RR_{\geqslant 0}^{\kPr+\kPm}
\longrightarrow \kMTD(\kP))= 
\frac{\RR_{\geqslant 0}^{\kPr+\kPm}+ 
\kMT(\kP)^\perp}{\kMT(\kP)^\perp}.
\]
\subsection{The lattice structure in $\kMT(\kP)$}
\label{ssec:theLatticeStructureInT(P)}
We start by defining  a subgroup of $\kMTZ(\kP)\subset\kMT(\kP)$, and then prove that this is a lattice. In the case in which $\kP$ is a lattice polytope with primitive edges, this lattice is simply $\Z^\kPr\cap \kMV(\kP)$, cf. Example~\ref{ex-backRoots}.
\begin{definition}\label{def:latticeInT}
Define the subgroup $\kMTZ(\kP)\subset\kMT(\kP)$ by
\[
  \begin{array}{rcl}
    (\bft,\bfs)\in \kMTZ(\kP) &:\iff&
                                            \begin{cases}
                                              s_i\in \ZZ &,\forall~ v^i \in \Vrtx(\kP), \text{~and}\\
                                              (t_{ij}-s_i)v^i - (t_{ij}-s_j)v^j 
                                              \in \kQuotD &,\forall~ [v^i,v^j]\in \cedges(\kP).
                                            \end{cases}
  \end{array}
\]
\end{definition}
\noindent Clearly $\kMTZ(\kP)$ is a subgroup of $\kMT(\kP)$, thus it is Abelian and torsion-free.

\begin{lemma}\label{ilem-indepRefPointForLattice} 
  The subgroup $\kMTZ(\kP)$ is a free Abelian group
  satisfying
  \[
    \kMTZ(\kP)\otimes_{\ZZ}\RR=\kMT(\kP).
  \]
\end{lemma}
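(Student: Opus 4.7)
My plan is to package both integrality conditions defining $\kMTZ(\kP)$ as the preimage of a standard lattice under a single injective rational linear map, and then invoke the general principle that such preimages are full-rank sublattices. First I would introduce
\[
  \phi: \RR^{\kPr+\kPm} \longrightarrow \RR^{\kPm} \oplus (\kQuotRD)^{\kPr},
  \qquad (\bft, \bfs) \longmapsto \bigl(\bfs,\, \bigl((t_{ij}-s_i)v^i - (t_{ij}-s_j)v^j\bigr)_{[v^i,v^j]\in\cedges(\kP)}\bigr).
\]
Since $\kP$ is rational, every vertex $v^i$ lies in $\kQuotD\otimes_{\ZZ}\QQ$, so $\phi$ is $\QQ$-linear with respect to the natural $\QQ$-structures. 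Unpacking Definition~\ref{def:latticeInT} shows exactly that
\[
  \kMTZ(\kP) \;=\; \kMT(\kP)\,\cap\,\phi^{-1}\bigl(\ZZ^{\kPm} \oplus (\kQuotD)^{\kPr}\bigr).
\]

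Next I would verify that $\phi$ is injective on $\kMT(\kP)$. If $(\bft,\bfs)$ lies in the kernel, then $\bfs = 0$ and the second component reduces to $-t_{ij}\cdot\vect{\aed}_{ij} = 0$ for every compact edge $[v^i,v^j]$. Since every compact edge vector $\vect{\aed}_{ij} = v^j - v^i$ is nonzero, this forces each $t_{ij} = 0$.

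To close the argument, observe that $\kMT(\kP) \subset \RR^{\kPr+\kPm}$ is itself $\QQ$-rational: the defining relations from Definition~\ref{def:TP} all have rational coefficients (the $\chi(F)$ relations use $\vect{\aed}\in\kQuotD\otimes\QQ$, and the others are integer). Hence $\phi(\kMT(\kP))$ is a rational subspace of $\RR^{\kPm}\oplus(\kQuotRD)^{\kPr}$, and the standard fact that any rational subspace of $\RR^N$ meets the standard lattice in a full-rank discrete subgroup supplies a full-rank lattice $\phi(\kMT(\kP)) \cap (\ZZ^{\kPm} \oplus (\kQuotD)^{\kPr})$. Pulling this back through the $\RR$-linear isomorphism $\phi|_{\kMT(\kP)}\colon \kMT(\kP) \xrightarrow{\sim} \phi(\kMT(\kP))$ identifies $\kMTZ(\kP)$ with that lattice, hence proves it is free Abelian of rank $\dim_{\RR}\kMT(\kP)$ and thus $\kMTZ(\kP)\otimes_{\ZZ}\RR = \kMT(\kP)$. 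The only conceptual step is the choice of $\phi$; the main obstacle is really just recognizing that both conditions of Definition~\ref{def:latticeInT} collapse into a single rational integrality condition, after which full rank follows automatically from rationality plus injectivity.
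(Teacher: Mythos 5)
Your proposal is correct but organizes the argument differently from the paper. The paper proves the result in two separate steps: first it shows $\kMTZ(\kP)$ is \emph{discrete} by observing that an element with sufficiently small coordinates is forced to be zero (integral $s_i$ close to zero vanishes, and then $t_{ij}(v^i-v^j)\in\kQuotD$ with $t_{ij}$ small forces $t_{ij}=0$ since $\kQuotD$ is not divisible); second it shows every rational element of $\kMT(\kP)$ has an integral multiple in $\kMTZ(\kP)$, so the sublattice has full rank. You instead package both integrality conditions of Definition~\ref{def:latticeInT} as the preimage of a standard lattice under a single $\QQ$-linear map $\phi$, show $\phi$ is injective (the same computation as the paper's discreteness argument, just read as injectivity), and then invoke the standard fact that a rational subspace of $\RR^N$ meets the standard lattice in a full-rank lattice. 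The underlying content is the same: your injectivity is the paper's discreteness, and your appeal to rationality of $\phi(\kMT(\kP))$ is the paper's observation about rational elements admitting integral multiples. What your route buys is a cleaner packaging through one reusable lemma (preimage of a lattice under an injective rational map); what it costs is the need to introduce the auxiliary map $\phi$, which the paper avoids by arguing directly with coordinates. Both proofs are correct and essentially equivalent in difficulty.
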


\begin{proof} 
We first show that $\kMTZ(\kP)$ is a discrete subgroup, i.e.\ that $0\in\kMTZ(\kP)$ is an isolated point.
If $(\bft,\bfs)\in\kMTZ(\kP)$ has sufficiently small coordinates, then the
integrality of $s_i$ implies that $s_i=0$. For the resulting
$(\bft,0)$ we thus get
\[
t_{ij}(v^i - v^j)\in\kQuotD.
\]
As we have finitely many compact edges, and as $\kQuotD$ is a lattice, thus not divisible, it follows that sufficiently small $t_{ij}$ are forced to be zero as well.

Every rational element of $\kMT(\kP)$  admits an integral multiple  contained in $\kMTZ(\kP)$.  Together with the discrete property, this implies that $\kMTZ(\kP)$ is an Abelian group satisfying 
  \[
    \kMTZ(\kP)\otimes_{\ZZ}\RR=\kMT(\kP).
  \] 
\end{proof}  

The dual lattice is by definition
\[
  \kMTZD(\kP)=\{f\in\kMTD(\kP)\kst f\big(\kMTZ(\kP)\big)\subseteq\ZZ\}.
\]
Thus, the dual lattice $\kMTZD(\kP)$ is generated by 
\[
  \{s_v ~:~ v\in\Vrtx(\kP)\}
  ~\cup~
  \{\kL_{ij}(\kc)~:~ \kc\in \tail(\kP)\dual\cap\kQuot\}.
\]

We will regard all the $t_{ij},s_i,s_j$ as coordinate functions, that is as elements of $\kMTD(\kP)$. 
The two conditions of Definition~\ref{def:latticeInT} can be thus rephrased as
\begin{eqnarray*}
  s_i&\in&\kMTZD(\kP),\\
\kL_{ij}:=(t_{ij}-s_i)\otimes v^i - (t_{ij}-s_j)\otimes v^j
&\in& \kMTZD(\kP)\otimes_{\ZZ}\kQuotD.
\end{eqnarray*}
\noindent We will often group the summands as:
\(
\kL_{ij} =   t_{ij}\otimes (v^i - v^j) + s_j \otimes v^j - s_i \otimes v^i. 
\)
\begin{remark}
  \label{rem:gensOfLatticeTensorLattice}
  The elements $\kL_{ij}$, together with $s_i\otimes\kQuotD$,  generate $\kMTZ(\kP)^*\otimes_{\ZZ}\kQuotD$.
\end{remark}
Note that $t_{ji}=t_{ij}$, but $\kL_{ji}=-\kL_{ij}$.
For any oriented compact edge $\aed=[v^i,v^j]$ we will write $\kL_\aed$ or $\kL_{v^iv^j}$ instead of $\kL_{ij}$ when it is more convenient to do so. 
Finally, let us point out that  the distinguished element $\oneone=:[\kP]$ belongs
to the lattice.
\begin{notation}
\label{not:LijOfcORofXi}
  As we are dealing with the tensor product of two linear forms, one on $\kMT(\kP)$ and one on $\kQuot$, it makes sense to apply $\kL_{ij}$ to both types of elements separately. We will denote as consistently as possible the elements of $\kMT(\kP)$ by $\xi$ and those of $\kQuot$ by $\kc$. Therefore, we will use the same  notation when we apply $\kL_{ij}$ to either of them:
  \begin{eqnarray*}
    \kL_{ij}(\xi)&:=& t_{ij}(\xi)\cdot(v^i - v^j) + s_j(\xi) \cdot v^j  -  s_i(\xi)\cdot v^i\in \kQuotRD, \text{~for~}\xi \in\kMT(\kP),   \\
    \kL_{ij}(\kc)&:=& \braket{v^i - v^j,\kc}\cdot t_{ij} +\braket{v^j,\kc}\cdot s_j  - \braket{v^i,\kc} \cdot s_i\in\kMTD(\kP), \text{~for~}\kc\in\kQuot.
  \end{eqnarray*}
\end{notation}

\begin{example}
  \label{ex-backRoots}
  \begin{enumerate}[label={\bf \arabic*.}, ref={\arabic*.}]
\item If $\kP=[v,w]\subset\RR$ is a rational line segment, 
then we denote by $d(\kP):=w-v$ the length of $\kP$ and by 
$\{v\}:= \roundup{v}-v$ and $\{w\}:=\roundup{w}-w$ the
discrepancies for $\kP$ to have integral limits.
Then, besides $s_v,s_w\in \kMTZD(\kP)$, this lattice is characterized
by the incidence
\[
d(\kP)\cdot t - \{v\}\cdot s_v + \{w\}\cdot s_w\in \kMTZD(\kP)
\vspace{-0.5ex}
\]
which has a straightforward geometric interpretation.
Indeed, this follows from
\vspace{-0.5ex}
\[
\begin{array}{rcl}
\kL_{d(\kP)}&=&-d(\kP)\cdot t + w\cdot s_w - v\cdot s_v
\\
&=& -d(\kP)\cdot t - \{w\} \cdot s_w + \{v\}\cdot s_v + 
\big(w+\{w\}\big)\cdot s_w - \big(v+\{v\}\big)\cdot s_v,
\end{array}
\vspace{-0.5ex}
\]
because the last two coefficients, $w+\{w\}$ and $-v-\{v\}$, are integers.

\item Whenever $\kP$ is a lattice polyhedron with primitive compact edges,
then $\kMT(\kP)=\kMV(\kP)$, and $\kMTZ(\kP)=\kMVZ(\kP)$ is determined by the
integrality of all $t_{ij}$.
\end{enumerate}
\end{example}

In Section~\ref{s:liftingsAndRelations} we will lift $\keta(\kc)$ and $\ketaZ(\kc)$ along the map
$\kpi:\kMTZD(\kP)\to\ZZ$ (or its rational version), where $t_{ij}\mapsto 1$ for all edges and $s_v\mapsto 1$ if $v\notin\kQuotD$.
This gives the vertical maps in the following diagram.
\[
\xymatrix@C=3em{
\kMTZD(\kP)\otimes_{\ZZ}\kQuotD \ar[d]_{\kpi:=\kpi\otimes\id_\kQuotD}
\ar[r]^-{\braket{\kbb,\kc}}         & 
\kMTZD(\kP) \ar[d]^{\kpi}
                                    & \kL_{ij} \ar@{|->}[r] \ar@{|->}[d] & \kL_{ij}(\kc) 
 \ar@{|->}[d]
 \\
\kQuotD \ar[r]^-{\braket{\kbb,\kc}} & \ZZ
                                    & (\delta^\ZZ v^i-\delta^\ZZ v^j) 
 \ar@{|->}[r]                    & \braket{\delta^\ZZ v^i-\delta^\ZZ v^j,c}
}
\]
In the previous diagram we have used the symbol
\[
\delta^\ZZ v:=\left\{\begin{array}{ll}
v & \mbox{if } \,v\in \kQuotD\\
0 & \mbox{if } \,v\in \kQuotRD\setminus\kQuotD.
\end{array}\right.
\]
In particular, $\kL_{ij}\in\ker(\kpi)$ if $v^i,v^j\in \VrtxnZ(\kP)$.

\section{Lifting the $\keta$ to $\kMTD$}
\label{s:liftingsAndRelations}
In this section we define liftings  $[\kc,\ketaZ(\kc)]$\reflectbox{$\mapsto$} $[\kc,\ktetaZ(\kc)]$
in the finite dimensional $\RR$-vector space $\kMTD(\kP)$ from Section~\ref{sec:theAbmientSpace}. 
The main idea behind constructing the universal extension is to use these liftings of the relative boundary
and define $\ktT$ using the relations among them: $\ktetaZ(\kc_1,\dots,\kc_\ell)$ (cf. Definition~\ref{def-independent}).
Then $\ktS$ will be the sum of the lifted boundary with $\ktT$.

\subsection{Lifting the boundary}
\label{subs:LiftingTheBoundary}
We start by  fixing a \emph{reference vertex} $ \vast\in\Vrtx(\kP)$.
\begin{convention}
\label{conv-refPtsB}
Whenever $\vast\in\kP$ belongs to the lattice, we will
set $\vast=0$. 
\end{convention}
As the $\kc$ in $[\kc,\keta(\kc)]$ will be unchanged when lifting, we will only focus on the $\keta$ part.
Let us state Definition~\ref{d:eta(c)} of $\keta(\kc)$ from a different point of view.
For $\kc\in \tail(\kP)\dual$ we choose a path 
$
\vast = v^0, v^1, \ldots ,v^k = v(\kc)
$
along the compact edges of $\kP$. 
Then
\[
\begin{array}{rcl}
  -\keta(\kc) &=& \braket{ v(\kc),\,\kc}\\
      &=&\braket{ \vast,\,\kc}+\braket{ v(\kc)-\vast,\,\kc}
\\
      &=&\braket{\vast,\,\kc} + \sum_{j=1}^k \braket{ v^{j}-v^{j-1},\,\kc}.
\end{array}
\]
In complete analogy to this we define now the lifting
$\wteta(\kc)$ of $\keta(\kc)$.
\begin{definition}\label{d:etatilde(c)}
For every $\kc\in\tail(\kP)\dual$, we define $\wteta(\kc)\in \kMTD(\kP)$ as
\[
\wteta(\kc) := -\braket{\vast,\,\kc}\cdot s_\vast - 
\sum_{j=1}^k  \braket{v^{j}-v^{j-1},\,\kc} \cdot t_{j-1,\,j}.
\]

\end{definition}

\begin{lemma}
\label{lem-etaTildeVc}
The definition of $\wteta(\kc)\in \kMTD(\kP)$ depends neither on the
choice of the vertex $v(\kc)$, nor on the choice of the path connecting
$\vast$ and $v(\kc)$.
\end{lemma}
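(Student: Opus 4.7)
The plan is to verify two separate invariances: first path-independence with the minimizer $v(\kc)$ fixed, and then independence from the choice among possible minimizers.

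For path-independence, two compact-edge paths from $\vast$ to $v(\kc)$ differ by a closed loop $\gamma$ in the 1-skeleton of the bounded faces of $\kP$. The first term $-\braket{\vast,\kc}\cdot s_\vast$ is unaffected, so the discrepancy between the two values of $\wteta(\kc)$ is $\sum_\aed \alpha_\aed \braket{\vect{\aed},\kc}\cdot t_\aed$, where $\alpha_\aed\in\ZZ$ is the signed multiplicity of $\aed$ in $\gamma$. I want to show this element of $\RR^{\kPr+\kPm}$ lies in $\kMT(\kP)^\perp$, so that it is zero in $\kMTD(\kP)$. Since the bounded-face subcomplex of a pointed polyhedron is contractible, the first homology of its 1-skeleton is generated by oriented boundaries of compact $2$-faces, and I can write $\gamma = \sum_F n_F\cdot(\text{oriented boundary of }F)$ with $n_F\in\ZZ$. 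Pairing the relation $\chi(F)\in\kMT(\kP)^\perp\otimes\kQuotRD$ from~\eqref{eq:dualEquationsT(Q)1} with $\kc\in\kQuotR$ yields $\sum_{\aed\subset F}\orient_F(\aed)\braket{\vect{\aed},\kc}\cdot t_\aed\in\kMT(\kP)^\perp$. Summing these $n_F$-weighted relations exactly matches the discrepancy, so it vanishes in $\kMTD(\kP)$.

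For vertex-independence, let $v$ and $v'$ be two vertices of $\kP$ both realizing $-\braket{\cdot,\kc}=\keta(\kc)$. Both lie on the face $f_\kc=\{w\in\kP\kst -\braket{w,\kc}=\keta(\kc)\}$, which is itself a polyhedron containing both as vertices, and the bounded-face subcomplex of $f_\kc$ is connected. Hence there is a compact-edge path $v=w^0,w^1,\ldots,w^m=v'$ sitting entirely inside $f_\kc$. For every edge $[w^{j-1},w^j]$ in this path the defining equation of $f_\kc$ gives $\braket{w^j-w^{j-1},\kc}=0$, so the corresponding term $\braket{w^j-w^{j-1},\kc}\cdot t_{j-1,j}$ vanishes identically. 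Invoking path-independence from the previous paragraph, I may replace an arbitrary path to $v'$ by (any path to $v$)~$\circ$~(this edge-walk from $v$ to $v'$); the extension contributes nothing, so $\wteta(\kc)$ takes the same value for $v$ and $v'$.

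The main obstacle is the topological input in the first paragraph: that every cycle in the 1-skeleton of compact edges of a pointed polyhedron $\kP$ is an integral combination of oriented boundaries of compact 2-faces. For a compact polytope this is standard (the bounded face poset is shellable and the 2-skeleton is simply connected), and in general it reduces to the compact case by projecting along an interior direction of $\tail(\kP)$, which retracts the bounded-face complex onto a polytope's face complex. Once this fact is granted, the rest is a direct interpretation of the relations~\eqref{eq:dualEquationsT(Q)1} cutting out $\kMT(\kP)^\perp$, together with the observation that edges contained in $f_\kc$ are annihilated by the coefficient $\braket{\vect{\aed},\kc}$.
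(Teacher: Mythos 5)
Your proof is correct and follows essentially the same two-step strategy as the paper's: path-independence via the closing relations $\chi(F)$, and vertex-independence by connecting two minimizers through the level face $f_\kc$, whose edges pair to zero with $\kc$. The paper compresses the first step into the phrase ``the usual argument,'' whereas you spell out the topological justification (that $1$-cycles in the compact $1$-skeleton bound integral combinations of compact $2$-faces, by contractibility of the bounded-face complex) — this is exactly the content the paper leaves implicit.
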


\begin{proof}
The independence on the choice of the path follows by the usual argument,
namely by the presence of the closing relations, i.e.\ the elements
$\chi(F)$ providing
$\braket{\chi(F),\kc}\in \kMT(\kP)^\perp$ mentioned in 
Subsection~\ref{ssec:understandMTPD}.
\\[0.5ex]
For the independence on $v(\kc)$, let us choose another candidate
$v'(\kc)$. Then $\braket{ v'(\kc),\kc} = \braket{ v(\kc),\kc}$,
and we may connect both vertices by a path within the level face
$f_c=\{v\in\kP\kst \braket{v,\kc} =\braket{ v(\kc),\kc}\}$,
i.e.\ via edges $\aed\in\kc^\bot$. Thus, the two paths connecting $\vast$ with
$v(\kc)$ or $v'(\kc)$, respectively, can be chosen to differ only
by those edges. In particular, they produce the same result after being paired
with $\kc$.
\end{proof}

Note that $\wteta(\kc)$ is always a lifting of
$\keta(\kc)$ via the map $\pi$ due to our Convention~\ref{conv-refPtsB}, i.e. it holds that
\[
\kpi\big(\wteta(\kc)\big)= \keta(\kc).
\]

\begin{remark}
\label{rem-commentConvention}
Sending a certain point of $\kP$ to $0$ means to shift the 
polyhedron by some vector  $w$. This implies
$
\wteta(\kc)_{\kP-w} = \wteta(\kc)_{\kP} + \braket{ w,\kc}\cdot s_{\vast}.
$
In particular, if $\vast\in\kQuotD$, then shifting by $-\vast$ 
to meet our convention does
not change $\wteta(\kc)$.
\end{remark}

For integrality issues it is important to express $\wteta(\kc)$ in terms of the integral $\kL_{ij}\in \kMTZ(\kP)^*\otimes_{\ZZ}\kQuotD$.
\begin{lemma}
  \label{lem-etaTildeViaL}
  For every $\kc\in\tail(\kP)\dual\cap\kQuot$ and any path $\,\vast = v^0, v^1, \ldots ,v^k = v(\kc)$ along the compact edges of $\kP$ we have
  \[
    \wteta(\kc)=\keta(\kc)\cdot s_{v(\kc)} + \sum_{j=1}^k\kL_{j-1,j}(\kc).
  \]
\end{lemma}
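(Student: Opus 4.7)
The plan is to prove the identity by a direct expansion-and-telescope computation. The lemma compares two expressions for the same element of $\kMTD(\kP)$: on one side, the defining formula of $\wteta(\kc)$ written in terms of the coordinate functions $s_{\vast}$ and $t_{j-1,j}$; on the other side, a formula written in terms of the lattice generators $\kL_{j-1,j}(\kc)$, with a correction term $\keta(\kc)\cdot s_{v(\kc)}$.

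First I would unfold the definition of $\kL_{j-1,j}(\kc)$ from Notation~\ref{not:LijOfcORofXi}, namely
\[
\kL_{j-1,j}(\kc) = \braket{v^{j-1}-v^j,\kc}\cdot t_{j-1,j} + \braket{v^j,\kc}\cdot s_j - \braket{v^{j-1},\kc}\cdot s_{j-1},
\]
and sum over $j=1,\dots,k$. The $t$-contributions already match (up to sign) the $t$-part of $\wteta(\kc)$ from Definition~\ref{d:etatilde(c)}, since $\braket{v^{j-1}-v^j,\kc} = -\braket{v^j-v^{j-1},\kc}$.

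Next I would telescope the $s$-part of the sum:
\[
\sum_{j=1}^k \bigl(\braket{v^j,\kc}\,s_j - \braket{v^{j-1},\kc}\,s_{j-1}\bigr) = \braket{v(\kc),\kc}\cdot s_{v(\kc)} - \braket{\vast,\kc}\cdot s_{\vast}.
\]
Using that $\braket{v(\kc),\kc} = -\keta(\kc)$ by Definition~\ref{d:eta(c)}, this becomes $-\keta(\kc)\cdot s_{v(\kc)} - \braket{\vast,\kc}\cdot s_{\vast}$. Combining this with the $t$-part and adding back $\keta(\kc)\cdot s_{v(\kc)}$ yields precisely the right-hand side of the definition of $\wteta(\kc)$.

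There is no real obstacle here; the proof is essentially a telescoping sum made possible by the specific shape of the $\kL_{ij}$ and by the sign conventions. The only subtle point worth remarking on is that the formula is formally a statement in $\kMTD(\kP)$ (an $\RR$-vector space), but since every $\kL_{j-1,j}(\kc)$ lies in $\kMTZD(\kP)$ when $\kc\in\tail(\kP)\dual\cap\kQuot$, the identity simultaneously expresses $\wteta(\kc)$ modulo the integral lattice in terms of the single fractional contribution $\keta(\kc)\cdot s_{v(\kc)}$, which is the real point of the lemma for later use.
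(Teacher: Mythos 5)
Your proposal is correct and follows essentially the same route as the paper: unfold the $\kL_{j-1,j}(\kc)$, observe that the $t$-contributions already match $\wteta(\kc)$ up to sign, telescope the $s$-contributions, and use $\braket{v(\kc),\kc}=-\keta(\kc)$. The paper simply runs the computation in the opposite direction (starting from $-\wteta(\kc)$ and regrouping into the $\kL_{j,j-1}(\kc)$), but it is the same telescoping identity.
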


\begin{proof}
We use the chosen path $\vast = v^0, v^1, \ldots ,v^k = v(\kc)$ to obtain
\[
\renewcommand{\arraystretch}{1.3}
\begin{array}{rcl}
-\wteta(\kc) &=& 
\braket{v^0,\,\kc}\cdot s_0 + 
\sum_{j=1}^k  \braket{v^{j}-v^{j-1},\,\kc} \cdot t_{j-1,\,j}
\\
&=&
\sum_{j=1}^k \Big(\braket{v^{j}-v^{j-1},\,\kc} \cdot t_{j-1,\,j}
- \braket{ v^j,\kc}\cdot s_j+\braket{ v^{j-1},\kc}\cdot s_{j-1}\Big)
+ \braket{v^k,\,\kc}\cdot s_k
\\
&=& \sum_{j=1}^k\kL_{j,j-1}(\kc) + \braket{v(\kc),\,\kc}\cdot s_{v(\kc)}. 
\end{array}
\vspace{-3.5ex}
\]
\end{proof}

\begin{corollary}\label{l:tildaLatticeEquivalence}
For every $\kc\in \tail(\kP)^\vee\cap\kQuot$ we have 
$\,\wteta(\kc)\in \kMTZD(\kP)$ if and only if $\,\keta(\kc)\in\ZZ$.  
\end{corollary}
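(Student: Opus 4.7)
The plan is to derive this directly from Lemma~\ref{lem-etaTildeViaL}. Choose any edge path $\vast=v^0,v^1,\ldots,v^k=v(\kc)$; that lemma gives the decomposition
\[
\wteta(\kc)\;=\;\keta(\kc)\cdot s_{v(\kc)}\;+\;\sum_{j=1}^k\kL_{j-1,j}(\kc).
\]
Since $\kc\in\kQuot$ is integral and each $\kL_{j-1,j}$ lies in $\kMTZD(\kP)\otimes_{\ZZ}\kQuotD$ (as noted after Lemma~\ref{ilem-indepRefPointForLattice}), the evaluations $\kL_{j-1,j}(\kc)$ all sit in $\kMTZD(\kP)$. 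Hence the membership question reduces to whether the leading term $\keta(\kc)\cdot s_{v(\kc)}$ lies in $\kMTZD(\kP)$.

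I will split according to whether $v(\kc)$ is a lattice point. If $v(\kc)\in\kQuotD$, then $\keta(\kc)=-\braket{v(\kc),\kc}\in\ZZ$ automatically, while on the other side $s_{v(\kc)}=0$ in $\kMTD(\kP)$ by equation~(\ref{eq:dualEquationsT(Q)2}); thus both the lattice membership and the integrality of $\keta(\kc)$ hold trivially, and the equivalence is vacuous.

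The interesting case is $v(\kc)\in\VrtxnZ(\kP)$. Here $s_{v(\kc)}$ is a nonzero generator of $\kMTZD(\kP)$, so the ``if'' direction is immediate: $\keta(\kc)\in\ZZ$ forces $\keta(\kc)\cdot s_{v(\kc)}\in\ZZ\cdot s_{v(\kc)}\subseteq\kMTZD(\kP)$, whence $\wteta(\kc)\in\kMTZD(\kP)$. For the converse, the key trick is to test against the distinguished lattice element $\oneone=[\kP]\in\kMTZ(\kP)$ introduced in Section~\ref{sec:theAbmientSpace}, which satisfies $s_j(\oneone)=1$ for every $v^j\notin\kQuotD$. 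Evaluating at $\oneone$ gives $(\keta(\kc)\cdot s_{v(\kc)})(\oneone)=\keta(\kc)$, so if $\keta(\kc)\cdot s_{v(\kc)}\in\kMTZD(\kP)$ then this value must be an integer.

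No step poses a real obstacle; the only thing to watch is the bookkeeping in the lattice-vertex case, since $s_{v(\kc)}$ is literally zero in $\kMTD(\kP)$ rather than merely a lattice generator, and so the statement degenerates there. The rest is a one-line application of Lemma~\ref{lem-etaTildeViaL} combined with testing against $\oneone$.
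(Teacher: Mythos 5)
Your proof is correct, and the approach is essentially that of the paper once you unwind it. Both directions hinge on Lemma~\ref{lem-etaTildeViaL}: for $\Leftarrow$ you and the paper argue identically, isolating $\keta(\kc)\cdot s_{v(\kc)}$ and noting the $\kL_{j-1,j}(\kc)$ live in $\kMTZD(\kP)$. For $\Rightarrow$ your test-against-$\oneone$ trick is literally the map $\kpi:\kMTD(\kP)\to\RR$ from Subsection~\ref{ssec:theLatticeStructureInT(P)} (since $\oneone\in\kMTZ(\kP)$ has $t_{ij}=1$ and $s_v=1$ for $v\notin\kQuotD$, pairing with $\oneone$ sends $t_{ij},s_v\mapsto 1$ exactly as $\kpi$ does); the paper just invokes $\kpi(\wteta(\kc))=\keta(\kc)$ and the fact that $\kpi(\kMTZD(\kP))\subseteq\ZZ$ directly on $\wteta(\kc)$, without first subtracting off the $\kL$-terms -- a shortcut that makes the detour through the decomposition unnecessary for that direction, though yours is no less valid.
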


\begin{proof}
Since $\kpi:\kMTZD(\kP)\to\ZZ$ maps $\wteta(\kc)$ to $\keta(\kc)$,
we obtain the first implication.
The converse  is a direct consequence of Lemma~\ref{lem-etaTildeViaL} and the integrality of $\kL_{ij}$..
\end{proof}

For $\kc\in \tail(\kP)^\vee\cap\kQuot$,
we recall $\ketaZ(\kc)=\roundup{\keta(\kc)}\in\ZZ$
from Definition~\ref{d:eta(c)}. Thus,
Lemma~\ref{lem-etaTildeViaL} suggests the following possibility to lift this
definition via $\kpi$.

\begin{definition}
  \label{def-etaTildeZ}
 For every $\kc \in \tail(\kP)^\vee\cap\kQuot$ we define $\ktetaZ(\kc)\in\kMTZD(\kP)$ as
 \begin{eqnarray*}
\ktetaZ(\kc) &:=& \ketaZ(\kc)\cdot s_{v(\kc)} + {\textstyle \sum_{j=1}^k}\kL_{j-1,j}(\kc) \\
             &=&\wteta(\kc) + \big(\ketaZ(\kc)-\keta(\kc)\big)\cdot s_{v(\kc)}\\
             &=&\wteta(\kc) + \etaFrac{\keta(\kc)}\cdot s_{v(\kc)}.
 \end{eqnarray*}
\end{definition}
\begin{remark}
  \label{rem-positiveEtaCC}
  \begin{enumerate}[label=(\roman*)]
  \item By Convention~\ref{conv-refPtsB} we have $\kpi\big(\kteta(\kc))=\keta(\kc)$. We also have 
    \[
      \kpi\big(\ktetaZ(\kc)\big)=\ketaZ(\kc).
    \]
\item The pair $[\kc,\ktetaZ(\kc)]$ is a quite natural lifting of 
$[\kc,\ketaZ(\kc)]$ from $\kQuot\times \NN$ to $\kQuot\times \kMTD(\kP)$. 
However, even when asking for some positivity,
it might be not the only lifting -- see \cite[3.7]{AK13} for an example.
  \end{enumerate}
\end{remark}
\begin{lemma}
  \label{lem:ktetaZindepOfv(c)}
  The definition of $\ktetaZ(\kc)$ does not depend on the choice of the vertex $v(\kc)$.
\end{lemma}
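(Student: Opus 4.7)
My plan is to exploit the alternative formula
\[
\ktetaZ(\kc) \;=\; \wteta(\kc) \,+\, \etaFrac{\keta(\kc)}\cdot s_{v(\kc)}
\]
already contained in Definition~\ref{def-etaTildeZ}. By Lemma~\ref{lem-etaTildeVc}, the first summand $\wteta(\kc)$ does not depend on the choice of $v(\kc)$. Thus the task reduces to showing that the correction term $\etaFrac{\keta(\kc)}\cdot s_{v(\kc)}\in\kMTD(\kP)$ is independent of the choice of the minimizing vertex.

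I would split into the two natural cases. If $\keta(\kc)\in\ZZ$, then by definition $\etaFrac{\keta(\kc)}=0$, so the correction term vanishes and there is nothing to prove. In the complementary case $\keta(\kc)\notin\ZZ$, pick any other minimizer $v'(\kc)$ and connect it to $v(\kc)$ by a path of edges lying entirely inside the minimizing face $f_c=\{v\in\kP\kst \braket{v,\kc}=-\keta(\kc)\}$. Since $\kc\in\tail(\kP)\dual\cap\kQuot$ but $\keta(\kc)\notin\ZZ$, every point $v\in f_c$ satisfies $\braket{v,\kc}\notin\ZZ$, and consequently $v\notin\kQuotD$. In particular, every edge $[v^i,v^j]$ of this path satisfies $[v^i,v^j]\cap\kQuotD=\emptyset$.

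Finally, the defining relations of $\kMT(\kP)^\perp$ of type~\eqref{eq:dualEquationsT(Q)3} in Subsection~\ref{ssec:understandMTPD} say precisely that, for such edges, $s_i-s_j\in\kMT(\kP)^\perp$; hence $s_i=s_j$ as elements of the quotient $\kMTD(\kP)$. Iterating along the path from $v(\kc)$ to $v'(\kc)$ yields $s_{v(\kc)}=s_{v'(\kc)}$ in $\kMTD(\kP)$, and multiplying by the common scalar $\etaFrac{\keta(\kc)}$ finishes the argument.

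The only potential obstacle is to be sure that two minimizing vertices can indeed be joined by a path along edges that stay inside $f_c$ and avoid the lattice. The first point is automatic since $f_c$ is itself a compact face of $\kP$ whose edges are edges of $\kP$; the second follows, as observed above, from the simple observation that the constant value $-\keta(\kc)$ of $\braket{\cdot,\kc}$ on $f_c$ is non-integral while $\kc\in\kQuot$. No further computation is required.
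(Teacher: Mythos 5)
Your proof is correct and follows essentially the same route as the paper: reduce via Lemma~\ref{lem-etaTildeVc} to well-definedness of the correction term $\etaFrac{\keta(\kc)}\cdot s_{v(\kc)}$, observe that the nontrivial case forces $\keta(\kc)\notin\ZZ$, so the minimizing face $f_c$ is disjoint from $\kQuotD$, and then chain the relations $s_i=s_j$ along a path of lattice-disjoint edges inside $f_c$. The paper's proof is phrased a bit more tersely but the decomposition into cases and the use of relation~\eqref{eq:dualEquationsT(Q)3} from Definition~\ref{def:TP} are identical.
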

\begin{proof}
As we did in the proof of Lemma~\ref{lem-etaTildeVc},
we may connect $v(\kc)$ and $v'(\kc)$ by edges $\aed=[v^i,v^j]$ contained in the face
$f_c:=\{p\in\kP\kst \braket{ p,\kc} =\min\braket{ \kP,\kc}\}$.
If the shared coefficient of
our two heroes $s_{v(\kc)}$ and $s_{v'(\kc)}$ matters at all, that is, if $\ketaZ(\kc)-\keta(\kc)\neq 0$, then $\keta(\kc)\notin\ZZ$, i.e.\
the face $f_c$ contains no lattice points.
That is, any edge $\aed=[v^i,v^j]$ on the path between $v(\kc)$ and
$v'(\kc)$ satisfies the property $[v^i,v^j]\cap\kQuotD=\emptyset$.
This property occurs in Definition~\ref{def:TP}, and implies $s_i=s_j$ as elements of $\kMTZD(\kP)\subseteq\kMTD(\kP)$.
Altogether, it means that $s_{v(\kc)}=s_{v'(\kc)}$.
\end{proof}
\subsection{Relations}
Having defined $\keta(\kc)$ as a minimum and $\ketaZ(\kc)=\roundup{\keta(\kc)}$ (Definition~\ref{d:eta(c)}), we get
\(
\roundup{\keta(\kc_1)} +\dots+ \roundup{\keta(\kc_\ell)} \geq
\roundup{\keta(\kc_1)+\dots+\keta(\kc_\ell)}
\geq \roundup{\keta(\kc_1+\dots+\kc_\ell)}
\)
for any sequence $\kc_1,\dots,\kc_\ell$ of not necessarily distinct elements of $\tail(\kP)^\vee\cap\kQuot$.
This implies:
\begin{equation}
  \label{eq:inequalityOverSumsKeta}
  \textstyle
  \ketaZ(\kc_1)+\ldots+\ketaZ(\kc_\ell)\geq\ketaZ\big(\kc_1+\ldots+\kc_\ell\big).
\end{equation}

\begin{definition}
  \label{def-independent}
Let $\ell\geqslant 2$.  For each sequence $\kc_1,\dots,\kc_\ell\in\tail(\kP)^\vee\cap\kQuot$  of not necessarily distinct elements, and for each of the symbols $\keta,\ketaZ,\kteta,$ or $\ktetaZ$, which we represent bellow by a $\geta$,  we define
   \[ 
     \geta(\kc_1,\ldots,\kc_\ell):= {\textstyle \sum_{i=1}^\ell \geta(\kc_i) - \geta\big(\sum_{i=1}^\ell\kc_i\big)}
   \]
   A sequence  $\kc_1,\ldots,\kc_\ell$ is called \emph{$\geta$-independent} if $\geta(\kc_1,\ldots,\kc_\ell)=0$. 
   We use the convention that every sequence of length one is independent as well\footnote{This
     make sense, as the definition of $\geta(\kc_1,\dots,\kc_\ell)$ would give zero for $\ell=1$.
     However, due to the overlap in notation for $\ell=1$, we  define $\geta(\kc_1,\dots,\kc_\ell)$ only for $\ell\ge 2$.}. 
 \end{definition}
 This definition does not depend on the order of the $\kc_i$, just on the multiset. 

\begin{remark}
\label{rem-positiveEtaCC}
\begin{enumerate}[label=(\roman*)]
\item Convention~\ref{conv-refPtsB} and Remark~\ref{rem-positiveEtaCC} extend to:
  \begin{eqnarray*}
    \kpi\big(\kteta(\kc_1,\dots,\kc_\ell)\big)&=&\keta(\kc_1,\dots,\kc_\ell)\\
    \kpi\big(\ktetaZ(\kc_1,\dots,\kc_\ell)\big)&=&\ketaZ(\kc_1,\dots,\kc_\ell).
  \end{eqnarray*}
\item The fact that $\keta(\kc_1,\dots,\kc_\ell)\geq 0$ is a trivial consequence of $\keta(\kc)$ being defined as some minimum.
  However, for the integral variant $\ketaZ(\kc_1,\dots,\kc_\ell)$ one should keep in mind that this does \emph{not} need to be the roundup of $\keta(\kc_1,\dots,\kc_\ell)$;
even the inequality $\ketaZ(\kc_1,\kc_2)\geq\keta(\kc_1,\kc_2)$ might
fail. Nevertheless, the non-negativity of $\ketaZ(\kc_1,\dots,\kc_\ell)$ is given by Equation (\ref{eq:inequalityOverSumsKeta}), so $\ketaZ(\kc_1,\ldots,\kc_\ell)\in \NN$.
\item \label{item:inductiveStepKteta}For every $\geta\in\{\keta,\ketaZ,\kteta,\ktetaZ\}$, for every $\kc_1,\dots,\kc_\ell\in\tail(\kP)\dual\cap\kQuot$ with $\ell\geqslant 2$, and for every $i=2,\dots,\ell-1$ we have
  \begin{eqnarray*}
    \geta(\kc_1,\dots,\kc_\ell) &=& \phantom{+~}\geta(\kc_1) +\dots+ \geta(\kc_i) - \geta(\kc_1+\dots+\kc_i)+\\
                                  &&  +~ \geta(\kc_1+\dots+\kc_{i}) +\geta(\kc_{i+1})+\dots+\geta(\kc_\ell)- \geta(\kc_1+\dots+\kc_\ell)\\
    &=& \geta(\kc_1,\dots,\kc_i) + \geta(\kc_1+\dots+\kc_{i},\kc_{i+1},\dots,\kc_\ell).
  \end{eqnarray*}
  \item \label{item:sameSemigroupWith2orMany}
    In particular, the above recursive formula gives us the semigroup equality
    \[
      \spann{\NN}{\ktetaZ(\kc_1,\dots,\kc_\ell)~:~\forall~\ell\geqslant2} = \spann{\NN}{\ktetaZ(\kc_1,\kc_2)},
    \]
    where the $\kc_i$ run through all possible elements of $\tail(\kP)^\vee\cap\kQuot$.
\end{enumerate}
\end{remark}

\begin{example}
\label{ex-MinFourC}
Let us continue Example~\ref{ex-MinFourB}.
Denoting the variables associated to the two non-lattice
vertices $-\frac{1}{2}$ and $\frac{1}{2}$ by $s_1$ and $s_2$, respectively,
and denoting by $t$ the variable referring to the one and only edge
$\aed=\kP$, we obtain the following values:
\[
\renewcommand{\arraystretch}{1.3}
\begin{array}{c|cc|cc}
\kc & \keta(\kc) & \ketaZ(\kc) & \kteta(\kc) & \ktetaZ(\kc)
\\
\hline
-2 & 1 & 1 & -s_1+2t & -s_1+2t
\\
-1 & \frac{1}{2} & 1 & -\frac{1}{2} s_1+t & \frac{1}{2} (s_2-s_1) +t
\\
\hline
0 & 0 & 0 & 0 & 0 
\\
\hline
1 & \frac{1}{2} & 1 & \frac{1}{2} s_1 & s_1
\\
2 & 1 & 1 & s_1 & s_1
\end{array}
\]
Turning to the values for $\ktetaZ(\kc_1,\kc_2)$, this leads to
  \[
  \renewcommand{\arraystretch}{1.5}%
  \begin{array}{rclcrcl}
    \ktetaZ(\phantom{-}1,1)&=&s_1&\qquad\qquad&
    \ktetaZ(-1,-1)&=&s_2\\
  \ktetaZ(-1,1)&=&t+\frac{1}{2} (s_1+s_2)&&
  \ktetaZ(-2,\phantom{-}2)&=&2t \\
  \ktetaZ(-1,2)&=&\frac{1}{2} (s_2-s_1) +t &&
  \ktetaZ(-2,\phantom{-}1)&=&\frac{1}{2} (s_1-s_2) +t                     
  \end{array}%
\]
\end{example}

Our main goal in this section is to prove that the notions of $\ketaZ$-independence and $\ktetaZ$-independence from Definition~\ref{def-independent} are equivalent (Proposition~\ref{prop:equivalenceOfIndependence}).

\begin{lemma}
\label{lem-independent}
The property of $\ketaZ$-independence is bequeathed to subsequences and to partitioning.
Moreover, the latter is also true for the property of being ``minimally $\ketaZ$-dependent''.
\end{lemma}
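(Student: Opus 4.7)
The plan is to reduce everything to two facts: the nonnegativity $\ketaZ(\kc_1,\ldots,\kc_\ell)\geqslant 0$ coming from Equation~(\ref{eq:inequalityOverSumsKeta}), and the recursive identity
\[
\ketaZ(\kc_1,\ldots,\kc_\ell) = \ketaZ(\kc_1,\ldots,\kc_i) + \ketaZ\big(\kc_1+\ldots+\kc_i,\,\kc_{i+1},\ldots,\kc_\ell\big)
\]
from Remark~\ref{rem-positiveEtaCC}\,\ref{item:inductiveStepKteta}, combined with the observation that $\ketaZ(\kc_1,\ldots,\kc_\ell)$ depends only on the underlying multiset, so we may freely permute.

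First I would handle subsequences. To show that deleting a single element $\kc_j$ preserves independence, I permute to place $\kc_j$ last and apply the recursive identity at $i=\ell-1$, obtaining
\[
\ketaZ(\kc_1,\ldots,\kc_\ell) \;=\; \ketaZ(\kc_1,\ldots,\widehat{\kc_j},\ldots,\kc_\ell) \;+\; \ketaZ\!\big({\textstyle\sum_{k\neq j}\kc_k},\,\kc_j\big).
\]
Both terms on the right are nonnegative by Remark~\ref{rem-positiveEtaCC}, so vanishing of the left forces vanishing of both. Iterating yields independence for any subsequence.

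Next I would treat partitioning. Given a partition of $\{\kc_1,\ldots,\kc_\ell\}$ into blocks $B_1,\ldots,B_r$ with block sums $\Sigma_j:=\sum_{c\in B_j}c$, I claim the master identity
\[
\ketaZ(\kc_1,\ldots,\kc_\ell) \;=\; \sum_{j=1}^{r}\ketaZ(B_j) \;+\; \ketaZ(\Sigma_1,\ldots,\Sigma_r),
\]
proved by induction on $r$: for $r=2$, merge the first block via the recursive identity, then permute and apply it once more to merge the second block into $\Sigma_2$; for general $r$, merge one block at a time. Again every summand on the right is nonnegative, so vanishing of the left gives simultaneous vanishing of each internal $\ketaZ(B_j)$ and of the coarsened $\ketaZ(\Sigma_1,\ldots,\Sigma_r)$. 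This proves that independence is bequeathed both to the individual blocks and to the coarsened sequence.

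Finally I would transfer to minimal dependence. If $\kc_1,\ldots,\kc_\ell$ is minimally dependent and the partition has $r\geqslant 2$ blocks, each $B_j$ is a proper subsequence, hence independent, so the master identity collapses to $\ketaZ(\Sigma_1,\ldots,\Sigma_r) = \ketaZ(\kc_1,\ldots,\kc_\ell) > 0$, giving dependence of the coarsened sequence. For minimality, a proper subcollection $\{\Sigma_{j_1},\ldots,\Sigma_{j_k}\}$ corresponds to the union $\bigcup_i B_{j_i}$, which is a proper subsequence of $\{\kc_1,\ldots,\kc_\ell\}$ and therefore independent; applying the master identity to this union with the induced partition shows $\ketaZ(\Sigma_{j_1},\ldots,\Sigma_{j_k})=0$, completing the argument. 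No serious obstacle is expected; the only point that requires care is the inductive bookkeeping for the master identity with $r>2$ blocks, where one must repeatedly invoke permutation-invariance to reduce to the two-block case.
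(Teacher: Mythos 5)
Your proof is correct and uses the same two ingredients the paper cites—the nonnegativity in Equation~(\ref{eq:inequalityOverSumsKeta}) and the recursion in Remark~\ref{rem-positiveEtaCC}\,\ref{item:inductiveStepKteta}—together with permutation-invariance of $\ketaZ$. The only difference is presentational: you package the iterated two-term merges into an explicit ``master identity'' valid for arbitrary partitions, whereas the paper merges just $\kc_1$ and $\kc_2$ and leaves the generalization (and the verification of minimality for the coarsened sequence) implicit.
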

\begin{proof}
  The first part follows from the equation (\ref{eq:inequalityOverSumsKeta}) and  Remark~\ref{rem-positiveEtaCC}~\ref{item:inductiveStepKteta}.
  If  $\ell\geqslant3$ and $\kc_1,\ldots,\kc_{\ell}$ is minimally dependent, then $\ketaZ(\kc_1,\dots,\kc_\ell)>0$ and $\ktetaZ(\kc_1,\kc_2) =0$, 
so $(\kc_1+\kc_2),\kc_3,\ldots,\kc_{\ell}$ is dependent too. The minimality of this property is clear.
\end{proof}
The next lemma will not be directly applied later on. Its proof however can be seen as a warm up in which some notation is fixed for the proof the main result in this section. 
\begin{lemma}
  \label{lem:etaIndependence}
  Let $\kc_1,\kc_2 \in \tail(\kP)^\vee\cap\kQuot$. If  $\keta(\kc_1,\kc_2)=0 $, then $\kteta(\kc_1,\kc_2)=0$.
\end{lemma}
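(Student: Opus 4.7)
The plan is to exploit the independence of $\wteta(\kc)$ on the chosen vertex and on the chosen path (Lemma~\ref{lem-etaTildeVc}), together with the linearity of $\wteta(\kc)$ in $\kc$ for a \emph{fixed} choice of path.

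First, I would extract geometric consequences from the hypothesis $\keta(\kc_1,\kc_2)=0$. By definition this reads
\[
-\langle v(\kc_1),\kc_1\rangle - \langle v(\kc_2),\kc_2\rangle + \langle v(\kc_1+\kc_2),\kc_1+\kc_2\rangle = 0.
\]
Since $v(\kc_i)$ minimizes $\langle v,\kc_i\rangle$ over $\kP$, each of
\[
\langle v(\kc_1+\kc_2),\kc_1\rangle \geq \langle v(\kc_1),\kc_1\rangle, \qquad \langle v(\kc_1+\kc_2),\kc_2\rangle \geq \langle v(\kc_2),\kc_2\rangle
\]
holds, and their sum is an equality by assumption. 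Hence both are equalities, i.e.\ the vertex $v(\kc_1+\kc_2)$ also realizes $\keta(\kc_1)$ and $\keta(\kc_2)$.

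Next, I invoke Lemma~\ref{lem-etaTildeVc}: $\wteta(\kc_i)$ does not depend on which minimizing vertex we use. We may therefore set $v(\kc_1)=v(\kc_2)=v(\kc_1+\kc_2)=:v$, and fix a single path $\vast=v^0,v^1,\dots,v^k=v$ along compact edges of $\kP$. With this common choice, the defining formula
\[
\wteta(\kc) = -\langle \vast,\kc\rangle\cdot s_\vast - \sum_{j=1}^k \langle v^j-v^{j-1},\kc\rangle\cdot t_{j-1,j}
\]
is visibly $\RR$-linear in $\kc$. Consequently
\[
\wteta(\kc_1)+\wteta(\kc_2) = \wteta(\kc_1+\kc_2),
\]
which is exactly $\kteta(\kc_1,\kc_2)=0$.

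I do not anticipate a real obstacle here: the whole argument hinges on noting that the only obstruction to $\kteta$ being additive in $\kc$ is the freedom to choose different vertices and different paths for different arguments, and the hypothesis $\keta(\kc_1,\kc_2)=0$ precisely removes the vertex obstruction while the path obstruction is already absent by Lemma~\ref{lem-etaTildeVc}. The one subtlety to double-check is that $v(\kc_1+\kc_2)$ is an \emph{admissible} choice for $v(\kc_1)$ and $v(\kc_2)$ in Definition~\ref{d:etatilde(c)} — but the argument above shows it minimizes $\langle\cdot,\kc_i\rangle$, hence it is.
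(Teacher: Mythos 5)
Your proof is correct and follows essentially the same idea as the paper's: show that the hypothesis forces $v(\kc_1+\kc_2)$ to be an admissible common choice of vertex for $\kc_1$, $\kc_2$, and $\kc_1+\kc_2$, at which point $\kteta(\kc_1,\kc_2)=0$ by the fixed-path linearity of $\wteta$. The only difference is cosmetic: you deduce the shared-vertex fact directly from the two minimality inequalities forced to be equalities, whereas the paper deduces it by first expressing $\kteta(\kc_1,\kc_2)$ as a non-negatively weighted sum over the two edge paths emanating from $v(\kc_1+\kc_2)$ and applying $\kpi$ — a formulation it also needs later in the proof of Proposition~\ref{prop:equivalenceOfIndependence}.
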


\begin{proof}
  If $\vect{\aed}^1_1,\ldots,\vect{\aed}^1_k$ and $\vect{\aed}^2_1,\ldots,\vect{\aed}^2_l$ are the oriented edges forming paths 
leading from $v(\kc_1+\kc_2)$ to $v(\kc_1)$ and $v(\kc_2)$   with 
  $\braket{\vect{\aed}^1_i,\kc_1}, \braket{\vect{\aed}^2_j,\kc_2}\leq 0$, respectively,
we obtain
  \begin{equation}
    \label{eq:ktetac1c2AsSumOftij}
    \textstyle
    \kteta(\kc_1,\kc_2)=
    -\sum_{i=1}^k\braket{ \vect{\aed}^1_i,\kc_1} \cdot t_{\aed^1_i} 
    -\sum_{j=1}^l\braket{ \vect{\aed}^2_j,\kc_2} \cdot t_{\aed^2_j}     
  \end{equation}
showing non-negative coefficients.
This implies that the edges $\vect{\aed}^1_i$ and $\vect{\aed}^2_j$ have to be contained in
$\kc_1^\bot$ and $\kc_2^\bot$, respectively, because
  \[
    \textstyle
    \kpi\big(\kteta(\kc_1,\kc_2)\big) =
    -\sum_{i=1}^k\braket{ \vect{\aed}^1_i,\kc_1}  
    -\sum_{j=1}^l\braket{ \vect{\aed}^2_j,\kc_2} 
    = \keta(\kc_1,\kc_2)=0.
    \]
Thus, $\braket{ v(\kc_1),\kc_1}=\braket{ v(\kc_1+\kc_2),\kc_1}$
and $\braket{ v(\kc_2),\kc_2}=\braket{ v(\kc_1+\kc_2),\kc_2}$.
This means that we could have chosen, i.e.\ that we can assume now, that
\(
v(\kc_1)=v(\kc_1+\kc_2)=v(\kc_2),
\)
which implies  $\kteta(\kc_1,\kc_2)=0$.
\end{proof}
In the next proof, the generators of $\kMT(\kP)^\perp$ described 
in (\ref{eq:dualEquationsT(Q)1}-\ref{eq:dualEquationsT(Q)4}) play a 
crucial role. 

\begin{proposition}
  \label{prop:equivalenceOfIndependence}
 Being $\ketaZ$-independent is equivalent to being $\ktetaZ$-independent. 
\end{proposition}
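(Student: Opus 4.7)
The implication $\ktetaZ$-independence $\Rightarrow$ $\ketaZ$-independence is immediate: applying the projection $\kpi$ to $\ktetaZ(\kc_1,\dots,\kc_\ell)=0$ gives $\ketaZ(\kc_1,\dots,\kc_\ell)=0$ by Remark~\ref{rem-positiveEtaCC}(i).

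For the converse, assume $\ketaZ(\kc_1,\dots,\kc_\ell)=0$. The recursive identity of Remark~\ref{rem-positiveEtaCC}(iii),
\[
  \ketaZ(\kc_1,\dots,\kc_\ell) = \ketaZ(\kc_1,\kc_2)+\ketaZ(\kc_1+\kc_2,\kc_3,\dots,\kc_\ell),
\]
holds verbatim for $\ktetaZ$. Both $\ketaZ$-summands on the right are non-negative by Remark~\ref{rem-positiveEtaCC}(ii), hence each vanishes, and induction on $\ell$ reduces the statement to the case $\ell=2$.

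Fix $\kc_1,\kc_2$ with $\ketaZ(\kc_1,\kc_2)=0$, and choose paths $A_0\colon\vast\to v(\kc_1+\kc_2)$ and $A_k\colon v(\kc_1+\kc_2)\to v(\kc_k)$ ($k=1,2$) along which $\kc_k$ is non-increasing. Evaluating $\ktetaZ(\kc_1),\ktetaZ(\kc_2),\ktetaZ(\kc_1+\kc_2)$ via Definition~\ref{def-etaTildeZ} along the concatenations $A_0A_1$, $A_0A_2$, and $A_0$ respectively, the $A_0$-contributions cancel by linearity of $\kL_{ij}$ in the covector. Writing $\alpha_k:=\etaFrac{\keta(\kc_k)}$, $\alpha_{12}:=\etaFrac{\keta(\kc_1+\kc_2)}$, and $\lambda^k_i:=-\braket{\vect{\aed}^k_i,\kc_k}\geq 0$, telescoping the remaining $s$-endpoint contributions along $A_1$ and $A_2$ produces the representative
\[
  \ktetaZ(\kc_1,\kc_2) = \alpha_1 s_{v(\kc_1)} + \alpha_2 s_{v(\kc_2)} - \alpha_{12} s_{v(\kc_1+\kc_2)} + \sum_i \lambda^1_i t_{\aed^1_i} + \sum_j \lambda^2_j t_{\aed^2_j}.
\]
The hypothesis translates into $\alpha_1+\alpha_2+\sum_i\lambda^1_i+\sum_j\lambda^2_j=\alpha_{12}<1$, bounding every non-negative summand.

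If $\keta(\kc_1,\kc_2)=0$, all $\lambda^k_i$ vanish, and as in the proof of Lemma~\ref{lem:etaIndependence} we may take $v(\kc_1)=v(\kc_2)=v(\kc_1+\kc_2)$, so the displayed expression collapses to $(\alpha_1+\alpha_2-\alpha_{12})s_{v(\kc_1+\kc_2)}=0$. The main technical obstacle is the remaining case $\keta(\kc_1,\kc_2)>0$, where $\alpha_{12}>0$ forces $v(\kc_1+\kc_2)\in\VrtxnZ(\kP)$: here one must argue, via the short-edge, non-lattice-edge, and 2-face generators \eqref{eq:dualEquationsT(Q)1}--\eqref{eq:dualEquationsT(Q)4} of $\kMT(\kP)^\perp$, that each $t_{\aed^k_i}$ with $\lambda^k_i>0$ and each $s_{v(\kc_k)}$ with $\alpha_k>0$ equals $s_{v(\kc_1+\kc_2)}$ in $\kMTZD(\kP)$. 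Granted this, the representative reduces modulo $\kMT(\kP)^\perp$ to $(\alpha_1+\alpha_2+\sum_i\lambda^1_i+\sum_j\lambda^2_j-\alpha_{12})s_{v(\kc_1+\kc_2)}=0$, completing the proof.
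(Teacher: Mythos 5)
Your outline is structurally sound and tracks the paper's argument closely: the easy direction via $\kpi$, the reduction to $\ell=2$ by non-negativity plus the recursive identity, the representative of $\ktetaZ(\kc_1,\kc_2)$ in $\RR^{\kPr+\kPm}$ with non-negative $t$- and $s$-coefficients, and the observation that those coefficients sum to $\alpha_{12}<1$. The subcase $\keta(\kc_1,\kc_2)=0$ is also handled correctly, matching Lemma~\ref{lem:etaIndependence}.

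However, the step you label as ``the main technical obstacle'' and then pass over with ``Granted this, \dots'' is in fact the heart of the proposition, and leaving it unproved is a genuine gap, not a deferral of routine bookkeeping. What has to be shown is precisely that every (half-open) edge $[v^{i-1},v^i)$ traversed by the chosen paths is a \se~edge in the sense of Definition~\ref{def-shortEdge}, since only the \se-relation~\eqref{eq:dualEquationsT(Q)4} (together with~\eqref{eq:dualEquationsT(Q)3} for lattice-disjoint edges) delivers the chain $s_{v(\kc_1+\kc_2)}=t_{0,1}=s_{v^1}=\cdots=s_{v(\kc_1)}$ in $\kMTZD(\kP)$. The paper's proof of this is a dedicated counting argument: after insisting (by a suitable choice of $v(\kc_1)$) on strict inequalities $\braket{v^i,\kc_1}<\braket{v^{i-1},\kc_1}$, one adjoins an auxiliary $\kc_1$-integral point $v^{k+1}$ with $\braket{v^{k+1},\kc_1}=\rounddown{\braket{v(\kc_1),\kc_1}}$, converts the $s_{v(\kc_1)}$-contribution into a $\braket{v^k-v^{k+1},\kc_1}$-contribution so that the whole partial sum $S_1$ satisfies $\kpi(S_1)=\braket{v^0,\kc_1}-\braket{v^{k+1},\kc_1}<1$, and then shows that if some $[v^{i-1},v^i]$ failed to be \se, the scaled segment $[gv^0,gv^{k+1}]$ would contain at least $g+1$ $\kc_1$-integral points, forcing $g\cdot\kpi(S_1)\geq g$, a contradiction. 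Nothing in your plan substitutes for this argument; without it, the assertion that each $t_{\aed^k_i}$ and $s_{v(\kc_k)}$ collapses to $s_{v(\kc_1+\kc_2)}$ remains unestablished. In particular, the 2-face generators~\eqref{eq:dualEquationsT(Q)1} you mention are not what is needed here --- they already went into the path-independence of $\kteta$; the collapse comes entirely from the \se~and lattice-disjoint relations, whose applicability is exactly what the counting argument certifies.
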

\begin{proof}
  By Remark~\ref{rem-positiveEtaCC} $\ktetaZ$-independence implies $\ketaZ$-independence. For the other direction we use induction on the length $\ell$ of the sequence $\kc_1,\dots,\kc_\ell$. The case $\ell=1$ is trivial. The essential step is~$\ell=2$. 
  
So let $\kc_1,\kc_2\in  \tail(\kP)^\vee\cap\kQuot$ with $\ketaZ(\kc_1,\kc_2)=0$.
Note first that we have
\[
  \etaFrac{\keta(\kc)} = \roundup{\keta(\kc)} - \keta(\kc)
                       = \roundup{-\braket{v(\kc),c}} - (-\braket{v(\kc),\kc})
                       = \braket{v(\kc),\kc)} -\rounddown{\braket{ v(\kc),\kc}}.
\]
Combining the above relation with the definition of $\kteta$  and with the formula (\ref{eq:ktetac1c2AsSumOftij}) for $\kteta(\kc_1,\kc_2)$   we obtain:
\begin{eqnarray}
  \ktetaZ(\kc_1,\kc_2)&=& \kteta(\kc_1,\kc_2) + \etaFrac{\keta(\kc_1)}\cdot s_{v(\kc_1)} + \etaFrac{\keta(\kc_2)}\cdot s_{v(\kc_2)} - \etaFrac{\keta(\kc_1+\kc_2)}\cdot s_{v(\kc_1+\kc_2)}\notag\\
                                      &=&\textstyle                                          
                                          -\sum_{i=1}^k\braket{ \vect{\aed}^1_i,\kc_1}\cdot t_{\aed^1_i}  + \big(\braket{ v(\kc_1),\kc_1} - \rounddown{\braket{ v(\kc_1),\,\kc_1}} \big)\cdot s_{v(\kc_1)} \label{eq:EI1}\\
                                      &&\textstyle
                                         -\sum_{j=1}^l\braket{ \vect{\aed}^2_j,\kc_2}\cdot t_{\aed^2_j}  + \big(\braket{ v(\kc_2),\kc_2} - \rounddown{\braket{ v(\kc_2),\,\kc_2}} \big)\cdot s_{v(\kc_2)}\label{eq:EI2}\\
                                      && \phantom{\textstyle-\sum_{i=1}^k\braket{ \vect{\aed}^1_i,\kc_1}\cdot t_{\aed^1_i}}-  \big(\braket{ v(\kc_1+\kc_2),\kc_1+\kc_2} - \rounddown{\braket{ v(\kc_1+\kc_2),\,\kc_1+\kc_2}} \big)\cdot s_{v(\kc_1+\kc_2)}\notag
\end{eqnarray}
where the $\aed^1_i$ and $\aed^2_j$ are just as in the proof of
Lemma~\ref{lem:etaIndependence}.
Our goal is to show that, assuming $\eta_\ZZ$-independence, all the edges above are short. The proof is analogous for both paths, so we focus only on $\vect{\aed}^1_1,\ldots,\vect{\aed}^1_k$, and label the vertices  with $v(\kc_1+\kc_2)=v^{0},\ldots,v^{k}=v(\kc_1)$.
From  $\braket{\vect{\aed}^1_i,\kc_1}=\braket{v^i-v^{i-1},\kc_1}\leq 0$ we get that $\braket{ v^{i},\kc_1} \leq \braket{ v^{i-1},\kc_1}$.
Via a suitable choice of $v(\kc_1)$, we can even insist on strict inequalities:
\[
  \braket{ v^{i},\kc_1} <\braket{ v^{i-1},\kc_1}.
\]
So the vanishing of $\ketaZ(\kc_1,\kc_2)$, which is obtained from $\ktetaZ(\kc_1,\kc_2)$ by sending $t_{\kbb,\kbb}, s_\kbb\mapsto 1$,
means that all the coefficients in rows (\ref{eq:EI1})  and (\ref{eq:EI2}) added up
cancel with the single negative coefficient: that of $s_{v(\kc_1+\kc_2)}$, which is contained in the half-open real interval $[0,1)$.
In particular, the sum of all coefficients of $t_{\kbb,\kbb}$ and $s_\kbb$ below is positive and strictly less than 1:
\[
  \textstyle
S_1:=-\sum_{i=1}^k\braket{v^i-v^{i-1},\,\kc_1} \cdot t_{i-1,\,i}+ \big(\braket{ v(\kc_1),\kc_1} - \rounddown{\braket{v(\kc_1),\,\kc_1}} \big)\cdot s_{v(\kc_1)}.
\]
We would like to express the coefficient of $s_{v(\kc_1)}$ in a similar way as the coefficients of $t_{i-1,\,i}$.
For this, we choose a \emph{$\kc_1$-integral point}
\footnote{meaning that $\braket{v^{k+1},\kc_1}\in\ZZ$}
$\,v^{k+1}\in\kQuotRD$ such that 
\[
\braket{ v^{k+1},\kc_1} = \rounddown{\braket{ v(\kc_1),\,\kc_1}}
\in\ZZ.
\]
Note that integral points $v\in\kQuotD$ are always $\kc_1$-integral, but
the opposite is far from being true.
Moreover, note that,
unless $\keta(\kc_1)\in\ZZ$, the new point $v^{k+1}$ cannot be
contained in the polyhedron $\kP$.
On the other hand, if $\keta(\kc_1)\in\ZZ$, then we may and will choose 
$v^{k+1}:=v^k$.
Anyway, the $s_{v(\kc_1)}$- coefficient of $S_1$ becomes 
$\braket{v^k-v^{k+1},\,\kc_1}$, and we obtain
\[
\textstyle
\kpi(S_1)=-\sum_{i=1}^k\braket{v^i-v^{i-1},\,\kc_1}+\lan v^k-v^{k+1},c_1\ran=\braket{v^0,\,\kc_1} - \braket{v^{k+1},\,\kc_1}<1.
\]
We want to deduce that
\begin{equation}
  \label{eq:longChain}
s_{v(\kc_1+\kc_2)}=t_{0,1}=s_{v^1}=t_{1,2}=\ldots=s_{v^{k-1}}=t_{k-1,k}=s_{v(\kc_1)}
\hspace{1em}\mbox{in}\hspace{0.4em}
\kMTD(\kP)
\end{equation}
where the last $s_{v(\kc_1)}=s_{v^{k}}$ has to be omitted if 
$\keta(\kc_1)\in\ZZ$. These equalities (together with those for 
the analogous $\kc_2$-summand) obviously imply that
\[
\ktetaZ(\kc_1,\kc_2)=\ketaZ(\kc_1,\kc_2)\cdot s_{v(\kc_1+\kc_2)}=0.
\]
To obtain (\ref{eq:longChain}) we show that
for $i=1,\dots,k$ the edges $[v^{i-1},v^i]$  are \se\ edges,
cf.\ Definition~\ref{def-shortEdge}.
Actually, if $\keta(\kc_1)\in\ZZ$, then only 
the one half open $[v^{k-1},v^k)$ is needed to be \se\ for the last segment.
Assume that this fails for one of them.
Then we have
\[
  \card{[g\hspace{0.05em}v^{i-1},g\hspace{0.05em}v^i)\cap\kQuotD}\geq g \qquad\text{or}\qquad
  \card{(g\hspace{0.05em}v^{i-1},g\hspace{0.05em}v^i]\cap\kQuotD}\geq g,
\]
where  $g\in\NN_{\geq 1}$ denotes the smallest number such that the line 
connecting $gv^{i-1}$ and $gv^i$ contains lattice points.
In the first case, this implies that 
there are at least $(g+1)$ $\kc_1$-integral points along our path from
$gv^{i-1}$ to $gv^{k+1}$, hence, more than ever, from
$gv^{0}$ to $gv^{k+1}$. In the second case, we obtain the same, unless
$i=k$ and $v^k=v^{k+1}$ -- however this exactly means that we speak about
the half open interval $(v^{k-1},v^k]$ in the situation where
$\keta(\kc_1)\in\ZZ$, which was excluded before.
\\[1ex]
Anyway, we do always get
\[
\card{[g\hspace{0.05em}v^{0},g\hspace{0.05em}v^{k+1}]\cap\kQuotD}\geq g+1, 
\]
hence, we obtain
\[
g\cdot\kpi(S_1) = \braket{ gv^0,\kc_1} - \braket{ gv^{k+1},\kc_1}\geq g,
\]
a contradiction. 
This concludes the proof for $\ell=2$. The inductive step follows from Lemma~\ref{lem-independent} and  Remark~\ref{rem-positiveEtaCC}~\ref{item:inductiveStepKteta}.
\end{proof}

\subsection{Liftings and relations of the Hilbert basis}
\label{ssec:relationsAndHB}
In the last part of this section we will prepare the proof of the finite generation of the semigroup of all relations $\ktetaZ(\kc_1,\dots,\kc_\ell)$ given in Proposition~\ref{prop-finGenT}.
To this aim, let
\(
  \big\{[\ahb_1,\ketaZ(\ahb_1)],\dots,[\ahb_k,\ketaZ(\ahb_k)],[{0},1]\big\}
\)
be the Hilbert basis of $\kcP\dual\cap(\kQuot\oplus\ZZ)$ (cf. Subsection~\ref{sigmaDual}, Remark~\ref{rem:firstPropertiesOfeta}).
Multisets supported on $\{\ahb_1,\ldots,\ahb_k\}$ correspond to elements of $\NN^k$ via the  multiplicities of occurrence of each element: $m_1,\ldots,m_k\in\NN$. We denote them by  $\{\ahb_1^{m_1},\dots,\ahb_k^{m_k}\}$. So we may speak of $\ketaZ$-dependent elements $\bfm=(m_1,\dots,m_k)\in\NN^k$ via this correspondence, and write 
\[
  \ketaZ(\bfm):= \ketaZ(\ahb_1^{m_1},\dots,\ahb_k^{m_k}),~~\forall~\bfm\in\NN.
\]

\begin{lemma}
\label{lem-finMinDep}
The number of minimally $\ketaZ$-dependent elements of $\NN^k$ finite.
\end{lemma}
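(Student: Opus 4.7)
The plan is to observe that the set of minimally $\ketaZ$-dependent elements of $\NN^k$ forms an antichain under the componentwise partial order, and then invoke Dickson's lemma to obtain finiteness.

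First, I would unwind the bijection between $\NN^k$ and multisets supported on $\{\ahb_1,\dots,\ahb_k\}$: an inequality $\bfm' \leq \bfm$ componentwise corresponds exactly to the multiset encoded by $\bfm'$ being a sub-multiset of the one encoded by $\bfm$. By Lemma~\ref{lem-independent}, $\ketaZ$-independence is inherited by sub-multisets, so equivalently, $\ketaZ$-dependence is upward monotone: if $\bfm' \leq \bfm$ and $\bfm'$ is dependent, then $\bfm$ is dependent as well.

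Next, I would suppose that $\bfm, \bfm' \in \NN^k$ are both minimally $\ketaZ$-dependent with $\bfm' \lneqq \bfm$. Then $\bfm'$ encodes a proper sub-multiset of $\bfm$, and by the minimality of $\bfm$ every proper sub-multiset of $\bfm$ is $\ketaZ$-independent. This contradicts the assumed dependence of $\bfm'$. Hence any two distinct minimally $\ketaZ$-dependent elements are incomparable in the componentwise order, i.e.\ the set of all minimally dependent elements is an antichain in $(\NN^k, \leq)$.

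Finally, Dickson's lemma guarantees that every antichain in $(\NN^k,\leq)$ is finite, which immediately yields the claim. There is no real obstacle here: once the hereditary property of $\ketaZ$-independence from Lemma~\ref{lem-independent} is in hand, the argument is purely formal. The only point worth spelling out carefully is the translation between sub-multisets and componentwise inequalities, which is why I would include the opening paragraph explicitly.
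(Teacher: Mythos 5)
Your proof is correct and follows essentially the same route as the paper: both reduce the claim to Dickson's lemma, with the paper phrasing the upward-closedness of dependence (via Lemma~\ref{lem-independent}) in terms of a monomial ideal $\dep_\ketaZ\subseteq\ZZ[t_1,\ldots,t_k]$ whose minimal generators are the minimally dependent elements, while you phrase the same thing as the minimally dependent elements forming an antichain. One small remark: your antichain step already follows from the definition of minimal dependence alone (a smaller minimally dependent $\bfm'$ would be a proper sub-multiset of $\bfm$, hence independent by minimality of $\bfm$), so the appeal to Lemma~\ref{lem-independent} for upward monotonicity in your first paragraph is true but not actually used in the contradiction.
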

\begin{proof} 
By Lemma~\ref{lem-independent},
the set of dependent sequences supported on 
$\ahb_1,\dots,\ahb_k$  is in order-preserving correspondence with a 
subset of $\NN^k$ representing a monomial ideal
$\,\dep_\ketaZ\subseteq\ZZ[t_1,\ldots,t_k]$.
So the above statement follows from Dickson's Lemma.
\end{proof}

\section{The universal \cocartesian~ extension}
\label{sec:mainPlayers}
The inspiration for the following definition comes from Proposition~\ref{prop:characterizeCoCartesian}, which defines \cocartesian~\aextension s by isomorphic relative boundaries,
and Remark~\ref{rem:firstPropertiesOfeta} which describes the relative boundary of our given object \givenpair, where $\cone_\ZZ(\kP)\dual:=\cone(\kP)\dual\cap(M\oplus \ZZ)$. Denote by
\[
  \kMTtZD(\kP):= \kQuot \oplus \kMTZD(\kP).
\]
\begin{definition}\label{def:upperSemigroups}
For any rational polyhedron $\kP\subseteq \kQuotD_\RR$, define the semigroups  $\ktT,\ktS \subset \kMTtZD(\kP)$ as
\begin{eqnarray*}
 \ktT &=& \spann{\NN}{[0,\ktetaZ(\kc_1,\kc_2)]\kst  \kc_1,\kc_2\in \tail(\kP)^\vee\cap \kQuot},\\
 \ktS &=& \ktT + \spann{ \NN }{[\kc,\ktetaZ(\kc)]\kst \kc\in \tail(\kP)^\vee\cap \kQuot }.
\end{eqnarray*}
\end{definition}
\noindent 
By Remark~\ref{rem-positiveEtaCC}~\ref{item:sameSemigroupWith2orMany} we have
\[
  \ktT = \spann{\NN}{[0,\ktetaZ(\kc_1,\dots,\kc_\ell)]\kst \forall~\ell\geq 2 \text{~and~}\forall~ \kc_1,\dots,\kc_\ell\in \tail(\kP)^\vee\cap \kQuot}.
\]
Further on, in Proposition~\ref{prop-finGenT}, we will see that we could 
have chosen, in the latter version of defining $\ktT$,
only those $\kc$ which appear in a Hilbert Basis of $\cone_\ZZ(\kP)\dual$.

\subsection{Belonging to the category}
\label{sec:UOisCocartExtension}
In this section we will check that the semigroups $\ktT\subset\ktS$ form a \cocartesian~\aextension\ of \givenpair.
\begin{proposition}\label{prop boundary pr}
It holds that
\[
\Rand{\ktT}{\ktS} = \{[\kc,\ktetaZ(\kc)]~|~ \kc\in\tail(\kP)\dual\cap\kQuot\}.
\]
\end{proposition}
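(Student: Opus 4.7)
The plan is to prove the two inclusions separately, leaning on two ingredients. The first is the projection $\kpi\colon\kMTtZD(\kP)\to\kQuot\oplus\ZZ$ obtained as the identity on the $\kQuot$-factor together with the map of Section~\ref{s:liftingsAndRelations} sending all $t_{ij},s_v\mapsto 1$ on the $\kMTZD(\kP)$-factor. Because $\kpi\bigl([0,\ktetaZ(\kc_1,\kc_2)]\bigr)=[0,\ketaZ(\kc_1,\kc_2)]$ and $\kpi\bigl([\kc,\ktetaZ(\kc)]\bigr)=[\kc,\ketaZ(\kc)]$, this projection makes $\ktS\supseteq\ktT$ sit above the base pair $\givenpair$ of~\eqref{eq:pairDefinedBykP}. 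The second ingredient is the telescoping identity of Remark~\ref{rem-positiveEtaCC}~\ref{item:inductiveStepKteta},
\[
\textstyle\sum_{i=1}^{\ell}\ktetaZ(\kc_i)=\ktetaZ(\kc_1,\dots,\kc_\ell)+\ktetaZ(\kc_1+\dots+\kc_\ell),
\]
together with Remark~\ref{rem-positiveEtaCC}~\ref{item:sameSemigroupWith2orMany}, which guarantees that $[0,\ktetaZ(\kc_1,\dots,\kc_\ell)]\in\ktT$ for every $\ell\ge 2$.

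For the inclusion ``$\supseteq$'', I would fix $\kc$ and suppose that $[\kc,\ktetaZ(\kc)]-t\in\ktS$ for some $t\in\ktT$. Applying $\kpi$ yields $[\kc,\ketaZ(\kc)]-\kpi(t)\in\cone_\ZZ(\kP)\dual$ with $\kpi(t)\in\NN$. Since $[\kc,\ketaZ(\kc)]$ lies in $\Rand{\NN}{\cone_\ZZ(\kP)\dual}$ by Remark~\ref{rem:firstPropertiesOfeta}, and the base pair is free by Proposition~\ref{prop-ksumInj}, this forces $\kpi(t)=0$. Writing $t=\sum n_j[0,\ktetaZ(\kc_{1,j},\kc_{2,j})]$, this becomes $\sum n_j\,\ketaZ(\kc_{1,j},\kc_{2,j})=0$; each nonzero term therefore corresponds to a $\ketaZ$-independent pair and, via Proposition~\ref{prop:equivalenceOfIndependence}, also to a $\ktetaZ$-independent pair, so the corresponding summand of $t$ vanishes. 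Thus $t=0$, proving that $[\kc,\ktetaZ(\kc)]\in\Rand{\ktT}{\ktS}$.

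For ``$\subseteq$'', I would pick $[\kc,x]\in\Rand{\ktT}{\ktS}$ and use Definition~\ref{def:upperSemigroups} to write
\[
[\kc,x]=t+\sum_{i=1}^{\ell}[\kc_i,\ktetaZ(\kc_i)],\qquad t\in\ktT,\ \sum_i\kc_i=\kc.
\]
The telescoping identity rewrites the sum as $[0,\ktetaZ(\kc_1,\dots,\kc_\ell)]+[\kc,\ktetaZ(\kc)]$, whose first summand lies in $\ktT$. Therefore $[\kc,x]=t'+[\kc,\ktetaZ(\kc)]$ with $t'\in\ktT$, and since $[\kc,\ktetaZ(\kc)]\in\ktS$, the defining property of the relative boundary forces $t'=0$, i.e.\ $x=\ktetaZ(\kc)$. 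The degenerate cases $\ell\le 1$ are handled directly: for $\ell=0$, $[\kc,x]\in\ktT$ and the boundary condition immediately forces $[\kc,x]=0=[0,\ktetaZ(0)]$; for $\ell=1$ the argument above applies verbatim with the first telescoping summand equal to zero.

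The main obstacle is the ``$\supseteq$'' inclusion, whose success rests entirely on Proposition~\ref{prop:equivalenceOfIndependence}: without the implication that $\ketaZ$-independence guarantees $\ktetaZ$-independence, the projection $\kpi$ could collapse a genuinely nonzero $t\in\ktT$ to zero, and the boundary property of $[\kc,\ktetaZ(\kc)]$ could fail. The ``$\subseteq$'' direction, by contrast, is essentially mechanical -- it is just the telescoping identity combined with the definition of $\ktS$.
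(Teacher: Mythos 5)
Your proposal is correct and follows essentially the same route as the paper: both hinge on Proposition~\ref{prop:equivalenceOfIndependence}, used exactly as you do, to conclude that $\ker\kpi_\kT=0$ (which is what makes a generator $[0,\ktetaZ(\kc_{1,j},\kc_{2,j})]$ vanish whenever its image under $\kpi$ does), and both treat the reverse inclusion as the easy telescoping direction. You simply spell out the details that the paper compresses into a single sentence.
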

\begin{proof}
The main consequence of Proposition~\ref{prop:equivalenceOfIndependence} is that $\ker \kpi_\kT=0$, from which it follows that $[\kc,\ktetaZ(c)]\in \Rand{\ktT}{\ktS}$. The other inclusion is obvious.
\end{proof}

\begin{remark}
\label{rem-noChoice}
Since $[\kc,\ktetaZ(c)]$ are natural
(but not the only) liftings of $[\kc,\ketaZ(\kc)]\in\Rand{\NN}{\cone_\ZZ(\kP)\dual}$,
it is quite natural to put these elements in $\ktS$.
Independently of the shape of $\ktT$, the required triviality of the kernel of $\kpi|_{\ktT}:\ktT\to\NN$
(cf.  Definition~\ref{def-extension})
implies that $[\kc,\ktetaZ(c)]\in \Rand{\ktT}{\ktS}$. 
By the defining property of the relative boundary, it follows that
\[
[0,\,\ktetaZ(\kc_1,\kc_2)] = 
[\kc_1,\,\ktetaZ(\kc_1)] + [\kc_2,\,\ktetaZ(\kc_2)] - 
[\kc_1+\kc_2,\,\ktetaZ(\kc_1+\kc_2)]
\]
has to be contained in $\ktT$. Thus,  the Definition~\ref{def:upperSemigroups} was quite inevitable. At least, it was the minimal choice.
\end{remark}

Proposition~\ref{prop:equivalenceOfIndependence} is also crucial to prove the following.
\begin{proposition}
  \label{prop-coCart}
For every rational polyhedron $\kP$, the diagram  
  \[
 \xymatrix{
   \ktT~ \ar@{^{(}->}[r] \ar@{->}[d]_{\kpi_{\kT}}
   &
   \ktS \ar@{->}[d]^{\kpi_{\kS}}\\
   \givenpairXYmat
 }
\]
with vertical maps induced by $t_{\kbb,\kbb},s_v\mapsto 1$ for $v\not\in N$ and $s_v\mapsto 0$ for $v\in N$, is a \cocartesian~\aextension. This means  that 
the addition maps are surjective,  $\kpi_\kS$ induces a bijection on the boundaries, and $\ker\kpi_\kT=\ker\kpi_\kS=0$.
\end{proposition}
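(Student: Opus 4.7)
The proof breaks into four independent verifications: surjectivity of $\ksum$ and $\ktsum$, bijectivity of the induced map on boundaries, and triviality of $\ker\kpi_\kT$ and $\ker\kpi_\kS$. My plan is to dispatch the lower pair $(\NN,\cone_\ZZ(\kP)\dual)$ using tools already established -- surjectivity of $\ksum$ is Lemma~\ref{lem-ksumSurj}, and its relative boundary was identified in Remark~\ref{rem:firstPropertiesOfeta} as $\{[\kc,\ketaZ(\kc)]\kst \kc\in\tail(\kP)\dual\cap\kQuot\}$. The upper boundary was already computed in Proposition~\ref{prop boundary pr}. What remains genuinely to prove sits entirely on the upper level.

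For the surjectivity of $\ktsum$, I would exploit the key combinatorial identity: for any finite sequence $\kc_1,\dots,\kc_\ell\in\tail(\kP)\dual\cap\kQuot$ with sum $\kc:=\kc_1+\dots+\kc_\ell$, one has
\[
[\kc_1,\ktetaZ(\kc_1)]+\dots+[\kc_\ell,\ktetaZ(\kc_\ell)] = [\kc,\ktetaZ(\kc)] + [0,\ktetaZ(\kc_1,\dots,\kc_\ell)],
\]
where the last summand lies in $\ktT$ by Definition~\ref{def:upperSemigroups} together with Remark~\ref{rem-positiveEtaCC}\ref{item:sameSemigroupWith2orMany}. This shows every element of $\ktS$ can be written as $[\kc,\ktetaZ(\kc)]+\ktt$ for some $\kc$ and some $\ktt\in\ktT$, yielding surjectivity.

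The heart of the argument is $\ker\kpi_\kT=0$, and this is where I expect the main difficulty. Any element $\ktt\in\ktT$ has a representation $\ktt=\sum_j n_j [0,\ktetaZ(\kc_1^j,\kc_2^j)]$ with $n_j\in\NN$. Applying $\kpi_\kT$ yields $[0,\sum_j n_j\ketaZ(\kc_1^j,\kc_2^j)]$; non-negativity of each $\ketaZ(\kc_1^j,\kc_2^j)$ (Remark~\ref{rem-positiveEtaCC}) forces every term to vanish individually whenever the sum vanishes, i.e.\ each pair $(\kc_1^j,\kc_2^j)$ is $\ketaZ$-independent. The crucial equivalence in Proposition~\ref{prop:equivalenceOfIndependence} then upgrades this to $\ktetaZ$-independence, hence $\ktetaZ(\kc_1^j,\kc_2^j)=0$ and $\ktt=0$. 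Triviality of $\ker\kpi_\kS$ follows from this combined with the decomposition above: if $\kpi_\kS([\kc,\ktetaZ(\kc)]+\ktt)=[\kc,\ketaZ(\kc)+\kpi(\ktt)]=0$, then $\kc=0$ (forcing $\ktetaZ(\kc)=0$ by Convention~\ref{conv-refPtsB}) and $\kpi(\ktt)=0$, so $\ktt=0$.

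Finally, for the bijection on boundaries, the restriction of $\kpi_\kS$ sends $[\kc,\ktetaZ(\kc)]\mapsto[\kc,\ketaZ(\kc)]$ by construction; this is manifestly a bijection between the two parametrizations, with inverse recorded by the assignment $\kc\mapsto\ktetaZ(\kc)$. The obstacle throughout is therefore wholly concentrated in invoking Proposition~\ref{prop:equivalenceOfIndependence}, which is the substantial technical input; everything else is bookkeeping built on top of the explicit shape of $\ktT$ and $\ktS$.
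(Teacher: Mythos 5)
Your proposal is correct and follows essentially the same route as the paper's own proof: surjectivity downstairs from the free pair $(\NN,\kS)$, surjectivity upstairs from the explicit shape of $\ktS$ (every element reduces to $[\kc,\ktetaZ(\kc)]+\ktt$ via the defining relation of $\ktetaZ(\kc_1,\ldots,\kc_\ell)$), manifest bijectivity on boundaries, and triviality of $\ker\kpi_\kT$ reduced to Proposition~\ref{prop:equivalenceOfIndependence} by non-negativity. The paper is terser — it delegates the upstairs surjectivity and boundary description to Proposition~\ref{prop boundary pr} — but you have correctly identified that the whole weight rests on Proposition~\ref{prop:equivalenceOfIndependence}, and your bookkeeping fills in exactly the steps the paper leaves implicit.
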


\begin{proof}

  The addition map downstairs is surjective because the pair is \straight.
  The addition map upstairs is by Proposition~\ref{prop boundary pr} surjective.
  The restriction of $\kpi_\kS$ to the boundary maps $[\kc,\ktetaZ(\kc)]\longmapsto[\kc,\ketaZ(\kc)]$, which is obviously bijective. 
  
  We have that $\kpi_\kT([0,\ktetaZ(\kc_1,\kc_2)])=0 \iff \ketaZ(\kc_1,\kc_2) =0$, which by  Proposition~\ref{prop:equivalenceOfIndependence} is equivalent to $\ktetaZ(\kc_1,\kc_2)=0$.
  Since every element $\kts\in\ktS$ can be written as $\kts=[\kc,\ktetaZ(\kc)]+\ktt$ for some elements $\kc\in\tail(\kP)^\vee\cap\kQuot$ and $\ktt\in\ktT$,
we have that $\kpi(\kts)=0$ implies  $\kc=0$ and $\kpi(\ktt)=0$, which implies $\kts=0$ because $\ker\kpi_\kT=0$.
\end{proof}

\subsection{The $s$ and multiples of  $t$ are in $\ktT$}
\label{ssec:sandtinktT}
The next result shows that the special elements $s_i$ and some multiples of the $t_{ij}$ are always in $\ktT$.
\begin{proposition}
\label{prop-siInT}
For every $v^i\in\VrtxnZ(\kP)$ there exist some  $c^i_1,c^i_2 \in \cone(P)^\vee\cap \kQuot$ such that $\ktetaZ(c^i_1,c^i_2) = s_i$, where $s_i=s_{v^{i}}$ is the corresponding coordinate.
Furthermore, we can also find for each $t_{ij}$ a positive integer
$a_{ij}\in\NN$ such that $a_{ij}t_{ij}\in \ktT$.
\end{proposition}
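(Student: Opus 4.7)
The plan is to exhibit explicit pairs $(c_1,c_2)\in (\tail(\kP)^\vee\cap\kQuot)^2$ and compute $\ktetaZ(c_1,c_2)$ directly from Definition~\ref{def-etaTildeZ}, tuning the choices so that the result collapses to exactly $s_i$ in the first assertion and to a positive integer multiple of $t_{ij}$ in the second. The guiding principle is that when all three $\keta$-values involved are honest integers realised on a common path, the non-integer discrepancies disappear and the $\kL$-terms telescope to something very concrete.

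For the first assertion, let $v^i\in\VrtxnZ(\kP)$ with normal cone $N_i$, and pick $\kc\in\innt(N_i)\cap\kQuot$ with $a:=-\braket{v^i,\kc}\notin\ZZ$; such a $\kc$ exists because $v^i\notin\kQuotD$ allows us to perturb any interior lattice point by a direction whose pairing with $v^i$ is non-integral. Writing the fractional part of $a$ as $p/q$ in lowest terms, I would set $c^i_1:=\kc$ and $c^i_2:=(q-1)\kc$. Since $\kc,(q-1)\kc,q\kc$ all lie in $\innt(N_i)$, they each have $v^i$ as unique minimizer, and a single path from $\vast$ to $v^i$ serves all three $\ktetaZ$-terms; the $\kL$-contributions then cancel in the alternating sum and what survives is
\[
\ktetaZ(c^i_1,c^i_2)=\bigl(\ketaZ(\kc)+\ketaZ((q-1)\kc)-\ketaZ(q\kc)\bigr)\cdot s_{v^i}.
\]
Writing $a=n+p/q$ and using $\roundup{j(n+p/q)}=jn+\roundup{jp/q}$, the coefficient evaluates to $(n+1)+((q-1)n+p)-(qn+p)=1$, so $\ktetaZ(c^i_1,c^i_2)=s_i$.

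For the second assertion, fix a compact edge $\aed=[v^i,v^j]$ with normal cones $N_i,N_j$. The plan is to force $\keta(\kc_1)$, $\keta(\kc_2)$, and $\keta(\kc_1+\kc_2)$ all to be integers by placing $\kc_1,\kc_2$ in the subgroups $H_v:=\{\kc\in\kQuot\,:\,\braket{v,\kc}\in\ZZ\}$, which are of finite index in $\kQuot$ since $v^i,v^j$ are rational. First pick $\kc_2\in\innt(N_j)\cap H_{v^i}\cap H_{v^j}$ (nonempty, as a finite-index subgroup is dense in $\kQuotR$ and $\innt(N_j)$ is open); then choose $\kc_1:=m\kc_0$ for some $\kc_0\in\innt(N_i)\cap H_{v^i}$ with $m\in\ZZ_{>0}$ so large that $\kc_1+\kc_2\in\innt(N_i)$. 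This ensures $v(\kc_1)=v(\kc_1+\kc_2)=v^i$, $v(\kc_2)=v^j$, $\ketaZ=\keta$ throughout, and also $\braket{v^i-v^j,\kc_2}>0$ because $\kc_2\in\innt(N_j)$ makes $v^j$ the strict minimizer. Using a path $P_1$ from $\vast$ to $v^i$ for $\kc_1$ and $\kc_1+\kc_2$, and the extension $P_2=P_1\cup\{\aed\}$ for $\kc_2$, only the single edge $\aed$ survives the alternating sum, giving
\[
\ktetaZ(\kc_1,\kc_2)=\bigl(\ketaZ(\kc_1)-\ketaZ(\kc_1+\kc_2)\bigr)s_{v^i}+\ketaZ(\kc_2)s_{v^j}+\kL_{ij}(\kc_2).
\]
Substituting $\ketaZ(\kc_*)=-\braket{v(\kc_*),\kc_*}$ and expanding $\kL_{ij}(\kc_2)=\braket{v^i-v^j,\kc_2}t_{ij}+\braket{v^j,\kc_2}s_j-\braket{v^i,\kc_2}s_i$, the $s_{v^i}$ and $s_{v^j}$ coefficients collapse to zero, leaving $\ktetaZ(\kc_1,\kc_2)=\braket{v^i-v^j,\kc_2}\cdot t_{ij}$. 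Setting $a_{ij}:=\braket{v^i-v^j,\kc_2}\in\ZZ_{>0}$ yields the relation $a_{ij}t_{ij}\in\ktT$.

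The main technical hurdle is the lattice-existence step in the second part: verifying that $H_{v^i}\cap H_{v^j}$ really meets the (relative) interior of $N_j$, and that a large multiple of an element of $\innt(N_i)\cap H_{v^i}$ can absorb $\kc_2$. Both follow from standard density and openness arguments, but configurations where $\kP$ fails to be full-dimensional, or where $v^i,v^j$ lie in a common rational hyperplane, deserve a separate check to ensure the normal cones still carry enough lattice points.
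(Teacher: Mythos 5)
Your proposal is correct and follows essentially the same strategy as the paper, down to the final computation in the second assertion. The only variation is in the first assertion, where the paper takes $c^i_1=c^i_2=\kc$ after scaling so that the fractional part of $\keta(\kc)$ lies in $(0,\tfrac{1}{2}]$, whereas you instead set $c^i_2=(q-1)\kc$ so that $c^i_1+c^i_2$ has integral $\keta$; both reduce the coefficient of $s_i$ to $1$ by the same cancellation of $\kL$-terms along a common path.
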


\begin{proof}
  Let $v^i\in\VrtxnZ(\kP)$. Clearly there exists a $c$ such that $v(c)=v^i$ and $\keta(c) \notin \ZZ$. We may assume  that
  \[
    \keta(c) = z+q \textup{~ with~} z\in\Z, q\in\Q \textup{~ and ~} 0<q \le\frac{1}{2}:
  \]
  otherwise we replace $c$ by $kc$ with $k$ being a  positive integer such that $\keta(kc)+1-\roundup{\keta(kc)}\le \frac{1}{2}$. This brings us to
\begin{eqnarray*}
  \ktetaZ(c,c) &=& \ktetaZ(c)s_i + \ktetaZ(c)s_i - \ktetaZ(2c)s_i\\
               &=& (z + 1)s_i +(z+1)s_i - \roundup{2z + 2q}s_i \\
               &=& s_i.
\end{eqnarray*}
For the second part, we look at one edge $[v^i,v^j]$. We can choose $c_1,c_2$ such that $v(c_1)=v^i$, $v(c_2)=v^j$ and 
furthermore such that $\braket{v^j,c_2}< \braket{v^i,c_2}$ and that
$v(c_1+c_2) = v(c_1)$. Finally, we can assume that all the brackets are integers.
By Definition~\ref{d:etatilde(c)} we then have
\begin{eqnarray*}
  \ktetaZ(c_1,c_2)&= \left(\braket{v^i,c_2} - \braket{v^j,c_2}\right)t_{ij}
\end{eqnarray*}
and by our assumptions the coefficient of $t_{ij}$ is a positive integer.
\end{proof}


\subsection{Finite generation}
\label{subs:finiteGenerationInTheory}
A  consequence of Proposition~\ref{prop:equivalenceOfIndependence} is that  lifting  the Hilbert basis elements
\[
[\ahb_1,\ketaZ(\ahb_1)],\dots,[\ahb_k,\ketaZ(\ahb_k)]
\]
we obtain generators of  $\ktS$ as a ``$\ktT$-module'':

\begin{corollary}
\label{cor-HilbBasisSuff} The following equality holds:
$\,\ktS=\ktT + \spann{\NN}{[\ahb_1,\ktetaZ(\ahb_1)],\dots,[\ahb_k,\ktetaZ(\ahb_k)]}.$
\end{corollary}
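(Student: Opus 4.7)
The plan is to prove the stronger statement
\[
\spann{\NN}{[\kc,\ktetaZ(\kc)] : \kc \in \tail(\kP)^\vee \cap \kQuot} \;\subseteq\; \spann{\NN}{[\ahb_i,\ktetaZ(\ahb_i)] : 1 \le i \le k}.
\]
Since each $\ahb_i$ lies in $\tail(\kP)^\vee\cap\kQuot$ by Remark~\ref{rem:firstPropertiesOfeta}, the reverse inclusion is immediate, and combined with the definition of $\ktS$ in Definition~\ref{def:upperSemigroups} this yields the corollary, even in the sharper form that the $\ktT$-summand is not needed for the generators $[\kc,\ktetaZ(\kc)]$.

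Fix $\kc \in \tail(\kP)^\vee \cap \kQuot$ and take any Hilbert basis expansion of $[\kc,\ketaZ(\kc)] \in \kS$:
\[
[\kc,\ketaZ(\kc)] \;=\; \sum_{i=1}^k m_i\,[\ahb_i,\ketaZ(\ahb_i)] + n\,[\underline{0},1],
\qquad m_i,\,n \in \NN.
\]
The crucial observation is that the coefficient $n$ must vanish. Projecting onto the two coordinates yields $\kc = \sum_i m_i\ahb_i$ and $n = \ketaZ(\kc) - \sum_i m_i\ketaZ(\ahb_i)$. The support function $\keta$ is subadditive; combined with $\lceil x+y\rceil \leq \lceil x\rceil + \lceil y\rceil$ this gives $\sum_i m_i\ketaZ(\ahb_i) \geq \ketaZ\!\left(\sum_i m_i\ahb_i\right) = \ketaZ(\kc)$, so $n \leq 0$, and hence $n = 0$.

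In the language of Definition~\ref{def-independent}, the identity $n = 0$ says precisely that the multiset $(\ahb_1^{m_1},\ldots,\ahb_k^{m_k})$ is $\ketaZ$-independent. Proposition~\ref{prop:equivalenceOfIndependence} then upgrades this to $\ktetaZ$-independence, so $\sum_i m_i \ktetaZ(\ahb_i) = \ktetaZ(\kc)$. Assembling both identities in $\kMTtZD(\kP) = \kQuot \oplus \kMTZD(\kP)$ gives
\[
\sum_{i=1}^k m_i\,[\ahb_i,\ktetaZ(\ahb_i)] \;=\; \bigl[\kc,\ktetaZ(\kc)\bigr],
\]
exhibiting $[\kc,\ktetaZ(\kc)]$ as an $\NN$-combination of the $[\ahb_i,\ktetaZ(\ahb_i)]$ and completing the proof.

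The argument hinges on exactly one nontrivial observation, namely that $n$ is forced to be zero, which rests on the subadditivity of $\keta$ together with the minimality built into $\ketaZ(\kc) = \lceil\keta(\kc)\rceil$. Once this is noted, Proposition~\ref{prop:equivalenceOfIndependence} performs the transfer from the downstairs $\ketaZ$-identity to the upstairs $\ktetaZ$-identity automatically, so no further obstacles arise.
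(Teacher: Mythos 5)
Your proof is correct and takes essentially the same route as the paper's: both reduce the claim to exhibiting a lattice-independent expansion of $[\kc,\ketaZ(\kc)]$ in the Hilbert basis without the $[\underline{0},1]$ generator, and both then invoke Proposition~\ref{prop:equivalenceOfIndependence} to lift the resulting $\ketaZ$-identity to a $\ktetaZ$-identity. The only cosmetic difference is in justifying that the $[\underline{0},1]$-coefficient $n$ vanishes: the paper appeals directly to Remark~\ref{rem:firstPropertiesOfeta}(iv), which places $[\kc,\ketaZ(\kc)]$ in $\Rand{\NN}{\cone_\ZZ(\kP)\dual}$ so that $[\underline{0},1]$ cannot appear in its decomposition, whereas you argue the same conclusion via the subadditivity inequality (\ref{eq:inequalityOverSumsKeta}); the two are equivalent formulations of the same fact, so this is a matter of presentation rather than substance.
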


\begin{proof}
 Let $\kc\in \tail(\kP)^\vee\cap\kQuot$. Our goal is to prove that 
  \(
    [\kc,\ktetaZ(\kc)]\in  \spann{\NN}{[\ahb_i,\ktetaZ(\ahb_i)] : i=1\dots k}.
  \)
  By Remark~\ref{rem:firstPropertiesOfeta} $[\kc,\ketaZ(\kc)]\in \Rand{\NN}{\cone_\ZZ(\kP)\dual}$, that is $[\kc,\ketaZ(\kc)]\in\spann{\NN}{[\ahb_i,\ketaZ(\ahb_i)] : i=1\dots k}$.
  So there exists an $\ketaZ$-independent sequence consisting of $\ahb_i$s which adds up to $\kc$, and we conclude by Proposition~\ref{prop:equivalenceOfIndependence}.
\end{proof}

\begin{proposition}
\label{prop-finGenT}
The semigroup $\ktT$ is finitely generated. A finite set of generators is 
given by the minimally dependent sequences supported on
$\ahb_1,\dots,\ahb_k$, yielding
$\,\ktetaZ(\bfm)$ for certain $\bfm\in\NN^k$.
\end{proposition}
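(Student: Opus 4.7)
The plan is to show that $\ktT$ equals the sub-semigroup
\[
G := \spann{\NN}{\ktetaZ(\bfm) \kst \bfm\in\NN^k\text{ is minimally }\ketaZ\text{-dependent}}.
\]
Combined with the finiteness of the minimally dependent set (Lemma~\ref{lem-finMinDep}), this yields the claim. The inclusion $G\subseteq\ktT$ is immediate: iterating the partition identity of Remark~\ref{rem-positiveEtaCC} expresses each $\ktetaZ(\bfm)$ as an $\NN$-combination of terms $\ktetaZ(\kc_1,\kc_2)$, which generate $\ktT$. The content is therefore in the reverse inclusion, and I will carry it out in two steps.

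First, I will rewrite every generator $\ktetaZ(\kc_1,\kc_2)$ of $\ktT$ as a single value $\ktetaZ(\bfm)$ for some $\bfm\in\NN^k$. Since $[\kc_i,\ketaZ(\kc_i)]$ lies in the relative boundary $\Rand{\NN}{\cone_\ZZ(\kP)\dual}$ by Remark~\ref{rem:firstPropertiesOfeta}, for each $i=1,2$ one may choose a Hilbert-basis decomposition $[\kc_i,\ketaZ(\kc_i)] = \sum_j m_{i,j}[\ahb_j,\ketaZ(\ahb_j)]$ with no contribution from $[\underline{0},1]$. The resulting $\bfm_i=(m_{i,j})_j \in\NN^k$ is then $\ketaZ$-independent and, by Proposition~\ref{prop:equivalenceOfIndependence}, also $\ktetaZ$-independent, so $\ktetaZ(\bfm_i)=0$. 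A direct computation from the definition of $\ktetaZ(\bfm_1+\bfm_2)$ yields
\[
\ktetaZ(\bfm_1+\bfm_2) \;=\; \ktetaZ(\bfm_1)+\ktetaZ(\bfm_2)+\ktetaZ(\kc_1,\kc_2) \;=\; \ktetaZ(\kc_1,\kc_2),
\]
reducing the problem to showing $\ktetaZ(\bfm)\in G$ for every $\bfm\in\NN^k$.

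The second step is an induction on the non-negative integer $\ketaZ(\bfm)$. The base case $\ketaZ(\bfm)=0$ gives $\ktetaZ(\bfm)=0\in G$ by Proposition~\ref{prop:equivalenceOfIndependence}. For $\ketaZ(\bfm)>0$, either $\bfm$ is already minimally dependent (and $\ktetaZ(\bfm)\in G$ by construction of $G$), or a minimally dependent proper sub-multiset $\bfm^*\subsetneq\bfm$ exists; in the latter case write $\bfm=\bfm^*+\bfn$ with $\bfn\neq 0$ and apply the partition formula to obtain
\[
\ktetaZ(\bfm) \;=\; \ktetaZ(\bfm^*) + \ktetaZ(\bfn) + \ktetaZ(\kc^*,\kc^\bfn),
\]
with $\kc^*:=\sum_j m^*_j\ahb_j$ and $\kc^\bfn:=\sum_j n_j\ahb_j$. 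The analogous identity for $\ketaZ$, together with $\ketaZ(\bfm^*)\geqslant 1$, forces both $\ketaZ(\bfn)<\ketaZ(\bfm)$ and $\ketaZ(\kc^*,\kc^\bfn)<\ketaZ(\bfm)$; the inductive hypothesis covers $\ktetaZ(\bfn)$ directly, and applying the first step to the pair $(\kc^*,\kc^\bfn)$ recasts $\ktetaZ(\kc^*,\kc^\bfn)$ as $\ktetaZ(\bfm')$ for some $\bfm'$ with $\ketaZ(\bfm')<\ketaZ(\bfm)$, to which the hypothesis applies again.

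The main obstacle is that the partition formula alone cannot reduce a dependency $\bfm$ to a minimally dependent multiset: it produces an unavoidable cross-term $\ktetaZ(\kc^*,\kc^\bfn)$ which is a priori not of the form $\ktetaZ(\bfm')$. The first step is precisely what is needed to overcome this, exploiting the freedom of choosing boundary decompositions in $\Rand{\NN}{\cone_\ZZ(\kP)\dual}$ together with the full strength of Proposition~\ref{prop:equivalenceOfIndependence}, so that the cross-term can be realised as a value of $\ktetaZ$ on an element of $\NN^k$ of strictly smaller $\ketaZ$-measure and the induction closes.
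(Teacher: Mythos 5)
Your proof is correct, and Step~1 (converting each $\ktetaZ(\kc_1,\kc_2)$ into $\ktetaZ(\bfm)$ via a boundary Hilbert-basis decomposition of each $[\kc_i,\ketaZ(\kc_i)]$, noting the $[\underline{0},1]$-coefficient must vanish) coincides essentially verbatim with the paper's argument, as does the overall two-step skeleton. Where you diverge is in the inductive scheme: the paper runs a \emph{double} induction, first on $\ketaZ(\bfm)$ and then, at each level, on $\deg(\bfm)$, whereas you close the argument with a \emph{single} induction on $\ketaZ(\bfm)$ alone. This works because you insist on taking $\bfm^*\subsetneq\bfm$ \emph{minimally} dependent, so that $\ketaZ(\bfm^*)\geqslant 1$ forces both residual terms $\ketaZ(\bfn)$ and $\ketaZ(\kc^*,\kc^{\bfn})$ strictly below $\ketaZ(\bfm)$, and because you re-apply Step~1 to the cross-term $(\kc^*,\kc^{\bfn})$ to pull it back into $\NN^k$ before invoking the induction hypothesis. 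The paper's inner induction on $\deg(\bfm)$ was there precisely to handle the case of a vanishing cross-term, where $\ketaZ(\bfm')=\ketaZ(\bfm)$ and only the degree drops; your variant handles that case without any separate treatment. The gain is a cleaner and more uniform inductive step, at the modest cost of one extra invocation of Step~1 inside the induction; the key lemmas used (Lemma~\ref{lem-finMinDep}, Proposition~\ref{prop:equivalenceOfIndependence}, Remark~\ref{rem-positiveEtaCC}) are the same in both.
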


\begin{proof}
  We start by claiming that for any sequence $\kc_1,\dots,\kc_\ell$ of elements from $\tail(\kP)^\vee\cap\kQuot$, there exists an $\bfm\in\NN^k$ such that, with the notation introduced in~\ref{ssec:relationsAndHB}, we have
  \begin{equation}
    \label{eq:veryObviousClaim}
    \ktetaZ(\kc_1,\dots,\kc_\ell) = \ktetaZ(\bfm).
  \end{equation}
  Indeed, for every $\kc_i$ we have
\[
\textstyle
[\kc_i,\,\ketaZ(\kc_i)]=\sum_{j=1}^k m_{ij}\,[\ahb_j,\ketaZ(\ahb_j)] = [\sum_{j=1}^km_{ij}\ahb_j,\sum_{j=1}^km_{ij}\ketaZ(\ahb_j)],
\]
so we can choose $\bfm=\bfm_1+\dots+\bfm_\ell\in\NN^k$:
\begin{eqnarray*}
  \ktetaZ(\kc_1,\dots,\kc_\ell) &=& \ktetaZ(\kc_1) +\dots +\ktetaZ(\kc_\ell) - \ktetaZ(\kc_1+\dots+\kc_\ell) \\
                     &=& \textstyle  \ktetaZ(\sum_{j=1}^k m_{1j}\ahb_j) + \dots + \ktetaZ(\sum_{j=1}^k m_{\ell,j}\ahb_j) - \ktetaZ(\sum_{i=1}^\ell\sum_{j=1}^k m_{ij}\ahb_j)\\
                                &=& \textstyle \sum_{j=1}^k m_{1j}\ktetaZ(\ahb_j) + \dots + \sum_{j=1}^k m_{\ell,j}\ktetaZ(\ahb_j) - \ktetaZ(\sum_{i=1}^\ell\sum_{j=1}^k m_{ij}\ahb_j)\\
                                &=& \ktetaZ(\bfm),
\end{eqnarray*}
so (\ref{eq:veryObviousClaim}) holds. Furthermore, all the $\bfm_i$ are independent, but their sum $\bfm$ is independent if and only if $\kc_1,\dots,\kc_\ell$ are independent.

Next we claim  that for every sequence $\kc_1,\dots,\kc_\ell \in \tail(\kP)^\vee\cap\kQuot$
we can even express  $\ktetaZ(\kc_1,\dots,\kc_\ell)$ 
using a combination of $\ktetaZ(\kbb)$ with minimally dependent arguments from $\NN^k$.
From this second claim, we can immediately conclude. To prove this claim we use double induction:
first with respect to $\ketaZ(\kc_1,\dots,\kc_\ell)\in\NN$, and, inside each induction step we use induction on $\deg(\bfm)=\sum m_{ij}$.
The key of the proof is Remark~\ref{rem-positiveEtaCC}~\ref{item:inductiveStepKteta}
adapted to $\NN^k$: if $\bfm'\le \bfm$ component-wise, then
\begin{equation}
  \label{eq:inductionStepM}
  \textstyle
  \ktetaZ(\bfm) = \ktetaZ(\bfm') + \ktetaZ(\sum_{i=1}^km'_i\ahb_i,\ahb_1^{m_1-m'_1},\dots,\ahb_k^{m_k-m'_k}).
\end{equation}
\indent\emph{The case $\ketaZ(\kc_1,\dots,\kc_\ell)=1$}. If $\deg(\bfm)=2$ we are trivially done. Otherwise, assume $\bfm$ is not minimally dependent and choose $\bfm'<\bfm$ which is also dependent.
By (\ref{eq:inductionStepM}) we have $\ketaZ(\bfm) = \ketaZ(\bfm')=1$ so $\ketaZ(\sum_{i=1}^km'_i\ahb_i,\ahb_1^{m_1-m'_1},\dots,\ahb_k^{m_k-m'_k})=0$. By Proposition~\ref{prop:equivalenceOfIndependence} and (\ref{eq:inductionStepM}) it follows that $\ktetaZ(\bfm)=\ktetaZ(\bfm')$, with $\deg(\bfm')<\deg(\bfm)$, so we conclude by induction on $\deg(\bfm)$.

\emph{The inductive step}  follows very similarly from (\ref{eq:inductionStepM}).

\end{proof}

\begin{question}
Now, where we know that $\ktT$ is a finitely generated semigroup, it might be
interesting to ask for the polyhedral cone generated
by $\ktT$. What are its fundamental rays, and how do its facets look like?
This will be answered partially in \cite{m}. In that paper we will use  different techniques  which will provide a new description of the generators of $\ktT$ as well as  another proof of Proposition~\ref{prop-finGenT}.
\end{question}

\section{The initial object property}
\label{ssec:initialObject}
  
In Proposition~\ref{prop-coCart} we showed that $(\ktT,\ktS)$ belongs to
the category of \cocartesian~\aextension s of the pair $\NN\hookrightarrow \cone_\Z(\kP)\dual$.
For this, we have utilized the fact that $(\ktT,\ktS)$ is ``small enough'', 
i.e.\ that the elements of the space $\kMTZD(\kP)$ satisfy sufficiently many relations, e.g.\ the \se~edge relation, 
which lead us to Proposition~\ref{prop:equivalenceOfIndependence}. This was then crucial to 
Propositions~\ref{prop-coCart} and~\ref{prop-finGenT}.
We are now going to show that  $(\ktT,\ktS)$ it is an initial object in this category, so, in a sense,  we care about the opposite:
We have to show that all these relations within $(\ktT,\ktS)$ (or $\kMTZD(\kP)$) are not arbitrarily
but implicitly part of the structure of  every other \cocartesian~\aextension.
This will allow us to construct a unique map from $(\ktT,\ktS)$ to any other \cocartesian~\aextension.

\begin{notation}
  In contrast to  Section~\ref{discreteSetupStart}  we no longer use $(\kT,\kS)$
  to denote the starting pair $(\NN,\cone_\Z(\kP)\dual)$. 
  Instead, we assume that $(\kT,\kS)$ is an arbitrary \cocartesian~\aextension\ of $(\NN,\cone_\Z(\kP)\dual)$.   
\end{notation}
\noindent Our goal in this section is to define compatible maps
$\kell_\kT:\ktT\to\kT$ and $\kell_\kS:\ktS\to\kS$ and prove the following theorem.
\[ \begin{tikzcd}
     \ktT
     \arrow[hook]{r}
     \arrow{rd}
     \arrow[bend left=30, dotted]{rr}{\kell_\kT}
     & \ktS
     \arrow[bend left=30, dotted, crossing over]{rr}{\kell_\kS}
     & \kT
     \arrow{ld}
     \arrow[hook]{r}
     & \kS
     \arrow{ld}
  \\
     & \NN
     \arrow[hook]{r}
     & \cone_\ZZ(\kP)\dual
     \arrow[crossing over,leftarrow]{lu}
     &    
   \end{tikzcd}
\]

\begin{theorem}
\label{th-initialObject}
The pair $(\ktT,\ktS)$ is an initial object in the category of \cocartesian~\aextension s of the pair $(\NN, \cone_\Z(\kP)\dual)$. 
Down to earth, this means that, for any given $(\kT,\kS)$ inducing
a diagram as above, there exists a unique pair $(\kell_T,\kell_S)$
of compatible maps $\kell_T:\ktT\to\kT$ and $\kell_S:\ktS\to\kS$.
\end{theorem}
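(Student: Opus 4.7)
The plan is to construct $\kell_\kT$ and $\kell_\kS$ in the only way consistent with the two diagrams, and then verify that this prescription yields well-defined semigroup homomorphisms. Uniqueness is essentially built into the construction.

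First, by Proposition~\ref{prop-coCart} applied to the canonical extension $(\ktT,\ktS)\to(\NN,\cone_\ZZ(\kP)\dual)$ and by hypothesis on the given extension $(\kT,\kS)\to(\NN,\cone_\ZZ(\kP)\dual)$, both are \cocartesian. For every $\kc\in\tail(\kP)^\vee\cap\kQuot$, set $\widetilde{B}(\kc):=\kpi_\bound^{-1}([\kc,\ketaZ(\kc)])\in\Rand{\kT}{\kS}$, where $\kpi_\bound:\Rand{\kT}{\kS}\stackrel{\sim}{\longrightarrow}\Rand{\NN}{\cone_\ZZ(\kP)\dual}$ is the boundary bijection of the given extension. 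Since Proposition~\ref{prop boundary pr} identifies $\Rand{\ktT}{\ktS}$ with $\{[\kc,\ktetaZ(\kc)]\}_\kc$, and this element maps under $\kpi$ to $[\kc,\ketaZ(\kc)]$, compatibility of $\kell_\kS$ with the outer diagram forces
\[
  \kell_\kS([\kc,\ktetaZ(\kc)])\;:=\;\widetilde{B}(\kc).
\]
The identity
\[
[\kc_1,\ktetaZ(\kc_1)]+[\kc_2,\ktetaZ(\kc_2)]=[\kc_1+\kc_2,\ktetaZ(\kc_1+\kc_2)]+[0,\ktetaZ(\kc_1,\kc_2)]
\]
inside $\ktS$, together with the additivity any semigroup homomorphism must satisfy, then forces
\[
  \kell_\kT([0,\ktetaZ(\kc_1,\kc_2)])\;:=\;\widetilde{L}(\kc_1,\kc_2)\;:=\;\widetilde{B}(\kc_1)+\widetilde{B}(\kc_2)-\widetilde{B}(\kc_1+\kc_2),
\]
which by the ``In particular'' line of Proposition~\ref{prop:boundaryIndepEquivalence} lies in $\kT$. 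By Proposition~\ref{prop-finGenT} this specifies $\kell_\kT$ on a finite generating set of $\ktT$, so uniqueness of the pair $(\kell_\kT,\kell_\kS)$ is immediate.

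For existence, the key assertion is that $\kell_\kT$ is well-defined on $\ktT$: for any two representations $\sum_i a_i\ktetaZ(\bfm_i)=\sum_j b_j\ktetaZ(\bfn_j)$ of a common element of $\ktT\subseteq\kMTZD(\kP)$, the sums $\sum_i a_i\widetilde{L}(\bfm_i)$ and $\sum_j b_j\widetilde{L}(\bfn_j)$ must agree in $\kT$. Granted this, additivity of $\kell_\kT$ is immediate, and the extension of $\kell_\kS$ to all of $\ktS$ is forced via the unique boundary decomposition $\kts=\widetilde{\bound}(\kts)+\widetilde{\lambda}(\kts)$ by setting $\kell_\kS(\kts):=\widetilde{B}(\kc(\kts))+\kell_\kT(\widetilde{\lambda}(\kts))$. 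Additivity of $\kell_\kS$ then reduces to a one-line calculation: the boundary decomposition of $\kts_1+\kts_2$ differs from the sum of the individual ones by the defect $\ktetaZ(\kc(\kts_1),\kc(\kts_2))$ absorbed into the $\ktT$-part, and $\kell_\kT$ applied to this defect produces exactly the compensation $\widetilde{B}(\kc(\kts_1))+\widetilde{B}(\kc(\kts_2))-\widetilde{B}(\kc(\kts_1)+\kc(\kts_2))$.

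The main obstacle is thus the well-definedness of $\kell_\kT$. My plan is the following: given a relation $\sum_i a_i\ktetaZ(\bfm_i)=\sum_j b_j\ktetaZ(\bfn_j)$ in $\kMTZD(\kP)$, expand each $\ktetaZ(\bfm)$ via $\sum_j m_j\ktetaZ(\ahb_j)-\ktetaZ(\sum_j m_j\ahb_j)$ and rearrange to obtain an equality in the ambient $\kMTtZD(\kP)=\kQuot\oplus\kMTZD(\kP)$ between two $\NN$-combinations of boundary generators $[\kc,\ktetaZ(\kc)]$. Both sides are then equal elements of $\ktS$, and applying the ``Furthermore'' clause of Proposition~\ref{prop:boundaryIndepEquivalence} to the corresponding sums of $\widetilde{B}(\kc)$'s in $\kS$ matches both the boundary parts (via $\kpi_\bound^{-1}$) and the $\kpi_\kT$-image in $\NN$ of the $\widetilde{\lambda}$-parts. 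The resulting discrepancy $\sum_i a_i\widetilde{L}(\bfm_i)-\sum_j b_j\widetilde{L}(\bfn_j)\in\kT-\kT$ then maps to zero in $\NN$, and the technical heart of the argument---paralleling the reasoning in Proposition~\ref{prop:equivalenceOfIndependence}---is to exploit the \se~edge relations in $\kMTZD(\kP)$ together with the trivial-kernel property $\ker\kpi_\kT=0$ and the integrality condition~\ref{condition3} from Proposition~\ref{prop:characterizeCoCartesian} to force this discrepancy to vanish in $\kT$.
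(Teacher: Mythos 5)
Your uniqueness argument is correct and matches the paper's Subsection~\ref{initialUniqueness}: the boundary bijection forces $\kellp$, and the generators $[0,\ktetaZ(\kc_1,\kc_2)]$ of $\ktT$ are boundary-combinations, so $\kell_\kT$ and then $\kell_\kS$ are determined. Your reduction of $\kell_\kS$-additivity to properties of $\kell_\kT$ via the unique boundary decomposition is also correct and matches Remark~\ref{rem:ellTisEnough}. The problem is the last paragraph, which is where essentially all of the paper's work lies.

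Your plan is to take a relation $\sum_i a_i\ktetaZ(\bfm_i)=\sum_j b_j\ktetaZ(\bfn_j)$ in $\kMTZD(\kP)$, rearrange it into an equality of two $\NN$-combinations of boundary elements, push both sides to $\kS$ via $\widetilde B$, and deduce that the discrepancy $D:=\sum_i a_i\widetilde L(\bfm_i)-\sum_j b_j\widetilde L(\bfn_j)$ vanishes. Carrying out the rearrangement and using Proposition~\ref{prop:boundaryIndepEquivalence} together with condition~\ref{condition3}, you indeed get two elements $\Sigma_1,\Sigma_2\in\kS$ with equal $\kpi_\kS$-images and equal boundary parts, so $D=\lambda(\Sigma_1)-\lambda(\Sigma_2)=t_1-t_2$ for some $t_1,t_2\in\kT$ with $\kpi_\kT(t_1)=\kpi_\kT(t_2)$. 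But here the argument stops: $\ker\kpi_\kT=0$ means only $\kpi_\kT(t)=0\Rightarrow t=0$, and this does \emph{not} imply that $\kpi_\kT$ is injective (the paper flags exactly this in a footnote to Definition~\ref{def-extension}; $\kT=\NN^2$ with $\kpi(a,b)=a+b$ is a counterexample). Condition~\ref{condition3} only reproduces the fact that $D=t_1-t_2$, not that $D=0$. So the pointer to ``\se~edge relations in $\kMTZD(\kP)$'' is aimed in the right direction but performs none of the actual work: one needs to prove that the \se~edge relations, the lattice-disjoint-edge relations, and the closing conditions of Definition~\ref{def:TP} all hold among \emph{new} elements constructed inside $\kT$, not merely to cite that they hold in $\kMTZD(\kP)$.

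This is precisely what the paper does in Subsections~\ref{recoverS}--\ref{concludeProofExKell}: it builds explicit elements $\kells(v)\in\kT$ (Proposition~\ref{prop-representingC}) and $\kellt(\aed)\in\QQ_{>0}\cdot\kT$ (Proposition~\ref{prop-representingD}), verifies case by case that they satisfy the defining relations (\ref{eq:dualEquationsT(Q)1})--(\ref{eq:dualEquationsT(Q)4}) of $\kMT(\kP)$ (Propositions~\ref{prop-gGreaterThanTwo}, \ref{prop-ClosingCond} and Subsection~\ref{recoverShortE}), so that they assemble into a genuine linear map $\varphi:\kMTD(\kP)\to(\kT-\kT)\otimes_\ZZ\QQ$, and finally (Lemma~\ref{lem-varphEll}) invokes the vanishing of $\gH^2$ with coefficients in the divisible group $(\kT-\kT)\otimes\QQ$ to identify $\varphi(\ktetaZ(\kbb,\kbb))$ with $\kell(\kbb,\kbb)$. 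Without some substitute for this construction, the well-definedness of $\kell_\kT$---and therefore the existence half of the theorem---is not established.
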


\noindent The proof of this theorem will be done in the following subsections
filling the rest of Section~\ref{ssec:initialObject}.

\subsection{Uniqueness}
\label{initialUniqueness}
The fact that both $(\kpi_\ktT,\kpi_\ktS)$ and $(\kpi_\kT,\kpi_\kS)$
are \cocartesian~\aextension s implies that we have vertical isomorphisms
$\kpi_{\wt{\partial}}:=\kpi_{\ktS}|_{\Rand{\ktT}{(\ktS)}}$
and $\kpi_{{\partial}}:=\kpi_{\kS}|_{\Rand{\kT}{(\kS)}}$.
\[
\xymatrix@C=3em@R=3ex{
\Rand{\ktT}{(\ktS)} \ar@{.>}[rr]^(0.50){\kellp}_{(\sim)}
\ar@{->}[rd]_(0.4){\kpi_{\wt{\partial}}}^\sim &&
\Rand{\kT}{(\kS)} \ar@{->}[ld]^(0.4){\kpi_{\partial}}_\sim
\\
& \Rand{\NN}{(\cone_\Z(\kP)\dual)}
}
\]
In particular, we are allowed and forced to set
\[
\kellp = \kpi_{\partial}^{-1}\circ \kpi_{\wt{\partial}}
\]
which will become the unique restriction to $\Rand{\ktT}{(\ktS)}$
of any possible $\kell_S:\ktS\to\kS$. 
Moreover, it follows that, like $\kpi_{\wt{\partial}}$ and
$\kpi_{{\partial}}$, the map $\kellp$ is bijective too.
\begin{notation}
  \label{not:always_kc}
  In this section $\kc, \kc_i$ will always denote elements from the semigroup $\tail(\kP)\dual\cap\kQuot$. For every $\kc$ we write
  \begin{eqnarray*}
    \kellp(\kc)&:=&\kellp\big([\kc,\ktetaZ(\kc)]\big) =\kpi_{\partial}^{-1}\big([\kc,\ketaZ(\kc)]\big),\\
    \kell(\kc_1,\kc_2)&:=&\kellp(\kc_1)+\kellp(\kc_2)-\kellp(\kc_1+\kc_2) \in \kT-\kT.
  \end{eqnarray*}  
 So we can also regard $\kellp$ as a map $\kellp:\tail(\kP)\dual\cap\kQuot\to\Rand{\kT}{(\kS)}$.
\end{notation}
\noindent Recall that from Remark~\ref{rem:firstPropertiesOfeta} and as a consequence of Proposition~\ref{prop-coCart} we have
\begin{eqnarray*}
  \Rand{\NN}{(\cone_\Z(\kP)\dual)}&=&\{[\kc,\ketaZ(\kc)]\kst \kc\in \tail(\kP)\dual\cap\kQuot\}\quad\text{and}\\
  \Rand{\ktT}{\ktS}&=&\{[\kc,\ktetaZ(\kc)]\kst \kc\in \tail(\kP)\dual\cap\kQuot\},
\end{eqnarray*}
with  $\kpi_{\wt{\partial}}:[\kc,\ktetaZ(\kc)]\mapsto [\kc,\ketaZ(\kc)]$.
So $\ktT$ is generated by a combination of elements from the boundary:
\[
[0,\,\ktetaZ(\kc_1,\kc_2)]=
[\kc_1,\,\ktetaZ(\kc_1)] + [\kc_2,\,\ktetaZ(\kc_2)] 
- [\kc_1+\kc_2,\,\ktetaZ(\kc_1+\kc_2)].
\]
Thus, since $\kell_S$ is supposed to equal the unique $\kellp$ on the summands on the right hand side, it is uniquely determined too.

\subsection{Defining the maps}
\label{subsec:definingTheMapsEll}
There is thus not much choice in defining the maps $\kell$: on the boundary it has to be $\kellp= \kpi_{\partial}^{-1}\circ \kpi_{\wt{\partial}}$ and $\kell_\ktT$ it has to satisfy
\[
  \kell_\ktT([0,\ktetaZ(\kc_1,\kc_2)]) = \kellp\big([\kc_1,\,\ktetaZ(\kc_1)]\big) + \kellp\big([\kc_2,\,\ktetaZ(\kc_2)]\big) -
  \kellp\big([\kc_1+\kc_2,\,\ktetaZ(\kc_1+\kc_2)]\big).
\]
We can then define
\[
  \kell_\ktS(\awt{s}) := \kellp(\awt{\partial}(\awt{s})) + \kell_\ktT(\awt{\lambda}(\awt{s})).
\]
We first have to check that the maps land where they are supposed to. For $\kellp$ this holds by definition. For $\kell_\ktT$ this follows directly from Proposition~\ref{prop:boundaryIndepEquivalence}. Furthermore, by the same proposition, the map $\kellp$ is as linear as it may be:
\begin{proposition}
\label{prop-kellpLinear}
For all $\kc_1,\kc_2$  we have $\kell_\ktT([0,\ktetaZ(\kc_1,\kc_2)])\in\kT$.
Moreover, $\kc_1,\dots,\kc_r$ are $\ketaZ$-independent if and only if $\kellp(\kc_1),\dots,\kellp(\kc_r)$ are boundary independent.
\end{proposition}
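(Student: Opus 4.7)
The plan is to recognize that the map $\kellp$ from Notation~\ref{not:always_kc} is nothing but the inverse boundary bijection $\kpi_\partial^{-1}$ associated to the given \cocartesian~\aextension~$(\kT,\kS)\to(\NN,\cone_\ZZ(\kP)\dual)$, precomposed with the natural identification $\kc\mapsto[\kc,\ketaZ(\kc)]$ from Remark~\ref{rem:firstPropertiesOfeta}. Once this is noted, both claims reduce to invoking Proposition~\ref{prop:boundaryIndepEquivalence} applied to this lower \cocartesian~\aextension, with the boundary elements chosen as $b_i := [\kc_i,\ketaZ(\kc_i)]\in\Rand{\NN}{(\cone_\ZZ(\kP)\dual)}$.

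For the first assertion I would first unpack the boundary decomposition of $b_1+b_2$ inside $(\NN,\cone_\ZZ(\kP)\dual)$: from
\[
[\kc_1,\ketaZ(\kc_1)]+[\kc_2,\ketaZ(\kc_2)] = [\kc_1+\kc_2,\ketaZ(\kc_1+\kc_2)] + [0,\ketaZ(\kc_1,\kc_2)],
\]
where the first summand lies on the boundary and the second in $\NN$, one reads off $\bound(b_1+b_2)=[\kc_1+\kc_2,\ketaZ(\kc_1+\kc_2)]$. Hence the ``in particular'' part of Proposition~\ref{prop:boundaryIndepEquivalence} yields
\[
\kellp(\kc_1)+\kellp(\kc_2)-\kellp(\kc_1+\kc_2)\;\in\;\kT,
\]
which by the definition of $\kell_\ktT$ in Subsection~\ref{subsec:definingTheMapsEll} is exactly $\kell_\ktT([0,\ktetaZ(\kc_1,\kc_2)])$. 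The same computation extends verbatim to $r$-tuples, so one also gets the useful auxiliary statement that $\sum_{i}\kellp(\kc_i)-\kellp(\kc_1+\dots+\kc_r)\in\kT$.

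For the equivalence, the key observation is that boundary independence of the $b_i$ in $(\NN,\cone_\ZZ(\kP)\dual)$ is, by Definition~\ref{def:boundaryIndependence}, exactly the condition that $b_1+\dots+b_r$ already lies in $\Rand{\NN}{(\cone_\ZZ(\kP)\dual)}$, which happens iff $\ketaZ(\kc_1)+\dots+\ketaZ(\kc_r)=\ketaZ(\kc_1+\dots+\kc_r)$, that is, iff $\kc_1,\dots,\kc_r$ are $\ketaZ$-independent in the sense of Definition~\ref{def-independent}. Proposition~\ref{prop:boundaryIndepEquivalence} then gives exactly the equivalence with boundary independence of $\kellp(\kc_1)=\kpi_\partial^{-1}(b_1),\dots,\kellp(\kc_r)=\kpi_\partial^{-1}(b_r)$ inside $\Rand{\kT}{\kS}$.

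There is no real obstacle here; the only point where one must be a little careful is the bookkeeping that identifies $\kellp$ (originally introduced in Subsection~\ref{initialUniqueness} via the upper extension $(\ktT,\ktS)$) with the $\kpi_\partial^{-1}$ of the lower extension. This is built into its definition as $\kpi_\partial^{-1}\circ\kpi_{\wt\partial}$ together with the explicit description $\kpi_{\wt\partial}([\kc,\ktetaZ(\kc)])=[\kc,\ketaZ(\kc)]$ from Proposition~\ref{prop boundary pr}; once this identification is in place, the entire proposition is an immediate corollary of Proposition~\ref{prop:boundaryIndepEquivalence}.
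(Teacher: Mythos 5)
Your proposal is correct and takes essentially the same approach as the paper, which states Proposition~\ref{prop-kellpLinear} as an immediate consequence of Proposition~\ref{prop:boundaryIndepEquivalence} applied to the co-Cartesian extension $(\kT,\kS)\to(\NN,\cone_\ZZ(\kP)\dual)$. You have simply spelled out the bookkeeping (identifying $\kellp$ with $\kpi_\partial^{-1}$ of that extension, computing $\bound(b_1+b_2)=[\kc_1+\kc_2,\ketaZ(\kc_1+\kc_2)]$, and matching the definitions of $\ketaZ$-independence and boundary independence) that the paper leaves implicit.
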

\begin{remark}
  \label{rem:ellTisEnough}
  If $\kell_\ktT$ is a well-defined semigroup homomorphism, then so is $\kell_\ktS$. Being well-defined follows from the uniqueness of the decomposition $\awt{s}=\awt{\partial}(\awt{s}) + \awt{\lambda}(\awt{s})$. The fact that $\kell_\ktS(\awt{s}_1+\awt{s}_2)=\kell_\ktS(\awt{s}_1)+\kell_\ktS(\awt{s}_2)$ is a consequence of Proposition~\ref{prop:boundaryIndepEquivalence} combined with the easy remark that for any \straight  pair $(\ktT,\ktS)$ and any $s_1,s_2\in\ktS$ we have
  \begin{eqnarray*}
    \partial(s_1+s_2) &=&\partial(\partial(s_1)+\partial(s_2)),\quad\text{and}\\
    \lambda(s_1+s_2) &=& \lambda(s_1)+\lambda(s_2)+\lambda(\partial(s_1)+\partial(s_2)).
  \end{eqnarray*}
\end{remark}

The hard part is to show that $\kell_\ktT$ is   well-defined  i.e.\ that it depends only on the element
$\ktetaZ(\kc_1,\kc_2)$ but not on the individual $\kc_1,\kc_2$.  This will be a consequence of Lemma~\ref{lem-varphEll}.
So, for most of the remainder of this section we will work towards this goal. We will use the $s$ and $t$ coordinates introduced in Section~\ref{sec:theAbmientSpace} and prove that there are corresponding elements in $\kT$ as well, and then show that these corresponding elements satisfy the relations from Definitions~\ref{def:Clin}~and~\ref{def:TP}.
The idea is to recover $\kell_\kT$ from a linear map
$\kMTD(\kP)\to (\kT-\kT)\otimes_\ZZ\RR$.
So the elements $s_v$ and $t_{ij}$ are important because they generate
$\kMTD(\kP)$, and because the relations (such as those arising from the \se~edges) are
formulated in terms of the elements $s_v$ and $t_{ij}$.
This is the rough idea of the next sections.

\subsection{Recovering the $s$-parameters}
\label{recoverS}
Recall from Definition~\ref{d:etatilde(c)} that the elements $\kteta(\kc)$
depend linearly on $\kc$ whenever the vertex $v(\kc)$ is not changing.
That means that, fixing some vertex $v$ of $\kP$, the map
\[
\kteta(\kbb):\normal(v,\kP)\subseteq \tail(\kP)\dual \to \kMTD(\kP)
\] 
is a linear on the normal cone
$\normal(v,\kP)\subseteq \tail(\kP)\dual\subseteq \kQuotR$; it defines some
element $\kteta_v\in\kQuotRD\otimes\kMTD(\kP)$.
Let us denote simplify further the notation introduced in~(\ref{eq:etaFrac}) by setting
\[
\etaFrac{\kc}:= \etaFrac{\keta(\kc)} = \ketaZ(\kc)-\keta(\kc)\in [0,1)\subset\RR.
\]
Then Definition~\ref{d:etatilde(c)} turns into
$\,\ktetaZ(\kc)=\kteta(\kc)+ \etaFrac{\kc}\cdot s_{v(\kc)}\in\kMTZD(\kP)$,
and, via $\kpi$, this element maps to 
$\,\ketaZ(\kc)=\keta(\kc)+ \etaFrac{\kc}\in\ZZ$.
In Proposition~\ref{prop-siInT} we have used elements
$\kc$ with $\etaFrac{\kc}\in[\frac{1}{2},1)$ to represent
$s_{v(\kc)}=\ktetaZ(\kc,\kc)$. This generalizes to the fact that
\[
\renewcommand{\arraystretch}{1.3}
\ktetaZ(\kc_1,\kc_2)=\left\{\begin{array}{ll}
s_v & \mbox{\rm if $\etaFrac{\kc_1}+\etaFrac{\kc_2}\geq 1$}\\
0 & \mbox{\rm if $\etaFrac{\kc_1}+\etaFrac{\kc_2}< 1$}.
\end{array}\right.
\]
whenever $\kc_i\in\normal(v,\kP)\cap\kQuot$, i.e.\ 
whenever $v$ can be chosen as $v(\kc_i)$ ($i=1,2$).
Now, the first step into the direction of establishing the map
$\kell$ is that this independence on the special choice of elements
$\kc_i\in\normal(v,\kP)$ remains true in $\kS$.

\begin{proposition}
\label{prop-representingC}
\begin{enumerate}[label={(\roman*)}]
\item \label{item:repC1}
Assume that $v\in\kP$ is a vertex. Then there is an element $\kells(v)\in\kS$
such that for all $\kc_1,\kc_2\in\normal(v,\kP)\cap\kQuot$ we have
\[
\renewcommand{\arraystretch}{1.3}
\kell(\kc_1,\kc_2)=\left\{\begin{array}{ll}
\kells(v) & \mbox{\rm if $\etaFrac{\kc_1}+\etaFrac{\kc_2}\geq 1$}\\
0 & \mbox{\rm if $\etaFrac{\kc_1}+\etaFrac{\kc_2}< 1$}.
\end{array}\right.
\]
\item \label{item:repC2}
If $\kc\in\normal(v,\kP)\dual\cap\kQuot$ with 
$n\in\NN$ being the smallest positive integer such that
$n\cdot\etaFrac{\kc}\geq 1$, 
e.g.\ if $\etaFrac{\kc}=1/n$, 
then we obtain 
$\kells(v)= n\cdot \kellp(\kc) -\kellp(n\kc)$.
\end{enumerate}
\end{proposition}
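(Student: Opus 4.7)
I would handle~\ref{item:repC1} and~\ref{item:repC2} together in three stages. First, when $\etaFrac{\kc_1}+\etaFrac{\kc_2}<1$, the formulas for $\ktetaZ$ in Subsection~\ref{recoverS} give $\ktetaZ(\kc_1,\kc_2)=0$, which is a boundary-independence statement for $\tilde b_i:=[\kc_i,\ktetaZ(\kc_i)]$ upstairs; Proposition~\ref{prop:boundaryIndepEquivalence} transfers it downstairs, placing $\kellp(\kc_1)+\kellp(\kc_2)$ into $\Rand{\kT}{\kS}$. Comparing $\pi_\kS$-images with $\pi_\kS(\kellp(\kc_1+\kc_2))$ and using bijectivity of $\pi_\partial$ forces $\kellp(\kc_1)+\kellp(\kc_2)=\kellp(\kc_1+\kc_2)$, i.e.\ $\kell(\kc_1,\kc_2)=0$. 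Iterating along a ray then yields $\kellp(i\kc)=i\,\kellp(\kc)$ for every $i<n$, where $n$ is the minimal integer with $n\etaFrac{\kc}\ge 1$.

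For a reference $\kc$ as in~\ref{item:repC2}, I next set $\kells(v;\kc):=n\,\kellp(\kc)-\kellp(n\kc)$. By the previous step this equals $\kell(\kc,(n-1)\kc)$, and a direct computation (using $\keta(i\kc)=i\keta(\kc)$ for $i\le n$ and $\etaFrac{n\kc}=n\etaFrac{\kc}-1$) gives $\pi_\kS(\kells(v;\kc))=[\underline{0},1]\in\NN$. Since every \cocartesian~diagram is Cartesian, this forces $\kells(v;\kc)\in\kT$, producing the candidate element claimed in~\ref{item:repC2}.

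The main work is then to prove $\kell(\kc_1,\kc_2)=\kells(v;\kc)$ for every admissible pair, which simultaneously yields the $\ge 1$ half of~\ref{item:repC1} and the independence of the reference $\kc$ in~\ref{item:repC2}. The key tool is the cocycle identity
\[
\kell(\kc_1,\kc_2)+\kell(\kc_1+\kc_2,\kc_3)=\kell(\kc_1,\kc_2+\kc_3)+\kell(\kc_2,\kc_3),
\]
obtained by expanding $\kellp(\kc_1)+\kellp(\kc_2)+\kellp(\kc_3)-\kellp(\kc_1+\kc_2+\kc_3)$ two ways. Combined with the first stage it produces a \emph{sliding move}: whenever $\kc_3\in\normal(v,\kP)\cap\kQuot$ satisfies both $\etaFrac{\kc_2}+\etaFrac{\kc_3}<1$ and $\etaFrac{\kc_1+\kc_2}+\etaFrac{\kc_3}<1$, the two $\kell$-terms on the right vanish and $\kell(\kc_1,\kc_2)=\kell(\kc_1,\kc_2+\kc_3)$; the analogous move applies on the first coordinate. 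Iterating such slides, augmented with lattice translates coming from $\kc_3$ with $\etaFrac{\kc_3}=0$ (which exist because $\braket{v,\cdot}$ takes integer values on a full-rank sublattice of $\kQuot$) and, where necessary, with balanced rollover moves on both coordinates, transports any admissible $(\kc_1,\kc_2)$ onto the ray $\NN\kc$, reducing to the previous stage. The main obstacle is this bookkeeping: the fractional parts $\etaFrac{\cdot}$ on $\normal(v,\kP)\cap\kQuot$ take only the finite set of rational values determined by the denominator of~$v$, so not every slide is available; showing that one can route any pair to the reference through admissible moves, with stray $\kell$-contributions from forced rollovers cancelling symmetrically, is the technical heart of the proof.
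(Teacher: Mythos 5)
Your stages 1 and 2 are fine and closely mirror the paper. Stage 1 is the paper's Step~1: the condition $\etaFrac{\kc_1}+\etaFrac{\kc_2}<1$ forces $\ketaZ(\kc_1)+\ketaZ(\kc_2)=\ketaZ(\kc_1+\kc_2)$, and Proposition~\ref{prop-kellpLinear} (which repackages the boundary-independence transfer you cite) gives $\kell(\kc_1,\kc_2)=0$. Stage 2's identity $\kells(v;\kc)=\kell(\kc,(n-1)\kc)$ is correct; membership in $\kT$ is then immediate from Proposition~\ref{prop-kellpLinear}, and the detour via the Cartesian property would anyway presuppose $\kells(v;\kc)\in\kS$, which that argument cannot supply on its own.

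Stage~3 contains a genuine gap, which you acknowledge but do not close. The one-coordinate cocycle slide generates two stray terms $\kell(\kc_2,\kc_3)$ and $\kell(\kc_1+\kc_2,\kc_3)$, and these need not cancel: they can fall on opposite sides of the $\etaFrac{\cdot}\geq 1$ threshold depending on the fractional arithmetic of $\kc_3$. The paper's Steps~2--3 supply the concrete replacement for your ``balanced rollover moves on both coordinates.'' With the reference $\kc$ chosen so that $\etaFrac{\kc}=1/n$, where $n$ is the maximal denominator of $\keta(\kbb)$ on $\normal(v,\kP)\cap\kQuot$ (so every $\etaFrac{\kc_i}$ is a multiple of $1/n$), write $\etaFrac{\kc_i}=(n-k_i)/n$. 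Then the three elements $\kc_1+k_1\kc$, $\kc_2+k_2\kc$, and $(\kc_1+\kc_2)+(k_1+k_2)\kc$ all have vanishing fractional part, and substituting $\kellp(\kc_i)=\kellp(\kc_i+k_i\kc)+(n-k_i)\kellp(\kc)-\kellp(n\kc)$ (the output of the paper's Step~2, which is where the specific $\kc$-arithmetic actually enters) into $\kell(\kc_1,\kc_2)$ makes the three resulting $\kellp$-terms cancel by additivity and leaves exactly $n\kellp(\kc)-\kellp(n\kc)=\kells(v)$. You assert that the routing works and that stray contributions cancel ``symmetrically,'' but establishing this is precisely the content of the proposition, and the one-coordinate slide alone does not accomplish it.
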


\begin{proof}
\ref{item:repC1}
{\;\em Step 1.}
We check first that $\kell(\kc_1,\kc_2)=0$ whenever
$\etaFrac{\kc_1}+\etaFrac{\kc_2}<1$. This inequality is equivalent to the
equality
\[
\etaFrac{\kc_1}+\etaFrac{\kc_2}=\etaFrac{\kc_1+\kc_2},
\]
i.e.\ it yields
\[
\ketaZ(\kc_1)+\ketaZ(\kc_2)=\ketaZ(\kc_1+\kc_2).
\]
Hence, $\kell(\kc_1,\kc_2)=0$ follows from Proposition~\ref{prop-kellpLinear}.
Note that the assumption of the just proven claim is trivially
fulfilled if,  $\etaFrac{\kc_1}=0$, i.e.\ if
$\keta(\kc_1)$ is an integer. We will use this in the next step.
\\[1ex]
{\;\em Step 2.}
Assume that $\kc,\kc'\in\normal(v,\kP)\cap\kQuot$ such that
$\etaFrac{\kc}=1/n$ and that $\etaFrac{\kc'}=(n-k)/n$ for some 
not necessarily coprime natural numbers $n\in\NN$ and
$k\in\{1,\ldots,n-1\}$. Then
\[
\kell(\kc',\,k\cdot\kc) = 
n\cdot\kellp(\kc)-\kellp(n\kc)=
\kell(a\cdot\kc,\, b\cdot\kc)
\hspace{0.6em}
\mbox{for all $a,b\in\ZZ_{\geq 1}$ with $a+b=n$}.
\]
Step 1 immediately implies the second equality.
To check the first one, we have to show that
\[
\kellp(\kc')+\kellp(k\kc)-\kellp(\kc'+k\kc)
=
n\cdot\kellp(\kc)-\kellp(n\kc).
\]
Since Step 1 yields $\kellp(k\kc)=k\cdot\kellp(\kc)$,
this reduces to the claim
\[
\kellp(\kc') + \kellp(n\kc) =
(n-k)\cdot \kellp(\kc) + \kellp(\kc'+k\kc).
\]
However, since $\etaFrac{n\kc}=\etaFrac{\kc'+k\kc}=0$, the expression
$\kellp$ is linear on both sides, i.e.\ both sides are equal
to $\kellp(\kc'+n\kc)$.
\\[1ex]
{\;\em Step 3.}
Assume that  $\etaFrac{\kc_1}+\etaFrac{\kc_2}\geq 1$;
in particular, that both summands are positive.
For the present Step 3 we suppose that we have found an element 
$\kc\in\normal(v,\kP)\cap\kQuot$ such that
it satisfies the assumption made in Step 2 with respect to
both $\kc':=\kc_1,\kc_2$. That is,
$\etaFrac{\kc}=1/n$ and $\etaFrac{\kc_i}=(n-k_i)/n$
with $k_i\in\{1,\ldots,n-1\}$ for $i=1,2$.
This leads to the equalities
\[
\kellp(\kc_i) + k_i\cdot\kellp(\kc)
-\kellp(\kc_i+k_i\cdot\kc)
= \kell\big(\kc_1, \, k_i\cdot\kc\big)
= n\cdot\kellp(\kc)-\kellp(n\kc),
\]
hence
\[
\kellp(\kc_i) = \kellp(\kc_i+k_i\cdot\kc)
+ (n-k_i)\cdot \kellp(\kc)-\kellp(n\kc).
\]
Alternatively, we could also take $\kc':=\kc_1+\kc_2$ instead of the single
$\kc_i$. Since 
$k_1+k_2<n$,
we have to replace the coefficients
$k_i$ by $(k_1+k_2)$. 
This leads to
\[
\kellp(\kc_1+\kc_2) = \kellp(\kc_1+\kc_2+(k_1+k_2)\cdot\kc)
+ (n-k_1-k_2)\cdot \kellp(\kc)-\kellp(n\kc).
\]
Using these equations, we obtain
\[
\begin{array}{rcl}
\kell(\kc_1,\kc_2)
&=& \kellp(\kc_1) + \kellp(\kc_2) - \kellp(\kc_1+\kc_2)
\\
&=& 
\begin{array}[t]{@{}l}
\kellp(\kc_1+k_1\cdot\kc) +
\kellp(\kc_2+k_2\cdot\kc) 
-\kellp(\kc_1+\kc_2+(k_1+k_2)\cdot\kc)
\\
\hspace{20em}
+ n\cdot \kellp(\kc) -\kellp(n\kc).
\end{array}
\end{array}
\]
The arguments of the first two summands have the property that
$\etaFrac{\kbb}=0$, i.e.\ $\ketaZ(\kbb)=\keta(\kbb)$. In particular,
since $\kellp$ is linear in this case, their sum cancels
with the third summand. Altogether this yields
\[
\kell(\kc_1,\kc_2)= n\cdot \kellp(\kc) -\kellp(n\kc).
\]
{\em Step 4.}
Since $v$ is a rational vertex of $\kP$, we know that the denominators
of all $\keta(\kc)$ and hence that of all fractional parts $\etaFrac{\kc}$
with $\kc\in\normal(v,\kP)\cap\kQuot$ are bounded. If $n$ is the maximal
denominator among them, 
then we can find a special $\kc\in\normal(v,\kP)\cap\kQuot$
with $\etaFrac{\kc}=1/n$. We will fix this element and set
\[
\kells(v):= n\cdot \kellp(\kc) -\kellp(n\kc).
\]
And now we can apply Step 3 for any given $\kc_1,\kc_2\in
\normal(v,\kP)\cap\kQuot$ and our fixed $\kc$.
\\[1ex]
\ref{item:repC2} This is a direct consequence of the first part of the proposition
and of Proposition~\ref{prop-kellpLinear}.
\end{proof}

\begin{remark}
\label{rem-whyEllV}
The meaning of the elements $\kells(v)$ is that $\kell_\kT$ will map $s_v$ onto $\kells(v)$.
Hence the existence of  $\kells(v)$ is, on the one hand, a necessary condition for the existence of
$\kell_\kT$, but, on the other, it will also help to prove it.
\end{remark}

\subsection{Recovering the $s$-equations (\ref{eq:dualEquationsT(Q)3}) for lattice-disjoint edges}
\label{recoverNoLatticePt}
In Definition~\ref{def:TP}
we had imposed the equations {$s_i=s_j$} 
on the vector space $\kMT(\kP)$ for 
compact edges $\aed=[v^i,v^j]$ with $[v^i,v^j]\cap\kQuotD=\emptyset$.
These impose the equality $s_i=s_j\in\kMTD(\kP)$.
Hence, for the well-definition of the map $\kell_\kT:\ktT\to\kT$,
we have to check that this leads to the equality
$\kells(v^i)=\kells(v^j)$ inside $\kT$ too.
\\[1ex]
Recall from Definition~\ref{def-shortEdge} that   $g_\aed\in\ZZ_{\geq 1}$ is minimal 
such that the affine line $\ko{g_\aed\cdot \aed}$ spanned by $g_\aed\cdot \aed$ contains 
lattice points.
If $g_\aed=1$, then we may choose some $w\in\ko{\aed}\cap\kQuotD$,
and for any integral
\[
\kc\in\normal(\aed,\kP)=\normal(v^i,\kP)\cap\normal(v^j,\kP)
\]
we obtain that
\[
\keta(\kc) = -\langle v^i,\kc\rangle = -\langle v^j,\kc\rangle
=-\langle w,\kc\rangle\in\ZZ,
\]
i.e.\ that $\etaFrac{\kc}=0$. That means that those $\kc$ do not qualify
to determine neither $\kells(v^i)$, nor $\kells(v^j)$ via
Proposition~\ref{prop-representingC}\,\ref{item:repC2}. While this is bad news,
the point is that the reverse implication works as well:
Assume that $g_\aed\geq 2$. Considering the projection
\[
\kQuotRD\surj \kQuotRD/\RR(v^j-v^i)=:\ko{\kQuotRD}
\]
the polyhedron $\kP$ maps
to a polyhedron $\ko{\kP}$, and the edge $\aed$ becomes a vertex
$\ko{\aed}$ of $\ko{\kP}$. Within the dual setup, the injection
$\ko{\kQuotR}\hookrightarrow \kQuotR$ sends
$\normal(\ko{\aed},\ko{P})$ isomorphically to $\normal(\aed,\kP)$.
The assumption $g_\aed\geq 2$ means that $\ko{\aed}$ is not a lattice point
in $\ko{\kQuotRD}$. In particular, there are integral
$\kc\in\normal(\ko{\aed},\ko{P})\stackrel{\sim}{\to}\normal(\aed,\kP)$
such that $\langle\ko{\aed},\kc\rangle\notin\ZZ$. However, this number equals
$\langle v^i,\kc\rangle = \langle v^j,\kc\rangle=-\keta(\kc)$.
As a direct consequence, we obtain the following.

\begin{proposition}
\label{prop-gGreaterThanTwo}
If $\aed=[v^i,v^j]$ is an edge with $g_\aed\geq 2$, then
$\kells(v^i)=\kells(v^j)$ inside $\kT$.
\end{proposition}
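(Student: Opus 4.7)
The plan is to leverage the geometric observation that the text has already established immediately before the proposition, together with part~\ref{item:repC2} of Proposition~\ref{prop-representingC}, which expresses $\kells(v)$ in a form that is manifestly independent of the vertex $v$ once a suitable test element $\kc$ has been chosen.

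First, I would invoke the preceding paragraph: since $g_\aed\geq 2$, the image $\ko{\aed}$ of $\aed$ in the quotient $\ko{\kQuotRD}=\kQuotRD/\RR(v^j-v^i)$ is not a lattice point, so the isomorphism $\normal(\ko{\aed},\ko{\kP})\stackrel{\sim}{\to}\normal(\aed,\kP)$ furnishes an integral element $\kc\in \normal(\aed,\kP)\cap\kQuot=\normal(v^i,\kP)\cap\normal(v^j,\kP)\cap\kQuot$ with $\langle v^i,\kc\rangle=\langle v^j,\kc\rangle\notin\ZZ$. In particular $\etaFrac{\kc}>0$, so there is a smallest integer $n\in\NN_{\geq 1}$ with $n\cdot\etaFrac{\kc}\geq 1$, and this $n$ depends only on $\kc$, not on the vertex we picked it from.

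Next, I would apply Proposition~\ref{prop-representingC}\ref{item:repC2} twice, once for each of the two vertices $v^i$ and $v^j$, using the \emph{same} element $\kc$ and the \emph{same} integer $n$. Since $\kc\in\normal(v^i,\kP)\cap\kQuot$ and simultaneously $\kc\in\normal(v^j,\kP)\cap\kQuot$, and since the formula
\[
\kells(v)=n\cdot \kellp(\kc)-\kellp(n\kc)
\]
on the right-hand side makes no reference to $v$ at all, we obtain $\kells(v^i)=n\cdot\kellp(\kc)-\kellp(n\kc)=\kells(v^j)$ as elements of $\kT-\kT$ (and in fact of $\kS$, by Proposition~\ref{prop-representingC}\ref{item:repC1}), which is what we wanted.

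I do not expect any real obstacle here: the hard work was done in setting up Proposition~\ref{prop-representingC} (where the independence of $\kells(v)$ from the particular test element $\kc$ had to be proved via Steps~1–3) and in the geometric projection argument right before the statement. The only point to double-check is that the conclusion $\kells(v^i)=\kells(v^j)$ really holds in $\kT$ rather than merely in its Grothendieck group, but this is immediate because part~\ref{item:repC1} of Proposition~\ref{prop-representingC} places each $\kells(v)$ inside $\kS$ (indeed inside the image of the boundary under $\kellp$-combinations), and equality between their common formula obviously takes place on the nose.
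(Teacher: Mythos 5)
Your proof is correct and follows the paper's argument exactly: you take the integral $\kc\in\normal(\aed,\kP)\cap\kQuot$ with $\etaFrac{\kc}\neq 0$ furnished by the projection argument preceding the proposition, let $n$ be the smallest positive integer with $n\cdot\etaFrac{\kc}\geq 1$, and then apply Proposition~\ref{prop-representingC}\,\ref{item:repC2} with the same $\kc$ and $n$ for both vertices $v^i$ and $v^j$, whence $\kells(v^i)=n\cdot\kellp(\kc)-\kellp(n\kc)=\kells(v^j)$. (One small caveat on your closing remark: the membership of $\kells(v)$ in $\kT$, rather than merely in $\kS$, is what Proposition~\ref{prop-kellpLinear} provides, not part~\ref{item:repC1} of Proposition~\ref{prop-representingC}, but this does not affect the argument.)
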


\begin{proof}
Using the element $\kc\in\normal(\aed,\kP)\cap\kQuot$ with $\etaFrac{\kc}\neq 0$
constructed right before the proposition, we denote by $n\geq 2$ the smallest
positive integer such that $n\cdot\etaFrac{\kc}\geq 1$. Then, \ref{item:repC2}~of
Proposition~\ref{prop-representingC} implies that
$\kells(v^i)= n\cdot \kellp(\kc) -\kellp(n\kc)=\kells(v^j)$.
\end{proof}

The task mentioned at the beginning of the present subsection is not 
fulfilled yet -- it remains to show that
$\kells(v^i)=\kells(v^j)$ for the lattice-disjoint edges $\aed=[v^i,v^j]$ 
with $g_\aed=1$. While we have already indicated that the method of the proof of
Proposition~\ref{prop-gGreaterThanTwo} does not work here, we are saved by the
fact that, supposed that $g_\aed=1$ and $v^i,v^j\notin\kQuotD$, the property
$\aed\cap\kQuotD=\emptyset$ is equivalent to $\aed$ being a \se~edge (see Definition~\ref{def-shortEdge}).
Thus, we can and will postpone this case until we have studied the elements $\kell(t_{ij})$ where $t_{ij}$
is the dilation parameter.

\subsection{Recovering the $t$-parameters}
\label{recoverT}
In Subsection~\ref{recoverS} we have utilized the fact that
$\kteta(\kbb)$ is linear on the normal cones $\normal(v,\kP)$ for vertices
$v\in\kP$. In the present subsection, however, we start with an edge
$\aed=[v^1,v^2]$ leading to the normal cones
\[
\normal(\aed,\kP)=\normal(v^1,\kP)\cap \normal(v^2,\kP).
\]
Here, we have to pay attention that
the function $\kteta(\kbb)$ is linear on each individual $\normal(v^i,\kP)$,
but not on their union. In particular, the function
$\kc\mapsto\etaFrac{\kc}$ ceases to be linear (even mod $\ZZ$) when crossing
the boundaries of normal cones.

\begin{definition}
\label{def-superintegral}
We call an element $\kc\in\tail(\kP)\dual$ super integral if it 
belongs to $\kQuot$ and has integral
values on all (rational, but not necessarily integral) vertices of $\kP$.
In particular, super integral elements $\kc$ satisfy 
$\keta(\kc)\in\ZZ$, hence $\etaFrac{\kc}=0$.
This notion is additive, i.e.\ the set of super integral elements
form a sublattice $\kQuotSup\subseteq\kQuot$.
\end{definition}

Now, assume that $\kc_i\in\normal(v^i,\kP)$ ($i=1,2$) are super integral
such that $\kc_1+\kc_2\in\normal(v^1,\kP)\cup\normal(v^2,\kP)$.
Note that the latter condition is automatic if 
$\normal(v^1,\kP)\cup\normal(v^2,\kP)$ is convex.
Denoting $\vect{\aed}:=v^2-v^1$, this implies that
$\braket{ \vect{\aed},\kc_1},\braket{ -\vect{\aed},\kc_2}\geq 0$.
In Proposition~\ref{prop-siInT}, we have related
the element $\ktetaZ(\kc_1,\kc_2)$ to the edge parameter $t=t_{12}$.
The exact statement generalizes to the fact that
\[
\ktetaZ(\kc_1,\kc_2)=
\min\{\braket{ \vect{\aed},\kc_1},\;\braket{ -\vect{\aed},\kc_2}\}\cdot t.
\]
Now, the next towards establishing the map
$\kell$ is that this special dependence on the choice of elements
$\kc_i\in\normal(v^i,\kP)$ remains true in $\kT$.
\begin{proposition}
\label{prop-representingD}
There is an element $\kellt(\aed)\in\QQ_{>0}\cdot\kT$
such that for all super integral $\kc_i\in\normal(v^i,\kP)$ ($i=1,2$) with 
$\kc_1+\kc_2\in\normal(v^1,\kP)\cup\normal(v^2,\kP)$ we have
$
\kell(\kc_1,\kc_2)=
\min\{\braket{ \vect{\aed},\kc_1},\;\braket{ -\vect{\aed},\kc_2}\}\cdot \kellt(\aed)
$.
\end{proposition}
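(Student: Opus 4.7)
My strategy mirrors the proof of Proposition~\ref{prop-representingC}, with the integer-valued linear form $\lambda:=\langle\vect{\aed},\,\cdot\,\rangle$ on super-integral elements playing the role that the fractional part $\etaFrac{\,\cdot\,}$ played there. Set $A:=\lambda(\kc_1)$ and $B:=-\lambda(\kc_2)$; the normal cone hypotheses force $A,B\ge 0$, super-integrality forces $A,B\in\ZZ$, and both in fact lie in $n\ZZ$, where $n\ge 1$ is the smallest positive value attained by $\lambda$ on $\kQuotSup\cap\normal(v^1,\kP)$. A direct computation along a path $v^1\to v^2$ via Lemma~\ref{lem-etaTildeViaL}, together with the hypothesis $\kc_1+\kc_2\in\normal(v^1,\kP)\cup\normal(v^2,\kP)$, yields $\ktetaZ(\kc_1,\kc_2)=\min(A,B)\cdot t_{12}$; hence $\ketaZ(\kc_1,\kc_2)=\min(A,B)$ on the lower level, and by Proposition~\ref{prop-kellpLinear} we have $\kell(\kc_1,\kc_2)\in\kT$.

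The engine of the argument is that $\kellp$ is additive on any two super-integral elements lying in a common normal cone, which follows from Proposition~\ref{prop-kellpLinear} because such a pair is $\ketaZ$-independent. From this I would extract two consequences. First, the case $A=0$ (or $B=0$) gives $\kell(\kc_1,\kc_2)=0$, since then $\kc_1\in\normal(v^1,\kP)\cap\vect{\aed}^{\perp}=\normal(\aed,\kP)$ and hence both $\kc_1,\kc_2$ lie in $\normal(v^2,\kP)$. Second, the \emph{neutrality} property $\kell(\kc_1+\kc_0,\kc_2)=\kell(\kc_1,\kc_2)$ for any super-integral $\kc_0\in\normal(\aed,\kP)$, obtained by expanding the three $\kellp$-terms using additivity within $\normal(v^1,\kP)$ and within whichever cone $\normal(v^i,\kP)$ contains $\kc_1+\kc_2$, then cancelling the $\kellp(\kc_0)$-contributions.

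Combining neutrality with the observation that $\vect{\aed}^{\perp}\cap\kQuotSup$ coincides with the group completion of the semigroup $\normal(\aed,\kP)\cap\kQuotSup$ (since $\spann{\RR}{\normal(\aed,\kP)}=\vect{\aed}^{\perp}$ and large multiples of an interior super-integral element of $\normal(\aed,\kP)$ absorb any $\vect{\aed}^{\perp}$-difference), I conclude that any two super-integral $\kc_1,\kc_1'\in\normal(v^1,\kP)$ with the same $A$-value can be written as $\kc_1+k'=\kc_1'+k$ with $k,k'\in\normal(\aed,\kP)\cap\kQuotSup$; two applications of neutrality then give $\kell(\kc_1,\kc_2)=\kell(\kc_1',\kc_2)$, and the symmetric statement handles $\kc_2$, so $\kell(\kc_1,\kc_2)$ depends only on $(A,B)$. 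I would next pick reference super-integral $\kc_1^{\circ}\in\normal(v^1,\kP)$ and $\kc_2^{\circ}\in\normal(v^2,\kP)$ with $\lambda(\kc_1^{\circ})=-\lambda(\kc_2^{\circ})=n$, adjusted so that $\kc_1^{\circ}+\kc_2^{\circ}\in\normal(\aed,\kP)$ (achievable by adding a deep interior element), and define $\kellt(\aed):=\tfrac{1}{n}\kell(\kc_1^{\circ},\kc_2^{\circ})\in\QQ_{>0}\cdot\kT$. For $A=kn$, $B=ln$ with (WLOG) $k\le l$, rigidity reduces the problem to $\kell(k\kc_1^{\circ},l\kc_2^{\circ})$, which via additivity of $\kellp$ and the decomposition $\kc_1^{\circ}+l\kc_2^{\circ}=(\kc_1^{\circ}+\kc_2^{\circ})+(l-1)\kc_2^{\circ}$ (both summands in $\normal(v^2,\kP)\cap\kQuotSup$) collapses by telescoping to $k\cdot\kell(\kc_1^{\circ},\kc_2^{\circ})=kn\cdot\kellt(\aed)=\min(A,B)\cdot\kellt(\aed)$.

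The main obstacle will be the rigidity step: identifying $\vect{\aed}^{\perp}\cap\kQuotSup$ with the group completion of $\normal(\aed,\kP)\cap\kQuotSup$. This demands verifying that $\normal(\aed,\kP)$ spans $\vect{\aed}^{\perp}$ and producing a super-integral element deep in the relative interior whose large multiples can absorb any $\vect{\aed}^{\perp}$-difference -- a standard but delicate lattice-convexity verification that must also be compatible with preserving the hypothesis $\kc_1+\kc_2\in\normal(v^1,\kP)\cup\normal(v^2,\kP)$ when transferring between different representatives.
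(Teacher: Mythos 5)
Your strategy (prove a ``neutrality'' lemma --- adding any super-integral element of $\normal(\aed,\kP)$ to $\kc_1$ leaves $\kell(\kc_1,\kc_2)$ unchanged --- then use it to reduce everything to a single reference pair) is genuinely different from the paper's route. The paper instead fixes $\kc_1$ deep in $\normal(v^1,\kP)$, proves that $\kell(\kc_1,\kbb)$ is \emph{additive} on the set $B(\kc_1)=\{\kc_2\kst \kc_1+\kc_2\in\normal(v^1,\kP)\}$ (their Step 2), proves it vanishes on $\vect{\aed}^\bot$ (their Step 3), and then reads off linearity of the resulting function on $\normal(v^2,\kP)/\vect{\aed}^\bot$. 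Both arguments run on the same engine --- additivity of $\kellp$ on super-integral elements lying in a common normal cone, via Proposition~\ref{prop-kellpLinear} --- and both agree that the resulting element lies in $\QQ\cdot\kT$ rather than in $\kT$ itself.

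There is, however, a real gap in your version that is larger than the lattice-convexity verification you flag at the end. You establish one-variable rigidity: for fixed $\kc_2$, the value $\kell(\kc_1,\kc_2)$ depends only on $A=\braket{\vect{\aed},\kc_1}$, and symmetrically for fixed $\kc_1$. You then say ``so $\kell(\kc_1,\kc_2)$ depends only on $(A,B)$,'' which requires chaining: $\kell(\kc_1,\kc_2)=\kell(\kc_1',\kc_2)=\kell(\kc_1',\kc_2')$. But the intermediate pair $(\kc_1',\kc_2)$ is \emph{not} covered by the proposition's hypothesis --- there is no reason $\kc_1'+\kc_2$ should lie in $\normal(v^1,\kP)\cup\normal(v^2,\kP)$, so one-variable rigidity does not apply. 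This is exactly the ``compatibility'' issue you mention in your last sentence, but it is not merely a technical annoyance to be checked later; it is the key obstruction to your reduction. It can be repaired using the same neutrality you already proved: add a large multiple $Mw$ of a super-integral $w\in\innt\normal(\aed,\kP)$ simultaneously to both $\kc_1$ and $\kc_1'$ (this changes neither $A$ nor the $\kell$-value), and argue that for $M\gg 0$ the sums $(\kc_1+Mw)+\kc_2'$ and $(\kc_1'+Mw)+\kc_2$ land in $\normal(v^1,\kP)\cup\normal(v^2,\kP)$ because their direction approaches $w$, and the union of the two adjacent maximal cones contains a neighbourhood of the relative interior of their common face. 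The paper's architecture avoids this altogether: by never moving $\kc_1$ and $\kc_2$ at the same time, and by working inside $B(\kc_1)$ for a single deep $\kc_1$, the hypothesis is automatically satisfied wherever additivity of $\kellp$ is invoked.

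A smaller imprecision: you define $n$ as ``the smallest positive value attained by $\lambda$ on $\kQuotSup\cap\normal(v^1,\kP)$.'' The image of that semigroup under $\lambda$ is a numerical semigroup in $n\ZZ$, not a priori all of $n\NN$, so a chosen $\kc_1^{\circ}$ with $\lambda(\kc_1^{\circ})=n$ need not generate every achievable $A$-value by multiplication. This is harmless because any super-integral $\kc_1\in\normal(v^1,\kP)$ has $A=\lambda(\kc_1)\in n\ZZ$ for $n=[\ZZ:\lambda(\kQuotSup)]$, and $k\kc_1^\circ$ realizes $A=kn$; but the phrasing should be that $n$ is the index of $\lambda(\kQuotSup)$ in $\ZZ$, not a minimum over a cone.
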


\begin{proof}
We may assume that $\braket{ \vect{\aed},\kc_1}\geq \braket{ -\vect{\aed},\kc_2} \;(\geq 0)$.
In this case, the claim turns into the equation
$\,\kell(\kc_1,\kc_2)=\braket{ -\vect{\aed},\kc_2}\cdot \kellt(\aed)$.
\\[0.5ex]
{\em Step 1:} Show that $\kell(\kc_1,\kc_2)$ does indeed not depend on $\kc_1$, 
provided that it does not leave the range 
$\normal(v^1,\kP)\cap \big[\braket{ \vect{\aed},\kbb}\geq \braket{ -\vect{\aed},\kc_2}\big]$.
If $\kc_1'$ is another candidate, then we obtain
\[
\kell(\kc_1,\kc_2)-\kell(\kc_1',\kc_2)=
\kellp(\kc_1) - \kellp(\kc_1+\kc_2)
- \kellp(\kc_1') + \kellp(\kc_1'+\kc_2).
\]
Hence, as our goal is $\kell(\kc_1,\kc_2) = \kell(\kc_1',\kc_2)$, we have to show that
\[
\kellp(\kc_1) + \kellp(\kc_1'+\kc_2) =
\kellp(\kc_1') + \kellp(\kc_1+\kc_2).
\]
The inequalities 
$\braket{ \vect{\aed},\kc_1}, \braket{ \vect{\aed},\kc_1'} \geq \braket{ -\vect{\aed},\kc_2}$
imply that both $\kc_1+\kc_2$ and $\kc_1'+\kc_2$ belong to
$\normal(v^1,\kP)$, 
i.e.\ to the same normal cone which already contains $\kc_1$ and $\kc_1'$.
In particular, since $\kc_1,\kc_1'$ are super integral,
Proposition~\ref{prop-kellpLinear} shows that
the map $\kellp$ behaves linearly on both sums, 
i.e.\ adding up to $\kellp(\kc_1+\kc_1'+\kc_2)$ in both cases.
\\[1ex]
{\em Step 2:} Fix a super integral $\kc_1\in\normal(v^1,\kP)$ and show that
$\kell(\kc_1,\kbb)$ is an additive function on
\[
B(\kc_1):=\{\kc_2\in\normal(v^2,\kP)\kst
\kc_1+\kc_2\in \normal(v^1,\kP)\mbox{ and }
\braket{ v^j,\kc_2}\in\ZZ\mbox{ for } j=1,2\}.
\]
Note that $B(\kc_1)$ is not a cone.
However, if $\kc_2,\kc_2'\in B(\kc_1)$ with $\kc_2+\kc_2'\in B(\kc_1)$, 
then we obtain
\[
\renewcommand{\arraystretch}{1.5}
\begin{array}[t]{rcl}
\kell(\kc_1,\kc_2+\kc_2')-\kell(\kc_1,\kc_2)-\kell(\kc_1,\kc_2')
&=&
\renewcommand{\arraystretch}{1.0}
\begin{array}[t]{@{}l}
\hspace{-1em}\phantom{ - }\kellp(\kc_1) + \kellp(\kc_2+\kc_2') 
- \kellp(\kc_1+\kc_2+\kc_2')
- \kellp(\kc_1)- \\
\hspace{-1em}- \kellp(\kc_2) + \kellp(\kc_1+\kc_2)
- \kellp(\kc_1) - \kellp(\kc_2') + \kellp(\kc_1+\kc_2')
\end{array}
\\
&=&
\renewcommand{\arraystretch}{1.0}
\begin{array}[t]{@{}l}
\hspace{-1em}\phantom{ - }\kellp(\kc_1+\kc_2) + \kellp(\kc_1+\kc_2')
+ \kellp(\kc_2+\kc_2') -\\
\hspace{-1em}
- \kellp(\kc_1) - \kellp(\kc_2) - \kellp(\kc_2')
- \kellp(\kc_1+\kc_2+\kc_2').
\end{array}
\end{array}
\]
Since $\kc_2,\kc_2'\in\normal(v^2,\kP)$ are super integral, we know that
$
\,\kellp(\kc_2) + \kellp(\kc_2') =
\kellp(\kc_2+\kc_2'). 
$
This transforms the previous expression into
\[
\kell(\kc_1,\kc_2+\kc_2')-\kell(\kc_1,\kc_2)-\kell(\kc_1,\kc_2')
\;= \;
\kellp(\kc_1+\kc_2) + \kellp(\kc_1+\kc_2')
- \kellp(\kc_1) - \kellp(\kc_1+\kc_2+\kc_2'),
\]
and the additivity claim that we want to prove for $\kell(\kc_1,\kbb)$  is equivalent to the equality
\[
\kellp(\kc_1+\kc_2) + \kellp(\kc_1+\kc_2')
\;=\;
\kellp(\kc_1) + \kellp(\kc_1+\kc_2+\kc_2').
\]
The integrality of $\braket{ v^1,\kc_1+\kc_2}$ (and similarly for
$\kc_2'$) implies that the left hand side equals the compact expression
$\kellp(2\kc_1+\kc_2+\kc_2')$. The same argument applies for the right hand
side, yielding the same value.
\\[1ex]
{\em Step 3:} Define the map
\(
\psi:\{\kc_2\in\normal(v^2,\kP)\cap\kQuot\kst 
\braket{ v^i,\kc_2}\in\ZZ,\; i=1,2\}
\too\kT
\)
as
\[
  \psi(\kc_2):=\kell(\kc_1,\kc_2)
\]
under use of any $\kc_1\in \normal(v^1,\kP)\cap\kQuot$ with
$\braket{ v^i,\kc_1}\in\ZZ$ ($i=1,2$) and
$\kc_1+\kc_2\in \normal(v^1,\kP)$, 
i.e.\ such that $\kc_2\in B(\kc_1)$ from Step 2.\\
While it is obvious that those elements $\kc_1$ exist, it is a consequence of
Step 1 that the definition of $\psi(\kc_2)$ does not depend on their choice.
Moreover, for any $\kc_2,\kc_2'\in \normal(v^2,\kP)\cap\kQuot$
with $\braket{ v^i,\kc_2}, \braket{ v^i,\kc_2'}\in\ZZ$
($i=1,2$), we can find an element $\kc_1$ 
such that $\kc_2,\kc_2',\kc_2+\kc_2'\in B(\kc_1)$. Hence, it follows from
Step 2, that $\psi$ is an additive function.
\\[0.5ex]
By definition, it is clear that $\vect{\aed}\leq 0$ on $\normal(v^2,\kP)$,
and we may restrict $\psi$ to the $\vect{\aed}$-face
\[
\normal(v^2,\kP)\cap (\vect{\aed})^\bot \;=\; \normal(\vect{\aed},\kP)
\;\subseteq\;\normal(v^1,\kP).
\]
That means that both arguments from $\psi(\kc_2)=\kell(\kc_1,\kc_2)$
become super integral elements of $\normal(v^1,\kP)$, i.e.\
they satisfy the linearity relation
$\ketaZ(\kc_1) + \ketaZ(\kc_2) = \ketaZ(\kc_1+\kc_2)$.
Thus, Proposition~\ref{prop-kellpLinear} implies 
that $\kell(\kc_1,\kc_2)=0$ on $(\vect{\aed})^\bot$.
It follows that $\psi$ extends to a linear map
\[
\normal(v^2,\kP)/(\vect{\aed})^\bot \to \Q_{\geq 0}\cdot\kT,
\]
i.e.\ it is of the form $\psi(\kc_2)=\braket{ -\vect{\aed},\kc_2}\cdot\kellt(\vect{\aed})$
for some element $\kellt(\aed)\in \Q_{\geq 0}\cdot\kT$.
\end{proof}

\subsection{Recovering the $s/t$-equations (\ref{eq:dualEquationsT(Q)4}) for \se~edges}
\label{recoverShortE}
Note that the vector $\vect{\aed} = v^2-v^1$ spans the 1-dimensional 
$\Q$-vector space associated to the edge $\aed=[v^1,v^2]$. While the affine line
spanned by $\aed$ might lack lattice points (i.e.\ $g_\aed\ge 2$ from Definition~\ref{def-shortEdge}), the intersection $(\Q\cdot \vect{\aed})\cap\kQuotD$ 
can be identified with $\Z$. It is dual to $\kQuot/(\vect{\aed})^\bot=\Z$.
Choose a representative $\kc_+\in\kQuot$ lifting $1$.
Note that, in the case of $g_\aed\geq 2$, the choice might indeed matter, cf.\
Subsubsection~\ref{charactShortEdges}.
\\[1ex]
We fix an element $w\in \innt\normal(\aed,\kP)$, meaning that $\braket{ w, v^1} = \braket{ w, v^2}$ is strictly
less than the value of $w$ on all other vertices of $\kP$. 
We will, additionally, assume that it is super integral $w\in\kQuotSup$,
meaning that it has integral values on all, even on the non-integral, vertices of $\kP$.
This allows us to choose and fix an $A\gg 0$ leading to elements
\[
\kc_1:= \kc_+ +A\cdot w \in\normal(v^1,\kP)
\hspace{1em}\mbox{and}\hspace{1em}
\kc_2:= -\kc_+ +A\cdot w \in\normal(v^2,\kP).
\]
In particular, 
\[
\kc_0:=\kc_1+\kc_2 =2A\cdot w \in \normal(\aed,\kP).
\] 
In contrast to $w$, the values of $\kc_+$ on $v^1,v^2$ might be non-integral.
Let $n\in g_\aed\cdot\Z_{\geq 1}$ such that
\[
n\cdot \braket{ \kQuot, v^i} \in \Z
\hspace{1em}(i=1,2).
\]
The idea is now to frequently make use of Proposition~\ref{prop-kellpLinear}
stating that any linearity from $\ketaZ$ transfers directly to $\kellp$.
On the other hand, when looking for linearity instances of $\ketaZ$,
having $w$ (or an integral multiple) as one argument does always help:
First, since $w$ is contained
in $\normal(\aed,\kP)$, the function $\keta$ acts linear into both regions
$\normal(v^1,\kP)$ and $\normal(v^2,\kP)$. Second, the integrality assumption
for $w$ implies $\ketaZ(w)=\keta(w)$.

\subsubsection{The first recursion formula}
\label{firstRec}
For any $h\in\Z_{\geq 0}$ we consider the differences 
\[
\ketaZ(h\kc_1,\kc_1)=
\ketaZ(h\kc_1)+\ketaZ(\kc_1)-\ketaZ\big((h+1)\cdot\kc_1\big)\in\NN.
\]
Since both arguments sit in the same normal cone,
i.e.\ $\keta$ behaves linear, we know $\ketaZ(h\kc_1,\kc_1)\in\{0,1\}$.
\\[1ex]
{\em Case 1: $\;\ketaZ(h\kc_1,\kc_1)=0$. }
Then Proposition~\ref{prop-kellpLinear} implies 
$\kellp(h\kc_1)+\kellp(\kc_1)=\kellp\big((h+1)\cdot\kc_1\big)$.
\\[1ex]
{\em Case 2: $\;\ketaZ(h\kc_1,\kc_1)=1$. }
Now, Proposition~\ref{prop-representingC} says that
\[
\kell(h\kc_1,\kc_1)=
\kellp(h\kc_1)+\kellp(\kc_1)-\kellp\big((h+1)\cdot\kc_1\big)
=\kells(v^1).
\]
Hence, we can express 
\[
\kellp\big((h+1)\cdot\kc_1\big)
=
\kellp(h\kc_1)+\kellp(\kc_1) -
\left\{\begin{array}{ll}
0 & \mbox{in Case 1}\\
\kells(v^1) & \mbox{in Case 2.}
\end{array}\right.
\]
Assume that, for $h=0,\ldots,n-1$, the Cases 1 and 2 occur
$(n-k)$ and $k$ times, respectively. Then, 
since $\kellp(0\cdot\kc_1)=0$, these recursion formulae add
up to
$
\kellp(n\kc_1)=n\cdot\kellp(\kc_1) - k\cdot\kells(v^1).
$
Analogously, utilizing the corresponding $k_2$ replacing $k_1:=k$, we obtain 
the same formula for $\kells(v^2)$. Hence,
\[
n\cdot\kellp(\kc_i)-\kellp(n\kc_i)=k_i\cdot\kells(v^i)
\hspace{1em}(i=1,2).
\]
Finally, we use Proposition~\ref{prop-representingD}. Note that
$\kc_1,\kc_2$ do not meet the assumptions, but
$n\kc_1,n\kc_2$ do. Hence
\[
\kell(n\kc_1,n\kc_2)=\kellp(n\kc_1)+\kellp(n\kc_2)-\kellp(n\kc_0)=\braket{\vect{\aed},n\kc_1}\cdot\kellt(\aed).
\]

\subsubsection{The relation between $\kellp$- and $\ketaZ$-equations}
\label{kellpVSketaZ}
Recall from Subsection~\ref{initialUniqueness} 
that we have an additive map $\kpi_\partial$ 
sending $\kellp(\kc)\mapsto [\kc,\ketaZ(\kc)]$.
Followed by the projection to $\Z$, this becomes
$\kellp(\kc)\mapsto \ketaZ(\kc)$. That means that all equations 
among the $\kellp(\kc)\in\kS$ we obtained so far (or in the upcoming text)
induce the same equations among the integers $\ketaZ(\kc)$.
Actually it is the point of claims as treated in Subsubsection~\ref{firstRec}
to deal with the reverse direction, i.e.\ lifting certain $\ketaZ$-relations to
$\kellp$-relations.
\\[1ex]
Nevertheless, when transferring relations, via $\pr_\Z\circ\kpi_\partial$,
from the elements $\kellp(\kc)$ to the integers $\ketaZ(\kc)$,
then by Proposition~\ref{prop-representingC} and 
Proposition~\ref{prop-representingD}
we obtain that $\kells(v),\kellt(\aed)\mapsto 1$, provided that $v\notin\kQuotD$.
This fits well with the facts $s_v,t_e\mapsto 1$ under $\kpi$ (cf. Section~\ref{sec:theAbmientSpace}).
In particular, the equations of Subsubsection~\ref{firstRec}
imply that
\[
k_i= n\cdot\ketaZ(\kc_i) - \ketaZ(n\kc_i) 
= n\cdot\big(\ketaZ(\kc_i) - \keta(\kc_i)\big)
\]
and
\[
\ketaZ(n\kc_1)+\ketaZ(n\kc_2)-\ketaZ(n\kc_0)=
\braket{ e,n\kc_1}=\braket{ -e,n\kc_2}.
\]
Note that $k_i\geq 1$ if and only if 
$\keta(\kc_i)=-\braket{ v^i, \kc_i}\notin\Z$, i.e.\
exactly when $v^i\notin\kQuotD$. In the case of $v^i\in\kQuotD$,
i.e.\ if the parameter $s_i$  is set to $0$, then
we proceed with $\kells(v^i)$ in the very same way.

\subsubsection{A property of \se~edges}
\label{charactShortEdges}
Here we will show that, whenever $\aed$ is a \se~edge, 
then there is a choice of $\kc_+$ (lifting $1\in\kQuot/e^\bot$)
such that the associated special elements $\kc_i$ lead to
$\ketaZ(\kc_1,\kc_2)=1$, i.e.\ we obtain
$\ketaZ(\kc_1)+\ketaZ(\kc_2)=\ketaZ(\kc_0)+1$.
The special choice of $\kc_+$ does only matter for $g=g_\aed\geq 2$.
\\[1ex]
Fix an element $p\in\sfrac{1}{g}\cdot \kQuotD$ of the affine line 
$\ko{\aed}$ containing
the edge $\aed$. That means that $\aed-p\subseteq\Q\cdot \vect{\aed}$,
inducing a lattice structure on $\ko{\aed}$
with $p$ becoming the origin.
Now, the striking point is that we may and will choose $p$ such that
$\braket{\kc_+,p}\in\Z$. Note that a different choice
of the lifting $\kc_+\in\kQuot$ of $1\in\kQuot/(\vect{\aed})^\bot$ 
at the beginning of the present Subsection~\ref{recoverShortE}
leads to a different $p$. 
\\[1ex]
Now we can write $v^i=p+v^i_0$ with $v^i_0\in \Q\cdot \vect{\aed}$ for $i=1,2$. 
Then, we obtain
$\keta(\kc_i)=-\braket{ \kc_i,\,p+v^i_0}$
and $\keta(\kc_1+\kc_2)=-\braket{ \kc_1+\kc_2,\,p+v^{1\wedge 2}_0}$.
Since $\braket{\kc_+,p}\in\Z$, we obtain that
$\braket{\kc_i,p}\in\Z$ as well, hence
\[
\renewcommand{\arraystretch}{1.3}
\begin{array}{rcl}
\ketaZ(\kc_1,\kc_2)&=&
\roundup{-\braket{ \kc_1,\,p+v^1_0}} +
\roundup{-\braket{ \kc_2,\,p+v^2_0}} -
\roundup{-\braket{ \kc_1+\kc_2,\,p+v^{1\wedge 2}_0}}
\\
&=&
\roundup{-\braket{ \kc_1,\,v^1_0}} +
\roundup{-\braket{ \kc_2,\,v^2_0}} -
\roundup{-\braket{ \kc_1+\kc_2,\,v^{1\wedge 2}_0}}
\\
&=&
\roundup{-\braket{ \kc_+,\,v^1_0}} +
\roundup{-\braket{ -\kc_+,\,v^2_0}}
\\
&=&
\roundup{\braket{ \kc_+,\,v^2_0}} -
\rounddown{\braket{ \kc_+,\,v^1_0}}.
\end{array}
\]
If we identify $(\Q\cdot \vect{\aed})\cap\kQuotD$ with $\Z$, thus also
$\Q\cdot \vect{\aed}$ with $\Q$, then $\kc_+$ becomes $1$ again, so
\[
\ketaZ(\kc_1,\kc_2)=\roundup{v^2_0} - \rounddown{v^1_0}.
\]
Hence, our claim $\ketaZ(\kc_1,\kc_2)=1$ is equivalent to the lack of interior
lattice points in $\aed$ (which we identified with $\aed-p$). 
This is clearly satisfied for \se~edges with $g_\aed=1$, but
we have to take a closer look at the case of $g_\aed\geq 2$.
\\[1ex]
Assume that $g_\aed\geq 2$. Then, the \se ness  still implies 
the lack of interior lattice points on $\aed-p$, provided that
$p\in(\sfrac{1}{g}\cdot\kQuotD)\cap\ko{\aed}$ is chosen as close as possible to $\aed$.
Thus, it remains to check that any desirable choice of $p$ can be realized
by a suitable choice of $\kc_+$. For this, we start by choosing coordinates
\begin{tikzcd}[cramped,sep=small]
\kQuotD\arrow[r]{}{\sim}&\Z^d 
\end{tikzcd}
such that $~\vect{\aed} \cdot\Q$ becomes the first coordinate axis
$\Q\times \ku{0}^{d-1}=\{(\bullet,\ku{0})\}$. The dual picture is
\begin{tikzcd}[cramped, sep=small]
  \Z^d \arrow[r]{}{\sim}&\kQuot
\end{tikzcd}
with $(\vect{\aed})^\bot=0\times\Q^{d-1} = \{(0,\ku{\bullet})\}$.
So, the affine line $\ko{\aed}$ equals 
$\Q\times\{(\frac{k_2}{g},\ldots,\frac{k_d}{g})\} = \{(\bullet,\frac{k_2}{g},\ldots,\frac{k_d}{g})\}$
with $k_2,\ldots,k_d\in\ZZ$ and $\gcd(k_2,\ldots,k_d,g)=1$.
Thus, if $p=(\frac{p_1}{g},\frac{k_2}{g},\ldots,\frac{k_d}{g})$,
there are coefficients $\lambda_i\in\Z$ such that
\[
\textstyle
\frac{p_1}{g} \equiv \sum_{i=2}^d \lambda_i\cdot \frac{k_i}{g}
\hspace{1em}(\hspace{-0.9em}\mod\Z).
\]
Then $\kc_+:=(1,-\lambda_2,\ldots,-\lambda_d)\in\Z^d=\kQuot$ is a suitable 
initial choice allowing to take this special point $p$ as an origin 
afterwards.

\subsubsection{The second recursion formula}
\label{secondRec}
Here we assume that $\aed$ is a \se~edge and use
Subsection~\ref{charactShortEdges}. We will first show that
\[
\ketaZ(h\kc_1,\kc_1)= 1 - \ketaZ\big((h+1)\kc_1,\kc_2\big)
\]
for all $h\in\Z$ (both positive and negative).  This can be seen as follows:
\[
\begin{array}{rcl}
\ketaZ(h\kc_1)+\ketaZ(\kc_1)-\ketaZ\big((h+1)\cdot\kc_1\big)
&=&
\ketaZ(h\kc_1)+ \Big(\ketaZ(\kc_0) +1 -\ketaZ(\kc_2)\Big)
-\ketaZ\big((h+1)\cdot\kc_1\big)
\\
&=& 1 + \Big(\ketaZ(h\kc_1)+ \ketaZ(\kc_0)\Big) -\ketaZ(\kc_2)
-\ketaZ\big((h+1)\cdot\kc_1\big)
\\
&=& 1 + \ketaZ(h\kc_1+\kc_0) -\ketaZ(\kc_2)
-\ketaZ\big((h+1)\cdot\kc_1\big).
\end{array}
\]
Let us recall Case 2 from Subsection~\ref{firstRec}.
We had assumed $\;\ketaZ(h\kc_1,\kc_1)=1$,
occurs for  $(k_1=k)$ values of $h\in \{0,\dots, n-1\}$.
Using our new relation, this implies
$\ketaZ\big((h+1)\kc_1,\kc_2\big)=0$.
Hence, Proposition~\ref{prop-kellpLinear} implies that
$\kellp\big((h+1)\kc_1\big)+\kellp(\kc_2)=\kellp(h\kc_1+\kc_0)$.
So, if $k_1\geq 1$, i.e.\ if Case 2 occurs, then we can express
\[
\renewcommand{\arraystretch}{1.3}
\begin{array}{rcl}
\kells(v^1)&=&
\kellp(h\kc_1)+\kellp(\kc_1)-\kellp\big((h+1)\cdot\kc_1\big)\\
&=&
\kellp(h\kc_1)+\kellp(\kc_1)+\kellp(\kc_2) - \kellp(h\kc_1+\kc_0)\\
&=&
\kellp(h\kc_1)+\kellp(\kc_1)+\kellp(\kc_2) - \kellp(h\kc_1) - \kellp(\kc_0)\\
&=&
\kellp(\kc_1)+\kellp(\kc_2)- \kellp(\kc_0).
\end{array}
\]
In Subsubsection~\ref{kellpVSketaZ} we have seen that
$k_i\geq 1$ if and only if $v^i\notin\kQuotD$.
In particular, we obtain for these cases 
\[
\kells:=\kells(v^i)=\kellp(\kc_1)+\kellp(\kc_2)- \kellp(\kc_0).
\]
If both $v^1,v^2\notin\kQuotD$, then this already shows that
$\kells(v^1)=\kells(v^2)$. 
Anyway, it remains to compare $\kells$ with $\kellt(\aed)$.
At the end of Subsubsection~\ref{firstRec} we already got
\[
\braket{\vect{\aed},n\kc_1}\cdot\kellt(\aed)=
\kellp(n\kc_1)+\kellp(n\kc_2)-\kellp(n\kc_0),
\]
which is in the same spirit as the formula before.
Applying $\kpi$ as explained in Subsubsection~\ref{kellpVSketaZ}
and $\ketaZ(\kc_1,\kc_2)=1$ from Subsubsection~\ref{charactShortEdges},
this yields 
\[
\renewcommand{\arraystretch}{1.3}
\begin{array}{rcl}
\braket{ e,n\kc_1} &= &
\ketaZ(n\kc_1)+\ketaZ(n\kc_2)-\ketaZ(n\kc_0) \\
&= &
n\cdot\ketaZ(\kc_1) -k_1 + n\cdot\ketaZ(\kc_2) -k_2 -n\cdot\ketaZ(\kc_0)
\\ &= & n-(k_1+k_2).
\end{array}
\]
Adding up the two equations from Subsubsection~\ref{firstRec}:
$n\cdot\kellp(\kc_i)-\kellp(n\kc_i)=k_i\cdot\kells(v^i)$,
for $i=1,2$, we obtain
\[
n\cdot\big(\kellp(\kc_1)+\kellp(\kc_2)\big)
-\big(\kellp(n\kc_1)+\kellp(n\kc_2)\big)=(k_1+k_2)\cdot\kells.
\]
Note that this is even correct if one of the vertices $v^i$ belongs to
$\kQuotD$, i.e.\ if $k_i=0$.
Now, we replace 
$\kellp(\kc_1)+\kellp(\kc_2)$ by $\kellp(\kc_0)+\kells$
and $\kellp(n\kc_1)+\kellp(n\kc_2)$ by 
$\kellp(n\kc_0)+(n-k_1-k_2)\cdot\kellt(\aed)$. We obtain
\[
n\cdot\big(\kellp(\kc_0)+\kells\big)
-\big(\kellp(n\kc_0)+(n-k_1-k_2)\cdot\kellt(\aed)\big)=(k_1+k_2)\cdot\kells.
\]
Reordering, this yields
\[
(n-k_1-k_2)\cdot\kellt(\aed)=(n-k_1-k_2)\cdot\kells,
\]
and it remains to check that $n-(k_1+k_2)\neq 0$. 
However, since we have seen before that
\[
n-(k_1+k_2) = \braket{ e,n\kc_1} = \braket{ -e,n\kc_2},
\]
the vanishing of $n-(k_1+k_2)$ would imply
$
v^2-v^1=\braket{ e,1}=\braket{ e,\kc_+}=\braket{ e,\kc_1}=0,
$
which leads to a contradiction.

\subsection{Recovering the closing conditions along 2-faces}
\label{recoverClosingCond}
In Subsection~\ref{recoverT} we have looked at adjacent vertices
$v,v'\in\kP$. If their oriented connecting edge is $\vect{\aed}=v'-v$,
then we may choose sufficiently integral 
$\kc\in\normal(v,\kP)$, $\kc'\in\normal(v',\kP)$ in $\kQuot$ with
$\braket{ \vect{\aed}, \kc+\kc'}=0$, i.e.\ with
$\braket{ \vect{\aed}, \kc} = -\braket{ \vect{\aed}, \kc'}>0$,
and $\kc+\kc'\in \normal(\aed,\kP)= \normal(v,\kP) \cap\normal(v',\kP)$
leading to
$\kell(\kc,\kc')=\braket{ \vect{\aed}, \kc}\cdot \kellt(\aed)$
by Proposition~\ref{prop-representingD}. For the whole subsection we 
could keep the assumption of being ``sufficiently integral'' for all 
relevant elements from $\kQuot$ -- namely, we could entirely work within the
super integral sublattice $\kQuotSup\subseteq\kQuot$.
Instead, we replace $\kellp(\kc)$ by the following
stabilized version:
\[
\textstyle
\kellz(\kc):=\frac{1}{A}\cdot \kellp(A\cdot\kc)
\hspace{1em}\mbox{for } A\in\NN \mbox{ with }A\gg 0.
\]
In accordance to this, we replace
$\kell(\kc,\kc')=\kellp(\kc)+\kellp(\kc')-\kellp(\kc+\kc')$ by the stabilized
version too:
$\kellz(\kc,\kc'):=\kellz(\kc)+\kellz(\kc')-\kellz(\kc+\kc')$.
It extends the validity of the above formula for $\kellt(\aed)$ to non-integral
arguments.
\\[1ex]
Now we consider a compact 2-dimensional face $\kF\leq \kP$. Assume that its
vertices and oriented edges are $v^i\in\kQuotRD$ 
and $\vect{\aed_i}=v^{i+1}-v^i$ ($i\in\Z/n\Z$), respectively.
Then, the cones
$\normal(\kF,\kP)\subseteq\normal(\aed_i,\kP)\subseteq\normal(v^i,\kP)$
are part of the inner normal fan $\normal(\kP)$. Projecting them down to
the 2-dimensional vector space $\kQuotR/\kF^\bot=:\kF^*$
(dual to the vector space $\kF-\kF$ accompanying the affine space spanned by
$\kF$) yields a 2-dimensional complete fan $\ko{\normal}_{\kF}$
within $\kF^*$. We denote the image cones by 
\[
0\;=\;\ko{\normal}(\kF,\kP)\;\subseteq\;\ko{\normal}(\aed_i,\kP)
\;\subseteq\;\ko{\normal}(v^i,\kP)\;\subseteq\;\kF^*.
\]
The cones $\ko{\normal}(\aed_i,\kP)$ form the rays, and their linear
hull is $(\vect{\aed_i})^\bot/F^\bot$. The two-dimensional cones
$\ko{\normal}(v^i,\kP)$ are spanned by the rays 
$\ko{\normal}(d^{i-1},\kP)$ and $\ko{\normal}(\vect{\aed_i},\kP)$.
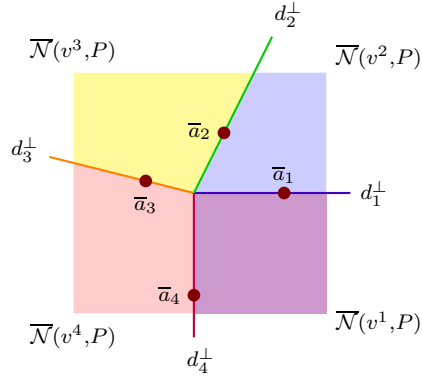
\begin{figure}[h!]
\begin{tikzpicture}[scale=.8]
    \newcommand{\xa}{2} 
    \newcommand{\ya}{5} 
    \node (o) at (0,0) {};
    \node (a) at (\xa.2,0) {};
    \node (A) at (\xa.6,0) {};
    \node (ab) at (\xa.2,\xa) {}; 
    \node (b) at (\xa/2,\xa) {};
    \node (B) at (\xa.6/2,\xa.6) {};
    \node (bc) at (-\xa,\xa) {};
    \node (c) at (-\xa,\xa/4) {};
    \node (C) at (-\xa.4,\xa.4/4) {};
    \node (cd) at (-\xa,-\xa){};
    \node (d) at (0,-\xa) {};
    \node (D) at (0,-\xa.4) {};
    \node (da) at (\xa.2,-\xa) {};
\fill[color=blue,%
      fill opacity=0.2] (o.center) -- (a.center) -- (ab.center)  -- (b.center)  --cycle;
    \fill[fill = yellow ,fill opacity=0.4] (o.center) -- (b.center) -- (bc.center)  -- (c.center)  --cycle;
    \fill[fill= red,fill opacity=0.2] (o.center) -- (c.center) -- (cd.center)  -- (d.center)  --cycle;
    \fill[fill=violet,fill opacity=0.4] (o.center) -- (d.center) -- (da.center)  -- (a.center)  --cycle;

    \draw[thick, color=blue!50!violet] (0,0) -- (A.center);
    \draw[thick, color=green!80!black] (0,0) -- (B.center);
    \draw[thick, color=red!50!yellow] (0,0) -- (C.center);
    \draw[thick, color=red!50!violet] (0,0) -- (D.center);

    \fill[thick, color=red!50!black] (1.5,0) circle (3pt);
    \fill[thick, color=red!50!black] (0.5,1) circle (3pt);
    \fill[thick, color=red!50!black] (-0.8,0.2) circle (3pt);
    \fill[thick, color=red!50!black] (0,-1.7) circle (3pt);
    
   \draw[thick,  color=black] (1.5,0.3) node{$\ks \ko{a}_1$};
    \draw[thick,  color=black] (0.1,1.03) node{$\ks \ko{a}_2$};
    \draw[thick,  color=black] (-0.8,-0.2) node{$\ks \ko{a}_3$};
    \draw[thick,  color=black] (-0.4,-1.7) node{$\ks \ko{a}_4$};  
    
    \node[anchor=west] at (A.center)  {$\ks \vect{\aed_{1}}^\bot$};
    \node[anchor=south west] at (B.center)  {\hspace{-.1cm}$\ks \vect{\aed_2}^\bot$};
    \node[anchor=east]  at (C.center) {\raisebox{.3cm}{$\ks \vect{\aed_{3}}^\bot$}};
    \node[anchor=north] at (D.center)   {~~$\ks \vect{\aed_{4}}^\bot$};
    \node[anchor=  west] at (ab.center) {\raisebox{.5cm}{$\ks \ko{\normal}(v^2,\kP)$}};
    \node[anchor = south] at (bc.center) {\raisebox{0cm}{$\ks \ko{\normal}(v^3,\kP)$}};
    \node[anchor = north] at (cd.center) {$\ks \ko{\normal}(v^4,\kP)$};
    \node[anchor = west] at (da.center) {\raisebox{-.5cm}{$\ks \ko{\normal}(v^1,\kP)$}}; 
  \end{tikzpicture}
  \label{fig:7
  }
  \caption{The normal fan of a 2-face with four edges.}
\end{figure}

\begin{proposition}
\label{prop-ClosingCond}
Corresponding to~(\ref{eq:dualEquationsT(Q)1}), in  $\kT\otimes_\Z\kQuotRD$ we have the equation
$\sum_{i\in\Z/n\Z}\kellt(\vect{\aed_i})\otimes \vect{\aed_i}=0$.
\end{proposition}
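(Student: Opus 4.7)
The plan is to verify the identity after pairing with super integral test elements. Since each $\kellt(\vect{\aed_i})$ lies in $\QQ_{\geqslant 0}\cdot\kT$ and the super integral sublattice $\kQuotSup\subseteq\kQuot$ spans $\kQuotR$ rationally, it will suffice to show that for every $\kc\in\kQuotSup$ one has
\[
\sum_{i\in\ZZ/n\ZZ}\braket{\vect{\aed_i},\kc}\cdot\kellt(\vect{\aed_i})=0 \quad\text{in}\quad (\kT-\kT)\otimes_{\ZZ}\QQ.
\]

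The key input will be the stabilized version of Proposition~\ref{prop-representingD}: for any super integral $\kc_i\in\normal(v^i,\kP)$, $\kc_{i+1}\in\normal(v^{i+1},\kP)$ with $\kc_i+\kc_{i+1}\in\normal(\aed_i,\kP)\cap\kQuotSup$, one has $\kellz(\kc_i)+\kellz(\kc_{i+1})-\kellz(\kc_i+\kc_{i+1})=\braket{\vect{\aed_i},\kc_i}\cdot\kellt(\vect{\aed_i})$. First I will fix a reference super integral element $\omega\in\kQuotSup$ lying deep in $\normal(F,\kP)$, and then, starting from a seed reflecting the test element $\kc$, I will construct for each vertex $v^i$ a super integral $\kc_i\in\normal(v^i,\kP)\cap\kQuotSup$ of the form $A\omega+\delta_i$ with $A\gg 0$, such that $\kc_i+\kc_{i+1}\in\normal(\aed_i,\kP)$ and $\braket{\vect{\aed_i},\kc_i}$ is proportional to $\braket{\vect{\aed_i},\kc}$. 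Summing the edge identities cyclically and invoking the additivity of $\kellz$ on super integral normal cones (a direct consequence of Proposition~\ref{prop-kellpLinear}) should produce the telescoping: each $\kellz(\kc_i)$ appears via the two edges incident to $v^i$ and cancels against the appropriate $\kellz(\kc_i+\kc_{i+1})$ and $\kellz(\kc_{i-1}+\kc_i)$ contributions through super integral linearity at the vertex $v^i$.

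The hard part will be constructing the cycle $(\kc_i)_{i\in\ZZ/n\ZZ}$ consistently around $\partial F$: the $n$ coupling constraints $\braket{\vect{\aed_i},\kc_i+\kc_{i+1}}=0$ link consecutive choices, and after $n$ steps one must return to the starting value. This is precisely where the closing identity $\sum_i\vect{\aed_i}=0$ enters: it renders the cyclic linear system solvable and leaves enough freedom to meet the super integrality and normal-cone membership requirements after translating by a sufficiently large multiple of $\omega$. Once the cycle is in place, the cyclic telescoping described above yields the required vanishing; the remaining technical care lies in tracking the stabilization parameter $A$, which drops out thanks to the definition of $\kellz$.
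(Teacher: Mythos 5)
Your plan has two concrete obstructions, and neither is a matter of "technical care."

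First, the cycle $(\kc_i)_{i\in\Z/n\Z}$ you want to build does not exist. Proposition~\ref{prop-representingD} requires $\kc_i\in\normal(v^i,\kP)$, which forces $\braket{\vect{\aed_i},\kc_i}\geqslant 0$ and $\braket{\vect{\aed_{i-1}},\kc_i}\leqslant 0$. If you also insist that $\braket{\vect{\aed_i},\kc_i}=\lambda\braket{\vect{\aed_i},\kc}$ for a single proportionality constant $\lambda>0$, then you need $\braket{\vect{\aed_i},\kc}\geqslant 0$ for all $i$. But $\sum_i\vect{\aed_i}=0$, so these pairings cannot all be nonnegative unless they all vanish — i.e.\ $\kc\in\kF^\perp$ and there is nothing to prove. (Already for $\kF$ a triangle in $\RR^2$ and a generic $\kc$ the constraints put $\kc_2$ outside $\normal(v^2,\kP)$, and adding $A\omega$ with $\omega\in\normal(\kF,\kP)$ cannot fix this, since $\omega$ projects to zero in $\kF^*$ and does not change the pairings $\braket{\vect{\aed_j},\cdot}$.) If instead you let the proportionality constant depend on $i$, the cyclic sum no longer equals $\lambda\sum_i\braket{\vect{\aed_i},\kc}\,\kellt(\vect{\aed_i})$, so you have abandoned the goal.

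Second, even granting the cycle, the claimed telescoping fails. Summing the $n$ edge identities gives
\begin{equation*}
\sum_{i\in\Z/n\Z}\kell(\kc_i,\kc_{i+1})
= 2\sum_i\kellz(\kc_i) - \sum_i\kellz(\kc_i+\kc_{i+1}),
\end{equation*}
and super integral linearity at $v^i$ yields $\kellz(\kc_{i-1}+\kc_i)+\kellz(\kc_i+\kc_{i+1})=\kellz(\kc_{i-1}+2\kc_i+\kc_{i+1})$ and $2\,\kellz(\kc_i)=\kellz(2\kc_i)$; these are not equal unless $\kc_{i-1}+\kc_{i+1}$ is a positive multiple of $\kc_i$, which across the whole cycle forces all $\kc_i$ to coincide. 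So the "cancellation at the vertex" is not produced by linearity alone; a residual term of the form $\beta_i\,\kellz(\ko{a}_i)$ (in the paper's notation) survives.

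What is genuinely missing is the reduction to the 2-dimensional projected normal fan $\ko{\normal}_\kF$ and the observation that the ray generators $\ko{a}_i$ can be taken to be a fixed rotation of the oriented edge vectors $\vect{\aed_i}$ (and hence satisfy exactly the same linear relations). That is where $\sum_i\vect{\aed_i}=0$ actually enters: not as a solvability condition for a cyclic system, but as the identity that makes the coefficients of $\kellz(\ko{a}_i)$ in $\sum_i\kellt(\vect{\aed_i})\otimes\vect{\aed_i}$ vanish after the regrouping. Without that step, the argument does not close.
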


\begin{proof}
We choose elements $a_i\in \innt\normal(\aed_i,\kP)$ mapping
to points $\ko{a}_i$ on the rays $\ko{\normal}(\aed_i,\kP)$.
Since $\vect{\aed_i}\subseteq \kF-\kF$, we know that
$\braket{ \vect{\aed_i},\kF^\bot} =0$, i.e.\ we may write
$\braket{ \vect{\aed_i}, a_j} = \braket{ \vect{\aed_i}, \ko{a}_j}$.
This yields 
\[
\braket{ \vect{\aed_i}, \ko{a}_{i-1}} >0,\hspace{1em}
\braket{ \vect{\aed_i}, \ko{a}_{i}} =0,\hspace{1em}
\braket{ \vect{\aed_i}, \ko{a}_{i+1}} <0.
\]
For a fixed $i\in\Z/n\Z$ and some $A\gg 0$ we will use 
\[
\textstyle
\kc:=\frac{1}{\braket{ \vect{\aed_i},\, \ko{a}_{i-1}}} \cdot a_{i-1} + A\cdot a_i
\hspace{0.8em}\mbox{and}\hspace{0.8em}
\kc':=\frac{-1}{\braket{ \vect{\aed_i},\, \ko{a}_{i+1}}} \cdot a_{i+1} + A\cdot a_i
\]
for which we have $\braket{ \vect{\aed_i}, \kc'} =-\braket{ \vect{\aed_i}, \kc}$.
Hence, we obtain that $\kc+\kc'\in \vect{\aed_{i}}^\bot$ and, if $A$ is large enough,
even $\kc+\kc'\in \normal(\aed_i,\kP)$. Thus, with
$v:=v^i$, $v':=v^{i+1}$, and $\vect{\aed}=\vect{\aed_i}$ we are exactly in the situation of the
begin of this subsection. That is,
\[
\kellt(\aed_i)=
\kellz(\kc)+\kellz(\kc')-\kellz(\kc+\kc').
\]
This equation remains valid if we alter $\kellz(\kc)$ by a 
function that is linear in $\kc$. Thus, we may and will assume that
$\kellz$ vanishes on $\normal(\kF,\kP)$. This implies that 
$\kellz$ descends to a well-defined function
$\kellz:\kF^*\to (\kS-\kS)\otimes_\Z\Q$. It is linear
on the cones of $\ko{\normal}_{\kF}$, and we still have
\[
\textstyle
\renewcommand{\arraystretch}{1.3}
\begin{array}{rcl}
\kellt(\vect{\aed_i}) &=&
\kellz(\ko{\kc})+\kellz(\ko{\kc}')-\kellz(\ko{\kc}+\ko{\kc}')
\\
&=&
\begin{array}[t]{@{}l}
\kellz\big(\frac{1}{\braket{ \vect{\aed_i},\, \ko{a}_{i-1}}} \cdot \ko{a}_{i-1} 
+ A\cdot \ko{a}_i\big)
+ \kellz\big(\frac{-1}{\braket{ \vect{\aed_i},\, \ko{a}_{i+1}}} \cdot \ko{a}_{i+1} 
+ A\cdot \ko{a}_i\big)
\\
\hspace{10em}
-\kellz\big(\frac{1}{\braket{ \vect{\aed_i},\, \ko{a}_{i-1}}} \cdot \ko{a}_{i-1} 
- \frac{1}{\braket{ \vect{\aed_i},\, \ko{a}_{i+1}}} \cdot \ko{a}_{i+1}
+ 2A\cdot \ko{a}_i\big)
\end{array}
\\
&=&
\frac{1}{\braket{ \vect{\aed_i},\, \ko{a}_{i-1}}} \,\kellz(\ko{a}_{i-1})
+A\,\kellz(\ko{a}_{i})
-\frac{1}{\braket{ \vect{\aed_i},\, \ko{a}_{i+1}}} \, \kellz(\ko{a}_{i+1})
+A\,\kellz(\ko{a}_{i})
-(\beta_i+2A)\,\kellz(\ko{a}_{i})
\\
&=&
\frac{1}{\braket{ \vect{\aed_i},\, \ko{a}_{i-1}}} \,\kellz(\ko{a}_{i-1})
-\frac{1}{\braket{ \vect{\aed_i},\, \ko{a}_{i+1}}} \, \kellz(\ko{a}_{i+1})
-\beta_i\,\kellz(\ko{a}_{i})
\end{array}
\]
where $\beta_i\in\RR$ is defined by the equality
\[
\textstyle
\frac{1}{\braket{ \vect{\aed_i},\, \ko{a}_{i-1}}} \cdot \ko{a}_{i-1} 
- \frac{1}{\braket{ \vect{\aed_i},\, \ko{a}_{i+1}}} \cdot \ko{a}_{i+1}
-\beta_i\cdot \ko{a}_{i} =0.
\]
Now, we consider 
\[
\textstyle
\renewcommand{\arraystretch}{1.3}
\begin{array}{rcl}
\sum_{i}\kellt(\aed_i)\otimes \vect{\aed_i}
&=&
\sum_{i\in\Z/n\Z} \big(\frac{1}{\braket{ \vect{\aed_i},\, \ko{a}_{i-1}}}
\,\kellz(\ko{a}_{i-1})
-\frac{1}{\braket{ \vect{\aed_i},\, \ko{a}_{i+1}}} \, \kellz(\ko{a}_{i+1})
-\beta_i\,\kellz(\ko{a}_{i})\big)
\otimes \vect{\aed_i}
\\
&=&
\sum_{i\in\Z/n\Z} \;\kellz(\ko{a}_{i}) \otimes \big(
\frac{1}{\braket{ \vect{\aed}_{i+1},\, \ko{a}_{i}}}\,\vect{\aed}_{i+1}
- \frac{1}{\braket{ \vect{\aed}_{i-1},\, \ko{a}_{i}}}\vect{\aed}_{i-1}
-\beta_i\,\vect{\aed_i} \big)
\end{array}
\]
and check that all of the second factors vanish. This will be done in the
following quick and dirty way via choosing coordinates, i.e.\ 
fixing some isomorphism
 \begin{tikzcd}[column sep = 1.5em]
   (\kF-\kF) \arrow[r]{}{\sim}&\RR^2
 \end{tikzcd}
 which determines a dual isomorphism
 \begin{tikzcd}[column sep = 1.5em]
   \RR^2 \arrow[r]{}{\sim}&\kF^*
 \end{tikzcd}
 too.
While the vectors $\vect{\aed_i}\in(\kF-\kF)=\RR^2$ are given by the choice of
$\kF\leq\kP$, we have some freedom in choosing the 
$\ko{a}_i\in\kF^*=\RR^2$ -- one has just to ensure that $\ko{a}_i\bot \vect{\aed_i}$
and that they have the right orientation. This can be obtained by
\[
\ko{a}_i:=
\left(\begin{array}{rr} 0 & -1 \\ 1 & 0\end{array}\right) \cdot \vect{\aed_i}.
\] 
Doing so, the $\ko{a}_i$ satisfy the same linear relations as the $\vect{\aed_i}$
do. i.e.\ we obtain that
\[
\textstyle
\frac{1}{\braket{ \vect{\aed_i},\, \ko{a}_{i-1}}} \cdot \vect{\aed}_{i-1} 
- \frac{1}{\braket{ \vect{\aed_i},\, \ko{a}_{i+1}}} \cdot \vect{\aed}_{i+1}
-\beta_i\cdot \vect{\aed_{i}} =0.
\]
Thus, the claim follows from the equalities
$\braket{\vect{\aed}_{i-1},\, \ko{a}_{i}}=
-\braket{ \vect{\aed_i},\, \ko{a}_{i-1}}$ for all indices
$i\in\Z/n\Z$ and our special choices of $\ko{a}_i\in\RR^2$.
\end{proof}

\subsection{Concluding the proof of the existence of $\kell$}
\label{concludeProofExKell}
One might think that we are already done with the construction of 
the map $\kell$ -- but it requires the following, seemingly paranoid 
conclusion of the proof.
What do we have so far? First, we have well-defined elements
$\kellp(\kc)\in\Rand{\kT}{(\kS)}$ which have to become the images of
$[\kc,\ktetaZ(\kc)]\in\Rand{\ktT}{(\ktS)}$.
Second, we have constructed the following elements:
\begin{enumerate}[label=(\roman*)]
\item
If $v\in\kP$ is a vertex, then there is a well-defined
$\kells(v)\in\kT$  planned to become the image $\kell(s_v)$,
cf.\ Proposition~\ref{prop-representingC} in Subsection~\ref{recoverS}.
\item
For each compact edge $\aed\leq\kP$ there is a well-defined 
$\kellt(\aed)\in\Q_{>0}\cdot\kT$ planned to become the image $\kell(t_\aed)$,
cf.\ Proposition~\ref{prop-representingD} in Subsection~\ref{recoverT}.
\end{enumerate}
In the Subsections~\ref{recoverNoLatticePt},~\ref{recoverShortE},~and~\ref{recoverClosingCond}
we have shown that the new elements
$\kells(v)$ and $\kellt(\aed)$ satisfy the same linear relations as the
original elements $s_v$ and $t_\aed$. This gives rise to a
well-defined linear map
\[
\varphi:\; \kMTD(\kP)=(\ktT-\ktT)\otimes_\Z\Q \;\to\; (\kT-\kT)\otimes_\Z\Q
\]
with $\varphi(s_v)=\kells(v)$ and $\varphi(t_\aed)=\kellt(\aed)$.

\begin{lemma}
\label{lem-varphEll}
For $\kc_1,\kc_2\in\tail(\kP)\dual\cap\kQuot$ we have
$\varphi\big(\ktetaZ(\kc_1,\kc_2)\big)=\kell(\kc_1,\kc_2)$.
\end{lemma}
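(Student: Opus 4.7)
The plan is to introduce the function $\Delta:\tail(\kP)^\vee\cap\kQuot\to(\kS-\kS)\otimes\Q$ given by $\Delta(\kc):=\kellp(\kc)-\varphi(\ktetaZ(\kc))$, and to show that $\Delta$ is additive; the claim will then be precisely the defect-from-additivity statement for $\Delta$ applied to the pair $(\kc_1,\kc_2)$.

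The first step is to verify additivity of $\Delta$ within each normal cone $\normal(v,\kP)^\vee\cap\kQuot$. Indeed, for $\kc_1,\kc_2$ in a common $\normal(v,\kP)$ one may take $v(\kc_1)=v(\kc_2)=v(\kc_1+\kc_2)=v$; since $\kteta$ is then linear (Lemma~\ref{lem-etaTildeVc}), one computes $\ktetaZ(\kc_1,\kc_2)=\ketaZ(\kc_1,\kc_2)\cdot s_v\in\{0,s_v\}$, whence $\varphi(\ktetaZ(\kc_1,\kc_2))=\ketaZ(\kc_1,\kc_2)\cdot\kells(v)$. By Proposition~\ref{prop-representingC}\ref{item:repC1}, $\kell(\kc_1,\kc_2)$ takes exactly the same value. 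Thus $\Delta$ has vanishing defect on each $\normal(v,\kP)^\vee\cap\kQuot$, and extends $\Q$-linearly to a function $\Delta_v:\normal(v,\kP)^\vee\otimes\Q\to(\kS-\kS)\otimes\Q$.

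The second step handles the single-edge case: super integral $\kc_i\in\normal(v^i,\kP)^\vee\cap\kQuotSup$ ($i=1,2$) joined by an edge $\aed=[v^1,v^2]$, with $\kc_1+\kc_2\in\normal(v^1,\kP)\cup\normal(v^2,\kP)$. Proposition~\ref{prop-representingD} gives
\[
\kell(\kc_1,\kc_2)=\min\{\braket{\vect{\aed},\kc_1},\,\braket{-\vect{\aed},\kc_2}\}\cdot\kellt(\aed).
\]
Super integrality forces $\etaFrac{\kc_i}=\etaFrac{\kc_1+\kc_2}=0$, so the fractional $s_v$-terms in $\ktetaZ(\kc_1,\kc_2)$ vanish, and formula~(\ref{eq:ktetac1c2AsSumOftij}) displays $\ktetaZ(\kc_1,\kc_2)$ as the identical multiple of $t_\aed$; applying $\varphi$ and using $\varphi(t_\aed)=\kellt(\aed)$ yields a match. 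This also guarantees that $\Delta_{v^1}$ and $\Delta_{v^2}$ agree on the common overlap $\normal(\aed,\kP)^\vee\otimes\Q$, so the local pieces glue to a piecewise-linear function on $\tail(\kP)^\vee\otimes\Q$.

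The final step — and where the real work lies — is to reduce the general situation to the above two cases and then to verify that this piecewise-linear gluing is actually globally linear, so that $\Delta$ becomes additive on all of $\tail(\kP)^\vee\cap\kQuot$. I would proceed by a telescoping argument along a path through the normal fan of $\kP$ connecting $v(\kc_1)$, $v(\kc_1+\kc_2)$ and $v(\kc_2)$, using Remark~\ref{rem-positiveEtaCC}\ref{item:inductiveStepKteta} to split the defect $\Delta(\kc_1)+\Delta(\kc_2)-\Delta(\kc_1+\kc_2)$ into same-cone and single-edge contributions handled above. The main obstacle will be checking that these contributions assemble path-independently, for which one should invoke precisely the three families of equations imposed in the definition of $\kMT(\kP)$ and thus built into $\varphi$: the lattice-disjoint-edge equations (\ref{eq:dualEquationsT(Q)3}), verified on the $\kellp$-side in Subsection~\ref{recoverNoLatticePt}; the \se~edge equations (\ref{eq:dualEquationsT(Q)4}), verified in Subsection~\ref{recoverShortE}; and the closing conditions around compact 2-faces (\ref{eq:dualEquationsT(Q)1}), verified in Proposition~\ref{prop-ClosingCond}.
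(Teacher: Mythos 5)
Your Steps 1 and 2 correctly reproduce the paper's Step 1: on a single normal cone $\normal(v,\kP)$ (not $\normal(v,\kP)^\vee$ — the normal cone already lives in $\kQuot_\RR$), Proposition~\ref{prop-representingC} matches $\varphi(\ktetaZ(\kc_1,\kc_2))=\ketaZ(\kc_1,\kc_2)\cdot\kells(v)$ with $\kell(\kc_1,\kc_2)$, and across an adjacent pair with super integral arguments Proposition~\ref{prop-representingD} delivers the corresponding multiple of $\kellt(\aed)$.  Your Step 3, however, has two genuine gaps.

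First, you omit the rescaling step.  Both ingredients above only cover super integral pairs, and passing from a general $\kc\in\kQuot$ to $n\kc\in\kQuotSup$ requires the identity $\varphi\big(n\cdot\ktetaZ(\kc)-\ktetaZ(n\kc)\big)=n\cdot\kellp(\kc)-\kellp(n\kc)$, which hinges on Proposition~\ref{prop-representingC}\,\ref{item:repC2}.  That is precisely the paper's Step 2; without it your argument never leaves $\kQuotSup$.

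Second, the mechanism you propose to globalize is misdirected.  Invoking Remark~\ref{rem-positiveEtaCC}\,\ref{item:inductiveStepKteta} and the equations (\ref{eq:dualEquationsT(Q)1})--(\ref{eq:dualEquationsT(Q)4}) does not help here: those equations have already done their job, namely making the linear map $\varphi$ well-defined on $\kMTD(\kP)$, and they give you no handle on the defect $\Delta(\kc_1)+\Delta(\kc_2)-\Delta(\kc_1+\kc_2)$ for $\kc_1,\kc_2$ in distant normal cones.  The paper closes this by a group-cohomology shortcut: $b(\kbb,\kbb):=\varphi(\ktetaZ(\kbb,\kbb))-\kell(\kbb,\kbb)$ is a $2$-cocycle on $\kQuotSup$ with values in the divisible, hence $\ZZ$-injective, group $(\kT-\kT)\otimes_\ZZ\QQ$; consequently $\gH^2\big(\kQuotSup,(\kT-\kT)\otimes\QQ\big)=0$ hands you a potential $\kbp$, and Step 1 then forces $\kbp$ to be additive on each normal cone and across each adjacent pair, i.e.\ globally linear, so $b=0$.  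Alternatively — and perhaps what you were reaching for with "globally linear" — one can skip the cohomology: Step 1 gives a linear extension $\Delta_v$ on each full-dimensional $\normal(v,\kP)$, your Step 2 identity forces $\Delta_{v^1}=\Delta_{v^2}$ for any two vertices joined by a compact edge (fix $\kc_2\in\normal(v^2,\kP)\cap\kQuotSup$, choose $\kc_1\in\normal(v^1,\kP)\cap\kQuotSup$ deep enough so that $\kc_1+\kc_2\in\normal(v^1,\kP)$, and compare), and connectedness of the compact-edge graph of $\kP$ yields one global linear map that agrees with $\Delta$ on all of $\tail(\kP)^\vee\cap\kQuotSup$.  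Either of these finishes would work; the "telescoping + path-independence" sketch, as stated, does not.
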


\begin{proof}
{\em Step 1. }
First, by Proposition~\ref{prop-representingC} and the preceding remarks in
Subsection~\ref{recoverS}, the claim of the lemma follows for those
pairs $(\kc_1,\kc_2)$ where $\kc_1,\kc_2$ are contained in a common
normal cone $\normal(v,\kP)$ for some vertex $v\in\kP$.
Second, by Proposition~\ref{prop-representingD} and the preceding remarks in
Subsection~\ref{recoverT} the claim of the lemma does also follow
for super integral $\kc_1,\kc_2\in\kQuotSup$ being contained in two
adjacent normal cones $\normal(v^1,\kP)$ and $\normal(v^2,\kP)$, respectively.
(That is, $v^1$ and $v^2$ have to be connected by an edge, 
and one has to suppose that $\kc_1+\kc_2$ belongs to the 
union of these normal cones).
\\[1.0ex]
{\em Step 2. }
If $\kc\in\kQuot$ and $n\in\NN$, then we know from
Subsection~\ref{recoverS} that
$n\cdot\ktetaZ(\kc)-\ktetaZ(n\kc)= \big(n\,\ketaZ(\kc)-\ketaZ(n\kc)\big)
\cdot s_{v(\kc)}$ and 
$n\cdot\kellp(\kc)-\kellp(n\kc)=
\big(n\,\ketaZ(\kc)-\ketaZ(n\kc)\big)\cdot\kells(v(\kc))$. Hence,
for $\kc_1,\kc_2\in\kQuot$ we obtain that
$n\cdot \ktetaZ(\kc_1,\kc_2)-\ktetaZ(n\kc_1,n\kc_2)$ maps, via $\varphi$,
to $n\cdot \kell(\kc_1,\kc_2)-\kell(n\kc_1,n\kc_2)$. Consequently, the
fact that $\varphi\big(\ktetaZ(\kc_1,\kc_2)\big)=\kell(\kc_1,\kc_2)$
is equivalent to 
$\varphi\big(\ktetaZ(n\kc_1,n\kc_2)\big)=\kell(n\kc_1,n\kc_2)$.
This means that it remains to show the claim for super integral (but not necessarily from adjacent cones)
$\kc_1,\kc_2\in\kQuotSup$.
\\[1.0ex]
{\em Step 3. }
We are going to use the inhomogeneous description of group cohomology,
cf.\ \cite[VII.3]{LocalFields}.
Both $\ktetaZ(\kbb,\kbb)$ and $\kell(\kbb,\kbb)$ are 2-coboundaries.
Hence, the map $b:\kQuotSup\times\kQuotSup\to(\kT-\kT)\cdot\Q$
defined as
\[
b(\kbb,\kbb):=\varphi\big(\ktetaZ(\kbb,\kbb)\big)-\kell(\kbb,\kbb)
\]
is still a 2-cocyle for $\gH^\kbb\!\big(\kQuotSup,\,(\kT-\kT)\cdot\Q\big)$.
Since $(\kT-\kT)\cdot\Q$ is a divisible group, hence an injective 
$\Z$-module, we know that $\gH^2\!\big(\kQuotSup,\,(\kT-\kT)\cdot\Q\big)=0$.
Thus, $b$ is a 2-coboundary, i.e.\ there is a map
\[
\kbp:\kQuotSup\to(\kT-\kT)\cdot\Q
\hspace{1em}\mbox{with}\hspace{1em}
b(\kc_1,\kc_2)=\kbp(\kc_1)+\kbp(\kc_2)-\kbp(\kc_1+\kc_2).
\]
From Step~1 we know that $\kbp$ is linear on the full-dimensional
normal cones $\normal(v,\kP)$ or even on the union of
adjacent ones $\normal(v^1,\kP)$ and $\normal(v^2,\kP)$ --
provided that $\kc_1+\kc_2$ belongs to this union.
But this means that $\kbp$ is globally linear, i.e.\ $b=0$.
\end{proof}

So $\kell_\kT:\ktT\too\kT$ is a well-defined linear map, and we conclude the existence part of Theorem~\ref{th-initialObject} by Remark~\ref{rem:ellTisEnough}.

\section{Minkowski decompositions revisited}
\label{sec:MinkowskiDec}
\subsection{Review of the case of lattice polytopes with primitive edges}
\label{ssec:lattPrimEdge}
In \cite{versalG} 
we had treated a special case of the scenario described in Subsection~\ref{buildSg}.
There it was assumed that $\kP$ is a lattice polytope with primitive edges, 
i.e.\ the edges did not contain any lattice points other than the vertices. 
In algebro-geometric terms this means that
$X=\toric(\cone(\kP))=\Spec\CC[\kS]$ is Gorenstein and it is smooth in
codimension two. Let us summarize the main results from \cite{versalG} for this special case.
\begin{enumerate}[label=(\roman*)]
\item\label{item:rlpe1}
If $\kP=\kP_0+\ldots+\kP_\minks$ is Minkowski decomposition, 
then this corresponds
to a decomposition $\one=\xi_0+\ldots+\xi_\minks$ within the cone
$\kMC(\kP)$. The summands $\kP_\nu$ are lattice polytopes if and only if
the corresponding $\xi_\nu$ belong to the lattice 
$\kMVZ(\kP)$ within the vector space
$\kMV(\kP):=\kMC(\kP)-\kMC(\kP)$. This lattice is defined by the integrality
of all coordinates $t_{ij}(\xi)$.
\vspace{1ex}
\item\label{item:rlpe2}
Since the coordinates $t_{ij}$ are supposed to be non-negative on 
$\kMC(\kP)$, they become elements of the dual cone $t_{ij}\in\kMC(\kP)\dual$.
The subsemigroup generated by these elements provides the base of the
initial object from  Theorem~\ref{th-initialObject}.
That is, the present special
case of a discrete setup shows tight parallels to the cone setup
 displayed in Proposition~\ref{prop-versalCP}.
\vspace{1ex}
\item\label{item:rlpe3}
Translated to the framework of algebraic geometry, lattice decompositions
of $\kP$ as in (i) correspond to components of the versal
deformation of $X=\toric(\cone(\kP))$ in degree $-\kR$.
The complexification of the vector space $\kMV(\kP)/\one\cdot \RR$ 
equals the space of
infinitesimal deformations $T^1_X(-\kR)$ of $X$ in degree $-R$.
Finally, the subsemigroup $\spann{\NN}{ t_{ij}}\subseteq\kMC(\kP)\dual$ 
of (ii) encodes the versal deformation itself, 
which is a much finer information than 
just its linear ambient space $T^1_X(-\kR)$.
\end{enumerate}

\begin{remark}
\label{rem-smoothCodimTwo}
In the Gorenstein case, i.e.\ when $\kP$ was a lattice polytope, then the
smoothness of $X$ in codimension two could be easily expressed
by the primitivity of its lattice edges. In \cite{AK13} we already got
rid of the Gorenstein assumption, but we heavily depended on the assumption
of smoothness in codimension two. In the non-Gorenstein case, this condition
can be still be expressed in the combinatorial language. It says that,
for each bounded edge $[v^i,v^j]$ of $\kP\subseteq\kQuotRD$,
the polyhedral cone generated
from $(v^i,1),(v^j,1)\in\kQuotRD\oplus\RR_{\geqslant 0}$ is
$\Z$-linearly (!) isomorphic to the ordinary upper orthant $\RR^2_{\geqslant 0}$.
\end{remark}

Now, returning to the general discrete setup established in Subsection~(\ref{buildSg})
and taking the points \ref{item:rlpe1}-\ref{item:rlpe3} above as a guideline,
we  no longer assume that $\kP$ is bounded, i.e.\ $\kP$ may have a non-trivial
tail cone. More important, however, is that we do not require $\kP$ to be a lattice 
polyhedron anymore, nor do we ask for any further restrictions (on the edges or anything else).

\begin{example}
\label{ex-discreteB 2}
Let us return to Example~\ref{ex-discreteB}.
In this case we have $\kMV(\kP)=\RR$, which shows that
(\ref{ssec:lattPrimEdge})\,\ref{item:rlpe3} is no longer valid in this case.
Moreover, neither the lattice $\kMVZ(\kP)$, nor lattice decompositions
$\kP=\kP_0+\ldots+\kP_\minks$ from Subsection~\ref{ssec:lattPrimEdge}\,\ref{item:rlpe1} make any sense here. So these notions need to be replaced: $\kMVZ(\kP)$ by $\kMTPZ$ and lattice decompositions by lattice \emph{friendly} decompositions (cf. Section~\ref{latticeFriends}).
\end{example}

\subsection{The universal Minkowski summand}
\label{univMinkSum}

We start by fixing a \kbox{reference vertex $\vast\in\Vrtx(\kP)$}, and recall from
Subsection~\ref{sec:theAbmientSpace} that
\[
  \kMTP(\kP):=\kMT(\kP)\cap(\RR^\kPr_{\geqslant 0}\oplus\RR_{\geqslant 0}^\kPm).
\]
For $\,\xi =(t,s)\in \kMTP(\kP)$
we construct $\kP_\xi$ by defining a map  
$\,\psi_{\vast}(\xi): \Vrtx(\kP)\longrightarrow\kQuotRD$.
For the reference vertex we set
\[
\psi_{\vast}(\xi,\vast):= s_\vast(\xi)\cdot \vast.
\]
Note that this definition implies that
$\psi_{\vast}(\xi,\vast)=0$ if $\vast\in \kQuotD$.
For every other $v\in\Vrtx(\kP)$, choose a path
$\vast=v^0,v^1,\ldots,v^k=v$ along the compact edges of $\kP$.
Denoting $\vect{\aed_i}:=v^i-v^{i-1}$, we define
\[ 
\textstyle
\psi_{\vast}(\xi,v) = \psi_{\vast}(\xi,\vast) + \sum_{i=1}^k
t_i(\xi)\cdot \vect{\aed_i}.
\]
It is a direct consequence of the closing conditions in the definition of
$\kMV(\kP)$ and hence of $\kMT(\kP)$ in
Subsection~\ref{sec:theAbmientSpace} that $\psi_{\vast}(\xi,v)$ does not depend
on the special choice of the path, cf.\ (\ref{indep-edgePath}).
Now, we obtain the Minkowski summand associated to $\,\xi\in\kMTP(\kP)$ as
\[
\kP_\xi:=
\psi_{\vast}(\xi,\kP) :=
\conv\{\psi_{\vast}(\xi,v) : v\in\Vrtx(\kP)\} + \tail(\kP).
\]
Note that one can avoid the usage of the $s$-variables
when there exists a lattice vertex in $\kP$ to be chosen as $\vast$. 
However,  even then the $s$-coordinates will
play an important role in (\ref{indep-refVertex}).
Similarly to Subsection~\ref{spaceMinkSummands} we proceed with the following.

\begin{definition}\label{d:tautologicalCone}
We define the \kbox{universal Minkowski summand} or the \kbox{tautological
cone} as
\[
\,\kMTtP^{\vast}(\kP) := 
\{ (\xi,w) : \xi\in \kMTP(\kP) ,\;w\in\psi_{\vast}(\xi,\kP)\}
\subseteq \kMTP(\kP)\times\kQuotRD.
\]
It comes with the natural projection 
$\,\kpr_+:\kMTtP^{\vast}(\kP)\to \kMTP(\kP)$ onto the first factor.
\end{definition}

Now we check that, up to consistent lattice-translations in $\kQuotD$,  
the previous definitions are independent of all choices,
i.e.\ that we may indeed call our Minkowski summands $\kP_\xi$
and denote the tautological cone by $\kMTtP(\kP)$.
In more detail, if $\xi \in \kMTP(\kP)\cap \kMTZ(\kP)$, then
the following 
(\ref{indep-edgePath}) and (\ref{indep-refVertex})
will imply
that, for all $\vast,\vastP$,
the polyhedron $\psi_{\vast}(\xi,\kP)$ is obtained from 
$\psi_{\vastP}(\xi,\kP)$ via a lattice isomorphism linearly depending
on $\xi$.

\subsubsection{Independence on the path along the edges}
\label{indep-edgePath}
This is a direct consequence of the closure conditions along the compact
2-faces $F\leq\kP$ which define $\kMV(\kP)$
or $\kMT(\kP)$, cf.\ Definition~\ref{def:TP}. 
Here we are literally in the same situation as in \cite{versalG}.

\subsubsection{Independence on the reference vertex}
\label{indep-refVertex}
Assume that
$\vast$ and $\vastP$ are two different vertices of $\kP$
which are connected by an (oriented) edge $\vect{\aed}=\vastP-\vast$.
Recall from Definition~\ref{def:latticeInT}  that this situation gives rise to
an element $\kL_\aed\in \kMTZ(\kP)^*\otimes_{\ZZ}\kQuotD$.
Again, we have to compare the Minkowski summands with respect to the same
vertex, say $\vast$:
\[
\psi_{\vastP}(\xi,\vast)-\psi_{\vast}(\xi,\vast) \;=\;
s_{\vastP}(\xi)\cdot\vastP \;-\;
t_d(\xi)\cdot (\vastP-\vast)
\;-\; s_{\vast}(\xi)\cdot\vast
\]
hence
\begin{eqnarray*}
\psi_{\vastP}(\vast)-\psi_{\vast}(\vast) &=&
  s_{\vastP}\otimes\vastP - t_\aed\otimes(\vastP-\vast) -
  s_{\vast}\otimes \vast\\
&=&
(t_\aed-s_{\vast})\otimes\vast - (t_\aed-s_{\vastP})\otimes \vastP
=
\kL_d\in \kMTZD(\kP)\otimes_{\ZZ}\kQuotD.
\end{eqnarray*}
That is the two tautological cones differ  via translation by an integral, linear section of $\kpr$.

\begin{convention}
\label{conv-refPtsA}
Unless $\kP$ has at least one lattice vertex, we cannot assume that the reference vertex $\vast$ is 0.
Nevertheless, we will write $\kMTtP(\kP)$ for $\kMTtP^{\vast }(\kP)$
and keep in mind the dependence on $\vast$ via\footnotemark~ the shift by 
the $\kpr$-section 
$\,\psi_{\vastP}-\psi_{\vast}=\kL_{\vast\vastP}$
and via
$\,\kMTtP^{\vastP 0}(\kP)=\kMTtP^{\vast 0}(\kP)+ \kL_{\vast\vastP}$.
\footnotetext{This is not true literally. It maps $(\xi,w)\mapsto (\xi, w+\kL_{\vast\vastP})$.}
\end{convention}

\begin{theorem}
\label{thm-tautCone}
The universal Minkowski summand $\kMTtP(\kP)$ is a convex, polyhedral cone.
\end{theorem}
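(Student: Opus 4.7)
The plan is to describe $\kMTtP(\kP)$ as the intersection of the polyhedral cone $\kMTP(\kP)\times\kQuotRD$ with finitely many linear half-spaces, which will show it is a polyhedral and hence convex cone. Note that $\kMTP(\kP)$ is itself polyhedral, being the intersection of the nonnegative orthant of $\RR^{\kPr+\kPm}$ with the linear subspace $\kMT(\kP)$. Moreover, for each vertex $v\in\Vrtx(\kP)$ the map $\xi\mapsto\psi_\vast(\xi,v)$ is linear in $\xi$, as is immediate from the edge-path formula $\psi_\vast(\xi,v)=s_\vast(\xi)\cdot\vast+\sum_i t_i(\xi)\cdot\vect{\aed_i}$; independence of the path choice is exactly the content of the closing conditions built into Definition~\ref{def:Clin}.

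The crucial intermediate step is to verify that, for every $\xi\in\kMTP(\kP)$, the polyhedron $\kP_\xi$ is a weak Minkowski summand of $\kP$, i.e., its normal fan is coarsened by $\normal(\kP)$. This extends the classical statement recalled in Subsection~\ref{spaceMinkSummands} (proved for lattice polytopes with primitive edges in \cite{versalG}) to the present setup; the content is that the closing conditions combined with the nonnegativity of the edge dilations $t_{ij}(\xi)$ force $\kP_\xi$ to have the same combinatorial shape as $\kP$ or a degeneration thereof. As a consequence, for any $c\in\normal(v,\kP)$ the minimum $\min\langle\kP_\xi,c\rangle$ is attained at $\psi_\vast(\xi,v)$, and $\psi_\vast(\xi,v')-\psi_\vast(\xi,v)\in\normal(v,\kP)\dual$ for every pair of vertices $v,v'$.

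From this I would derive the tangent-cone characterization
\[
w\in\kP_\xi \iff w-\psi_\vast(\xi,v)\in\normal(v,\kP)\dual \quad\text{for every $v\in\Vrtx(\kP)$.}
\]
For the implication $\Rightarrow$, writing $w=\sum_{v'}\lambda_{v'}\psi_\vast(\xi,v')+t$ with $\lambda_{v'}\geq 0$, $\sum\lambda_{v'}=1$ and $t\in\tail(\kP)$ exhibits $w-\psi_\vast(\xi,v)$ as a nonnegative combination of elements of $\normal(v,\kP)\dual$ (noting $\tail(\kP)\subseteq\normal(v,\kP)\dual$). For $\Leftarrow$, any $c\in\tail(\kP)\dual$ lies in some $\normal(v,\kP)$, where the hypothesis reads $\langle w,c\rangle\geq\langle\psi_\vast(\xi,v),c\rangle=\min\langle\kP_\xi,c\rangle$. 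Since each tangent cone $\normal(v,\kP)\dual$ is polyhedral — finitely generated by the edges of $\kP$ at $v$ together with $\tail(\kP)$ — the membership $w\in\kP_\xi$ translates into finitely many linear inequalities whose coefficients depend linearly on $\xi$, and intersecting with the polyhedral description of $\kMTP(\kP)\times\kQuotRD$ completes the proof. The main obstacle I anticipate is the weak-Minkowski-summand statement in the second paragraph: in the general rational, possibly unbounded setting with non-lattice vertices and \se\ edges, it must be re-derived from the specific relations defining $\kMT(\kP)$ in Section~\ref{sec:theAbmientSpace}.
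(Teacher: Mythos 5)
Your argument is correct but reaches the conclusion via a half-space description of $\kMTtP(\kP)$ — a tangent-cone characterization giving linear inequalities whose coefficients depend linearly on $\xi$ — whereas the paper's one-line proof implicitly treats $\kMTtP(\kP)$ as the convex hull of finitely many graphs of linear maps, namely $\bigcup_{v\in\Vrtx(\kP)}\{(\xi,\psi_{\vast}(\xi,v))\kst\xi\in\kMTP(\kP)\}$, together with the tail directions $\{0\}\times\tail(\kP)$; each graph is the linear image of the polyhedral cone $\kMTP(\kP)$, so their conical convex hull is finitely generated, and your inequality description is the dual picture. Both routes rest on exactly the intermediate fact you single out — that $\normal(\kP)$ refines $\normal(\kP_\xi)$, so that the vertex-to-vertex Minkowski sum is the true Minkowski sum — but your closing worry is a bit overcautious: the combinatorial shape of $\kP_\xi$ depends only on the $t$-part of $\xi$, which lands in $\kMV(\kP)\cap\RR_{\geqslant 0}^{\kPr}$, precisely the cone of (translation classes of) Minkowski summands treated for arbitrary rational polyhedra in \cite{versalG} and recalled in Subsection~\ref{spaceMinkSummands}. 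The $s$-coordinates and the additional relations~(\ref{eq:dualEquationsT(Q)2})--(\ref{eq:dualEquationsT(Q)4}) only govern how $\kP_\xi$ is placed relative to the lattice $\kQuotD$, not its normal fan, so no fresh derivation of the refinement property from the relations of Section~\ref{sec:theAbmientSpace} is needed.
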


\begin{proof}
This follows because $\xi(\vast):=\psi(\xi,\vast)$ and hence 
$v_\xi=\xi(v):=\psi(\xi,v)$ depend,
for every vertex $v\in\kP$, linearly on $\xi$.
\end{proof}

\subsection{Lattice friendly Minkowski decompositions}
\label{latticeFriends}

In the situation of Subsection~\ref{ssec:lattPrimEdge}\,\ref{item:rlpe1}, 
each decomposition of a lattice polytope $\kP$ into a sum of lattice polytopes
$\kP=\kP_0+\ldots + \kP_\minks$ was encoding a component of the versal deformation.
Independently on this interpretation, the lattice condition for the summands
$\kP_i$ was a discrete requirement reducing the number of 
admissible decompositions drastically; in particular, it becomes finite.
In the general setup, however, 
i.e.\ when $\kP$ is no longer a lattice polytope, 
then lattice decompositions cannot exist at all. 
Inspired by \cite[(3.2)]{ka-flip}, we nevertheless save this concept by
defining the following weaker version.

\begin{definition}
\label{def-latticeFriends}
A Minkowski decomposition $\kP=\kP_0+\ldots + \kP_\minks$ is called 
{\em lattice friendly} if all summands share the same tail cone,
and if, for every $\kc\in\tail(\kP)\dual\cap\kQuot$,
there is an index $\mu=\mu(\kc)$ such that all 
$\face(\kP_i,\kc)\leq\kP_i$ with $i\in\{0,\ldots,\minks\}\setminus\{\mu\}$
contain lattice points.
\end{definition}

Recall that $\face(\kP_i,\kc):=\{a\in\kP_i\kst \braket{ \kc,a} =
\min\braket{ \kc,\kP_i}\}$ is the face of $\kP_i$ where $\kc$ attains
its minimum. It suffices to check the condition of the previous definition for
generic $\kc\in\tail(\kP)\dual\cap\kQuot$, i.e.\ for those
where $\face(\kP,\kc)$ is a vertex of $\kP$. Since
\[
\face(\kP,\kc)= \face(\kP_0,\kc)+\ldots+\face(\kP_\minks,\kc),
\]
this implies that $\face(\kP_i,\kc)\leq\kP_i$ are vertices, too.
Hence, in this generic case, the above definition asks for 
\[
\face(\kP_0,\kc),\ldots,\face(\kP_\minks,\kc) \in\kQuotD
\] 
to be lattice vertices -- with at most one exception, 
namely for $\face(\kP_\mu,\kc)$. 
This means that 
\begin{enumerate}[label=(\roman*)]
\item
any failure $\face(\kP,\kc)\notin\kQuotD$ stems from 
one single summand $\face(\kP_\mu,\kc)\notin\kQuotD$ where $\mu$ depends on
the choice of the generic $\kc$, i.e.\ on the choice of the vertex 
$\face(\kP,\kc)$, and
\item
if $\face(\kP,\kc)\in\kQuotD$,
then all summands $\face(\kP_i,\kc)$ are lattice vertices, 
without any exception.
\end{enumerate}
In particular, if $\kP$ were a lattice polyhedron
as in Subsection~\ref{ssec:lattPrimEdge}\,\ref{item:rlpe1}, 
then  being lattice friendly just means being a lattice
decomposition, i.e.\  all
summands $\kP_i$ must be lattice polyhedra with $\tail(\kP_i)=\tail(\kP)$.

\subsection{The Kodaira-Spencer map}
\label{KodairaSpencer}
Assume that $\kP=\kP_0+\ldots + \kP_\minks$ is any Minkowski decomposition with
$\tail(\kP_i)=\tail(\kP)$. For each vertex $w=\face(\kP,\kc)$ of $\kP$
we will denote the corresponding vertex $\face(\kP_i,\kc)$
by $w(\kP_i)=w_i\in\kP_i$. 
Note that it depends on $w$ alone, i.e.\ not on the
special choice of $\kc\in\tail(\kP)\dual$. Actually, the associated normal
cones,
i.e.\ the regions of those $\kc$ providing the desired vertex,
satisfy $\normal(w,\kP)\subseteq\normal(w_i,\kP_i)$ and
\[
\textstyle
\normal(w,\kP) = \normal(w_0,\kP_0)\cap\ldots\cap\normal(w_\minks,\kP_\minks).
\]
In accordance with Notation~\ref{not:verticesEdges} we write 
$\Vrtx(\kP)=\{v^1\dots,v^\kPm\}$ and $\cedges(\kP):=\{\aed_1,\ldots,\aed_\kPr\}$,
which gives rise to the $\RR$-vector space $\RR^\kPr\oplus\RR^\kPm$
with coordinates $(\bft,\bfs)$. We will define an evaluation
$\kodSp:\{0,\ldots,\minks\}\to
\RR_{\geqslant 0}^\kPr\oplus\RR^\kPm$ of the Minkowski summands.

\begin{definition}
\label{def-KodSpMap}
Let $Q$ with $\tail(Q)=\tail(\kP)$ be a Minkowski summand of $\kP$.
The \emph{Kodaira-Spencer} evaluation 
$\kodSp(Q)=\big(t(Q),s(Q)\big)\in
\RR_{\geqslant 0}^\kPr\oplus\RR^\kPm$ is defined by 
\[
\begin{array}{rcl}
t_d(Q) & := &
(\mbox{the dilation factor of the edge $d$ inside $Q$})\in [0,1]\subset\RR
\\[0.5ex]
s_v(Q) & := &
                      \begin{cases}
                        0 & \textup{~if~} v(Q)\in\kQuotD\\
                        1 & \textup{~if~} v(Q)\notin\kQuotD
                      \end{cases} 
\hspace{1em}\mbox{for any vertex $v\in\kP$}.
\end{array}
\]
\end{definition}

Recall from Subsection~\ref{spaceMinkSummands} that the dilation factor
means the non-negative scalar transforming an edge of $\kP$ into the
associated edge of $Q$, i.e.\ satisfying
$v^j(Q)-v^i(Q)=t_{ij}(Q)\cdot(v^j-v^i)$ for vertices $v^i,v^j\in\kP$.
Note that the values collected in $s(Q)\in\RR^\kPm$ 
do heavily depend
on the position of $Q$, i.e.\ in general, they do change after shifting $Q$
along a vector from $\kQuotRD\setminus\kQuotD$.
In particular, the Kodaira-Spencer map $\kodSp$ is, in general, neither
Minkowski-additive, nor is its image contained in the subspace 
\mbox{$\kMT(\kP)\subseteq\RR^\kPr\oplus\RR^\kPm$}
from Definition~\ref{def:TP}. 
Nevertheless, we have $\kodSp(\kP)=\oneone=[\kP]\in\kMT(\kP)$ and  $\kodSp(0)=\ku{0}$.

\begin{example}
\label{ex-kodSpNotInTP}
Take $\kP=[\frac{1}{2},\frac{3}{4}]$
from Example~\ref{ex-CQS}.\ref{item:CQS3} and decompose it as
\[
\textstyle
\kP_0+\kP_1=[0,\frac{1}{4}] + [\frac{1}{2},\frac{1}{2}].
\]
Using the coordinates $(t;s_1,s_2)$ of $\RR^3$,
the Kodaira-Spencer map yields 
\[
\kodSp(\kP)=(1;1,1),
\hspace{0.8em}
\kodSp(\kP_0)=(1;0,1),
\hspace{0.8em}\mbox{and}\hspace{0.8em}
\kodSp(\kP_1)=(0;1,1).
\]
While this is clearly not additive, both summands $\kodSp(\kP_i)$
do also miss $\kMT(\kP)$: Since both half open edges induced from
$\kP$ are \se, the equations for $\kodSp(\kP_i)$ involve $s_1=t=s_2$,
which is not satisfied. 
\end{example}

\subsection{The Kodaira-Spencer map for lattice friendly decompositions}
\label{KodairaSpencerLattFriend}
While the Kodaira-Spencer map $\kodSp=(\bft,\bfs)$ behaves rather wildly for 
general Minkowski decompositions, it turns out to be the right tool to 
reflect lattice friendly decompositions.

\begin{theorem}\label{thm-KodairaSpencerLattFriend}
Let $\kP=\kP_0+\ldots + \kP_\minks$ be a Minkowski decomposition with
$\tail(\kP_i)=\tail(\kP)$. Then this decomposition is lattice friendly
if and only if 
\[
\,\kodSp(\kP)=\oneone=\kodSp(\kP_0)+\ldots+\kodSp(\kP_\minks),
\]
and this is a decomposition inside $\kMTZ(\kP)$, i.e.\
for all summands we have $\kodSp(\kP_i)\in\kMTZ(\kP)
\subset\RR^\kPr\oplus\RR^\kPm$.
\end{theorem}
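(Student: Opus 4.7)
The identity $\kodSp(\kP)=\sum_k\kodSp(\kP_k)$ holds automatically on the $t$-coordinates, since the edge dilation factors of any Minkowski decomposition satisfy $\sum_k t_{ij}(\kP_k)=1$. So the additivity amounts to the $s$-part: $s_v(\kP)=\sum_k s_v(\kP_k)$ at each vertex $v$. I would first establish that this $s$-additivity is equivalent to the \emph{vertex-level lattice condition}: at every vertex $v$, either $v\in\kQuotD$ and all $v(\kP_k)\in\kQuotD$, or $v\notin\kQuotD$ and exactly one $v(\kP_k)\notin\kQuotD$. Since $s_v(\kP_k)$ equals $1$ exactly when $v(\kP_k)\notin\kQuotD$, this is a direct counting argument, using $v=\sum_k v(\kP_k)$ to exclude the possibility that all $v(\kP_k)$ are lattice when $v$ is not. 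The vertex-level condition is in turn equivalent to lattice-friendliness: specialization of $\kc$ to the interior of $\normal(v,\kP)$ yields one direction, and conversely, for any face $F$ of $\kP$, either some vertex of $F$ gives a lattice point of $F(\kP_k)$, or $k$ must be the common bad index at every non-lattice vertex of $F$, identifying at most one exceptional summand. This chain already yields the reverse direction of the theorem.

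For the forward direction I must check that each $\kodSp(\kP_k)$ lies in $\kMTZ(\kP)$. The closing conditions~(\ref{eq:dualEquationsT(Q)1}) hold tautologically for any Minkowski summand, and~(\ref{eq:dualEquationsT(Q)2}) follows from vertex-level lattice-friendliness. The heart of the proof lies in~(\ref{eq:dualEquationsT(Q)3}) and~(\ref{eq:dualEquationsT(Q)4}), both derived from the coincidence $\mu_i=\mu_j$: whenever $[v^i,v^j]\cap\kQuotD=\emptyset$, or $[v^i,v^j)$ is a \se~half-open edge with $v^j\notin\kQuotD$, the unique bad summand at $v^i$ equals the unique bad summand at $v^j$. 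Supposing otherwise, the relation $v^j(\kP_{\mu_i})-v^i(\kP_{\mu_i})=t_{ij}(\kP_{\mu_i})(v^j-v^i)$ combined with $v^i(\kP_{\mu_i})\equiv v^i\pmod{\kQuotD}$ and $v^j(\kP_{\mu_i})\in\kQuotD$ would force a point of $\kQuotD$ onto the open segment $(v^i,v^j)$; but this is impossible in both cases---directly from lattice-disjointness of $[v^i,v^j]$, and because $(v^i,v^j)\cap\kQuotD=\emptyset$ also when $[v^i,v^j)$ is \se, via Definition~\ref{def-shortEdge} when $g=1$ and via the very definition of $g$ when $g\ge 2$.

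For~(\ref{eq:dualEquationsT(Q)4}) we further need $t_{ij}(\kP_k)\in\{0,1\}$, and this requires an auxiliary counting lemma: on a \se~half-open edge, the minimal $q>0$ with $q(v^j-v^i)\in\kQuotD$ satisfies $q>1$. Indeed, if $q\le 1$, evenly spaced lattice points on $\ko{g[v^i,v^j]}$ at spacing $q$ in the parametrization $\tau\mapsto gv^i+\tau(v^j-v^i)$ would fit at least $g$ copies into $[0,g)$, contradicting the bound $\le g-1$ demanded by \se ness. With $q>1$ in hand, the relation $t_{ij}(\kP_k)(v^j-v^i)\in\kQuotD$ for $k\ne\mu=\mu_i=\mu_j$ (valid since both endpoints of the corresponding edge of $\kP_k$ are then lattice) forces $t_{ij}(\kP_k)\in q\ZZ\cap[0,1]=\{0\}$, hence $t_{ij}(\kP_\mu)=1=s_i(\kP_\mu)$. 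The integrality condition in Definition~\ref{def:latticeInT} is then a routine case analysis of $(t_{ij}-s_i)v^i-(t_{ij}-s_j)v^j$, using $v^j(\kP_k)-v^i(\kP_k)=t_{ij}(\kP_k)(v^j-v^i)$ together with the vertex-level classification of which $v^i(\kP_k),v^j(\kP_k)$ lie in $\kQuotD$. The main obstacle is precisely the pair of assertions $\mu_i=\mu_j$ and $q>1$; together they distil how the lattice geometry of an edge translates into rigidity of any lattice-friendly Minkowski decomposition.
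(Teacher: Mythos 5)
Your proposal is correct and follows the same overall architecture as the paper's proof: the reverse implication is reduced to the \(s\)-additivity (the paper's single paragraph for \((\neht)\)), and the forward implication checks in turn the relations \((\ref{eq:dualEquationsT(Q)1})\)--\((\ref{eq:dualEquationsT(Q)4})\) and then the integrality in Definition~\ref{def:latticeInT}, using the edge formula \(v^j(\kP_k)-v^i(\kP_k)=t_{ij}(\kP_k)(v^j-v^i)\) and the lattice-length bound of Remark~\ref{rem:afterDefShortedge}. Where you genuinely deviate is in the treatment of \((\ref{eq:dualEquationsT(Q)3})\): the paper deduces ``a unique bad summand on the whole edge'' by applying the face-level formulation of Definition~\ref{def-latticeFriends} to a linear form \(\kc\) selecting \([v^i,v^j]\) (its Step~2), whereas you derive the coincidence \(\mu_i=\mu_j\) directly from the vertex-level classification by exhibiting the point \(v^i+t_{ij}(\kP_{\mu_i})(v^j-v^i)\) as a lattice point in \([v^i,v^j]\). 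Your argument is a bit more self-contained (it needs only the vertex-level data and the dilation formula rather than re-invoking faces) and unifies the lattice-disjoint and short-edge cases under one modular computation; the paper's version buys a shorter Step~3 because it can cite Step~2 directly. Two small expository points worth tightening: the lattice point you produce lands in the \emph{closed} segment \([v^i,v^j]\), not automatically the open one — the cases \(t_{ij}(\kP_{\mu_i})\in\{0,1\}\) are excluded because \(v^i,v^j\notin\kQuotD\), which you should say explicitly; and for \se~edges with \(v^j\in\kQuotD\) there is no \(\mu_j\), so the phrase ``\(k\neq\mu=\mu_i=\mu_j\)'' should be read as ``\(k\neq\mu_i\)'' there (the conclusion \(v^j(\kP_k)\in\kQuotD\) then holds trivially). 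Finally, the ``routine case analysis'' for the \(\kL_{ij}\)-integrality should at least record the paper's key rewriting \(\kL_{ij}(\kP_k)=\big(v^i(\kP_k)-s_i(\kP_k)v^i\big)-\big(v^j(\kP_k)-s_j(\kP_k)v^j\big)\) and the observation that when \(k=\mu_i\) the first bracket equals \(-\sum_{k'\neq\mu_i}v^i(\kP_{k'})\in\kQuotD\).
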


\begin{proof}
($\neht$)
For each vertex $w\in\kP$, we obtain a decomposition
$s_w(\kP)=s_w(\kP_0)+\ldots+s_w(\kP_\minks)$ inside $\NN$.
Since $s_w(\kP)\in\{0,1\}$, this means that there is at most one
index $\mu=\mu(w)$ such that $s_w(\kP_\mu)=1$. All remaining summands vanish,
and this translates directly into the condition of
Definition~\ref{def-latticeFriends}.
\\[1ex]
($\then$)
Assume that the decomposition $\kP=\kP_0+\ldots + \kP_\minks$ is lattice
friendly. 
\\[0.5ex]
{\em Step 1. }
Since there is never a problem with the dilation factors, 
let us focus on the $s$-parameters.
If $w\in\kP$ is a vertex, then there is at most one index
$\mu=\mu(w)$ such that $s_w(\kP_\mu)\neq 0$. Moreover, we know that
\[
s_w(\kP_\mu)=1 \iff s_w(\kP_\mu)\neq 0 ~\Longrightarrow~ w\notin\kQuotD
\iff s_w(\kP)\neq 0 \iff s_w(\kP)=1.
\]
This shows the formula $\,\kodSp(\kP)=\sum_{i=0}^\minks\kodSp(\kP_i)$.
Moreover, the integrality of the $s_w(\kP_i)$ is clear, too.
\\[0.8ex]
{\em Step 2. }
Assume that $[v,w]\leq\kP$ is a compact edge with
$[v,w]\cap\kQuotD=\emptyset$. We may choose an element
$\kc\in\tail(\kP)\dual\cap\kQuot$ such that 
$[v,w]=\face(\kP,\kc)$. Since
\[
\face(\kP,\kc) \;=\; \face(\kP_0,\kc)+\ldots +  \face(\kP_\minks,\kc),
\]
there is at least one summand $\face(\kP_\mu,\kc)$ lacking lattice points, too.
In particular, $v(\kP_\mu),w(\kP_\mu)\notin\kQuotD$, and
since the decomposition of $\kP$ is lattice friendly, $\kP_\mu$ is the only
summand with 
$v(\kP_\mu)\notin\kQuotD$ or $w(\kP_\mu)\notin\kQuotD$. Hence
\[
s_v(\kP_\mu)=1=s_w(\kP_\mu)
\hspace{0.9em}\mbox{and}\hspace{0.9em}
s_v(\kP_i)=0=s_w(\kP_i)
\hspace{0.5em}\mbox{for}\hspace{0.5em}
i\neq\mu.
\]
That is, the equation $s_v=s_w$ from the definition of $\kMT(\kP)$
is satisfied for all Minkowski summands.
\\[0.8ex]
{\em Step 3. }
Assume that $[v,w)$ is a \se\ half open edge of $\kP$. We are supposed to
check the equality $s_v=t:=t_{vw}$ for all Minkowski summands.
Since $v\notin\kQuotD$, we know that $s_v(\kP)=1$, i.e.\ there is exactly one
index $\mu=\mu(v)$ such that $s_v(\kP_\mu)=1$.
Since this means $s_v(\kP_i)=0$ for $i\neq\mu$,
it remains to show that $t(\kP_i)=0$ for these indices;
the equality $t(\kP_\mu)=1$ follows then automatically.
If we had $t(\kP_i)>0$, then the equality
\[
w(\kP_i) - v(\kP_i) = t(\kP_i)\cdot (w-v)\neq 0.
\]
would imply that $w(\kP_i)\neq v(\kP_i)$. On the other hand,
both vertices $w(\kP_i)$ and $v(\kP_i)$ are lattice points.
While this is clear for $v(\kP_i)$, we have to provide an extra argument
for $w(\kP_i)$: If $w\in\kQuotD$, then it is clear; if
$w\notin \kQuotD$, then it follows from the \se ness of $[v,w)$
that $[v,w]\cap\kQuotD=\emptyset$. Hence, we can use
the equation $s_v=s_w$ obtained in Step 2.
Now, since we know that $w(\kP_i),v(\kP_i)\in\kQuotD$ do not coincide, 
we get a lower bound for the lattice lengths
\[
\ell(w-v)\geq \ell\big(w(\kP_i)-v(\kP_i)\big)\geq 1,
\]
which, by Remark~\ref{rem:afterDefShortedge}, is
not possible for short half open edges.
\\[0.8ex]
{\em Step 4. }
So far, we have seen that $\kodSp(\kP_i)\in\kMT(\kP)$.
To show that $\kodSp(\kP_i)$ is integral,
i.e.\ that $\kodSp(\kP_i)\in\kMTZ(\kP)$, 
we are supposed to check, for all $i=0,\ldots,\minks$, that
\[
\kL_{vw}(\kP_i)\;=\;
t(\kP_i)\cdot(v-w) - s_v(\kP_i)\cdot v + s_w(\kP_i)\cdot w\in \kQuotD.
\]
For this, we rewrite
\[
\kL_{vw}(\kP_i) = \big(v(\kP_i) - s_v(\kP_i)\cdot v\big)
- \big(w(\kP_i) - s_w(\kP_i)\cdot w\big)
\]
and analyse the membership of $\kQuotD$ for both summands separately.
If $v\in\kQuotD$, then $v(\kP_i)\in \kQuotD$ and $s_v(\kP_i)=0$,
hence 
\[
v(\kP_i) - s_v(\kP_i)\cdot v\in\kQuotD.
\]
If $v\notin\kQuotD$, then we denote by $\mu=\mu(v)$ the unique index with 
$v(\kP_\mu)\notin \kQuotD$,
and the previous argument survives for $i\neq\mu$.
On the other hand, since $s_v(\kP_\mu)=1$,
\[
\textstyle
v(\kP_\mu) - s_v(\kP_\mu)\cdot v
\;=\; v(\kP_\mu) - v
\;=\;-\sum_{i\neq\mu} v(\kP_i) \in\kQuotD.
\]
The proof for the $w$-summand is the same, with a possibly different index $\mu=\mu(w)$.
\end{proof}

\begin{example}\label{ex-calComponents}
Let us continue our main example. The interval $\kP=[-\frac{1}{2},\frac{1}{2}]\subset\RR$ allows two
non-trivial, lattice friendly decompositions, namely
\[
\textstyle
[-\frac{1}{2},\frac{1}{2}] = [-\frac{1}{2},0] + [0,\frac{1}{2}] = [-\frac{1}{2},-\frac{1}{2}] + [0,1].
\]
Applying the Kodaira-Spencer map $\kodSp$, this decomposition looks like
\[
\textstyle
(1,1,1) = (\frac{1}{2},1,0) + (\frac{1}{2},0,1)=(0,1,1)+(1,0,0)
\hspace{0.8em}\mbox{inside }\,\kMTZ(\kP).
\]
According to Subsection~\ref{KodairaSpencerRevisited}, we can understand
these two decompositions, for  $i=1,2$,  as two linear maps $\kodSp_i:\ZZ^2\to\kMTZ(\kP)$;
the dual maps $\kodSp^*_i:\kMTZD(\kP)\to\ZZ^2$ are given by the matrices
\[
\textstyle
\left(\begin{array}{ccc}
\frac{1}{2} & 1 & 0\\
\frac{1}{2} & 0 & 1
\end{array}\right),~~~~
\textstyle
\left(\begin{array}{ccc}
0 & 1 & 1\\
1 & 0 & 0
\end{array}\right).
\]
The integrality of the target can be checked when $\kodSp^*_i$ are applied
to the generators of $\ktT$: 
\[
  s_1,~~~ s_2,~~~ A=t+\frac{1}{2}s_1-\frac{1}{2}s_2,~~~ B=t+\frac{1}{2}s_2-\frac{1}{2}s_1.
\]
 Then, $\kodSp^*_i$ yield two integral $(2\times 4)$-matrices mapping from 
$ \ZZ s_1 \oplus \ZZ s_2  \oplus \ZZ A \oplus \ZZ B$ to $\ZZ^2$:
\[
\textstyle
\kodSp^*_1=
\left(\begin{array}{cccc}
1 & 0 & 1 & 0\\
0 & 1 & 0 & 1
\end{array}\right),~~~~
\kodSp^*_2=
\left(\begin{array}{ccccc}
1 & 1 & 0 & 0\\
0 & 0 & 1 & 1
\end{array}\right).
\]
\end{example}

\subsection{Lattice friendly decompositions and the map $\psi$}
\label{latticeFriendlyPsi}
In Subsection~\ref{univMinkSum} we have defined
for every $\xi\in \kMTP(\kP)$ an associated Minkowski
summand $\kP_\xi=\psi(\xi,\kP)$. While this construction depends on the
choice of  a reference vertex $\vast$,  
we have seen in  (\ref{indep-refVertex})
that, for integral $\xi\in\kMTZ(\kP)$,  
this dependence involves only  lattice translations.
Furthermore, $\psi$  is linear in $\xi$, i.e.\ for $\xi,\xi'\in \kMTP(\kP)$
we have 
\[
\kP_\xi+\kP_{\xi'}=\kP_{\xi+\xi'}.
\]

\begin{remark}
\label{rem-oneone}
We do not always have the equality $\psi_\vast(\oneone,\kP)=\kP$, 
but we can be very precise about this:
\[
\kP_{\oneone}\neq \kP \iff \vast\in\kQuotD\setminus\{0\}.
\]
This issue can again be solved by a lattice translation of $\kP$.
Hence, in accordance with Convention~\ref{conv-refPtsA},
we can and will assume  that $\vast$ is chosen such that
$\kP_\oneone = \kP$.
\end{remark}

In Theorem~\ref{thm-KodairaSpencerLattFriend} we have seen how 
the Kodaira-Spencer map $\kodSp$ can detect whether a Minkowski decomposition
is lattice friendly or not.
The next result shows how the map $\psi(\kbb,\kP)=\kP_{\kbb}$ 
fits into this relation.

\begin{theorem}
\label{thm-latticeDecompPsi}
Let $\xi_0,\ldots,\xi_\minks\in \kMTP(\kP)$ with 
$\xi_0+\ldots+\xi_\minks=\oneone$. 
Then, the following three conditions are equivalent:
\begin{enumerate}[label=(\roman*)]
\item\label{item:tldp1}
$\,\xi_0,\ldots,\xi_\minks\in \kMTZ(\kP)$,
\vspace{0.5ex}
\item\label{item:tldp2} for each vertex $w\in\kP$ and index $i\in\{0,\ldots,\minks\}$
we have
\vspace{-0.2ex}
\[
w(\kP_i)\notin\kQuotD \iff w\notin\kQuotD 
\hspace{0.5em}\mbox{and}\hspace{0.5em}
s_w(\kP_{\xi_i})=1,
\hspace{0.8em}\mbox{and}
\vspace{0.0ex}
\]
\item \label{item:tldp3}
the decomposition $\kP_{\xi_0}+\ldots+\kP_{\xi_\minks}=\kP$ 
is lattice friendly with $\kodSp(\kP_{\xi_i}) = \xi_i\,$
for $i=0,\ldots,\minks$. 
\end{enumerate}
\end{theorem}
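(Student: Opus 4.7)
The plan is to establish (iii)\,$\Rightarrow$\,(i)\,$\Rightarrow$\,(ii)\,$\Rightarrow$\,(iii), with Theorem~\ref{thm-KodairaSpencerLattFriend} providing the first implication and a single congruence computation handling the rest.

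The key technical lemma I would prove first is: for every $\xi \in \kMTZ(\kP)$ and every vertex $w \in \kP$, one has
\[
\psi_\vast(\xi, w) \;\equiv\; s_w(\xi)\cdot w \pmod{\kQuotD}.
\]
This follows by an easy induction along a path of compact edges from $\vast$ to $w$, using the defining integrality condition $\kL_{vw}(\xi) \in \kQuotD$, which rewrites $t_{vw}(\xi)\cdot(w-v) = s_w(\xi)\cdot w - s_v(\xi)\cdot v \pmod{\kQuotD}$, combined with Convention~\ref{conv-refPtsB} and Remark~\ref{rem-oneone} ensuring $\psi_\vast(\xi,\vast) = s_\vast(\xi)\cdot \vast$ and $\psi_\vast(\oneone,\kP) = \kP$. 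The immediate consequence is the equivalence
\[
w(\kP_\xi) \notin \kQuotD \;\iff\; s_w(\xi) \cdot w \notin \kQuotD \;\iff\; w\notin \kQuotD \text{ and } s_w(\xi) \text{ is odd-enough (not killed by } w\text{'s denominators)}.
\]
More usefully, when $\xi$ has $s_w(\xi) \in \{0,1\}$, one gets the clean statement $w(\kP_\xi) \notin \kQuotD \iff w\notin\kQuotD \text{ and } s_w(\xi)=1$.

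For (iii)\,$\Rightarrow$\,(i): If the decomposition is lattice friendly and $\kodSp(\kP_{\xi_i}) = \xi_i$, then Theorem~\ref{thm-KodairaSpencerLattFriend} gives $\xi_i = \kodSp(\kP_{\xi_i}) \in \kMTZ(\kP)$ directly.

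For (i)\,$\Rightarrow$\,(ii): Assume all $\xi_i \in \kMTZ(\kP)$. Since $\xi_i \in \kMTP(\kP)$, the $s_w(\xi_i)$ are non-negative integers, and the summation constraint forces $\sum_i s_w(\xi_i) = s_w(\oneone)$, which equals $1$ if $w \notin \kQuotD$ and $0$ if $w \in \kQuotD$. Hence each $s_w(\xi_i) \in \{0,1\}$, and when $w \notin \kQuotD$ exactly one index $\mu = \mu(w)$ satisfies $s_w(\xi_\mu) = 1$. Combining with the lemma above closes (ii).

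For (ii)\,$\Rightarrow$\,(iii): The vertex-wise dichotomy in (ii) is precisely the lattice-friendliness criterion when checked on generic $\kc \in \tail(\kP)^\vee \cap \kQuot$ (those realising each vertex $w$ of $\kP$), since it forces at most one summand $\face(\kP_{\xi_i},\kc) = w(\kP_{\xi_i})$ to lie outside $\kQuotD$. To identify $\kodSp(\kP_{\xi_i}) = \xi_i$: the $t$-coordinates match tautologically because $\psi_\vast$ was defined through edge dilation by $t_d(\xi_i)$, and the $s$-coordinates match because Definition~\ref{def-KodSpMap} sets $s_w(\kP_{\xi_i})$ to $1$ or $0$ according to whether $w(\kP_{\xi_i})$ is a lattice vertex, which by (ii) coincides with $s_w(\xi_i)$.

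The main obstacle is verifying the congruence lemma cleanly at the reference vertex when $\vast \notin \kQuotD$, since the two sides each pick up a $s_\vast(\xi)\cdot\vast$ contribution that must be shown to cancel; however, Convention~\ref{conv-refPtsB} removes this difficulty whenever $\vast \in \kQuotD$, and otherwise the shift $\kL_{\vast\vastP}$ from Subsection~\ref{indep-refVertex} absorbs the ambiguity consistently, so everything is really a routine check once the lemma is pinned down.
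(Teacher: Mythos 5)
Your proposal is correct and follows essentially the same route as the paper's own proof: the identical cyclic structure (iii)$\Rightarrow$(i)$\Rightarrow$(ii)$\Rightarrow$(iii), with Theorem~\ref{thm-KodairaSpencerLattFriend} giving (iii)$\Rightarrow$(i), and the congruence
\[
\psi_{\vast}(\xi,w)=\sum_{j=1}^{k}\kL_{j,j-1}(\xi)+s_w(\xi)\cdot w
\]
as the key algebraic identity behind (i)$\Rightarrow$(ii). Your explicit extraction of this as a ``congruence lemma'', and your care in noting that $\sum_i s_w(\xi_i)=s_w(\oneone)$ together with nonnegativity and integrality forces $s_w(\xi_i)\in\{0,1\}$, are slightly more verbose versions of what the paper's one-liner ``the equivalence in (ii) becomes evident'' hides. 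The one misstep is your final paragraph: the worry about the reference vertex $\vast\notin\kQuotD$ is moot, since the $s_\vast(\xi)\cdot\vast$ contributions cancel telescopically in the passage from $s_\vast(\xi)\cdot\vast + \sum_j t_j(\xi)\cdot\vect{\aed}_j$ to $\sum_j\kL_{j,j-1}(\xi)+s_w(\xi)\cdot w$ regardless of whether $\vast$ is a lattice point; neither Convention~\ref{conv-refPtsB} nor the shift by $\kL_{\vast\vastP}$ is needed there (the latter is only relevant for comparing tautological cones built from different reference vertices). This is a harmless over-caution rather than an error.
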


\begin{proof}
{\ref{item:tldp1}$\then$\ref{item:tldp2}: }
Let $v\in\kP$ be a vertex and choose a path 
$\vast=v^0,v^1,\ldots,v^k=v$ along the compact edges of $\kP$.
Denoting $\vect{\aed_i}:=v^i-v^{i-1}$, we know from Subsection~\ref{univMinkSum}
that for each $\xi\in\kMTP(\kP)$
\[ 
\textstyle
\psi_{\vast}(\xi,v) 
\;=\; s_\vast(\xi)\cdot \vast +\sum_{i=1}^k t_i(\xi)\cdot \vect{\aed_i}
\;=\; \sum_{i=1}^k \kL_{i,\,i-1}(\xi) + s_v(\xi)\cdot v.
\]
As $\xi\in\kMTZ(\kP)$ implies $\kL_{i,\,i-1}(\xi)\in\kQuotD$
for every $i$, the equivalence in (ii) becomes evident. 
\\[1ex]
{\ref{item:tldp2}$\then$\ref{item:tldp3}: }
Since $\kP_{\oneone}=\kP$,
we obtain a Minkowski decomposition $\kP_{\xi_0}+\ldots+\kP_{\xi_\minks}=\kP$.
To check that it is lattice friendly, it suffices to check that for
every vertex $w\in\kP$ we have at most one index
$\mu\in\{0,\ldots,\minks\}$ with $w(\xi_\mu)\notin\kQuotD$.
However, this follows directly from $s_w\big(\xi\in\kMTZ\big)\in\NN$,
from $\xi_0+\ldots+\xi_\minks=\oneone$, hence from
$s_w(\xi_0)+\ldots+s_w(\xi_\minks)=1$ and~\ref{item:tldp2}.
\\[0.5ex]
Finally, the $t$-coordinates of $\kodSp(\kP_\xi)$ and $\xi$ are equal by
definition. For the equality of the $s$-coordinates we use again
$\xi_0+\ldots+\xi_\minks=\oneone$ and (ii) in a straightforward manner.
\\[1ex]
{\ref{item:tldp3}$\then$\ref{item:tldp1}: }
This follows from the direction ($\then$) in Theorem~\ref{thm-KodairaSpencerLattFriend}.
\end{proof}

\begin{remark}
\label{rem-kodSpPsi}
\begin{enumerate}[label={(\roman*)}]
\item On the one hand, since there are many non-lattice choices for $\xi$ which produce the same
polyhedron~$\kP_\xi$, we cannot expect that $\xi$ can be recovered from
$\kP_\xi$. In particular, the equality $\kodSp(\kP_{\xi}) = \xi$
cannot be true in general. Hence, it is not possible to erase the phrase
``with $\kodSp(\kP_{\xi}) = \xi$'' from~\ref{item:tldp3} of the previous theorem.

\item On the other hand, every lattice shift of a Minkowski summand produces 
the same value of $\kodSp$.
So we cannot expect to obtain $Q=\kP_{\kodSp(Q)}$ in general, either. 
\item Despite the negative claims above, we can consider the following
two sets:
\[
\begin{array}{l}
A:=\{\mbox{polyhedra }Q\subseteq\kQuotRD \mbox{ with }
\tail(Q)=\tail(\kP) \mbox{ such that there is a polyhedron } Q' 
\\
\hspace*{11em}
\mbox{ providing a lattice friendly decomposition } \kP=Q+Q'\}
\hspace{1em}\mbox{and}
\\[1ex]
B:= \{\xi\in \kMTP(\kP)\cap\kMTZ(\kP)\kst \oneone-\xi\in\kMTP(\kP)\}.
\end{array}
\]
Then, dividing out integral translations, it follows from the theorems
\ref{thm-KodairaSpencerLattFriend} and~\ref{thm-latticeDecompPsi}
that the two maps
$\kodSp:A/\kQuotD\to B$ and $\psi:B\to A/\kQuotD$ are mutually inverse.
\end{enumerate}
\end{remark}

\begin{example} 
\label{ex-negativeS}
While the construction $\xi\mapsto \kP_\xi$ 
from Subsection~\ref{univMinkSum} does not make use of the
non-negativity of $s$ in $\xi=(t,s)$, the assumption
$\xi\in\kMTP(\kP)$ becomes really important for 
Theorem~\ref{thm-latticeDecompPsi}. To illustrate this, take
$P=[-\frac{1}{3},\frac{1}{4}]$ with $v_1=-\frac{1}{3}$ and $v_2=\frac{1}{4}$. 
So this is not a \se\ edge (none of the two half-open edges is), 
and the lattice conditions for $\xi=(t,s_1,s_2)$ are
\[
\textstyle
s_1, s_2 \in\ZZ
\hspace{1em}\mbox{and}\hspace{1em}
\frac{7}{12} t - \frac{1}{3}s_1-\frac{1}{4}s_2 \in\ZZ. 
\]
Choose $v_\ast=v_1$ and consider 
\[
\textstyle
\xi=(\frac{1}{7},1,-1),\hspace{1em}\xi'=(\frac{6}{7},0,2).
\]
We see that $\xi,\xi'\in\kMTZ(\kP)$ with $\xi+\xi'=(1,1,1)$, but that 
\[
\textstyle
\kP_\xi=[-\frac{1}{3},-\frac{1}{4}]
\hspace{1em}\mbox{and}\hspace{1em}
\kP_{\xi'}=[0,\frac{1}{2}],
\]
 provides a non-lattice-friendly Minkowski decomposition of $\kP$.
\end{example}

\subsection{The Kodaira-Spencer map revisited}
\label{KodairaSpencerRevisited}
In Definition~\ref{def-latticeFriends}
we have introduced the notion of lattice friendly decomposition $\kP=\kP_0+\ldots+\kP_\minks$.
In \cite[(3.2)]{ka-flip}, this notion was used to construct a
$\minks$-parameter family, i.e.\ a deformation 
$\tX\to \A_k^\minks$ of the associated toric singularity $X$.
Its total space was built from the Cayley product
mentioned in Remark~\ref{rem-propsCQ}.\ref{item:rem-propsCQ}.
Actually, similarly to $X=\toric(\sigma)$, one defines it as
$\tX=\toric(\tsigma)$ with $\tsigma:=\cone(\kP_0*\ldots*\kP_\minks)$.
\\[1ex]
In \cite[(3.3)+(3.4)]{ka-flip} it was shown that the Kodaira-Spencer 
map of this construction is exactly the map $\kodSp$ we have defined
in Subsection~\ref{KodairaSpencer} -- this is why we have called
it like this even in the purely discrete, i.e.\ non-algebraic setup.
In \cite[Proposition 4.3]{m} we will connect the notion of a free pair to flatness, from which it follows that  the corresponding
inclusion 
$\NN^{\minks+1}\hookrightarrow \tsigma\dual\cap(\kQuot\oplus\NN^{\minks+1})$
is a \cocartesian~\aextension\  of 
$\NN\hookrightarrow \sigma\dual\cap(\kQuot\oplus\NN)$.
In particular, denoting by $(\ktT,\ktS)$ the initial extension from
Theorem~\ref{th-initialObject}, then this is induced from 
a semigroup homomorphism $\kMTZD(\kP)\supseteq\ktT\to\NN^{\minks+1}$. 
The dual map $\ZZ^{\minks+1}\to\kMTZ(\kP)$ equals $\kodSp$, and the
fact that its target is $\kMTZ(\kP)\subset\kMT(\kP)$
illustrates Theorem~\ref{thm-KodairaSpencerLattFriend}.

\bibliographystyle{amsalpha}
\bibliography{GBUbib}

\end{document}